\documentclass{amsart}

\usepackage{graphicx}
\usepackage{tikz,tikz-cd}
\usepackage{comment}
\usepackage{a4wide}
\usepackage[colorlinks=true]{hyperref}
\usepackage[capitalize,nameinlink]{cleveref}

\newtheorem{thm}{Theorem}[section]
\newtheorem{lemma}[thm]{Lemma}
\newtheorem{prop}[thm]{Proposition}
\newtheorem{cor}[thm]{Corollary}

\theoremstyle{definition}

\newtheorem{defn}[thm]{Definition}
\newtheorem{remark}[thm]{Remark}

\newtheorem{example}[thm]{Example}
\newtheorem{algorithm}[thm]{Algorithm}

\crefname{lemma}{Lemma}{Lemmas}
\crefname{defn}{Definition}{Definitions}
\crefname{lem}{Lemma}{Lemmas}
\crefname{thm}{Theorem}{Theorems}
\crefname{prop}{Proposition}{Propositions}
\crefname{question}{Question}{Questions}
\crefname{defn}{Definition}{Definitions}
\crefname{conj}{Conjecture}{Conjectures}
\crefname{figure}{Figure}{Figures}
\crefname{cor}{Corollary}{Corollaries} 

\AddToHook{env/lemma/begin}{\crefalias{thm}{lemma}}
\AddToHook{env/prop/begin}{\crefalias{thm}{prop}}
\AddToHook{env/cor/begin}{\crefalias{thm}{cor}}
\AddToHook{env/example/begin}{\crefalias{thm}{example}}
\AddToHook{env/defn/begin}{\crefalias{thm}{defn}}
\AddToHook{env/conj/begin}{\crefalias{thm}{conj}}
\AddToHook{env/question/begin}{\crefalias{thm}{question}}
\AddToHook{env/problem/begin}{\crefalias{thm}{problem}}
\AddToHook{env/remark/begin}{\crefalias{thm}{remark}}

\numberwithin{equation}{section}
\renewcommand\flat{\operatorname{flat}}
\newcommand\SP{\operatorname{SP}}
\newcommand\MLH{\operatorname{MLH}}
\newcommand\sinv{\operatorname{sinv}}
\newcommand\sh{\operatorname{sh}}

\newcommand\tile{\operatorname{tile}}

\DeclareMathOperator{\iDes}{iDes}

\DeclareMathOperator{\wt}{wt}
\DeclareMathOperator{\w}{w}

\DeclareMathOperator{\heads}{{heads}}

\DeclareMathOperator{\wex}{{wex}}

\DeclareMathOperator{\fcol}{{free-col}}
\DeclareMathOperator{\frow}{{free-row}}
\DeclareMathOperator{\topup}{top-up}
\DeclareMathOperator{\row}{{row}}
\DeclareMathOperator{\stat}{{stat}}

\DeclareMathOperator{\fcell}{{free-cell}}

\DeclareMathOperator{\LRmax}{{LRmax}}

\DeclareMathOperator{\RLmax}{{RLmax}}
\DeclareMathOperator{\RLmin}{{RLmin}}

\DeclareMathOperator{\diag}{diag}
\DeclareMathOperator{\inv}{{inv}}

\DeclareMathOperator{\dd}{{2}}
\DeclareMathOperator{\ee}{{0}}

\DeclareMathOperator{\A}{\mathcal{A}}
\DeclareMathOperator{\Z}{\mathcal{Z}}
\DeclareMathOperator{\Y}{\mathcal{Y}}
\DeclareMathOperator{\AS}{AS}

\DeclareMathOperator{\RAT}{RAT}
\DeclareMathOperator{\AT}{AT}

\newcommand\cro{\operatorname{crossing}}
\renewcommand\neg{\operatorname{neg}}

\newcommand\zrpt{\zeta}

\newcommand\ferat{\Phi_{\operatorname{FE}}}

\newcommand\biletter[2]{\genfrac{}{}{0pt}{}{#1}{#2}}

\DeclareMathOperator{\str}{Straight}
\DeclareMathOperator{\spl}{Split}

\usetikzlibrary {arrows.meta}
\usetikzlibrary{snakes}
\tikzset{every picture/.style={x=1em, y=1em}}

\newcommand\vertex[2]{\node[fill=black, inner sep=1pt, circle] (#1) at (#1,0) {};
\node at (#1,-.5) {\small \( #2 \)};}

\newcommand\Rvertex[2]{\node[fill=red, inner sep=2pt, circle] (#1) at (#1,0) {};
\node at (#1,-.5) {\small \( #2 \)};}
\newcommand\Uvertex[2]{\node[fill=black, inner sep=1pt, circle] (#1) at (#1,0) {};
\node at (#1,.5) {\small \( #2 \)};}

\newcommand\upperBrace[3]{\draw[snake=brace] (#1,#2)--(#1+1,#2); \node[above] at (#1+0.5,#2+0.2) {#3};}
\newcommand\lowerBrace[3]{\draw[snake=brace] (#1+1,#2)--(#1,#2); \node[below] at (#1+0.5,#2-0.2) {#3};}

\newcommand\shortUpperHalfEdge[1]{\draw[thick,blue] (#1,0) to (#1+.5,.5);}
\newcommand\shortLowerHalfEdge[1]{\draw[thick,blue] (#1,0) to (#1+.5,-.5);}
\newcommand\longUpperHalfEdge[1]{\draw[thick,blue] (#1,0) to (#1+1.5,1.5);}
\newcommand\longRedUpperHalfEdge[1]{\draw[thick,red] (#1,0) to (#1+1.5,1.5);}
\newcommand\longLowerHalfEdge[1]{\draw[thick,blue] (#1,0) to (#1+1.5,-1.5);}
\newcommand\upperHalfEdge[1]{\draw[thick,red] (#1,0) to (#1+.5,0.5);}
\newcommand\lowerHalfEdge[1]{\draw[thick,red] (#1,0) to (#1+.5,-0.5);}
\newcommand\upperEdge[2]{\draw[out=60,in=120] (#1,0) to (#2,0);}
\newcommand\lowerEdge[2]{\draw[out=-120,in=-60] (#1,0) to (#2,0);}
\newcommand\oneSpiralEdge[3]{\draw[blue] (0.5,0) arc [start angle=180, end angle=360, radius=0.25]; \draw[out=90,in=120,blue] (#2,0) to (#3,0);}
\newcommand\spiralEdge[3]{\draw[out=-120,in=-90,blue] (#1,0) to (#2,0); \draw[out=90,in=120,blue] (#2,0) to (#3,0);}
\newcommand\upperLoop[1]{\draw (#1,0.15) circle [radius=0.15];}

\newcommand\cell[2]{\draw (#1,#2) --++(1,0) --++(0,1) --++(-1,0) --++(0,-1) -- cycle;}
\newcommand\cells[2]{\draw [fill=cyan!20!white,draw=black] (#1,#2) --++(1,0) --++(0,1) --++(-1,0) --++(0,-1) -- cycle;}
\newcommand\cellt[2]{\draw [fill=gray!30!white,draw=black] (#1,#2) --++(1,0) --++(0,1) --++(-1,0) --++(0,-1) -- cycle;}
\newcommand\tall[2]{\draw [fill=gray!30!white,draw=black] (#1,#2) --++(1,1) --++(0,1) --++(-1,-1) --++(0,-1) -- cycle;}
\newcommand\short[2]{\draw [fill=cyan!20!white,draw=black](#1,#2) --++(1,0) --++(1,1) --++(-1,0) --++(-1,-1) -- cycle;}
\newcommand\upArr[2]{\draw [blue,arrows = {-Stealth[scale=.8]}, thick] (#1+.5,#2+.2) --++(0,0.6);}
\newcommand\slantUpArr[2]{\draw [blue,arrows = {-Stealth[scale=1]}, thick] (#1+0.7,#2+.2) --++(0.6,0.6);}
\newcommand\slantLeftArr[2]{\draw [red,arrows = {-Stealth[scale=1]}, thick] (#1+0.8,#2+1.3) --++(-0.6,-0.6);}
\newcommand\leftArr[2]{\draw [red,arrows = {-Stealth[scale=.8]}, thick] (#1+0.8,#2+.5) --++(-0.6,0);}
\newcommand\cellDot[2]{\fill (#1+.5,#2+.5) circle [radius=1pt];}
\newcommand\shortDot[2]{\fill (#1+1,#2+.5) circle [radius=1pt];}
\newcommand\tallDot[2]{\fill (#1+.5,#2+1) circle [radius=1pt];}

\newcommand\US[2]{\draw (#1,#2) -- (#1+1,#2+1);}
\newcommand\DS[2]{\draw (#1,#2) -- (#1+1,#2-1);}
\newcommand\HS[2]{\draw (#1,#2) -- (#1+1,#2);}
\newcommand\dHS[2]{\draw[dashed,thick] (#1,#2) -- (#1+1,#2);}
\newcommand\UMark[3]{\node at (#1+.7,#2+.3) {\tiny \textbf{\textcolor{blue}{#3}}};}
\newcommand\DMark[3]{\node at (#1+.4,#2-.7) {\tiny \textbf{\textcolor{blue}{#3}}};}
\newcommand\HMark[3]{\node at (#1+.5,#2-.3) {\tiny \textbf{\textcolor{blue}{#3}}};}
\newcommand\HLabel[3]{\node at (#1+.5,#2+.4) {\small #3};}
\newcommand\DLabel[3]{\node at (#1+.8,#2-.2) {\small #3};}
\newcommand\ULabel[3]{\node at (#1+.4,#2+.7) {\small #3};}

\newcommand\Grid[2]{
  \draw[help lines,step=1] (0,0) grid (#1,#2);
  \foreach \x in {0,...,#1}
  \draw (\x,0) node[below] {\tiny \x};
  \foreach \y in {0,...,#2}
  \draw (0,\y) node[left] {\tiny \y};
}

\title{Bijections for rhombic alternative tableaux}

\author{Sylvie Corteel, Jang Soo Kim, Olya Mandelshtam, and Philippe Nadeau}

\address[S.~Corteel]{Department of Mathematics, University of California, Berkeley, CA, USA and CNRS, IMJ-PRG, Sorbonne Universit\'e, France}
\email{corteel@berkeley.edu}

\address[J.~S.~Kim]{Department of Mathematics, Sungkyunkwan University, Suwon 16419, Republic of Korea}
\email{jangsookim@skku.edu}

\address[O.~Mandelshtam]{University of Waterloo, Waterloo, Ontario, Canada}
\email{omandels@uwaterloo.ca}

\address[P.~Nadeau]{Universite Claude Bernard Lyon 1, CNRS, Ecole Centrale de Lyon, INSA Lyon, Universit\'e Jean Monnet, ICJ UMR5208, 69622 Villeurbanne, France}
\email{nadeau@math.univ-lyon1.fr}

\thanks{Kim is supported by the National Research Foundation of Korea
(NRF) grant RS-2025-00557835 funded by the Korea government. Mandelshtam is supported by the NSERC grant RGPIN-2021-02568 and was partially supported by the NSF grant DMS-1704874. Corteel and Nadeau were partially supported by French ANR grant ANR-19-CE48-0011 (COMBINÉ). Parts of this work were completed during visits of Corteel and Nadeau to Korea funded by French PHC STAR 2024 project number 50134 VJ.}

\begin{document}

\maketitle

\begin{abstract}
  We generalize well-known bijections between alternative tableaux and permutations to bijections between rhombic alternative tableaux (RAT) and assembl\'ees of permutations. We show how these various bijections are connected. As a consequence, we find a refined enumeration formula for RAT. One of our bijections carries many statistics from RAT to assembl\'{e}es; notably, it sends the number of free cells to the number of crossings, which answers a question of Mandelshtam and Viennot. We also find an $r!$-to-$1$ map from marked Laguerre histories to assembl\'{e}es, answering a question of Corteel and Nunge.
\end{abstract}

\section{Introduction}

\emph{Rhombic alternative tableaux} 
provide a combinatorial interpretation for the steady-state probabilities of the \emph{two-species partially asymmetric simple exclusion process} (PASEP). The PASEP, or more generally \emph{asymmetric simple exclusion process} (ASEP), is a classical interacting particle system on a finite one-dimensional lattice with open boundaries, where particles hop right and left with rates determined by fixed parameters. Originally introduced in the 1960s by biologists and mathematicians (see for example the survey papers \cite{blythe, chou}),  the ASEP has since received significant attention, in part due to its connections with Askey--Wilson polynomials \cite{Corteel2011} and, more generally, Koornwinder polynomials \cite{cantini16,cantini17,Corteel18}.

The single-species PASEP consists of indistinguishable particles on a lattice with \(n\) sites, with at most one particle per site. A particle may hop into an adjacent vacant site to its right (resp.~left) with rate 1 (resp.~\(q\)), and border parameters \(\alpha\) and \(\beta\) are the rates at which a particle may enter from the left and exit from the right of the lattice, respectively. The steady-state probabilities (given by the left-eigenvector of the transition matrix) may be computed through the \emph{matrix ansatz} 
\cite{Derrida1993} and combinatorial formulas were obtained by the first author and Williams in \cite{Corteel2007} with \emph{permutation tableaux}, later reformulated as \emph{alternative tableaux}
\cite{Viennot2013} and the closely related \emph{tree-like tableaux} \cite{Aval2013}. The connection to Askey--Wilson polynomials is realized by the ASEP (the 5-parameter generalization of the PASEP), in which particles may enter and exit at both boundaries of the lattice, which are modeled combinatorially with \emph{staircase tableaux}
\cite{Corteel2011}.  

The \emph{two-species PASEP} is a generalization with two types of particles, \emph{heavy} and \emph{light}, as depicted in \cref{fig:2-parameters}. Adjacent particles can swap with rate 1 (resp.~\(q\)) if the heavier particle is on the left (resp.~right). Only the heavy particles may enter or exit the lattice. The two-species PASEP admits an analogous matrix ansatz solution \cite{Uchiyama2008}. The combinatorial construction of alternative tableaux was extended by the third author and Viennot \cite{MV18} to the two-species setting by introducing a diagonal edge, yielding \emph{rhombic alternative tableaux}. This construction was extended to the 5-parameter setting with \emph{rhombic staircase tableaux} by the first and third authors together with Williams in \cite{CMW17}, to make a connection with Koornwinder polynomials.

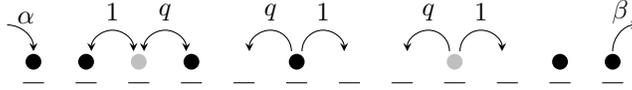
\begin{figure}
\centering
\begin{tikzpicture}[x=.7cm,y=.7cm,>=stealth]
\foreach \i in {1,...,12}{\draw (\i-0.2,-0.1) -- (\i+0.2,-0.1);}
\fill (1,0.3) circle (0.15);
\fill (2,0.3) circle (0.15);
\fill[lightgray] (3,0.3) circle (0.15);
\fill (4,0.3) circle (0.15);
\fill (6,0.3) circle (0.15);
\fill[lightgray] (9,0.3) circle (0.15);
\fill (11,0.3) circle (0.15);
\fill (12,0.3) circle (0.15);
\draw[<-] (1,.5) arc [radius=.5, start angle=0, end angle=90] node [midway, above] {\(\alpha\)};
\draw[<->] (2.9,.5) arc [radius=.4, start angle=0, end angle=180] node [midway, above] {\(1\)};
\draw[<->] (3.9,.5) arc [radius=.4, start angle=0, end angle=180] node [midway, above] {\(q\)};
\draw[->] (5.9,.5) arc [radius=.4, start angle=0, end angle=180] node [midway, above] {\(q\)};
\draw[<-] (6.9,.5) arc [radius=.4, start angle=0, end angle=180] node [midway, above] {\(1\)};
\draw[->] (8.9,.5) arc [radius=.4, start angle=0, end angle=180] node [midway, above] {\(q\)};
\draw[<-] (9.9,.5) arc [radius=.4, start angle=0, end angle=180] node [midway, above] {\(1\)};
\draw[->] (12,.5) arc [radius=.5, start angle=180, end angle=90] node [midway, above] {\(\beta\)};
\end{tikzpicture}
\caption{The parameters of the two-species PASEP.}
\label{fig:2-parameters}
\end{figure}

A rhombic alternative tableau (RAT) is a filling of a \emph{rhombic
  diagram} that generalizes an alternative tableau. Rhombic diagrams
of size \((n,r)\) are rhombic tilings of a certain closed shape on a
triangular lattice that correspond to states of the two-species PASEP
with \(n\) sites and \(r\) light particles. For \(r=0\), RAT coincide
with alternative tableaux. It was shown in \cite{MV18} that the partition function \( \Z_{n,r}(\alpha,\beta,q) \) for
the two-species PASEP is the generating function for RAT of size
\( (n,r) \) (with the relevant terms defined in \cref{sec:preliminaries}):
\[
  \Z_{n,r}(\alpha,\beta,q) = (\alpha\beta)^{n-r} \Y_{n,r}(\alpha^{-1}, \beta^{-1}, q),
\]
where
\begin{equation}\label{eq:Y}
  \Y_{n,r}(\alpha,\beta,q) = \sum_{T\in \RAT(n,r)} \alpha^{\frow(T)} \beta^{\fcol(T)} q^{\fcell(T)}.
\end{equation}
The enumeration of RAT of size \( (n,r) \) specialized at \(q=1\)
yields the following formula in \cite{MV18}:
\begin{equation}\label{Z_nr}
\Y_{n,r}(\alpha,\beta,1) = \binom{n}{r} (\alpha+\beta+r)_{n-r}\,,
\end{equation}
where \((x)_n=x (x+1) \cdots (x+n-1)\).

Alternative tableaux, permutation tableaux, and tree-like tableaux
have rich combinatorial structures: their generating functions are
given by product formulas, and they encode various permutation
statistics \cite{Steingrimsson2007} as well as moments of
orthogonal polynomials \cite{Medicis1994,Corteel2011a}. These
connections are realized through several bijections with permutations,
including a zigzag map on permutation tableaux
\cite{Steingrimsson2007} (equivalent to that of \cite{Postnikov}), the
fusion-exchange map \cite{Viennota}, a zigzag map on alternative
tableaux \cite{CorteelKim}, insertion algorithms
\cite{Corteel2009,Aval2013}, and a recursive decomposition
\cite{Nadeau}.

\emph{Assembl\'{e}es of permutations} are generalizations of
permutations: they consist of unordered blocks of disjoint permutations
whose union is \( \{ 1,\dots,n\}\). A bijection between RAT and
assembl\'{e}es, generalizing the \emph{fusion-exchange} bijection for
alternative tableaux of \cite{Viennota}, was constructed in
\cite{Mandelshtam2018}.
\smallskip

In this paper, we further develop the combinatorics of RAT, extend
the various bijections between alternative tableaux and permutations
to bijections between RAT and assembl\'{e}es, and describe the
relationship among the bijections. In particular, one of our bijections sends the
number of \emph{free cells} of a RAT to the number of \emph{crossings} of an
assembl\'{e}e, which we represent using arc diagrams by considering assembl\'ees as signed permutations. This extends the crossing statistic on permutations; see \cite{Corteel2011a} and \cite{Corteel2013}.
This provides an answer to the open problem in \cite[Section~5]{Mandelshtam2018}.

By specializing \cite[Corollary~6.2]{Corteel18}, one can obtain that
the partition function of the two-species PASEP, or equivalently the
generating function of RAT, is equal to a mixed moment of Al-Salam--Chihara polynomials. It can be written as the generating function for marked Laguerre
histories. Corteel and Nunge \cite{Corteel2020a} showed that a variant of this generating function has a factor of
\( (1+q)(1+q+q^2) \cdots (1+ q+ \cdots + q^{r-1}) \), and posed as an open question the problem of finding a bijective proof of this factorization. We answer this question by constructing a bijection between signed permutations and marked Laguerre histories, inspired by crossings and arc diagrams.

Let us now outline the content of the paper. In
\Cref{sec:preliminaries}, we provide basic definitions and some known
results on alternative tableaux. In \cref{sec:structure}, we
generalize the decomposition of alternative tableaux in \cite{Nadeau}
to understand the structure of RAT. From this perspective, RAT can
be decomposed into indecomposable pieces called \emph{packed} RAT.
Using this, we give a simple proof of \eqref{Z_nr}. In that section,
we also define a \emph{flattening map}, which embeds RAT into a
subclass of alternative tableaux. In \Cref{sec:bijections}, we
generalize the insertion map in \cite{Corteel2009} and the zigzag map
in \cite{CorteelKim} to RAT. Using the insertion map, we give a refined
formula of \eqref{Z_nr}. We also show that the insertion map, the
zigzag map, and the fusion-exchange map in \cite{Mandelshtam2018} are
all essentially the same up to simple transformations. In
\cref{sec:zigzag-RPT}, we introduce a fundamentally different
bijection between RAT and assembl\'{e}es that generalizes the zigzag
map of \cite{Steingrimsson2007} and maps free cells of a RAT to crossings of the corresponding assembl\'{e}e.
Finally, in \cref{sec:laguerre-histories}, we construct an
\(r!\)-to-\(1\) map from marked Laguerre histories to assembl\'{e}es, answering a question in
\cite[p.~15]{Corteel2020a}.

\section{Preliminaries}
\label{sec:preliminaries}

In this section we provide necessary definitions and recall some known results.

For integers \( i \) and \( j \), we define
\( [i,j] = \{i,i+1,\dots,j\} \) if \( i\le j \) and
\( [i,j] = \emptyset \) otherwise. For an integer \(n\geq 0\), we
denote \([n]=[1,n]=\{1,2,\ldots,n\}\).

Let \(v=(v_1,\ldots,v_n)\) be a sequence of integers.
An \emph{inversion} of \( v \) is a pair \( (v_i,v_j) \)
such that \( i<j \) and \( v_i>v_j \). We denote by \( \inv(v) \)
the number of inversions of \( v \).
We say that
\( v_i \) is a \emph{right-to-left-minimum} (or \emph{RL-minimum}) of
\( v \) if \( v_i<v_j \) for all \( j\in [i+1,n] \).  Similarly,
\( v_i \) is a \emph{right-to-left-maximum} (or \emph{RL-maximum}) of
\( v \) if \( v_i>v_j \) for all \( j\in [i+1,n] \). We denote by
\( \RLmin(v) \) (resp.~\( \RLmax(v) \)) the number of
RL-minima (resp.~RL-maxima) of \( v \).

We will sometimes identify the sequence \(v=(v_1,\ldots,v_n)\) as the
word \(v=v_1\cdots v_n\). Note that, by definition, \( v_n \) is always
an RL-minimum and also an RL-maximum.

\subsection{Permutations and assembl\'{e}es} 

For an integer \(n\geq 1\), denote the set of permutations of \([n]\)
by \(S_n\). We will consider a permutation \(\sigma\in S_n\) as a
bijection \( \sigma:[n]\to [n] \) and also as a word
\(\sigma=\sigma_1\sigma_2\cdots\sigma_n\) so that
\(\sigma_i=\sigma(i)\) denotes the entry at position \(i\). More
generally, for an ordered set of labels \(X\), denote by \(S_X\) the
set of permutations of \(X\). Hence, \(S_n=S_{[n]}\), and if
\(|X|=n\), then \(S_n\) and \(S_X\) are in bijection.

\begin{defn}
  For positive integers \(n\geq r\), an \emph{assembl\'ee} of size
  \((n,r)\) is a collection \( \{\sigma^{(1)},\dots,\sigma^{(r)}\} \)
  of \( r \) permutations \( \sigma^{(i)}\in S_{B_i} \) such that
  \( \{B_1,\dots,B_r\} \) is a set partition of \( [n] \). Each
  permutation \( \sigma^{(i)} \) is called a \emph{block} of the
  assembl\'{e}e. Denote the set of assembl\'ees of size \((n,r)\) by
  \(\A(n,r)\).
\end{defn}

For example, \(\pi= \{ 3529, 647,81\} \in\A(9,3)\) is an assembl\'{e}e
with \( 3 \) blocks. Note that \(\A(n,1)\) is naturally identified
with \(S_n\). Using a counting argument, we have
\[
  |\A(n,r)| = \binom{n-1}{r-1} \frac{n!}{r!},
\]
which is known as the \emph{Lah number}. 

It is convenient to impose a canonical ordering on the blocks of an assembl\'ee, so that it can be
considered as a permutation together with a set of markers delineating
the blocks.

\begin{defn}\label{def:f}
  For \(\pi\in\A(n,r)\), the \emph{head} of a block \( B \) in
  \( \pi \) is its first element, and the set of heads of $\pi$ is denoted $\heads(\pi)$. Define the \emph{canonical order}
  of \(\pi\) to be the arrangement of blocks such that their heads are
  in increasing order. Define the \emph{canonical permutation} \(\sigma=f(\pi)\) to be the
  permutation \(\sigma\in S_n\) obtained by forgetting the
  delineations of the blocks in \(\pi\) when it is in canonical order. 
\end{defn}

When writing an assembl\'ee in canonical order, we use brackets to
delineate the blocks. By definition, \(\pi\) can be recovered uniquely
from \(f(\pi)\) and \(\heads(\pi)\). For example, the canonical order
of the assembl\'ee \(\pi= \{ 3529, 647,81\} \) is
\([3\, 5\, 2\, 9]\, [6\, 4\, 7]\, [8\, 1]\), and we have
\(\heads(\pi)=\{3, 6, 8\}\) and
$f(\pi)=3\,5\,2\,9\,6\,4\,7\,8\,1\in S_9$.

\subsection{Alternative tableaux} 

For a word \(w\in\{\dd,\ee\}^n\), the \emph{diagram} \( \Gamma_w \) of
\( w \) is obtained as follows. Draw a lattice path, called the
\emph{southeast border}, by reading \(w\) from
left to right and drawing a step south for a \(\dd\) and a step west
for a \(\ee\). From the same starting point, draw a lattice path,
called the \emph{northwest border}, consisting of \( a \) steps
east and \( b \) steps south, where \( a \) and \( b \) are the
numbers of \( 0 \)'s and \( 2 \)'s in \( w \), respectively. Then tile
the region between the two paths with non-overlapping squares. Note that the
number of squares in the \(i\)th row (from top to bottom) of
\( \Gamma_w \) is equal to the number of \(\ee\)'s in the subword of
\( w \) following the \(i\)th \(\dd\).

\begin{defn}\label{def:AT}
  An \emph{alternative tableau} of \emph{shape}
  \( w\in\{\dd,\ee\}^n \) is a filling of the squares in \(\Gamma_w\)
  with \emph{up-arrows} and \emph{left-arrows} such that there is no
  arrow pointing to any other. We say an up-arrow (resp.~a left-arrow)
  \emph{points to} a square if the square lies south (resp.~east) of
  the arrow and in the same column (resp.~row); see \Cref{fig:AT}. The
  \emph{size} of this alternative tableau is defined to be \( n \). We
  denote by \( \AT(n) \) the set of alternative tableaux of size
  \( n \).
\end{defn}

\begin{figure}
  \centering
  \begin{tikzpicture}[scale=1.5]
  \cell{3}{4} \cell{2}{4} \cell{1}{4} \cell{0}{4} \cell{3}{3} \cell{2}{3} \cell{1}{3} \cell{0}{3} \cell{3}{2} \cell{2}{2} \cell{1}{2} \cell{0}{2} \cell{2}{1} \cell{1}{1} \cell{0}{1} \cell{1}{0} \cell{0}{0}
 \upArr{1}{1} \leftArr{2}{3} \upArr{3}{4} \leftArr{0}{1} \leftArr{0}{0}
 \draw (4,5) -- (6,5); \draw (0,0) -- (0,-1);
\end{tikzpicture} \qquad \qquad \qquad \qquad 
\begin{tikzpicture}[scale=1.5]
  \cell{3}{4} \cell{2}{4} \cell{1}{4} \cell{0}{4} \cell{3}{3} \cell{2}{3} \cell{1}{3} \cell{0}{3} \cell{3}{2} \cell{2}{2} \cell{1}{2} \cell{0}{2} \cell{2}{1} \cell{1}{1} \cell{0}{1} \cell{1}{0} \cell{0}{0}
  \cell{0}{5} \cell{1}{5} \cell{2}{5} \cell{3}{5} \cell{4}{5} \cell{5}{5}
 \upArr{1}{1} \leftArr{2}{3} \upArr{3}{4} \leftArr{0}{1} \leftArr{0}{0}
 \draw (4,5) -- (6,5); \draw (0,0) -- (0,-1);
 \upArr{0}{5} \upArr{2}{5} \upArr{4}{5} \upArr{5}{5}
\end{tikzpicture}
\caption{An alternative tableau \( T\in \AT(12) \) of shape \( w=002220202002 \) on the left and the
  corresponding extended alternative tableau on the right.}
  \label{fig:AT}
\end{figure}
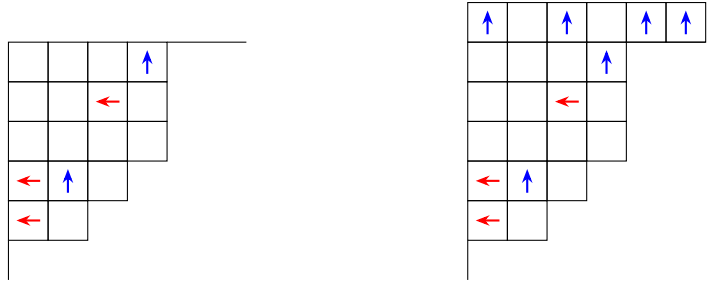

Let \( T\in \AT(n) \). A \emph{free cell} of \( T \) is a cell without
an arrow such that there is no arrow pointing to this cell. A
\emph{free row} is a row that does not contain any left-arrows. A
\emph{free column} is a column that does not contain any up-arrows.

It will be convenient for us to work with \emph{extended alternative tableaux}, which are alternative tableaux \(T\)
such that the shape of \( T \) begins with a 2 and every column of
\(T\) contains an up-arrow. We denote by \( \AT^+(n) \) the set of
extended alternative tableaux in \( \AT(n) \).

There is a simple bijection between \( \AT(n) \) and \( \AT^+(n+1) \).
For \(T\in\AT(n)\), the corresponding extended alternative tableau is
constructed as follows. First, add a row to the top of \( T \).
For every column without an up-arrow in \(T\), place an up-arrow in
its topmost cell. See \Cref{fig:AT} for an example. 

\begin{remark}
  Extended alternative tableaux can be identified with
  \emph{permutation tableaux} \cite{Steingrimsson2007}. Given an extended alternative tableau,
  if we replace every up-arrow and every free cell by a \( 1 \) and
  every left-arrow and every non-free cell by a \( 0 \), we obtain a
  permutation tableau.
\end{remark}

There are several known bijections between \( \AT^+(n+1) \) (or
\( \AT(n) \)) and \( S_{n+1} \). We review a zigzag map and an
insertion map, which will be useful for studying RAT later.

From now on, we label the southeast border edges of \(T\in \AT^+(n+1)\) with
\( 1,2,\dots,n+1 \). The northwest border edges are also labeled to match
the labels of the corresponding rows or columns.

\begin{defn}[Zigzag map] \label{def:zigzag AT} Let
  \(T\in \AT^+(n+1)\). A \emph{zigzag path} on \(T\) is a path that
  travels southeast along the rows and columns of \(T\), and makes a
  turn at every cell containing an arrow. Define \( \Phi_Z^{\AT}(T) \) to be
  the permutation \( \pi\in S_{n+1} \) such that for each
  \(i\in [n+1]\), the zigzag path from \(i\) on the northwest border
  reaches \( \pi(i) \) on the southeast border.
\end{defn}

For example, if \( T \) is the alternative tableau in
\Cref{fig:AT-zigzag}, then, in two-line notation, we have
\begin{equation}\label{eq:5}
  \Phi_Z^{\AT}(T) =
  \begin{pmatrix}
1 & 2 & 3 & 4 & 5 & 6 & 7 & 8 & 9 & 10\\
8 & 4 & 3 & 1 & 9 & 7 & 6 & 5 & 2 & 10\\
\end{pmatrix}.
\end{equation}

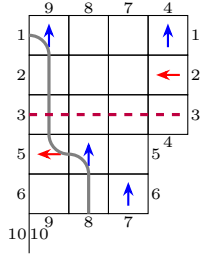
\begin{figure}
  \centering
  \begin{tikzpicture}[scale=1.5]
    \cell{3}{4} \cell{2}{4} \cell{1}{4} \cell{0}{4} \cell{3}{3} \cell{2}{3} \cell{1}{3} \cell{0}{3} \cell{3}{2} \cell{2}{2} \cell{1}{2} \cell{0}{2} \cell{2}{1} \cell{1}{1} \cell{0}{1} \cell{2}{0} \cell{1}{0} \cell{0}{0} \node at (0.500000000000000, -0.200000000000000) {\tiny 9};
\node at (1.50000000000000, -0.200000000000000) {\tiny 8};
\node at (2.50000000000000, -0.200000000000000) {\tiny 7};
\node at (3.20000000000000, 0.500000000000000) {\tiny 6};
\node at (3.20000000000000, 1.50000000000000) {\tiny 5};
\node at (3.50000000000000, 1.80000000000000) {\tiny 4};
\node at (4.20000000000000, 2.50000000000000) {\tiny 3};
\node at (4.20000000000000, 3.50000000000000) {\tiny 2};
\node at (4.20000000000000, 4.50000000000000) {\tiny 1};
 \upArr{0}{4} \upArr{1}{1} \upArr{3}{4} \leftArr{0}{1} \upArr{2}{0} \leftArr{3}{3}
 \draw (0,0) --(0,-1);
\node at (.25,-.5) {\tiny 10};
\node at (-.3,-.5) {\tiny 10};
\node at (.5,5.2) {\tiny 9};
\node at (1.5,5.2) {\tiny 8};
\node at (2.5,5.2) {\tiny 7};
\node at (3.5,5.2) {\tiny 4};
\node at (-.2,4.5) {\tiny 1};
\node at (-.2,3.5) {\tiny 2};
\node at (-.2,2.5) {\tiny 3};
\node at (-.2,1.5) {\tiny 5};
\node at (-.2,.5) {\tiny 6};
\draw[very thick,gray] (0,4.5) to[out=0, in=90] (.5,4) to (.5,2) to[out=270,in=180] (1,1.5) to[out=0, in=90] (1.5,1) to (1.5,0);
  \draw[very thick,purple,dashed] (0,2.5) to (4,2.5);
\end{tikzpicture}
\caption{The zigzag path from \( 1 \) ends at \( 8 \), and the zigzag
  path from \( 3 \) ends at \( 3 \).}
  \label{fig:AT-zigzag}
\end{figure}

\begin{defn}[Insertion map] \label{def:insertion AT} Let
  \(T\in \AT^+(n+1)\). Suppose \( c_1 > \cdots > c_m \) are the labels
  of the columns in \( T \). Then \( \Phi^{\AT}_I(T)=\pi \) is defined as
  follows.
  \begin{enumerate}
  \item Set \( \pi \) to be the word of the indices of the free rows
    of \( T \) in increasing order.
  \item For \(i=1,\ldots,m\), we insert integers to \( \pi \) as
    follows. Let \( j \) be the label of the row containing the
    up-arrow in column \( c_i \). Let \(r_1<\cdots<r_k\) be the labels
    of the rows containing a left-arrow in column \( c_i \). Insert
    \(r_1,\ldots, r_k,c_i\) to the left of \(j\) in \( \pi \).
  \end{enumerate}
\end{defn}

For example, if \( T \) is the alternative tableau in
\Cref{fig:AT-zigzag}, then
\begin{equation}\label{eq:6}
  \Phi^{\AT}_I(T) = 8 \,\, 5 \,\, 9 \,\, 2 \,\, 4 \,\, 1 \,\, 3 \,\, 7 \,\, 6 \,\, 10.
\end{equation}
Corteel and Nadeau showed the following theorem.

\begin{thm} \cite[Section~3]{Corteel2009}
  \label{thm:CN} 
  The insertion map \( \Phi^{\AT}_I:\AT^+(n+1)\to S_{n+1} \) is a
  bijection. Moreover, if \( T = \Phi^{\AT}_I(\pi) \), then for each
  \( i\in[n+1] \), the integer \( i \) is the label of a free row of \( T \) if and only
  if \( i \) is an RL-minimum of \( \pi \).
\end{thm}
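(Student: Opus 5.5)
We prove both claims together by constructing the inverse map and tracking free rows. The plan is to show that \(\Phi^{\AT}_I\) is a composition of invertible local steps, checking after each step that an invariant about RL-minima holds.

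\textbf{Setting up the induction.} Process the columns \(c_1>\cdots>c_m\) in the order the definition uses. After step \(i\), the word \(\pi\) contains exactly the following labels:
\begin{itemize}
\item all row labels and the column labels \(c_1,\ldots,c_i\), which together are all labels \(\ge c_i\);
\item the rows \(<c_i\) that are already "used" by columns \(c_1,\dots,c_i\).
\end{itemize}
The row labels below \(c_i\) are automatically present, since rows have larger labels than the columns to their left. I would make this bookkeeping precise via the staircase shape.

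\textbf{Invariant.} After step \(i\), the RL-minima of the current word are exactly the rows that contain no arrow in columns \(c_1,\dots,c_i\). These are the rows still "open" to the left in the sense of the alternative-tableau condition. Initially the word is the increasing free-row word, and every entry is an RL-minimum.

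When column \(c_i\) is processed, its up-arrow lies in some row \(j\). That row must be open: no left-arrow of row \(j\) can sit in an earlier (more eastern) column, since it would point at the up-arrow. Symmetrically, the rows \(r_1<\dots<r_k\) holding left-arrows in column \(c_i\) lie below \(j\), and no earlier arrow points into them. Inserting \(r_1\cdots r_k c_i\) just before \(j\) does the following:
\begin{itemize}
\item Each \(r_t\) stays an RL-minimum among the new letters, because \(r_1<\cdots<r_k\) and \(c_i\) is smaller than every row label below it.
\item The letters \(r_t\) and \(c_i\) destroy no RL-minimum to their left in the relevant sense.
\end{itemize}
This needs care, and I would verify it from the relative order of labels given by the southeast border. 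At the end, the RL-minima are exactly the rows with no left-arrow anywhere, which are the free rows. Running the same invariant backwards yields the "if and only if" statement.

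\textbf{Inverse map.} Given \(\pi\in S_{n+1}\), read off the shape first: a label \(i\) is a column exactly when it is not an RL-minimum at the stage when it is removed. Then undo the steps for \(i=m,\ldots,1\), where \(c_m\) is the smallest column label. At each stage take the smallest column letter \(c\) that has not yet been removed. The letters immediately before \(c\) that are row labels smaller than it and increasing form the block \(r_1\cdots r_k\), and the letter after \(c\) is \(j\).

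Identifying which labels are rows and which are columns from \(\pi\) alone is the main obstacle, because the shape is not given. I would determine it inductively: every label not currently an RL-minimum is inserted material, and the minimal inserted label must be a column. Unique decodability then follows from the fact that \(k\) is determined by maximal runs. The result is that \(\Phi^{\AT}_I\) and this decoding are mutually inverse, which gives the bijection. A counting check is also available: \(|\AT^+(n+1)|=(n+1)!\) is known, so injectivity alone would suffice.
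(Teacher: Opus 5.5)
You have the standard plan: track RL-minima through the insertion steps, undo the steps from the smallest column upward, and use $|\AT^+(n+1)|=(n+1)!$ to reduce to injectivity. However, the claims that carry the argument are false or deferred. The inequality that makes everything work never appears: when column $c_i$ is processed with its up-arrow in row $j$, \emph{every inserted letter exceeds $j$}. Indeed $c_i>j$ because $(j,c_i)$ is a cell, and $r_t>j$ because the up-arrow points north, so the left-arrows of column $c_i$ lie south of it.

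Instead you assert that each $r_t$ ``stays an RL-minimum''. That is false, since $j<r_t$ stays to its right. It also contradicts your conclusion, because $r_t$ is not a free row.

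Your invariant is false as stated. For the tableau of \Cref{fig:AT-zigzag}, after column $9$ the word is $5\,9\,1\,3\,6\,10$, and $1$ is an RL-minimum although row $1$ holds the up-arrow of column $9$.

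Nor must row $j$ be ``open''. Column $8$ has its up-arrow in row $5$, which carries a left-arrow in the earlier column $9$. Earlier columns are \emph{western}, and a left-arrow west of the up-arrow is allowed. What you need is only that $j$ is already in the word. This holds because any left-arrow of row $j$ lies west of $(j,c_i)$, i.e.\ in an earlier column.

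With ``inserted letters $>j$'' the RL-minimum statement is immediate. Inserted letters never become RL-minima. Any RL-minimum to the left of $j$ is smaller than $j$, so it is smaller than every inserted letter and survives.

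The inverse, as you describe it, does not work.
\begin{enumerate}
\item ``The minimal inserted label must be a column'' is false. In $8\,5\,9\,2\,4\,1\,3\,7\,6\,10$ the non-RL-minima are $8,5,9,2,4,7$, and the smallest, $2$, is a row. More generally, ``not an RL-minimum'' cannot separate columns from rows carrying left-arrows.
\item The right criterion is that the columns are exactly the descents of $\pi$ (letters followed by a smaller letter) and the rows are the ascents. Every insertion preserves this, because a block inserted before $y$ consists of letters larger than $y$ and begins with a letter smaller than every previously inserted column.
\item Your block rule lacks the lower bound $r_1>j$. Take the tableau of shape $20220$ with up-arrows in cells $(1,2)$ and $(3,5)$ and a left-arrow in $(4,5)$; it gives $\pi=2\,1\,4\,5\,3$. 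After undoing column $2$ the word is $1\,4\,5\,3$. There the maximal increasing run of row labels before $5$ is $1\,4$, while the true block is $4$.
\item The correct rule, as in \Cref{alg:ins_reverse}, takes the maximal run immediately before $c$ with $j<r_1<\cdots<r_k<c$, where $j$ is the letter after $c$. Showing that this recovers exactly the inserted block again uses that rows are ascents and that previously inserted columns exceed $c$.
\end{enumerate}
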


The following map, known as the \emph{Foata map}, will be useful in what follows.

\begin{defn}\label{def:phi}
  For \( \pi\in S_n \), we define \( \phi(\pi) \) as follows. Let
  \( a_1 < \cdots < a_t=n \) be the indices such that
  \(\pi_{a_1},\ldots,\pi_{a_t}\) are the RL-minima of \(\pi\). Then,
  \( \phi(\pi) \) is the permutation whose cycle decomposition is given by
\[
  \phi(\pi) = (\pi_1,\dots,\pi_{a_1}) (\pi_{a_1+1},\dots,\pi_{a_2})
  \cdots (\pi_{a_{t-1}+1},\dots,\pi_{a_t}).
\]
\end{defn}

Corteel and Kim showed that the insertion map \( \Phi^{\AT}_I \) and the zigzag map \( \Phi^{\AT}_Z \) are equivalent up to the Foata transformation. 

\begin{thm}{{\cite[Section~5]{CorteelKim}}}\label{eq:CK}
For an alternative tableau $T$,
\[
  \Phi_Z^{\AT}(T) = \phi(\Phi^{\AT}_I(T))\,.
  \]
\end{thm}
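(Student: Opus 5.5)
Our plan is to prove that \( \Phi_Z^{\AT}(T) = \phi(\Phi^{\AT}_I(T)) \) for \(T\in\AT^+(n+1)\) by induction on the number of columns of \(T\), following the column-by-column construction in \Cref{def:insertion AT}. Remove the leftmost column of \(T\), which has the largest label \(c_1\). Call the resulting extended tableau \(T'\), with the edge \(c_1\) deleted from its border. The insertion procedure treats the columns in decreasing order of label, so it inserts column \(c_1\) first. To make the induction run cleanly, we will instead remove the column with the \emph{smallest} label \(c_m\), the rightmost column, because it is processed last. Then \(\Phi_I^{\AT}(T)\) is obtained from \(\pi'=\Phi_I^{\AT}(T')\) by inserting the word \(r_1\cdots r_k c_m\) immediately to the left of \(j\). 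Here \(j\) is the row of the up-arrow in column \(c_m\), and \(r_1<\dots<r_k\) are the rows containing left-arrows in that column. Since the rows \(r_1,\dots,r_k\) lose their left-arrow when the column is removed, they become free rows of \(T'\). So we in fact remove them from \(\pi'\) too, and \(\pi'\) contains them as RL-minima by \Cref{thm:CN}.

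The key steps are as follows.

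\emph{Step 1: the change in the zigzag permutation.} In \(T\), the zigzag path entering row \(j\) from the northwest turns up at the up-arrow in column \(c_m\) and exits at \(c_m\). A path entering column \(c_m\) from the top goes down to the first left-arrow, at row \(r_1\), turns left along that row, and exits where row \(r_1\) would have led in \(T'\). The path coming from row \(r_i\) enters the column, travels down to \(r_{i+1}\) and turns; the path from \(r_k\) travels down to row \(j\)'s endpoint. Since column \(c_m\) is rightmost, every path in row \(r\) meets this column first. Hence \(\Phi_Z^{\AT}(T)\) is obtained from \(\sigma'=\Phi_Z^{\AT}(T')\) by a local modification: the image of \(j\) becomes \(c_m\), the image of \(c_m\) becomes \(r_1\), the image of \(r_i\) becomes \(r_{i+1}\), and the image of \(r_k\) becomes \(\sigma'(j)\). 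On the cycle structure, this inserts \(c_m, r_1,\dots,r_k\) into the cycle of \(j\) immediately after \(j\). This needs care about row \(r_i\) in \(T'\), where these rows are free and their paths exit as fixed points or as they would in \(T'\); we expect this to work out because in \(T'\) the rows \(r_i\), all lying above row \(j\), are fixed points.

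\emph{Step 2: the effect of the insertion on \(\phi\).} Inserting \(r_1\cdots r_kc_m\) immediately before \(j\) in \(\pi'\) corresponds, under the Foata map of \Cref{def:phi}, to splicing the elements into the cycle containing \(j\). For this we must check the RL-minima bookkeeping. The inserted letters are larger than \(j\)? Not necessarily: \(r_i<j\) is possible. So we must verify that the new letters are not RL-minima, or, if they are, that the cycle factorization places them consistently with Step 1. Matching the cyclic orders of Steps 1 and 2, up to the orientation convention, then completes the induction. The base case is a single row with no columns, where both maps give the identity.

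The main obstacle is the RL-minimum bookkeeping in Step 2 and the exact direction and order of the spliced cycle, since the Foata cycles read \(\pi_{a_{i-1}+1}\cdots\pi_{a_i}\) end at RL-minima. We would first try the alternative induction that removes the leftmost column (largest label). There the insertion happens first, and this is better matched by reading the insertion algorithm in reverse: we would prove a statement about the partially built word directly. If the bookkeeping remains messy, we would instead describe both maps by a direct characterization of \(\pi(i)\) in terms of the arrow positions and compare.
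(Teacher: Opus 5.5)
Your overall plan can work: induct by deleting the rightmost column \(c_m\), note that \(r_1,\dots,r_k\) become free rows of \(T'\), and compare how both sides change. (The paper gives no argument here and simply cites Corteel--Kim.) As written, however, Step~1 is wrong and Step~2 rests on a false worry. The two changes you compute therefore do not match, and the induction does not close.

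The first error is geometric. An up-arrow points to the cells \emph{north} of it in its column, so no left-arrow of column \(c_m\) can lie above the up-arrow in row \(j\). Hence \(j<r_1<\cdots<r_k<c_m\), and the \(r_i\) lie \emph{below} row \(j\), contrary to what you assert in both steps.

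The second error concerns the paths, which only move east and south.
\begin{enumerate}
\item The path entering at the top of column \(c_m\) passes through empty cells down to \((j,c_m)\). It turns east there and exits at once, since \(c_m\) is rightmost, so \(\sigma(c_m)=j\).
\item The path reaching \((j,c_m)\) eastward along row \(j\) is the one that exited at \(j\) in \(T'\), i.e.\ the path starting at \(\sigma'^{-1}(j)\). It is not necessarily the path starting in row \(j\): in the paper's example, the path from row \(1\) already turns in column \(9\). It turns south at \((j,c_m)\) and is deflected east at \((r_1,c_m)\), so it now exits at \(r_1\).
\item The path from row \(r_i\) meets no other arrows, because the left-arrow in that row points west across it. It now exits at \(r_{i+1}\), and the path from \(r_k\) exits at the bottom edge \(c_m\).
\end{enumerate}
So the correct local change is
\[
\sigma'^{-1}(j)\mapsto r_1\mapsto r_2\mapsto\cdots\mapsto r_k\mapsto c_m\mapsto j .
\]
That is, the fixed points \(r_1,\dots,r_k\) of \(\sigma'\), together with \(c_m\), are spliced into the cycle of \(j\) immediately \emph{before} \(j\).

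Your version \(j\mapsto c_m\mapsto r_1\mapsto\cdots\mapsto r_k\mapsto\sigma'(j)\) is contradicted by the paper's own example. There \(c_m=4\), \(j=1\), \(r_1=2\), and \(\Phi^{\AT}_Z(T)\) sends \(1\mapsto 8\) and \(4\mapsto 1\). Your version is not the reverse of the correct splice either, and an equality of permutations leaves no orientation freedom.

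With these two facts, Step~2 is immediate, and this is the argument you are missing.
\begin{enumerate}
\item By the induction hypothesis the \(r_i\) are fixed points of \(\phi(\pi')\). So each forms a one-letter segment of \(\pi'\), and deleting them leaves every other Foata cycle unchanged.
\item The inserted block \(r_1\cdots r_k c_m\) is increasing, and every letter in it exceeds \(j\), which sits immediately to its right. So none of these letters is an RL-minimum.
\item Any RL-minimum to the left of the insertion point is smaller than \(j\), so it remains an RL-minimum.
\end{enumerate}
Hence the block joins the segment of \(j\) directly before \(j\). The cyclic predecessor of \(j\) now maps to \(r_1\); this predecessor is the preceding letter, or the closing RL-minimum if \(j\) starts its segment. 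This is exactly the splice displayed above, and it also covers \(k=0\). Matching it with the corrected Step~1 completes the induction, and the base case of a tableau with no columns gives the identity on both sides.
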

For example, applying \( \phi \) to \eqref{eq:6} yields
\[
  \phi(\Phi^{\AT}_I(T)) = (8,5,9,2,4,1) (3) (7,6) (10)\,,
\]
which agrees with the permutation in \eqref{eq:5}.

\subsection{Rhombic alternative tableaux}

In this subsection, we define rhombic alternative tableaux, which were introduced in \cite{MV18} to
generalize alternative tableaux using the following three kinds of
tiles:
\[ \text{\emph{squares} }
\begin{tikzpicture}[scale=0.75]
  \cell{1}{1}
\end{tikzpicture}\,,\qquad\text{\emph{tall rhombi }}
\begin{tikzpicture}[scale=0.75]
  \tall{1}{1}
\end{tikzpicture}\,,\qquad\text{and \emph{short rhombi} }
\begin{tikzpicture}[scale=0.75]
  \short{1}{1}
\end{tikzpicture}\,.
\]
The \emph{north, south, east}, and \emph{west} sides (or simply
\emph{N-side, S-side, E-side}, and \emph{W-side}), denoted
\( N(\tau) \), \( S(\tau) \), \( E(\tau) \), and \( W(\tau) \),
respectively, of a tile \( \tau \) are defined as in \Cref{fig:NESW}. A tile is
also called a \emph{cell}.

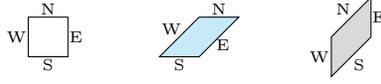
\begin{figure}
  \centering
\begin{tikzpicture}[scale=1.5]
  \cell{0}{0}
  \node at (0.5, -0.2) {\tiny S};
  \node at (0.5, 1.2) {\tiny N};
  \node at (1.2, 0.5) {\tiny E};
  \node at (-.3, 0.5) {\tiny W};
\end{tikzpicture}
\qquad 
\begin{tikzpicture}[scale=1.5]
  \short{0}{0}
  \node at (1.5, 1.2) {\tiny N};
  \node at (0.5, -0.2) {\tiny S};
  \node at (1.6, 0.3) {\tiny E};
  \node at (0.3, 0.7) {\tiny W};
\end{tikzpicture}
\qquad 
\begin{tikzpicture}[scale=1.5]
  \tall{0}{0}
  \node at (.3, 1.7) {\tiny N};
  \node at (0.7, 0.3) {\tiny S};
  \node at (1.2, 1.5) {\tiny E};
  \node at (-.3, 0.5) {\tiny W};
\end{tikzpicture}
\caption{The N-side \( N(\tau) \), S-side \( S(\tau) \), E-side \( E(\tau) \), and W-side \( W(\tau) \) of a tile \( \tau \) are
  indicated with letters N, S, E, and W, respectively.}
  \label{fig:NESW}
\end{figure}

\begin{defn}
  Let \(n\) be a nonnegative integer, and let
  \( w=w_1 \cdots w_n\in \{0,1,2\}^n \). The \emph{rhombic diagram} \( \Gamma_w \) of
  \( w \) is the diagram obtained as follows; see \Cref{fig:rhombic_diag} for an example.
  \begin{enumerate}
  \item Draw a lattice path, called the \emph{southeast border}, by reading \(w\) from left to right and drawing
    a step south for a \(2\), a step southwest for a \(1\), and a step
    west for a \(0\). From the same starting point, draw a lattice
    path, called the \emph{northwest border}, consisting of \( a \)
    steps west, \( b \) steps southwest, and \( c \) steps south to
    obtain a closed region, where \( a \), \( b \), and \( c \) are
    the number of \( 2 \)'s, \( 1 \)'s, and \( 0 \)'s in \( w \),
    respectively.
  \item We tile the closed region recursively with respect to
    \( \inv(w) \). If \( \inv(w) = 0 \), then the region has no
    interior and there is nothing to tile. Otherwise, find the
    smallest integer \( i \) such that \( w_i>w_{i+1} \). Let \( w' \)
    be the word obtained from \( w \) by exchanging \( w_i \) and
    \( w_{i+1} \). Then we first tile the region for \( w' \) using
    the recursive step, and the remaining region for \( w \) is tiled
    with a tall tile, a square, or a short tile if
    \( (w_i,w_{i+1}) = (2,1) \), \( (w_i,w_{i+1}) = (2,0) \), or
    \( (w_i,w_{i+1}) = (1,0) \), respectively.
  \end{enumerate}
\end{defn}

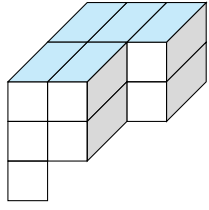
\begin{figure}
  \centering
  \begin{tikzpicture}[scale=1.5]
  \tall{4}{3} \cell{3}{3} \tall{2}{2} \cell{1}{2} \cell{0}{2} \tall{4}{2} \cell{3}{2} \tall{2}{1} \cell{1}{1} \cell{0}{1} \short{3}{4} \short{2}{4} \short{1}{4} \short{1}{3} \short{0}{3} \cell{0}{0} 
\end{tikzpicture}
\caption{The rhombic diagram \( \Gamma_w \) of the word \( w=2\,2\,1\,0\,1\,0\,2\,0 \).}
\label{fig:rhombic_diag}
\end{figure}

We also call south, west, and southwest edges vertical, horizontal, and diagonal, respectively.

\begin{defn}
  A \emph{strip} in a rhombic diagram is a maximal sequence of connected 
  tiles such that all the edges of adjacency are parallel. A
  \emph{horizontal strip} has vertical edges of adjacency (and thus
  contains no short tiles), a \emph{vertical strip} has horizontal
  edges of adjacency (and thus contains no tall tiles), and a
  \emph{diagonal strip} has diagonal edges of adjacency (and thus
  contains no squares). 
  
  Let \(\mathcal{L}=(\ell_1,\ldots,\ell_k)\) be a sequence of real numbers.
  We say that the southeast border of a RAT \(R\) is \emph{labeled by
\(\mathcal{L}\)} if its southeast border edges are labeled by
\(\ell_1,\ldots,\ell_k\) in this order from northeast to southwest. In a RAT with a labeled southeast border, we refer to \emph{the strip with label
    \(i\)} as the strip originating at the edge with label
  \(i\in \mathcal{L}\). Every tile belongs to exactly two strips: for
  \(i,j\in\mathcal{L}\), the cell \((i,j)\) refers to the unique cell at
  the intersection of the strips labeled \(i\) and \(j\), when such a cell
  exists. In analogy with classical diagrams, we refer to horizontal
  strips as \emph{rows} and vertical strips as \emph{columns}.
\end{defn}

If the southeast border and the northwest border have a common
diagonal (resp.~vertical and horizontal) segment \( s \), then we also
consider \( \{ s \} \) as a diagonal (resp.~vertical and horizontal)
strip containing no cells.

\begin{remark}
The recursive construction of the tiling may be equivalently described in terms of strips. Each vertical strip has all its square tiles strictly south of its short tiles; equivalently, each diagonal strip has all of its tall tiles strictly east of its short tiles. We call such a tiling the \emph{maximal tiling}.  
\end{remark}

\begin{defn}\label{RAT_def}
  A \emph{rhombic alternative tableau} (or \emph{RAT} for short) of
  shape \( w \) is the rhombic diagram \( \Gamma_w \) with a filling
  of its tiles with up-arrows and left-arrows satisfying the following
  conditions.
  \begin{itemize}
  \item Every up-arrow (resp.~left-arrow) must lie in a column
    (resp.~row) of the diagram.
  \item There is no arrow pointing to any other. Here, a left-arrow
    (resp.~an up-arrow) points to all cells in the same row
    (resp.~column) that are weakly southwest (resp.~northeast) of it.
  \end{itemize}
  We denote by \(\RAT(n,r)\) the set of RAT whose southeast border has a total of
  \( n \) steps with exactly \( r \) of them being diagonal steps.
\end{defn}

We note that the original definition of RAT in \cite{MV18} uses
\( \alpha \)'s and \( \beta \)'s in place of up-arrows and
left-arrows, respectively.

\begin{remark}\label{rem:1}
  Although we only consider maximal tilings in this paper, it is possible
  to define RAT using arbitrary tilings. It is shown in
  \cite[Proposition~2.8]{MV18} that for any fixed border, there is a
  bijection between the set of RAT with the maximal tiling and the set
  of RAT with any other tiling.
\end{remark}

We extend the definitions of free cells, free rows, and free columns
to RAT: a \emph{free cell} of \( R \) is a cell without an arrow such
that there is no arrow pointing to this cell, and a \emph{free row}
(resp.~\emph{free column}) is a row (resp.~column) that does not
contain any left-arrows (resp.~up-arrows). We denote by \(\fcell(R)\),
\(\frow(R)\), and \(\fcol(R)\) the numbers of free cells, free rows,
and free columns in a RAT \( R \), respectively. We also define
\( \row(R) \) to be the number of rows in \( R \).

As with alternative tableaux, it will be convenient for us to introduce extended RAT.

\begin{defn}\label{def:12}
  An \emph{extended RAT} is a RAT \(R\) satisfying the following conditions:
  \begin{itemize}
  \item the shape of \( R \) begins with a 1, that is, the southwest border begins with a diagonal step;
  \item every column of \(R\) contains an up-arrow.
  \end{itemize}
  We denote by \( \RAT^+(n+1,r+1) \) the set of extended RAT in \( \RAT(n+1,r+1) \).
\end{defn}

As with extended alternative tableaux, there is a bijection between
\( \RAT(n,r) \) and \( \RAT^+(n+1,r+1) \). Let \(R\in\RAT(n,r)\), and
define \( R^+\in\RAT(n+1,r+1) \) to be the RAT obtained as follows.
First, add a diagonal strip consisting of \(\ell\) short rhombi at the
top of \(R\), where \(\ell\) is the number of columns in \(R\). Then,
place an up-arrow in the topmost cell of each free column. Conversely,
removing the topmost diagonal strip from $R^+\in\RAT^+(n+1,r+1)$
recovers $R\in\RAT(n,r)$. See \cref{fig:RAT+} for an example.

\begin{figure}
  \centering
\begin{tikzpicture}[scale=1.5]
  \tall{4}{3} \cell{3}{3} \tall{2}{2} \cell{1}{2} \cell{0}{2} \tall{4}{2} \cell{3}{2} \tall{2}{1} \cell{1}{1} \cell{0}{1} \short{3}{4} \short{2}{4} \short{1}{4} \short{1}{3} \short{0}{3} \cell{0}{0} \node at (0.500000000000000, -0.200000000000000) {\tiny 8};
\node at (1.20000000000000, 0.500000000000000) {\tiny 7};
\node at (1.50000000000000, 0.800000000000000) {\tiny 6};
\node at (2.60000000000000, 1.30000000000000) {\tiny 5};
\node at (3.50000000000000, 1.80000000000000) {\tiny 4};
\node at (4.60000000000000, 2.30000000000000) {\tiny 3};
\node at (5.20000000000000, 3.50000000000000) {\tiny 2};
\node at (5.20000000000000, 4.50000000000000) {\tiny 1};
 \slantUpArr{2}{4} \slantLeftArr{2}{2} \leftArr{0}{0}
\end{tikzpicture}
\qquad \qquad \qquad 
\begin{tikzpicture}[scale=1.5]
  \short{4}{5} \short{3}{5} \short{2}{5} \tall{4}{3} \cell{3}{3} \tall{2}{2} \cell{1}{2} \cell{0}{2} \tall{4}{2} \cell{3}{2} \tall{2}{1} \cell{1}{1} \cell{0}{1} \short{3}{4} \short{2}{4} \short{1}{4} \short{1}{3} \short{0}{3} \cell{0}{0} \node at (0.500000000000000, -0.200000000000000) {\tiny 9};
\node at (1.20000000000000, 0.500000000000000) {\tiny 8};
\node at (1.50000000000000, 0.800000000000000) {\tiny 7};
\node at (2.60000000000000, 1.30000000000000) {\tiny 6};
\node at (3.50000000000000, 1.80000000000000) {\tiny 5};
\node at (4.60000000000000, 2.30000000000000) {\tiny 4};
\node at (5.20000000000000, 3.50000000000000) {\tiny 3};
\node at (5.20000000000000, 4.50000000000000) {\tiny 2};
\node at (5.60000000000000, 5.30000000000000) {\tiny 1};
 \slantUpArr{4}{5} \slantUpArr{2}{4} \slantUpArr{2}{5} \slantLeftArr{2}{2} \leftArr{0}{0}
\end{tikzpicture}
\caption{The correspondence between the RAT \( T\in\RAT(n,r) \) and
  the extended RAT \( T^+\in \RAT^+(n+1,r+1) \). In the RAT on the
  left, there are 11 free cells, row 2 is free, and columns 4 and 8
  are free.}
  \label{fig:RAT+}
\end{figure}
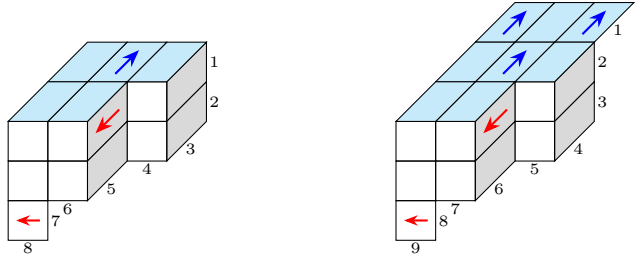

\section{The structure of RAT}
\label{sec:structure}

There are two natural ways of studying RAT through alternative
tableaux. The first way is to decompose a RAT into a labeled set of
special RAT, called packed RAT, which can then be identified with
alternative tableaux: this is covered in \cref{sec:split}. The second
is to use a \emph{flattening map} to embed a RAT inside a straight
shape and thus treat it as an alternative tableau with some additional
restrictions: this is done in \cref{sec:flatten}.

\subsection{Decomposition of a RAT}\label{sec:split}

In earlier work, Nadeau \cite{Nadeau} showed that every alternative tableau can be
decomposed into packed tableaux, which are alternative tableaux with
the maximum number of arrows. In this subsection, we generalize this
decomposition to RAT.

\begin{defn}
We say that a RAT of size \((n,r)\) is \emph{packed} if it has the
maximal number \(n-1\) of arrows: this occurs only for sizes \((n,0)\) and
\((n,1)\). At size \((n,0)\), either the RAT has no left-arrow on the
top row, in which case we say it has \emph{horizontal type}, or no up-arrow on
the leftmost column, in which case we say it has \emph{vertical type}. At size
\((n,1)\), we say that \(T\) has \emph{diagonal type}.
\end{defn}

Consider a RAT \(T\) with its southeast border labeled by \([n]\). We
define \(\simeq\) to be the smallest equivalence relation on these
labels such that \(i\simeq j\) whenever the intersection of the strips
labeled $i$ and $j$ contains an arrow. Given an equivalence class
\(L\subseteq [n]\), we define \(T[L]\) as the RAT labeled by \(L\)
obtained by restricting \(T\) to its edges labeled by \(L\). Observe
that $T[L]$ is packed. Let \( \spl(T) \) denote the collection of the
packed RAT \( T[L] \) for \( L\in [n]/\simeq \); see
\cref{fig:SplitRAT}.

\begin{figure}
\centering
\begin{tikzpicture}[scale=1.5]
    \cell{9}{8} \tall{8}{7} \cell{7}{7} \cell{6}{7} \cell{5}{7} \tall{4}{6} \tall{3}{5} \cell{2}{5} \cell{1}{5} \tall{0}{4} \short{7}{8} \short{6}{8} \short{5}{8} \short{4}{8} \short{3}{8} \cell{7}{6} \cell{6}{6} \cell{5}{6} \tall{4}{5} \tall{3}{4} \cell{2}{4} \cell{1}{4} \tall{0}{3} \cell{7}{5} \cell{6}{5} \cell{5}{5} \tall{4}{4} \tall{3}{3} \cell{2}{3} \cell{1}{3} \tall{0}{2} \cell{6}{4} \cell{5}{4} \tall{4}{3} \tall{3}{2} \cell{2}{2} \cell{1}{2} \tall{0}{1} \short{3}{7} \short{2}{7} \tall{3}{1} \cell{2}{1} \cell{1}{1} \tall{0}{0} \short{2}{6} \short{1}{6} \node at (0.600000000000000, 0.300000000000000) {\tiny 15};
\node at (1.50000000000000, 0.800000000000000) {\tiny 14};
\node at (2.50000000000000, 0.800000000000000) {\tiny 13};
\node at (3.60000000000000, 1.30000000000000) {\tiny 12};
\node at (4.20000000000000, 2.50000000000000) {\tiny 11};
\node at (4.60000000000000, 3.30000000000000) {\tiny 10};
\node at (5.50000000000000, 3.80000000000000) {\tiny 9};
\node at (6.50000000000000, 3.80000000000000) {\tiny 8};
\node at (7.20000000000000, 4.50000000000000) {\tiny 7};
\node at (7.50000000000000, 4.80000000000000) {\tiny 6};
\node at (8.20000000000000, 5.50000000000000) {\tiny 5};
\node at (8.20000000000000, 6.50000000000000) {\tiny 4};
\node at (8.60000000000000, 7.30000000000000) {\tiny 3};
\node at (9.50000000000000, 7.80000000000000) {\tiny 2};
\node at (10.2000000000000, 8.50000000000000) {\tiny 1};
 \upArr{9}{8} \slantUpArr{7}{8} \upArr{6}{4} \slantUpArr{3}{7} \slantUpArr{1}{6} \leftArr{5}{4} \slantLeftArr{3}{3} \leftArr{1}{1}
 \draw[->] (10.5,4) -- (12.5,4);
 \node at (11.5,4.5) {Split};
\end{tikzpicture} \quad 
\raisebox{5.8em}{
\( \left\{
  \begin{aligned}
&\begin{tikzpicture}[scale=1.5]
  \cell{0}{0} \node at (0.500000000000000, -0.200000000000000) {\tiny 2};
\node at (1.20000000000000, 0.500000000000000) {\tiny 1};
 \upArr{0}{0}
\end{tikzpicture}
  \quad
\raisebox{0.8em}{\begin{tikzpicture}[scale=1.5]
\draw (0,0) -- (0,1);
\node at (0.200000000000000, 0.500000000000000) {\tiny 4};
\end{tikzpicture}}\\
&\begin{tikzpicture}[scale=1.5]
 \cell{1}{0} \cell{0}{0} \node at (0.500000000000000, -0.200000000000000) {\tiny 9};
\node at (1.50000000000000, -0.200000000000000) {\tiny 8};
\node at (2.20000000000000, 0.500000000000000) {\tiny 7};
 \upArr{1}{0} \leftArr{0}{0}
\end{tikzpicture}\\
& \begin{tikzpicture}[scale=1.5]
\short{0}{0} \node at (0.500000000000000, -0.200000000000000) {\tiny 6};
\node at (1.60000000000000, 0.300000000000000) {\tiny 3};
 \slantUpArr{0}{0}
\end{tikzpicture}
\quad
\begin{tikzpicture}[scale=1.5]
\tall{1}{1} \cell{0}{1} \tall{1}{0} \cell{0}{0} \short{0}{2} \node at (0.500000000000000, -0.200000000000000) {\tiny 14};
\node at (1.60000000000000, 0.300000000000000) {\tiny 12};
\node at (2.20000000000000, 1.50000000000000) {\tiny 11};
\node at (2.20000000000000, 2.50000000000000) {\tiny 5};
 \slantUpArr{0}{2} \slantLeftArr{1}{1} \leftArr{0}{0}
\end{tikzpicture} \quad 
\begin{tikzpicture}[scale=1.5]
\short{0}{0} \node at (0.500000000000000, -0.200000000000000) {\tiny 13};
\node at (1.60000000000000, 0.300000000000000) {\tiny 10};
 \slantUpArr{0}{0}
\end{tikzpicture}
\quad
\raisebox{0.8em}{\begin{tikzpicture}[scale=1.5]
\draw (0,0) -- (1,1);
\node at (0.600000000000000, 0.300000000000000) {\tiny 15};
\end{tikzpicture}
}
\end{aligned}
\right\} \)
}
\caption{Splitting a RAT of type \((15,4)\). The packed RAT on the right are of horizontal, vertical, and diagonal type, from top to bottom, respectively.
  \label{fig:SplitRAT}}
\end{figure}
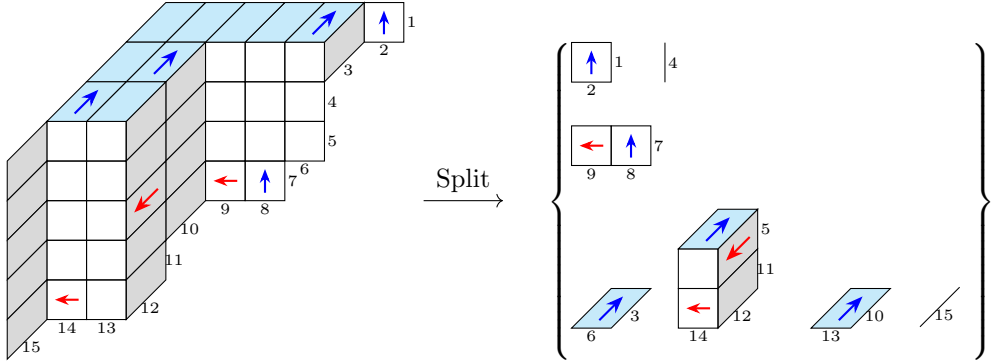

\begin{thm}
\label{theo:SplitRAT}
The map \( \spl \), which sends \( T\mapsto \{T[L]:L\in [n]/\simeq\}\), is a
bijection between
\begin{enumerate}
\item \( \RAT(n,r) \) and
\item the set of collections of packed RAT with labels whose disjoint
  union is \([n]\), and with \(r\) of these RAT being of diagonal
  type.
\end{enumerate}
Moreover, the number of packed RAT of horizontal (\emph{resp.}
vertical) type is the number of free rows (\emph{resp.} columns) of
\(T\).
\end{thm}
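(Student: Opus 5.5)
Our plan is to reduce the statement to a local description of RAT in terms of strips. The key observation is that a RAT is determined by its southeast border word together with the set of cells (pairs of strips) containing arrows and the direction of each arrow. The shape of a restricted RAT is determined by the relative order of the labels in \(L\). In a maximal tiling, two strips labeled \(i<j\) intersect exactly when \(w_i>w_j\), and the type of the intersecting tile (tall, square or short) is determined by the pair \((w_i,w_j)\). So restriction to \(L\) is well defined: the rhombic diagram of the subword \(w|_L\) has exactly the cells \((i,j)\) with \(i,j\in L\). We will also check that the maximal tiling restricts to the maximal tiling, using the strip characterization in the remark after the definition of strips.

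\textbf{Step 1: \(T[L]\) is a packed RAT.} The condition ``no arrow points to another'' only involves arrows in a common strip, so it is inherited by \(T[L]\). By definition of \(\simeq\), the graph on \(L\) whose edges are the arrows is connected, so it has at least \(|L|-1\) arrows. Conversely, we show this graph is a forest, so it has at most \(|L|-1\) arrows. This should follow from the alternative condition: a cycle of arrows would force two arrows in the same strip to point at each other, or an arrow to point into another along the cycle. The cleanest route is probably to first prove that any RAT of size \((m,s)\) has at most \(m-1\) arrows, and that \(s\le 1\) when equality holds. A diagonal strip carries no arrows of its own, so it can be attached to the rest only through an arrow lying in some column or row crossing it. We would prove this by induction, removing the last border step.

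\textbf{Step 2: counting types and the inverse map.} A diagonal label lies in a packed piece of size \((m,1)\), so there are exactly \(r\) pieces of diagonal type. A free row \(i\) with no left-arrow: its class has all its arrows as up-arrows, and we check that it is a horizontal-type piece whose top row is \(i\). Symmetrically, free columns correspond to vertical-type pieces, which gives the last assertion. For the inverse, we are given packed RAT on disjoint label sets whose union is \([n]\). We form the word \(w\) by reading, for each label, the step type recorded in its piece (vertical, horizontal or diagonal edge), place the arrows of each piece in the corresponding cells of \(\Gamma_w\), and leave all other cells empty. Two arrows in the same strip necessarily lie in the same piece, since that strip's label belongs to both. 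Hence the no-pointing condition is checked inside the piece, where it holds. The equivalence classes of the glued tableau are exactly the given label sets, because each piece is connected and there are no arrows between pieces. So the two maps are mutually inverse.

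\textbf{Main obstacle.} The hard step is the arrow bound: a RAT of size \((m,s)\) has at most \(m-1\) arrows, with equality forcing \(s\le 1\). I would first try Nadeau's argument for alternative tableaux, using induction on the southwest-most strip. That strip meets only strips of larger-or-smaller type, and at most one of its arrows can carry the ``connecting'' role. The diagonal strips must be handled with the pointing rules for tall and short rhombi.
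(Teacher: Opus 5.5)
Your restriction/gluing framework is sound, and it is a different route from the paper's: the paper simply reruns Nadeau's proof for alternative tableaux, after observing that diagonal border steps are never pointed to and so behave like free rows or columns. As written, though, your proposal leaves open exactly the points that carry the content.

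\textbf{What is missing or wrong.}
\begin{itemize}
\item The arrow bound is only a plan. The cycle remark in Step~1 does not supply it: around a cycle, each strip may carry exactly one arrow pointing along it, so no local conflict is forced.
\item The justification in Step~2 is wrong. The class of a free row need not consist of up-arrows only. The packed tableau of shape $2\,2\,0$ with an up-arrow in $(1,3)$ and a left-arrow in $(2,3)$ has horizontal type, yet it contains a left-arrow.
\item You never show that a diagonal-type piece has no free row or column.
\item You never show that the unique free strip of a packed piece with $s=0$ is its top row or its leftmost column. This is what makes the type well defined and matched to the free strip.
\item You never show that a given packed RAT is connected, which your gluing step uses.
\end{itemize}

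\textbf{The missing lemma.} All of this follows from one count, which makes your ``main obstacle'' easy.
\begin{itemize}
\item A row holds at most one left-arrow and a column at most one up-arrow, since otherwise one would point to the other. Diagonal strips own no arrows. So a RAT $T$ of size $(m,s)$ has exactly $m-s-\frow(T)-\fcol(T)$ arrows.
\item Suppose $s=0$ and the top row has a left-arrow. That arrow sits in the topmost cell of its column, so that column is free.
\item Dually, if the leftmost column has an up-arrow, that arrow sits in the westmost cell of its row, so that row is free.
\end{itemize}
Hence there are at most $m-1$ arrows. Equality holds iff either $s=1$ and there are no free rows or columns, or $s=0$ and there is exactly one free strip. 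In the latter case, the two observations force that strip to be the top row (horizontal type) or the leftmost column (vertical type), never both.

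\textbf{How it completes your proof.}
\begin{itemize}
\item Applied to $T[L]$, the lemma gives packedness and at most one diagonal per class.
\item Applied to the components of a given packed RAT, it gives connectivity.
\item Every arrow of $T$ lying in row $i$ belongs to the piece containing $i$. So row $i$ is free in $T$ iff it is free in its piece. This yields the bijection between free rows (resp.~columns) and pieces of horizontal (resp.~vertical) type.
\end{itemize}
With this lemma your argument becomes a complete, self-contained proof. The paper's proof is shorter but rests on Nadeau's.
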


\begin{proof}
The case \(r=0\) is the case of alternative tableaux, which is shown in \cite[Theorem 2.14]{Nadeau}. To extend the proof to the case of general \(r\), one notices first that diagonal steps on the border are always free, in the sense that no arrow points towards them. It follows that they behave as free rows or columns and the proof of \cite[Theorem 2.14]{Nadeau} applies verbatim. 
\end{proof}

Using this correspondence, we give a simple proof of the enumeration
formula \eqref{Z_nr} for the RAT of size \((n,r)\). Let
\(H(x) = \sum_{n\ge1} h(n)\frac{x^n}{n!} \),
\(V(x) = \sum_{n\ge1} v(n)\frac{x^n}{n!}\), and
\(D(x) = \sum_{n\ge1} d(n)\frac{x^n}{n!}\), where \( h(n) \),
\( v(n) \), and \( d(n) \) are the numbers of packed RAT of
horizontal, vertical and diagonal types, respectively, whose southeast
borders have \( n \) steps. For a fixed integer \( r\ge0 \), by
\Cref{theo:SplitRAT} and the exponential formula
\cite[Corollary~5.1.6]{EC2}, we have
\begin{equation}
\label{eq:split}
\sum_{n\geq r}\Y_{n,r}(\alpha,\beta,1)\frac{x^n}{n!}=\frac{D(x)^r}{r!}\exp(\alpha H(x))\exp(\beta V(x)).
\end{equation}

It is shown in \cite{Nadeau} (in terms of alternative tableaux) that
\( h(n)=v(n)=(n-1)! \), and thus \(H(x)=V(x)=-\log(1-x)\). Now given a
packed RAT \(T\) of diagonal type, consider the following construction
illustrated in \cref{fig:StraightRAT}: delete its short rhombi on top,
and change tall rhombi to square cells. Let the resulting alternative
tableau be denoted by \(\str(T)\).

\begin{figure}
\centering
\begin{tikzpicture}[scale=1.5]
  \cell{6}{6} \cell{5}{6} \tall{4}{5} \cell{3}{5} \cell{2}{5} \cell{1}{5} \cell{0}{5} \cell{6}{5} \cell{5}{5} \tall{4}{4} \cell{3}{4} \cell{2}{4} \cell{1}{4} \cell{0}{4} \cell{6}{4} \cell{5}{4} \tall{4}{3} \cell{3}{3} \cell{2}{3} \cell{1}{3} \cell{0}{3} \short{3}{6} \short{2}{6} \short{1}{6} \short{0}{6} \cell{2}{2} \cell{1}{2} \cell{0}{2} \cell{1}{1} \cell{0}{1} \cell{1}{0} \cell{0}{0} \node at (0.500000000000000, -0.200000000000000) {\tiny 13};
\node at (1.50000000000000, -0.200000000000000) {\tiny 12};
\node at (2.20000000000000, 0.500000000000000) {\tiny 11};
\node at (2.20000000000000, 1.50000000000000) {\tiny 10};
\node at (2.50000000000000, 1.80000000000000) {\tiny 9};
\node at (3.20000000000000, 2.50000000000000) {\tiny 8};
\node at (3.50000000000000, 2.80000000000000) {\tiny 7};
\node at (4.60000000000000, 3.30000000000000) {\tiny 6};
\node at (5.50000000000000, 3.80000000000000) {\tiny 5};
\node at (6.50000000000000, 3.80000000000000) {\tiny 4};
\node at (7.20000000000000, 4.50000000000000) {\tiny 3};
\node at (7.20000000000000, 5.50000000000000) {\tiny 2};
\node at (7.20000000000000, 6.50000000000000) {\tiny 1};
 \upArr{5}{6} \upArr{1}{5} \upArr{3}{4} \upArr{6}{4} \slantUpArr{2}{6} \slantUpArr{0}{6} \leftArr{0}{5} \leftArr{1}{4} \slantLeftArr{4}{3} \leftArr{2}{2} \leftArr{0}{1} \leftArr{1}{0}
 \draw[->] (9,3) -- (12,3) node [midway,above] {Straight};
\end{tikzpicture}
\qquad 
\begin{tikzpicture}[scale=1.5]
\cell{6}{5} \cell{5}{5} \cellt{4}{5} \cell{3}{5} \cell{2}{5} \cell{1}{5} \cell{0}{5} \cell{6}{4} \cell{5}{4} \cellt{4}{4} \cell{3}{4} \cell{2}{4} \cell{1}{4} \cell{0}{4} \cell{6}{3} \cell{5}{3} \cellt{4}{3} \cell{3}{3} \cell{2}{3} \cell{1}{3} \cell{0}{3} \cell{2}{2} \cell{1}{2} \cell{0}{2} \cell{1}{1} \cell{0}{1} \cell{1}{0} \cell{0}{0} \node at (0.500000000000000, -0.200000000000000) {\tiny 13};
\node at (1.50000000000000, -0.200000000000000) {\tiny 12};
\node at (2.20000000000000, 0.500000000000000) {\tiny 11};
\node at (2.20000000000000, 1.50000000000000) {\tiny 10};
\node at (2.50000000000000, 1.80000000000000) {\tiny 9};
\node at (3.20000000000000, 2.50000000000000) {\tiny 8};
\node at (3.50000000000000, 2.80000000000000) {\tiny 7};
\node at (4.50000000000000, 2.80000000000000) {\tiny 6};
\node at (5.50000000000000, 2.80000000000000) {\tiny 5};
\node at (6.50000000000000, 2.80000000000000) {\tiny 4};
\node at (7.20000000000000, 3.50000000000000) {\tiny 3};
\node at (7.20000000000000, 4.50000000000000) {\tiny 2};
\node at (7.20000000000000, 5.50000000000000) {\tiny 1};
 \upArr{5}{5} \upArr{1}{5} \upArr{3}{4} \upArr{6}{3} \leftArr{0}{5} \leftArr{1}{4} \leftArr{4}{3} \leftArr{2}{2} \leftArr{0}{1} \leftArr{1}{0}
\end{tikzpicture}
\caption{The bijection in \cref{lemma:straight}. 
\label{fig:StraightRAT}}
\end{figure}

\begin{lemma}
\label{lemma:straight}
Let \( n \) be a positive integer.
The map \(T\mapsto \str(T)\) is a bijection from the set of packed
RAT of diagonal type and size \((n,1)\) to the set of alternative
tableaux of size \(n\) with no free rows.
\end{lemma}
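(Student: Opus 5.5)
I would first pin down the shape of a packed RAT \(T\) of diagonal type and size \((n,1)\). Its border word \(w\) has exactly one \(1\), say \(w=u\,1\,v\) with \(u,v\in\{\dd,\ee\}^*\). In the maximal tiling, the unique diagonal strip starts at the diagonal border edge. It contains one short rhombus for each \(\ee\) in \(u\) and one tall rhombus for each \(\dd\) in \(v\). The short rhombi form the top part of the columns coming from the \(\ee\)'s of \(u\); the remaining cells are squares. Deleting the short rhombi and flattening the tall rhombi into squares gives the ordinary diagram \(\Gamma_{uv}\), with the cells in natural bijection with the cells of \(T\) other than the short rhombi. So \(\str\) at least produces a filling of \(\Gamma_{uv}\), a diagram of size \(n-1\) in the labeling of the figure. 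I also have to check that the relabeling matches the figure: the diagonal label disappears and the other labels keep their order. Arrows in tall rhombi are left-arrows, since tall rhombi lie in rows, and they become left-arrows in squares. Pointing relations among the surviving cells are unchanged, because rows and columns restricted to squares and tall rhombi are exactly the rows and columns of \(\Gamma_{uv}\). Hence \(\str(T)\) is an alternative tableau.

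The key step is to understand what packedness forces on the diagonal strip. A packed RAT has \(n-1\) arrows and a single \(\simeq\)-class. The diagonal label \(d\) lies in this class, so some arrow sits on the diagonal strip. I would show the following. Every column meeting the diagonal strip, that is every column from \(u\), has its up-arrow in its short rhombus. In packed tableaux each column has an up-arrow, and each row has a left-arrow except possibly in the horizontal case. No left-arrow can sit in a short rhombus, since short rhombi are not in rows. It follows that the arrows in short rhombi are up-arrows. The no-pointing rule then gives the claim: an up-arrow points northeast within its column, so placing it in the topmost (short) part is forced. More carefully, I would count. There are \(n-1\) arrows and \(n-1\) nondiagonal labels. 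Each column contains at most one up-arrow and each row at most one left-arrow, since two arrows in the same line would make the later one point at the earlier. Packedness then gives exactly one arrow per nondiagonal line. The arrows in short rhombi are exactly the up-arrows of the \(u\)-columns that land on the diagonal strip. I need to show these are all of them, that is, every \(u\)-column has its up-arrow in a short rhombus. This is the part I expect to be the main obstacle. My first attempt would be to read the equivalence relation: if some \(u\)-column had its up-arrow in a square, I would try to argue that the diagonal label becomes disconnected or that the arrow count fails. Alternatively, I would invoke the packed/non-free correspondence behind \cref{theo:SplitRAT}, where diagonal edges behave like free rows.

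With this in hand, the image condition follows. Every row of \(T\) has a left-arrow, in a square or a tall rhombus, and these survive, so \(\str(T)\) has no free rows. For the inverse, start from an alternative tableau \(A\) of shape \(uv\) with no free rows. I would insert the diagonal step at the unique position that is forced: it must sit just after the last column that is free in \(A\). I would check this using the fact that the \(u\)-columns lose their up-arrows, so in \(A\) they are exactly the free columns, all lying to the right of the diagonal position. Then add short rhombi carrying up-arrows atop those columns, and turn the squares in the rows of \(v\) meeting them into tall rhombi. Finally, I would verify that the result is a RAT with \(n-1\) arrows, hence packed, and that its \(\simeq\)-class is all of \([n]\). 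The second point follows because every added up-arrow links its column to \(d\), while \(A\) has no free row. The two constructions are mutually inverse by design.
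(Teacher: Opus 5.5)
\textbf{There is a genuine gap, and it starts with the geometry.} In the maximal tiling of $\Gamma_{u1v}$, a cell $(i,j)$ with $i<j$ exists only when $w_i>w_j$. So the diagonal strip of the edge $d$ meets the rows labeled $<d$ (the $2$'s of $u$) in tall rhombi, and the columns labeled $>d$ (the $0$'s of $v$) in short rhombi. You have this backwards.

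As a result, $\str(T)$ is not a filling of $\Gamma_{uv}$. Each row from $u$ has one more cell in $T$ than in $\Gamma_{uv}$. Once the tall rhombi are squared, they form a new column whose border edge, labeled $d$, is now horizontal. So $\str(T)$ has shape $u\,0\,v$ and size $n$, as both the statement and \cref{fig:StraightRAT} show. Your inverse starts from tableaux of shape $uv$ and size $n-1$, so it is aimed at the wrong target set.

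\textbf{The step you flag as the main obstacle is false.} A column crossing the diagonal strip need not have its up-arrow in its short rhombus. In \cref{fig:StraightRAT}, columns $7$ and $12$ both cross the diagonal strip. Their up-arrows sit in squares in rows $2$ and $1$, and the short rhombi on top of them are empty. The no-pointing rule only forbids arrows above an up-arrow in its own column; nothing forces the up-arrow to the top. Hence ``the free columns of the image are exactly the columns met by the diagonal strip'' fails. So does your recipe for locating the diagonal and placing up-arrows in the inverse.

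\textbf{What is true, and what the paper uses, is the following.}
\begin{itemize}
\item The former diagonal strip becomes a free column $C$ of $\str(T)$, since tall rhombi carry no up-arrows.
\item $C$ is the \emph{rightmost} free column. The columns to its right come from $u$, never meet the diagonal strip, and keep their up-arrows in squares.
\item A column to the left of $C$ is free in $\str(T)$ exactly when its unique up-arrow was in its short rhombus.
\end{itemize}

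The inverse therefore proceeds as follows. Given $U$ with no free rows, take its rightmost free column $C$ and turn its cells into tall rhombi; this is allowed because $C$ has no up-arrows. Then add a short rhombus atop every column to the left of $C$, with an up-arrow in it if and only if that column is free in $U$.

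In the result, every row has a left-arrow and every column has an up-arrow, giving $n-1$ arrows, so it is packed. The new up-arrows are topmost in formerly free columns, and tall rhombi lie in no column, so no arrow points to another.

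Your counting argument (exactly one arrow per row and per column in a packed diagonal-type RAT) and your observation that rows keep their left-arrows are correct and fit into this corrected argument.
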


\begin{proof}
Let \(U\) be an alternative tableau of size \( n \) with no free rows (which hence must contain at least one free column). Let \(C\) be the rightmost free column. Transform the cells of \(C\) to tall rhombi preserving the contents, which is possible since \(C\) contains no up-arrows. Then add a short rhombus to the top of each column left of \(C\), and fill it with an up-arrow if its column is free. 
By construction, this defines the inverse of the map \(\str\).
\end{proof} 

From this lemma, it follows that the number \( d(n) \) of packed RAT
of diagonal type and size \( (n,1) \) is equal to \(n!\) for
\(n\geq 1\), since the set of alternative tableaux of size \(n\) with
no free rows is in bijection with the set of alternative tableaux of
size \(n-1\), by simply deleting the leftmost column. Therefore
\(D(x)=\frac{1}{1-x}-1=\frac{x}{1-x}\). We thus obtain
from~\eqref{eq:split} that
\begin{align*}
  \sum_{n\geq r}\Y_{n,r}(\alpha,\beta,1)\frac{x^n}{n!}
&=\frac{1}{r!}\left(\frac{x}{1-x}\right)^r\exp(-\alpha\log(1-x))\exp(-\beta\log(1-x))\\
&=\frac{x^r}{r!}(1-x)^{-r-\alpha-\beta}\\
&=\frac{x^r}{r!}\sum_{j\geq 0}(\alpha+\beta+r)_j\frac{x^j}{j!},
\end{align*}
and so
\[
\Y_{n,r}(\alpha,\beta,1)=\frac{n!}{r!}\frac{(\alpha+\beta+r)_{n-r}}{(n-r)!}={n \choose r}(\alpha+\beta+r)_{n-r},
\]
 which completes the proof of \eqref{Z_nr}.

\subsection{Flattening map}\label{sec:flatten}

In this subsection, we construct a map that converts a RAT into an
alternative tableau, thereby embedding the set of RAT into a subset of
alternative tableaux. This embedding allows us to extend known bijections on alternative tableaux to RAT
in a natural way.

The map we define requires us to append auxiliary strips to the top of
the tableau. To preserve the original edge labels, we assign to the
added edges a new collection of labels that are strictly smaller than
the labels in $[n]$ used for the original RAT. To that end, fix an increasing
sequence \( \epsilon = (\epsilon_1<\epsilon_2<\cdots) \) of positive
real numbers less than \( 1 \) to serve as the labels of the auxiliary
edges.

\begin{defn}[Flattening map]
  Let \( R\in \RAT^+(n+1,r+1) \) and suppose that
  \( d_1,\dots,d_{r+1} \) are the labels of the diagonal edges. The
  \emph{flattening} \( \flat(R) \) of \( R \) is the alternative
  tableau \( T \in\AT(n+r+2) \) obtained as follows.
\begin{itemize}
\item The southeast border of \( T \) begins with $r+1$ vertical edges
  labeled \( \epsilon_1,\dots,\epsilon_{r+1} \). The remaining edges
  are labeled \( 1,\dots,n+1 \). The edge of $T$ with label \( i\in[n+1] \) is
  vertical if the \( i \)th edge of \( R \) is vertical, and
  horizontal otherwise.
\item For each \( i\in [r+1] \), add an up-arrow in the cell \( (\epsilon_i,d_i) \) in \( T \).
\item For every arrow of \( R \) in the cell \( (i,j) \), add the same
  arrow in the cell \( (i,j) \) in \( T \) if $i\not\in\{d_1,\ldots,d_{r+1}\}$. If \( i=d_t \) for some
  \( t \), then add the arrow in the cell \( (\epsilon_t,j) \) instead of
  \( (i,j) \).
\end{itemize}
\end{defn}

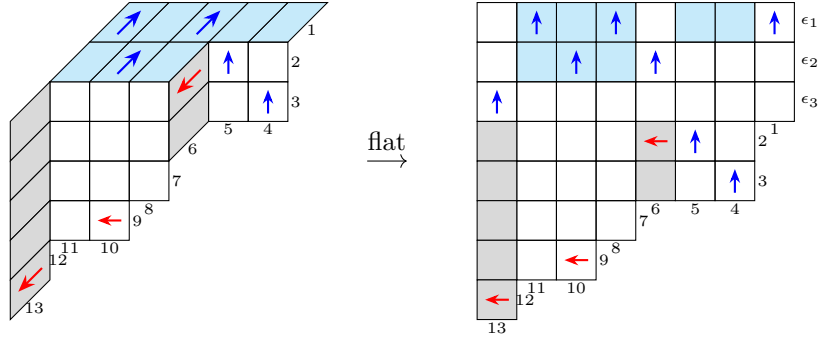
\begin{figure}
  \centering
\begin{tikzpicture}[scale=1.5]
    \short{6}{7} \short{5}{7} \short{4}{7} \short{3}{7} \short{2}{7} \cell{6}{6} \cell{5}{6} \tall{4}{5} \cell{3}{5} \cell{2}{5} \cell{1}{5} \tall{0}{4} \cell{6}{5} \cell{5}{5} \tall{4}{4} \cell{3}{4} \cell{2}{4} \cell{1}{4} \tall{0}{3} \short{3}{6} \short{2}{6} \short{1}{6} \cell{3}{3} \cell{2}{3} \cell{1}{3} \tall{0}{2} \cell{2}{2} \cell{1}{2} \tall{0}{1} \tall{0}{0} \node at (0.600000000000000, 0.300000000000000) {\tiny 13};
\node at (1.20000000000000, 1.50000000000000) {\tiny 12};
\node at (1.50000000000000, 1.80000000000000) {\tiny 11};
\node at (2.50000000000000, 1.80000000000000) {\tiny 10};
\node at (3.20000000000000, 2.50000000000000) {\tiny 9};
\node at (3.50000000000000, 2.80000000000000) {\tiny 8};
\node at (4.20000000000000, 3.50000000000000) {\tiny 7};
\node at (4.60000000000000, 4.30000000000000) {\tiny 6};
\node at (5.50000000000000, 4.80000000000000) {\tiny 5};
\node at (6.50000000000000, 4.80000000000000) {\tiny 4};
\node at (7.20000000000000, 5.50000000000000) {\tiny 3};
\node at (7.20000000000000, 6.50000000000000) {\tiny 2};
\node at (7.60000000000000, 7.30000000000000) {\tiny 1};
 \slantUpArr{2}{7} \slantUpArr{4}{7} \slantUpArr{2}{6} \upArr{5}{6} \upArr{6}{5} \slantLeftArr{4}{5} \leftArr{2}{2} \slantLeftArr{0}{0}
 \draw[->] (9,4) -- (10,4);
 \node at (9.5,4.5) {flat};
 \phantom{\node at (0.500000000000000, -0.200000000000000) {\tiny 13};}
\end{tikzpicture}
\qquad 
\begin{tikzpicture}[scale=1.5]
\cell{7}{7} \cells{6}{7} \cells{5}{7} \cell{4}{7} \cells{3}{7} \cells{2}{7} \cells{1}{7} \cell{0}{7} \cell{7}{6} \cell{6}{6} \cell{5}{6} \cell{4}{6} \cells{3}{6} \cells{2}{6} \cells{1}{6} \cell{0}{6} \cell{7}{5} \cell{6}{5} \cell{5}{5} \cell{4}{5} \cell{3}{5} \cell{2}{5} \cell{1}{5} \cell{0}{5} \cell{6}{4} \cell{5}{4} \cellt{4}{4} \cell{3}{4} \cell{2}{4} \cell{1}{4} \cellt{0}{4} \cell{6}{3} \cell{5}{3} \cellt{4}{3} \cell{3}{3} \cell{2}{3} \cell{1}{3} \cellt{0}{3} \cell{3}{2} \cell{2}{2} \cell{1}{2} \cellt{0}{2} \cell{2}{1} \cell{1}{1} \cellt{0}{1} \cellt{0}{0} \node at (0.500000000000000, -0.200000000000000) {\tiny 13};
\node at (1.20000000000000, 0.500000000000000) {\tiny 12};
\node at (1.50000000000000, 0.800000000000000) {\tiny 11};
\node at (2.50000000000000, 0.800000000000000) {\tiny 10};
\node at (3.20000000000000, 1.50000000000000) {\tiny 9};
\node at (3.50000000000000, 1.80000000000000) {\tiny 8};
\node at (4.20000000000000, 2.50000000000000) {\tiny 7};
\node at (4.50000000000000, 2.80000000000000) {\tiny 6};
\node at (5.50000000000000, 2.80000000000000) {\tiny 5};
\node at (6.50000000000000, 2.80000000000000) {\tiny 4};
\node at (7.20000000000000, 3.50000000000000) {\tiny 3};
\node at (7.20000000000000, 4.50000000000000) {\tiny 2};
\node at (7.50000000000000, 4.80000000000000) {\tiny 1};
\node at (8.40000000000000, 5.50000000000000) {\tiny \( \epsilon_3 \)};
\node at (8.40000000000000, 6.50000000000000) {\tiny \( \epsilon_2 \)};
\node at (8.40000000000000, 7.50000000000000) {\tiny \( \epsilon_1 \)};
 \upArr{7}{7} \upArr{3}{7} \upArr{1}{7} \upArr{4}{6} \upArr{2}{6} \upArr{0}{5} \upArr{5}{4} \upArr{6}{3} \leftArr{4}{4} \leftArr{2}{1} \leftArr{0}{0}
\end{tikzpicture}
\caption{The flattening map sends \( R\in\RAT^+(13,3) \) on the left to the
  alternative tableau on the right.
The cells coming from short rhombi and tall rhombi are colored accordingly.}
  \label{fig:flat}
\end{figure}

See \Cref{fig:flat} for an example of the flattening map. In order to characterize the images of the flattening map, we define
\( \AT^+(n+1,r+1) \) to be the set of alternative tableaux \( T \) in
\( \AT^+(n+1) \), with southeast border labeled by \((\epsilon_1,\dots,\epsilon_{r+1}, 1,\dots,n+1) \),  satisfying the following properties.
\begin{itemize}
\item The southeast border of \( T \) begins with \( r+1 \) vertical
  edges.
\item The top \(r+1\) rows of \(T\) contain no left-arrows.
\item Each of the top \(r+1\) rows of \(T\) contains at least one
  up-arrow. If \(d_1,\ldots,d_{r+1}\) are the labels of the columns
  containing the rightmost up-arrows in the first \( r+1 \) rows,
  respectively, we must have \(d_1<\cdots<d_{r+1}\).
\end{itemize}

By construction, for \( R\in \RAT^+(n+1,r+1) \), we have
\( \flat(R) = T \in \AT^+(n+1,r+1) \). Conversely, given
\( T\in \AT^+(n+1,r+1) \), we can reconstruct \( R \) as follows. Let
\(d_1<\dots<d_{r+1}\) be the labels of the columns containing the
rightmost up-arrows in the first \( r+1 \) rows in \( T \).
Then the shape of \( R \) is \( w=w_1 \cdots w_{n+1} \),
where 
\[
  w_i =
  \begin{cases}
   1 & \mbox{if \( i\in \{d_1,\dots,d_{r+1}\} \)},\\
    0 & \mbox{if \( i \) is the label of a column in \( T \)},\\
    2 & \mbox{if \( i \) is the label of a row in \( T \)}.
  \end{cases}
\]
For each arrow of \( T \) in cell \( (i,j) \) that is not the rightmost
up-arrow within the first \( r+1 \) rows, place the same type of arrow in the
cell \( (i',j) \) in \( R \), where \( i'=d_k \) if
\( i=\epsilon_k \), and \( i'=i \) otherwise. This shows the following
proposition.

\begin{prop}\label{lem:6}
  The map \( \flat: \RAT^+(n+1,r+1) \to \AT^+(n+1,r+1) \) is a
  bijection.
\end{prop}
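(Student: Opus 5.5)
I will prove the proposition by checking three things: $\flat(R)$ lies in $\AT^+(n+1,r+1)$; the reconstruction $T\mapsto R$ described just before the statement (call it $\Psi$) lands in $\RAT^+(n+1,r+1)$; and $\Psi\circ\flat$ and $\flat\circ\Psi$ are identities. All of this rests on one geometric dictionary.

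\emph{The dictionary.} Let $R\in\RAT^+(n+1,r+1)$ have diagonal labels $d_1<\dots<d_{r+1}$. The rows of $\flat(R)$ are the $r+1$ new rows $\epsilon_1,\dots,\epsilon_{r+1}$ together with the rows of $R$. Its columns are the columns of $R$ together with the diagonal strips $d_t$, each of which has become a column.

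Under the maximal tiling, the cells of $R$ match cells of $\flat(R)$ as follows.
\begin{itemize}
\item A square $(i,j)$ with $i$ a row of $R$ and $j$ a column stays $(i,j)$.
\item A tall rhombus $(i,d_t)$, lying in row $i$ and diagonal strip $d_t$, becomes the square $(i,d_t)$.
\item A short rhombus $(d_t,j)$, lying in diagonal strip $d_t$ and column $j$, becomes the square $(\epsilon_t,j)$.
\end{itemize}
I will check that $\Gamma$ of the new word is exactly the union of these images and the cells $(\epsilon_t,d_s)$. For this I use the remark that in each vertical strip the short tiles lie strictly north of the squares. Consequently, a column $j$ of $R$ meets $d_t$ in a short rhombus exactly when $d_t<j$, and a row $i$ meets $d_t$ in a tall rhombus exactly when $i<d_t$. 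Both conditions are the cell-existence conditions in the flattened shape.

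Pointing is preserved under this matching. A left-arrow in row $i$ points southwest along row $i$, which maps to row $i$ of $\flat(R)$. An up-arrow in column $j$ points northeast and meets short rhombi at the top of the column; these map to the rows $\epsilon_t$ at the top of column $j$. An up-arrow in diagonal strip $d_t$, sitting in cell $(d_t,j)$, is moved to $(\epsilon_t,j)$.

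\emph{Image conditions.} I now check that $\flat(R)\in\AT^+(n+1,r+1)$.
\begin{itemize}
\item The $r+1$ rows $\epsilon_t$ come first, and each is nonempty because $w_1=1$ in $R$.
\item The rows $\epsilon_t$ contain no left-arrows, since diagonal strips carry only up-arrows.
\item Row $\epsilon_t$ contains the added up-arrow $(\epsilon_t,d_t)$. Its other up-arrows come from short rhombi $(d_t,j)$ with $j>d_t$, which sit to the left of column $d_t$ in the tableau. So the rightmost up-arrow of row $\epsilon_t$ is in column $d_t$, giving $d_1<\cdots<d_{r+1}$.
\item Every column has an up-arrow. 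The columns of $R$ had one by extendedness, and each column $d_t$ receives the added arrow.
\item Nothing points at the added arrow $(\epsilon_t,d_t)$. Any arrow in column $d_t$ below it comes from a left-arrow in a tall rhombus of $d_t$, and a left-arrow does not point north. Any other up-arrow of row $\epsilon_t$ lies to its left, and an up-arrow does not point west.
\item The added arrow points only to cells of column $d_t$ lying \emph{above} $\epsilon_t$, that is, to cells $(\epsilon_s,d_t)$ with $s<t$. These cells hold no arrows, because the only arrow placed in column $d_t$ within the top rows is $(\epsilon_t,d_t)$ itself. Note that such a cell can be empty, so it has to be treated separately from the other cells.
\end{itemize}
So the non-pointing condition carries over from $R$.

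\emph{Inverse.} Conversely, take $T\in\AT^+(n+1,r+1)$ with rightmost up-arrows in columns $d_1<\dots<d_{r+1}$. Apply the dictionary backwards to form the word $w$. The rightmost up-arrow in row $\epsilon_t$ sits in column $d_t$, so every other arrow of that row lies in a column $j>d_t$ and therefore corresponds to an existing short rhombus $(d_t,j)$. The conditions that arrows lie in the correct strip types hold for these reasons:
\begin{itemize}
\item left-arrows only occur in genuine rows, since the rows $\epsilon_t$ contain none;
\item column $d_t$ contains no up-arrow other than $(\epsilon_t,d_t)$. The cells $(\epsilon_s,d_t)$ with $s>t$ lie below the up-arrow $(\epsilon_t,d_t)$ and so are pointed to by it. The cells $(\epsilon_s,d_t)$ with $s<t$ are excluded because the rightmost up-arrow of row $\epsilon_s$ is in column $d_s<d_t$.
\end{itemize}
Every column of $R$ contains an up-arrow, either directly or through a short rhombus, so the extended condition holds, and $w_1=1$ because $d_1=1$. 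To see $d_1=1$: the label $1$ must be a column of $T$, since $1$ is the first label after the $\epsilon$'s. That column contains no up-arrow outside the top $r+1$ rows, and it is the rightmost column, so the rightmost up-arrow of row $\epsilon_1$ is there.

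Both composites are the identity directly from the definitions. The step I expect to be the main obstacle is the shape bookkeeping: showing that the maximal tiling matches cells as claimed (tall rhombus $(i,d_t)$ exists if and only if $i<d_t$, and short rhombus $(d_t,j)$ exists if and only if $d_t<j$), and that the southwest and northeast pointing directions of the rhombic tiles correspond exactly to west and north in $\flat(R)$. I would prove this by induction on $\inv(w)$ along the recursive tiling.
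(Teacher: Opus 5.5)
Your plan matches the paper's: the paper asserts that $\flat(R)\in\AT^+(n+1,r+1)$ ``by construction'' and writes down the reconstruction $T\mapsto R$, and you supply the verifications. The forward half is sound apart from a harmless slip. The shape of $\flat(R)$ also contains the cells $(\epsilon_t,j)$ with $j$ a column of $R$ and $j<d_t$, which are images of nothing. They are empty, so they only matter when an up-arrow of column $j$ points north.

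The inverse half has a step whose reasons are wrong. The claim is that column $d_t$ contains no up-arrow other than $(\epsilon_t,d_t)$. With the convention you use in the forward direction (up-arrows point north), $(\epsilon_t,d_t)$ does not point at the cells $(\epsilon_s,d_t)$, $s>t$, below it. For $s<t$, knowing that the rightmost up-arrow of row $\epsilon_s$ is in column $d_s$ says nothing about column $d_t$, which lies to the \emph{left} of column $d_s$ since $d_t>d_s$. The correct argument runs the other way. Cells of column $d_t$ above $(\epsilon_t,d_t)$ are pointed to by it, hence empty. An up-arrow anywhere below it would point at it. ``Anywhere below'' must include the genuine rows $(i,d_t)$, which your bullet omits. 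That case is exactly what guarantees that the tall rhombi $(i,d_t)$ of the reconstructed $R$ receive only left-arrows.

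Second, your derivation of $d_1=1$ is not a proof: that $1$ is the first label after the $\epsilon$'s does not make the edge labeled $1$ horizontal. Suppose ``begins with $r+1$ vertical edges'' only says that the first $r+1$ edges are vertical. Then already for $n=1$, $r=0$, take the tableau with rows $\epsilon_1,1$, a single column $2$, and one up-arrow in $(\epsilon_1,2)$. It meets every listed condition, yet $d_1=2$. Its reconstruction has shape $w=21$, which is not extended, so it is not a flattening.

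You must therefore use the hypothesis that the edge labeled $1$ is horizontal, equivalently $1=d_1$. This is the normalization $1=d_1<\cdots<d_{r+1}$ the paper imposes in its other definitions, and it has to be read into the definition of $\AT^+(n+1,r+1)$. Granting it, your argument is correct. Column $1$ has cells only in the top $r+1$ rows and must contain an up-arrow. That up-arrow is the rightmost one in its row, and $d_1<\cdots<d_{r+1}$ forces that row to be $\epsilon_1$. With these two repairs, plus the shape and strip-order bookkeeping you already flag, your proposal becomes a complete version of the paper's argument.
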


This bijective construction now allows us to consider RAT as alternative tableaux.

\section{Three bijections between RAT and assembl\'ees}\label{sec:bijections}

There are three well-known bijections that directly relate alternative
tableaux and permutations: the \emph{insertion bijection} of Corteel
and Nadeau \cite{Corteel2009}, the \emph{zigzag bijection} of Corteel
and Kim \cite{CorteelKim}, and the \emph{fusion-exchange bijection} of
Viennot \cite{Viennota}. Mandelshtam and Viennot
\cite{Mandelshtam2018} generalized the fusion-exchange bijection to
RAT. In this section, we generalize the insertion map and the zigzag
map to RAT. We show that these three bijections between RAT and
assembl\'ees are equivalent up to a simple transformation.

\subsection{The insertion map}\label{sec:phi_i}

In this subsection we define the insertion map
\[\Phi_I: \RAT^+(n+1,r+1) \rightarrow \A(n+1,r+1)\,.\] 
For this map, we no longer consider the canonical order of an
assembl\'{e}e. Instead, it is convenient to represent an assembl\'ee
as a certain reading word, which we call its \(\epsilon\)-word. This
representation allows us to view an assembl\'ee as a permutation on the alphabet $\{\epsilon_1,\ldots,\epsilon_{r+1},1,\ldots,n+1\}$, where \(\epsilon_1<\epsilon_2<\cdots<\epsilon_{r+1}<1\) are fixed
positive real numbers. This will give a natural way to extend results for alternative tableaux on permutations to rhombic
alternative tableaux.

\begin{defn}\label{def:e-word}
  Let \( \pi\in \A(n+1,r+1) \). The \emph{\( \epsilon \)-word} of
  \( \pi \), denoted $e(\pi)$, is the word on the alphabet
\(\{\epsilon_1,\ldots,\epsilon_{r+1},1,\ldots,n+1\}\), constructed as follows. First, we arrange the blocks of \( \pi \) as
  \( B_1,\dots,B_{r+1} \) such that \( d_1=1\in B_1 \) and if
  \(d_2,\ldots,d_{r+1}\) are the last elements of the blocks
  \(B_2,\ldots,B_{r+1}\), respectively, then \( d_2<\dots<d_{r+1} \).
  We then write $B_1=C_1\,1\,C_2$ and concatenate \( C_1,B_2,\dots, B_{r+1},C_2 \) as words. 
  Insert \( \epsilon_i \) after \( d_i \) for each \( i\in[r+1] \).
  Then \( e(\pi) \) is defined to be the resulting word.
  
  An assembl\'{e}e \( \pi\in\A(n+1,r+1) \) can be reconstructed from $e(\pi)$, by concatenating the subword before \(\epsilon_1\) with the subword after \(\epsilon_{r+1}\) to recover the first block $B_1$. We use the notation $e^{-1}(v)$ for the assembl\'ee whose $\epsilon$-word is $v$.

  Let \( \pi\in\A(n+1,r+1) \) and \( v=e(\pi) \).
  For an integer \( i\in [n+1] \), we say that
  \begin{itemize}
  \item \( i \) is an \emph{ascent} of \( v \) if it is followed by a
    larger element or if it is the last element of \( v \),
\item \( i \) is a \emph{descent} of \( v \) if it is followed by a
  smaller element.
  \end{itemize}
  Moreover, we say that a descent \( i \) is \emph{special} if it is
  followed by \( \epsilon_j \) for some \( j \), and \emph{regular}
  otherwise.
\end{defn}

\begin{example}
  Let \( \pi= [9\,\,10\,\,5\,\,2\,\,6]\, [11\,\,8\,\,1\,\,4\,\,3\,\,7]\, [12\,\,13]\in \A(13,3) \). Then
  \( d_1=1 \), \( d_2=6 \), \( d_3=13 \), and the \( \epsilon \)-word
  of \( \pi \) is \( e(\pi)=v \), where
  \[
    v = 
11\,\, 8\,\, 1\,\, \epsilon_1\,\, 9\,\,10\,\,5\,\, 2\,\,6\,\,\epsilon_2\,\, 12\,\,13\,\,\epsilon_3\,\, 4\,\, 3\,\, 7,
  \]
  and \( \pi = e^{-1}(v) \).
  The ascents of \( v \) are \( 2,3,7,9,12 \), the regular descents of
  \( v \) are \( 4,5,8,10,11 \), and the special descents of \( v \)
  are \( 1,6,13 \).
\end{example}

\begin{algorithm}[Insertion map]\label{alg:insertion}
Let \(R\in\RAT^+(n+1,r+1)\) be given as an input.
Then \( \Phi_I(R)\in\A(n+1,r+1)\) is constructed as follows.
  \begin{enumerate}
  \item Suppose that \( 1=d_1 < \cdots < d_{r+1} \) are the labels of
    diagonal strips and \( a_1 < \cdots < a_t \) are the labels of
    free rows in \( R \). Initialize
    \( v = \epsilon_1\, \cdots\, \epsilon_{r+1}\, a_1\, a_2\, \cdots\, a_t \).
  \item Suppose that \(R\) has \(m\) non-horizontal strips (i.e.,
    vertical or diagonal strips), and let \(\ell_1>\cdots>\ell_m\) be
    their labels from left to right. Then for \(i=1,\ldots,m\), we do
    the following.

\begin{enumerate}
\item   Let \(r_1<\cdots<r_k\) be the labels of the strips containing a left-arrow at the intersection with strip \(\ell_i\).
\item If strip \( \ell_i \) is vertical and it contains an up-arrow in
  a horizontal strip \( h \), then let \( j=h \). If strip
  \( \ell_i \) is vertical and it contains an up-arrow in a diagonal
  strip \( d_t \), then let \( j=\epsilon_t \). If \( \ell_i=d_t \),
  then let \( j=\epsilon_t \).
\item Insert \(r_1\,\cdots\, r_k\,\ell_i\) to the left of \(j\) in \( v \).
\end{enumerate}
\item Define \(\Phi_I(R)=e^{-1}(v)\) to be the assembl\'{e}e whose
  \( \epsilon \)-word is \( v \).
  \end{enumerate}
\end{algorithm}

\begin{example}\label{exa:2}
Let \(R\) be the RAT in \Cref{fig:flat}. The steps of the insertion to form $v=e(\Phi_I(R))$ are as follows.
\begin{center}
  \begin{tabular}{ll}
Initialization:& \( v=\epsilon_1\,\, \epsilon_2\,\, \epsilon_3\,\, 3\,\, 7 \)\\
Strip 13:& \( v=\epsilon_1\,\, \epsilon_2\,\, 12\,\,13\,\,\epsilon_3\,\, 3\,\, 7 \)\\
Strip 11:& \( v=11\,\, \epsilon_1\,\, \epsilon_2\,\, 12\,\,13\,\,\epsilon_3\,\, 3\,\, 7 \)\\
Strip 10:& \( v=11\,\, \epsilon_1\,\, 9\,\,10\,\,\epsilon_2\,\, 12\,\,13\,\,\epsilon_3\,\, 3\,\, 7 \)\\
Strip 8:& \( v=11\,\, 8\,\, \epsilon_1\,\, 9\,\,10\,\,\epsilon_2\,\, 12\,\,13\,\,\epsilon_3\,\, 3\,\, 7 \)\\
Strip 6:& \( v=11\,\, 8\,\, \epsilon_1\,\, 9\,\,10\,\,2\,\,6\,\,\epsilon_2\,\, 12\,\,13\,\,\epsilon_3\,\, 3\,\, 7 \)\\
Strip 5:& \( v=11\,\, 8\,\, \epsilon_1\,\, 9\,\,10\,\,5\,\, 2\,\,6\,\,\epsilon_2\,\, 12\,\,13\,\,\epsilon_3\,\, 3\,\, 7 \)\\
Strip 4:& \( v=11\,\, 8\,\, \epsilon_1\,\, 9\,\,10\,\,5\,\, 2\,\,6\,\,\epsilon_2\,\, 12\,\,13\,\,\epsilon_3\,\, 4\,\, 3\,\, 7 \) \\
Strip 1:& \( v=11\,\, 8\,\, 1\,\, \epsilon_1\,\, 9\,\,10\,\,5\,\, 2\,\,6\,\,\epsilon_2\,\, 12\,\,13\,\,\epsilon_3\,\, 4\,\, 3\,\, 7 \)\\
\end{tabular}
\end{center}
Thus
\[
  \Phi_I(R) = e^{-1}(v)= [11\,\,8\,\,1\,\,4\,\,3\,\,7]\,\, [9\,\,10\,\,5\,\,2\,\,6]\ [12\,\,13].
\]
\end{example}

The following lemmas follow from the construction of the map \( \Phi_I \) and \Cref{thm:CN}.

\begin{lemma}\label{lem:I-flat}
  Let \(R\in\RAT^+(n+1,r+1)\). Then the \( \epsilon \)-word of
  \( \Phi_I(R) \) is 
  \[
  e(\Phi_I(R)) = \Phi^{\AT}_I(\flat(R)).
  \]
\end{lemma}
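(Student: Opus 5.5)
The plan is to compare the two insertion procedures step by step, showing that running \Cref{alg:insertion} on \(R\) and running \Cref{def:insertion AT} on \(T=\flat(R)\) produce the same word at every stage. Since \(e\) is a bijection onto its image, we then have \(e(\Phi_I(R))=v=\Phi^{\AT}_I(\flat(R))\).

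First, compare the initializations. In \(T\) the southeast border is labeled \(\epsilon_1,\dots,\epsilon_{r+1},1,\dots,n+1\), and the first \(r+1\) edges are vertical, so the rows of \(T\) are labeled by the \(\epsilon_i\) together with the labels of the horizontal strips of \(R\). By the definition of \(\AT^+(n+1,r+1)\), the top \(r+1\) rows contain no left-arrows, so all \(\epsilon_i\) are free rows. A row labeled \(h\in[n+1]\) of \(T\) receives exactly the left-arrows of row \(h\) of \(R\), because arrows are only relocated when their first coordinate is a diagonal label. Hence it is free in \(T\) if and only if it is free in \(R\). Since \(\epsilon_i<1\), the increasing word of free rows of \(T\) is \(\epsilon_1\cdots\epsilon_{r+1}a_1\cdots a_t\), which matches the initialization in \Cref{alg:insertion}.

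Next, compare the columns. The columns of \(T\) are exactly the labels of the vertical and diagonal strips of \(R\), since the shape of \(T\) sends \(w_i\in\{0,1\}\) to a horizontal edge. Both algorithms therefore process the same labels \(\ell_1>\cdots>\ell_m\) in the same order. For each such \(\ell_i\) we must check two things. The set of rows with a left-arrow in column \(\ell_i\) of \(T\) must equal the set of strips with a left-arrow meeting \(\ell_i\) in \(R\). The row containing the up-arrow of column \(\ell_i\) in \(T\) must be the \(j\) of step (b). There are two cases.
\begin{itemize}
\item If \(\ell_i\) is vertical, then an arrow of \(R\) at \((\ell_i,h)\) with \(h\) horizontal stays at \((\ell_i,h)\). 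An up-arrow at \((\ell_i,d_t)\) with \(d_t\) diagonal is moved to \((\epsilon_t,\ell_i)\), where I take the cell convention to be symmetric in the two labels. So the up-arrow row is \(h\) or \(\epsilon_t\) as in step (b). Left-arrows in column \(\ell_i\) can only sit in rows, so they are unchanged.
\item If \(\ell_i=d_t\), then column \(d_t\) of \(T\) contains the added up-arrow at \((\epsilon_t,d_t)\), giving \(j=\epsilon_t\). The arrows of \(R\) lying in strip \(d_t\) are left-arrows, whose row labels \(h\) are preserved at \((h,d_t)\). Up-arrows of \(R\) at \((d_t,c)\) for vertical \(c\) are moved to row \(\epsilon_t\) and do not land in column \(d_t\).
\end{itemize}

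The main obstacle is the case \(\ell_i=d_t\): we must make sure the relocation rule in the definition of \(\flat\) really sends the arrows of \(R\) to the intended cells. This needs a careful reading of the notation ``cell \((i,j)\)'', which is unordered. After that, the induction over \(i=1,\dots,m\) is immediate, because the insertions coincide at each step. Finally, \Cref{thm:CN} (together with \Cref{lem:6}) guarantees that the resulting word is a permutation of the alphabet. This justifies writing it as \(e(\pi)\) for some \(\pi\in\A(n+1,r+1)\), so that \(e^{-1}\) applies in step (3).
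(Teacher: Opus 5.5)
Your step-by-step comparison is correct and is exactly the ``by construction'' argument the paper intends: both procedures start from the same word, process the same columns in the same decreasing order, and see the same left-arrow rows and the same $j$ in each column, since flattening relocates only the up-arrows of short rhombi $(d_t,c)$ to row $\epsilon_t$. The one flawed step is your last paragraph: being a permutation of the alphabet does not make $v$ an $\epsilon$-word. What does is that $d_t$ stays immediately left of $\epsilon_t$, because the only other strips with $j=\epsilon_t$ are columns $c>d_t$ with an up-arrow in $(d_t,c)$, and these are processed before $d_t$; in any case, this is the well-definedness of $\Phi_I$, which the statement takes for granted.
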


\begin{lemma}\label{lem:shape}
  Suppose \(R\in\RAT^+(n+1,r+1)\) corresponds to \(\pi=\Phi_I(R)\in\A(n+1,r+1)\).
  Let \( v=e(\pi) \). For \(i\in [n+1]\setminus \{1\}\), we have the following.
  \begin{itemize}
  \item Strip \(i\) is a diagonal strip if and only if \(i\) is a special descent of \( v \).
  \item Strip \(i\) is a horizontal strip if and only if \(i\) is an ascent of \(v\).
  \item Strip \(i\) is a vertical strip if and only if \(i\) is a
    regular descent of \(v\).
  \item The labels of the free rows of \( R \) are the RL-minima of
    \( v \) except for \( \epsilon_1,\dots,\epsilon_{r+1} \).
  \end{itemize}
\end{lemma}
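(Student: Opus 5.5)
We argue directly from \Cref{alg:insertion}, tracking what is inserted immediately to the right of each label $i\in[n+1]\setminus\{1\}$ in the $\epsilon$-word, and then use \Cref{lem:I-flat} together with \Cref{thm:CN} for the last item. The key invariant is that once $i$ has been placed in $v$, the letter immediately following it is fixed at the moment of insertion and afterwards can only change when some word is inserted to the left of that follower. Everything is therefore governed by what $i$ is followed by when it is first inserted, and by the later insertions to the left of its follower.

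First, suppose strip $i$ is vertical or diagonal, so $i=\ell_s$ for some $s$. It is inserted as the last letter of $r_1\cdots r_k\,\ell_s$, placed to the left of $j$. If the strip is diagonal, $i=d_t$, so $j=\epsilon_t$ and $i$ is immediately followed by $\epsilon_t$. If the strip is vertical, $j$ is either a row label $h$ or some $\epsilon_t$.

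In the row case, the cell $(i,h)$ lies in column $i$ and row $h$, so $h>i$ (rows below a column have larger labels in this labeling). Hence $i$ is followed by a larger element. Later insertions use strips $\ell_{s'}<\ell_s$ and put blocks ending in $\ell_{s'}$ in front of some letter. If such a block goes directly before $h$, then $i$ is followed by its first letter $r_1$ or $\ell_{s'}$. We must show this is smaller than $i$, yet $i$ must stay a descent. Here the alternative-tableau condition is what matters: strip $\ell_{s'}$ has its up-arrow in row $h$, and the no-pointing rule should force the left-arrows $r_1,\dots$ to satisfy $r_1<i$.

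This is exactly the structure handled in the proof of \Cref{thm:CN}. So rather than redo it, I would transport everything through $\flat$ using \Cref{lem:I-flat}: $v=\Phi^{\AT}_I(\flat(R))$, with letters $\epsilon_t<1$. In $\flat(R)$, a column label $i$ of $R$ remains a column, and the standard fact for $\Phi^{\AT}_I$ is that column labels are exactly the descents and row labels exactly the ascents (the usual shape-to-descent property underlying \cite{Corteel2009}). A descent $i$ is special exactly when its follower is some $\epsilon_t$. By the construction of $\flat$, the diagonal strips $d_t$ become columns whose up-arrow lies in row $\epsilon_t$, and they are the rightmost such up-arrows. I would check that this is exactly what makes $i$ immediately followed by $\epsilon_t$: no later insertion goes before $\epsilon_t$ with a block landing between them. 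Any later column with an up-arrow in row $\epsilon_t$ has a smaller label, so it lies to the left, and its block would be inserted before $\epsilon_t$. This is the main obstacle, and I expect it to resolve because the ordering $d_1<\dots<d_{r+1}$ together with ``rightmost'' means that columns with an up-arrow in row $\epsilon_t$ that are processed later are not diagonal. That would make $i=d_t$ followed by such an $\ell_{s'}$ or $r_1<d_t$ rather than by $\epsilon_t$. So I must verify the claim carefully, possibly reading ``followed by'' in the correct final configuration; the example supports it ($6\,\epsilon_2$, $13\,\epsilon_3$, $1\,\epsilon_1$).

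Finally, the free-row statement follows immediately from \Cref{thm:CN} applied to $\flat(R)$. The rows of $\flat(R)$ are the rows of $R$ together with the $\epsilon$-rows, the free rows are preserved by $\flat$, and the $\epsilon$-rows contain no left-arrows. The RL-minima of $v$ are thus the free rows of $R$ together with $\epsilon_1,\dots,\epsilon_{r+1}$.
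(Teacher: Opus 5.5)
Your overall route (read the shape off the construction of \(\Phi_I\), and get the free rows from \Cref{thm:CN} applied to \(\flat(R)\)) is the paper's. The free-row item is fine: \(\flat\) leaves every left-arrow of \(R\) in its row (a left-arrow in a tall rhombus \((r,d_t)\) stays in cell \((r,d_t)\)), and the \(\epsilon\)-rows carry none.

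The descent/ascent part, however, contains an error. A row \(h\) meets column \(i\) only if \(h<i\), not \(h>i\), since tiles correspond to inversions of the shape word. So a non-horizontal strip \(i\) is followed by a \emph{smaller} letter when it is inserted. It stays a descent simply because every letter inserted later lies in a block with \(r_1<\dots<r_k<\ell_{s'}<i\); no no-pointing argument is needed there.

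The no-pointing rule is needed on the row side. The left-arrows in a column lie below its up-arrow, so every inserted block satisfies \(j<r_1<\dots<r_k<\ell\). Hence a row label \(x<j\) sitting just before \(j\) becomes followed by \(r_1\) (or \(\ell\)), which is still larger than \(x\). This invariant proves in a few lines the ``standard fact'' you invoke, which is not part of \Cref{thm:CN} as stated.

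The genuine gap is the special/regular distinction. You flag it as the main obstacle and leave it open, and your reasoning about it runs the wrong way. Labels increase from northeast to southwest, so a column with a \emph{smaller} label lies to the \emph{right}. Thus ``\(d_t\) carries the rightmost up-arrow of row \(\epsilon_t\) in \(\flat(R)\)'' means \(d_t\) is the \emph{smallest} label among columns with an up-arrow in row \(\epsilon_t\).

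Directly in \(R\): a vertical strip \(\ell\) with \(j=\epsilon_t\) has its up-arrow in the short rhombus \((d_t,\ell)\). Hence \(\ell>d_t\), and it is processed \emph{before} \(d_t\), not after. So once \(r_1\cdots r_k\,d_t\) is inserted immediately before \(\epsilon_t\), nothing is ever inserted before \(\epsilon_t\) again, and \(d_t\) ends up immediately followed by \(\epsilon_t\). The scenario you worry about, a later column landing between \(d_t\) and \(\epsilon_t\), cannot occur.

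The converse then comes for free. Each \(\epsilon_t\) has a unique predecessor, namely \(d_t\), so no vertical-strip label is followed by an \(\epsilon\) in the final word. Combined with ``non-horizontal \(\Leftrightarrow\) descent,'' this gives vertical \(\Leftrightarrow\) regular descent.

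Without this argument your proof establishes neither the first nor the third item. It also leaves unproved that the word produced by the insertion algorithm is a genuine \(\epsilon\)-word, i.e.\ that \(e(e^{-1}(v))=v\).
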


Note that \cref{lem:shape} enables us to determine the shape of
\( R \) from \( \pi \) and reverse the construction, which will be done in the
following subsection to obtain the inverse map.

\subsection{The inverse of the insertion map} \label{sec:insertion_reverse}

In this subsection we describe the inverse map of $\Phi_I$: 
\[
\Phi^{-1}_I : \A(n+1,r+1)\to \RAT^+(n+1,r+1).
\] 
For $\pi\in \A(n+1,r+1)$, we reverse the procedure of
\cref{alg:insertion} to construct the corresponding RAT $\Phi^{-1}_I(\pi)$ by
sequentially filling a RAT of shape given by $\pi$ according to
\cref{lem:shape} with
arrows. 

\begin{algorithm}[Inverse of insertion]\label{alg:ins_reverse}
  For an assembl\'ee $\pi \in \A(n+1,r+1)$, the RAT \( R=\Phi^{-1}_I(\pi) \) is
  constructed as follows, using the $\epsilon$-word $e(\pi)$.
  \begin{enumerate}
  \item We first construct the shape of \( R \). For each
    \( i\in [n+1] \), the \( i \)th step of the southeast border is
    vertical, diagonal, or horizontal if \( i \) is an ascent, a
    special descent, or a regular descent of \( e(\pi) \), respectively. Label the border edges with $[n+1]$ and let $d_1<\cdots<d_{r+1}$ be the labels of the diagonal strips.
  \item Suppose that \(R\) has \(m\) non-horizontal strips and let
    \(\ell_1<\cdots<\ell_m\) be their labels from right to left. Then
    for \(i=1,\ldots,m\), we do the following.
\begin{itemize}
\item [(i)] If strip \(\ell_i\) is vertical, let \( j \) be the
  element to the right of \( \ell_i \) in \( e(\pi) \). Let
  \( r_1,\dots,r_k,\ell_i \) be the maximal consecutive subsequence of
  \( e \) ending with \( \ell_i \) such that
  \( j<r_1 < \cdots <r_k<\ell_i \). Add an up-arrow in strip \( j \)
  and a left-arrow in each of strips \( r_1,\dots,r_k \) at the
  intersections of strip \( \ell_i \). (If \( j=\epsilon_t \), then
  strip \( j \) means the diagonal strip \( d_t \).) Delete
  \( r_1,\dots,r_k, \ell_i \) from \( e(\pi) \).
\item [(ii)] If strip \(\ell_i\) is diagonal, let
  \( r_1,\dots,r_k,\ell_i \) be the maximal consecutive subsequence of
  \( w \) ending with \( \ell_i \) such that
  \( r_1 < \cdots <r_k<\ell_i \). Add a left-arrow in each of the strips
  \( r_1,\dots,r_k \) at the intersections of strip \( \ell_i \).
  Delete \( r_1,\dots,r_k \) from \( e(\pi) \).
\end{itemize}
\item Define \(\Phi^{-1}_I(\pi)\) to be the resulting object \(R\). 
  \end{enumerate}
\end{algorithm}

\begin{example}\label{ex:psiI}
  Let \( \pi= [11\,\,8\,\,1\,\,4\,\,3\,\,7]\,[9\,\,10\,\,5\,\,2\,\,6]\, [12\,\,13]\in \A(13,3) \). Then
  \( d_1=1 \), \( d_2=6 \), \( d_3=13 \), and the \( \epsilon \)-word
  is
  \[
    e(\pi) = 11\,\, 8\,\, 1\,\, \epsilon_1\,\, 9\,\,10\,\,5\,\, 2\,\,6\,\,\epsilon_2\,\, 12\,\,13\,\,\epsilon_3\,\, 4\,\, 3\,\, 7.
  \]
  We reverse the process in \Cref{exa:2} using the above
  algorithm. Then \( \Phi^{-1}_I(\pi) \) is the RAT in \Cref{fig:flat}.
\end{example}

The maps $\Phi_I$ and $\Phi^{-1}_I$ are well-defined and are inverses of
each other by construction, giving the following theorem.

\begin{thm}
The map \(\Phi_I: \RAT^+(n+1,r+1)\to \A(n+1,r+1)\) is a bijection with inverse map
\(\Phi^{-1}_I : \A(n+1,r+1) \to \RAT^+(n+1,r+1)\) described in \Cref{alg:ins_reverse}.
\end{thm}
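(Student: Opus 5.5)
The plan is to reduce the statement to \Cref{thm:CN} by means of the flattening bijection of \Cref{lem:6} and the identity of \Cref{lem:I-flat}. Since $\flat:\RAT^+(n+1,r+1)\to\AT^+(n+1,r+1)$ is a bijection and $e(\Phi_I(R))=\Phi^{\AT}_I(\flat(R))$, the insertion map factors as
\[
\Phi_I = e^{-1}\circ \Phi^{\AT}_I\circ \flat .
\]
So it suffices to identify the image of $\AT^+(n+1,r+1)$ under $\Phi^{\AT}_I$ (restricted from $\AT^+$ on the alphabet $\{\epsilon_1,\dots,\epsilon_{r+1},1,\dots,n+1\}$) with the set $E$ of $\epsilon$-words of assembl\'ees in $\A(n+1,r+1)$, and to check that $e$ is a bijection from $\A(n+1,r+1)$ onto $E$.

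\textbf{The set $E$.} The set $E$ consists of the permutations $v$ of the alphabet with three properties:
\begin{itemize}
\item the letters $\epsilon_1,\dots,\epsilon_{r+1}$ appear in increasing order;
\item $\epsilon_1$ is immediately preceded by $1$;
\item each $\epsilon_t$ with $t\ge2$ is immediately preceded by some element $d_t\in[n+1]$, with $d_2<\dots<d_{r+1}$.
\end{itemize}
The map $e$ is injective with image $E$, since the reconstruction described after \Cref{def:e-word} inverts it.

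\textbf{Showing the image of $\AT^+(n+1,r+1)$ lands in $E$.} Take $T\in\AT^+(n+1,r+1)$. Its rows $\epsilon_1,\dots,\epsilon_{r+1}$ contain no left-arrows, so they are free rows and are initialized in increasing order. Nothing is ever inserted immediately to the left of $\epsilon_t$ except the column containing the rightmost up-arrow in row $\epsilon_t$, namely $d_t$. Columns are processed in decreasing label order, so each later column inserted before $\epsilon_t$ lands left of $d_t$, and $d_t$ stays adjacent to $\epsilon_t$. The condition $d_1=1$ holds because the shape begins with a diagonal step. The conditions on the $d_t$ transfer directly.

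\textbf{Showing every $v\in E$ is hit.} Conversely, for $v\in E$ let $T=(\Phi^{\AT}_I)^{-1}(v)$. By \Cref{thm:CN}, the free rows of $T$ are the RL-minima of $v$. The $\epsilon$'s are all RL-minima, being smaller than every integer and increasing, so the top $r+1$ rows are free. The shape must start with $r+1$ vertical edges, because each $\epsilon_t$ is an ascent. Row $\epsilon_t$ has its rightmost up-arrow in column $d_t$, because $d_t$ is the letter immediately before $\epsilon_t$; the inverse algorithm places an up-arrow in row $j$ for the column immediately to its left.

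The main obstacle is this last step. It requires knowing that, in the inverse of $\Phi^{\AT}_I$, the \emph{rightmost} up-arrow in row $j$ comes from the letter immediately preceding $j$ in $v$. I would verify this by induction on the insertion steps: columns are inserted in decreasing label order, and each new column inserted to the left of $j$ goes left of the earlier ones. So the rightmost column (smallest label) with an up-arrow in row $j$ is the one inserted last, which is exactly the letter adjacent to $j$.

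Finally, \Cref{alg:ins_reverse} is the transport of $(\Phi^{\AT}_I)^{-1}$ through $\flat^{-1}$ and $e$. The shape assignment matches \Cref{lem:shape}, and the maximal increasing run with $j<r_1<\dots<r_k<\ell_i$ recovers the inserted block, as in \cite{Corteel2009}. Hence this algorithm is the inverse of $\Phi_I$.
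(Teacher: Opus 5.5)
Your route differs from the paper's, which simply asserts that \cref{alg:insertion} and \cref{alg:ins_reverse} undo each other step by step. You instead factor $\Phi_I=e^{-1}\circ\Phi^{\AT}_I\circ\flat$ using \Cref{lem:I-flat} and \Cref{lem:6}, and identify the image of $\flat$ under $\Phi^{\AT}_I$ with the set $E$ of $\epsilon$-words. This gives bijectivity directly from \Cref{thm:CN}. It also makes explicit a point the paper leaves implicit: \cref{alg:insertion} always outputs a genuine $\epsilon$-word.

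That part is sound, but your explanation of the key adjacency is backwards. Each block $r_1\cdots r_k\,c$ is inserted \emph{immediately} to the left of $j$, hence to the right of all blocks inserted there earlier. Columns are processed by decreasing label, so the one with the smallest label among those with an up-arrow in row $j$ (the rightmost one) comes last and ends up adjacent to $j$. There is no ``later column'' after $d_t$. For surjectivity you also need the converse: if $\epsilon_t$ is preceded by an integer, then row $\epsilon_t$ contains an up-arrow. Otherwise $\epsilon_t$ would still be preceded by $\epsilon_{t-1}$ or by nothing.

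\textbf{The gap.} Your last paragraph asserts, without checking, that \cref{alg:ins_reverse} is the transport of $(\Phi^{\AT}_I)^{-1}$. As printed, it is not.
\begin{itemize}
\item In $\flat(R)$ the diagonal $d_t$ is a column whose right neighbour in the word is $\epsilon_t$. The Corteel--Nadeau inverse therefore processes it with $j=\epsilon_t$ and deletes $d_t$ together with its run.
\item Step (ii) deletes only $r_1,\dots,r_k$, so $d_t$ stays in the word. A later vertical strip whose up-arrow lies in $d_t$ then sees $j=d_t$ instead of $\epsilon_t$; indeed the clause ``if $j=\epsilon_t$'' in (i) can never apply.
\item The condition $j<r_1$ then drops left-arrows in rows $r<d_t$.
\end{itemize}
Concretely, take shape $1210$ with an up-arrow in $(3,4)$ and a left-arrow in $(2,4)$. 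Then $e(\Phi_I(R))=1\,\epsilon_1\,2\,4\,3\,\epsilon_2$. But step (i) for $\ell=4$ gets $j=3$, rejects $r=2$, and returns the tableau without the left-arrow.

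The runs in (ii) also need the lower bound $\epsilon_t<r_1$. Without it, in \Cref{ex:psiI} the run ending at $13$ would absorb $\epsilon_2$.

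If (ii) is read as ``$j=\epsilon_t$, $\epsilon_t<r_1<\dots<r_k<\ell_i$, delete $r_1,\dots,r_k,\ell_i$'', the algorithm is exactly the transported inverse, and your argument proves the theorem. You need to state and verify this identification explicitly rather than invoke Corteel--Nadeau.
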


\subsection{An application of the insertion map}
\label{sec:z_1-ldots}

In this subsection, as an application of the insertion map
\( \Phi_I \), we prove the following generalization of the enumeration
formula \eqref{Z_nr}.

\begin{thm}\label{thm:MV_gen}
We have
\[
\sum_{R \in \RAT(n,r)} \alpha^{\frow(R)} \beta^{\fcol(R)}z_1^{a_1(R)}\cdots z_{r}^{a_{r}(R)} = 
{n \choose r} (\alpha+\beta+z_1+ \dots + z_{r})_{n-r},
\]
where \(a_i(R)\) is the number of arrows in the \(i\)th topmost diagonal
strip in \(R\) for \(1\leq i \leq r\).
\end{thm}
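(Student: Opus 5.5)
We plan to pass to extended tableaux and read the statistics off the $\epsilon$-word produced by the insertion map. The map $R\mapsto R^+$ is a bijection $\RAT(n,r)\to\RAT^+(n+1,r+1)$. Label the southeast border of $R^+$ by $[n+1]$, so that the new top diagonal strip is $d_1=1$ and the $i$th topmost diagonal strip of $R$ becomes the strip $d_{i+1}$ of $R^+$. The statistics translate as follows.
\begin{itemize}
\item The free rows of $R$ are exactly the free rows of $R^+$.
\item The free columns of $R$ are exactly the columns of $R^+$ whose up-arrow lies in strip $d_1$.
\item $a_i(R)$ is the number of arrows of $R^+$ in strip $d_{i+1}$.
\end{itemize}
The arrows in strip $d_{i+1}$ are of two kinds: up-arrows of vertical strips whose target in \cref{alg:insertion} is $j=\epsilon_{i+1}$, and left-arrows at the intersections of $d_{i+1}$ with rows $r_1,\dots,r_k$. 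We then apply $\Phi_I$ and work with $v=e(\Phi_I(R^+))$. By \cref{lem:shape}, the free rows are the non-$\epsilon$ RL-minima of $v$, and the diagonal labels $d_2,\dots,d_{r+1}$ are the special descents.

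The key step is to reinterpret \cref{alg:insertion}, equivalently the $\epsilon$-word itself, as building $v$ letter by letter. We start from $\epsilon_1\cdots\epsilon_{r+1}$ and insert the non-diagonal labels in decreasing order; the diagonal labels $d_{i+1}$ are attached as the letters immediately preceding $\epsilon_{i+1}$. When a label $x$ is inserted and $k$ larger labels are already present, it has $k+r+2$ possible slots, minus a correction for the attached diagonal letters. Each slot corresponds to a single role for $x$:
\begin{itemize}
\item the last slot makes $x$ a free row, with weight $\alpha$;
\item the slot immediately before the block ending in $\epsilon_1$ makes $x$ contribute to a free column, or equivalently an arrow in $d_1$, with weight $\beta$;
\item the slot immediately before the block ending in $\epsilon_{i+1}$ makes $x$ contribute exactly one arrow to strip $d_{i+1}$, with weight $z_i$;
\item any slot directly after a previously inserted larger letter gives an arrow not in a diagonal strip, with weight $1$.
\end{itemize}
I would verify this weight assignment from the description of $\Phi_I^{-1}$ in \cref{alg:ins_reverse}. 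There, each letter is consumed exactly once, either as an up-arrow target or as a left-arrow $r_s$, and whether the arrow lands in strip $d_{i+1}$ is decided by the $\epsilon$ that is next to its right at the relevant moment.

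Granting this, each non-diagonal label contributes a factor $(\alpha+\beta+z_1+\dots+z_r+c)$, where $c$ counts the weight-$1$ slots. Ordering the $n-r$ non-diagonal letters of $[2,n+1]$ and checking that $c$ runs through $0,1,\dots,n-r-1$ gives the rising factorial $(\alpha+\beta+z_1+\cdots+z_r)_{n-r}$. The choice of which $r$ labels of $[2,n+1]$ are diagonal, together with how the diagonal letters interleave, should account for the factor $\binom nr$. A useful consistency check is that setting $z_i=1$ must recover \eqref{Z_nr}, and the count at $\alpha=\beta=z_i=1$ must match $|\A(n+1,r+1)|$.

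The main obstacle is the slot/weight correspondence: each inserted letter must receive exactly one weight, and arrows coming from left-arrow chains $r_1\cdots r_k\ell_i$ inserted together must be attributed correctly to diagonal strips. If the letter-by-letter order does not work directly, I would instead use \cref{lem:I-flat} and the flattening. There the arrows of strip $d_{i+1}$ become arrows in row $\epsilon_{i+1}$ of $\flat(R^+)$, and I would apply Corteel--Nadeau's refined count of $\AT^+$ by arrows per top row, relying on \cref{thm:CN}.
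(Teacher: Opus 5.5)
Your reduction is fine: passing to $R^+$, reading the free rows as the non-$\epsilon$ RL-minima of $v=e(\Phi_I(R^+))$, and splitting the arrows of strip $d_{i+1}$ into up- and left-arrows all match the paper. The gap is in the step that carries the proof, namely the slot/weight rule for building $v$ itself by inserting the non-diagonal letters in decreasing order. As stated, it is false.

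Under decreasing insertion the new letter is smaller than every non-diagonal letter already present, so it destroys every RL-minimum to its left. RL-minimum counts therefore do not accumulate additively: inserting $3$ into a tail $4\,5$ at its three possible positions changes the number of RL-minima by $+1$, $0$, $-1$.

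Concretely, take $r=0$, $n=2$. In $v=1\,\epsilon_1\,3\,2$ the letter $3$ entered through the last slot, yet the only free row is $2$ (here $3$ is a column whose up-arrow lies in row $2$). By contrast, $v=1\,\epsilon_1\,2\,3$ has two free rows although $2$ did not use the last slot.

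The same failure occurs for left-arrows in a diagonal strip. Take $d_2=4$. In $1\,\epsilon_1\,3\,2\,4\,\epsilon_2$ the letter $3$ entered immediately before $d_2\,\epsilon_2$ but contributes no arrow to strip $4$, so this word has one arrow there. The word $1\,\epsilon_1\,2\,3\,4\,\epsilon_2$ has two.

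Two smaller problems:
\begin{itemize}
\item As written, your weight-$1$ slots (``directly after a previously inserted letter'') overlap the special slots and miss the first slot and the slots right after each $\epsilon_j$. They should be the slots immediately before an already inserted letter.
\item Your fallback does not work either. Under flattening, the left-arrows of strip $d_{i+1}$ land in column $d_{i+1}$, not in row $\epsilon_{i+1}$. Also, \cref{thm:CN} gives no refined count by arrows in the top rows.
\end{itemize}

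What is missing are the two ingredients of the paper's proof.
\begin{itemize}
\item \cref{lem:diag-strip}. Write the segment of $v$ ending with $d_i$ as $X_iY_id_i$, where $X_i$ stops at the last letter larger than $d_i$. Then strip $d_i$ contains $\RLmax(X_i)$ up-arrows and $\RLmin(Y_i)$ left-arrows. This must be proved by tracking \cref{alg:insertion}; it is not a formal consequence of \cref{lem:shape}.
\item An involution on $\epsilon$-words that reverses the relative order of the values inside each $Y_i$ and inside the tail $B_{r+2}$; call the results $Y_i'$ and $B_{r+2}'$. It turns every RL-minimum statistic into an RL-maximum statistic. Moreover $\RLmax(X_i)+\RLmax(Y_i')=\RLmax(X_iY_i')$, since every letter of $Y_i$ is smaller than the last letter of $X_i$.
\end{itemize}

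Your rule for strip $d_1$ happens to be correct only because $d_1=1$ forces $Y_1=\emptyset$.

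On the transformed word, your slot rule becomes exactly right. Insert in decreasing order. The end of each of the $r+2$ segments (immediately before some $d_j$, or the end of the word) carries $\beta$, $z_1,\dots,z_r$ or $\alpha$. The slot in front of each already inserted letter carries $1$. This gives the factor $\alpha+\beta+z_1+\cdots+z_r+k$ when $k$ letters are already present. The $\binom nr$ is just the choice of $\{d_2,\dots,d_{r+1}\}\subseteq[2,n+1]$, since the positions of the diagonal letters are forced.
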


Recall the bijection \( R\mapsto R^+ \) from \( \RAT(n,r) \) to
\( \RAT^+(n+1,r+1) \). Under this bijection, a free column of \( R \)
corresponds to a column that has an up-arrow in diagonal strip
\( 1 \). Thus, by replacing the parameters
\( (\beta,z_1,\dots,z_r) \) by \( (z_1,z_2,\dots,z_{r+1}) \), we can
restate \Cref{thm:MV_gen} as follows:
\begin{equation}\label{eq:7}
\sum_{R \in \RAT^+(n+1,r+1)} \alpha^{\frow(R)} z_1^{a_1(R)}z_2^{a_2(R)}\cdots z_{r+1}^{a_{r+1}(R)} = 
{n \choose r} (\alpha+z_1+z_2+ \dots + z_{r+1})_{n-r}.
\end{equation}

To prove \eqref{eq:7}, we need a definition and a lemma. Let
\( B=b_1\cdots b_k \) be a word of integers. The \emph{decomposition}
of \(B\) is the triple \((X,Y,b_k)\) defined as follows:
\( X=b_1 \cdots b_t \) and \( Y= b_{t+1} \cdots b_{k-1} \), where
\( t \) is the largest integer with \( 1\le t\le k-1 \) such that
\( b_t>b_k \); if there is no such \( t \), then \( X=\emptyset \) and
\( Y=b_1 \cdots b_{k-1} \). Note that every integer in \( Y \) is
smaller than \( b_k \).

\begin{lemma}\label{lem:diag-strip}
  Let \( R\in \RAT^+(n+1,r+1) \). Suppose that the \( \epsilon \)-word of
  \( \pi = \Phi_I(R)\in \A(n+1,r+1) \) is given by
  \begin{equation}\label{eq:10}
    e(\pi)= B_1 \epsilon_1 B_2 \epsilon_2 \cdots B_{r+1} \epsilon_{r+1} B_{r+2}.
  \end{equation}
  For \( i\in [r+1] \), let \( (X_i,Y_i,d_i) \) be the decomposition
  of \( B_i \). Then the number of up- (resp.~left-) arrows in the
  diagonal strip \( d_i \) is equal to \( \RLmax(X_i) \)
  (resp.~\( \RLmin(Y_i) \)). In particular, the total number of arrows
  in the diagonal strip \( d_i \) is equal to \( \RLmax(X_i)+\RLmin(Y_i) \).
\end{lemma}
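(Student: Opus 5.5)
Tracking the insertion procedure of \cref{alg:insertion} on the \(\epsilon\)-word reduces the lemma to following one diagonal strip \(d_i\) through the process. The first step is to describe \(B_i\). By \cref{lem:shape}, \(d_i\) is a special descent, immediately followed by \(\epsilon_i\). So \(B_i\) ends with \(d_i\), and since no \(\epsilon_j\) occurs inside \(B_i\), everything in \(B_i\) lies within a stretch of \(v\) formed between consecutive \(\epsilon\)'s.

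The arrows in strip \(d_i\) arise at two moments. Left-arrows in \(d_i\) are created when strip \(\ell=d_i\) is processed. At that step we insert \(r_1<\cdots<r_k<d_i\) immediately to the left of \(\epsilon_i\), where the \(r_j\) label rows with a left-arrow in \(d_i\). Up-arrows in \(d_i\) are created when a vertical strip \(\ell_s\) to the left of \(d_i\) has its up-arrow in \(d_i\). Then \(j=\epsilon_i\), and the block \(r_1\cdots r_k\ell_s\) is inserted immediately left of \(\epsilon_i\). Strips are processed from left to right, i.e.\ in decreasing label order, so \(d_i\) itself is processed before all such \(\ell_s<d_i\). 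Later insertions elsewhere (left of some other \(j\)) may land inside \(B_i\) but only to the left of an existing element of \(B_i\).

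The main step is to show inductively that after all insertions, \(B_i=X_iY_id_i\) where:
\begin{itemize}
\item \(Y_i\) is obtained from the inserted word \(r_1\cdots r_k\) (all entries \(<d_i\)) by later insertions to its left;
\item the RL-minima of \(Y_i\) are exactly \(r_1,\dots,r_k\);
\item \(X_i\) is a concatenation of blocks \(\cdots r'\cdots\ell_s\) whose last letters \(\ell_s\) are its RL-maxima.
\end{itemize}
Here \(\ell_s>d_i\) holds because column \(\ell_s\) lies left of \(d_i\), hence has a larger label.

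The invariant I would use is the standard one from \cref{thm:CN}: whenever something is inserted to the left of an element \(j\), everything inserted is larger than \(j\), since the inserted labels index strips intersecting the column and that column lies left of strip \(j\). It follows that insertions never destroy an existing RL-minimum (they land left of and above it), and never create new RL-maxima beyond the column letters \(\ell_s\), since a block ending with \(\ell_s\) is inserted to the left of a previously inserted larger \(\ell_{s'}\)-block or of \(\epsilon_i\).

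Two consequences follow.
\begin{itemize}
\item The last element of \(B_i\) before \(d_i\) exceeding \(d_i\) is the last letter \(\ell_s\) of the most recently inserted up-arrow block, so the decomposition point \(t\) separates \(X_i\) and \(Y_i\) exactly as claimed.
\item The RL-maxima of \(X_i\) are exactly the \(\ell_s\). Each later up-arrow block is inserted left of \(\epsilon_i\), hence right of earlier blocks, and has last letter smaller than all earlier \(\ell\)'s because processing is in decreasing order. This makes the \(\ell_s\) a decreasing sequence read left to right with the other letters dominated.
\end{itemize}
Counting gives \(\RLmax(X_i)\) up-arrows and \(\RLmin(Y_i)=k\) left-arrows.

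The delicate step, and the main obstacle, is verifying that later insertions into the interior of \(B_i\) (before some \(r_j\) or some letter of an \(\ell_s\)-block) cannot disturb these extremal sets. In particular, insertions before some \(r_j\) must not create an element \(>d_i\) inside \(Y_i\), which would shift \(t\). I would handle this via the inequality that entries inserted left of \(x\) are bounded by the column label currently being processed, which is smaller than every previously processed \(\ell\), including \(d_i\). Thus \(Y_i\) stays below \(d_i\), and RL-maxima of \(X_i\) stay at the \(\ell_s\). If needed, I would translate to \(\flat(R)\) via \cref{lem:I-flat} and reuse the corresponding facts for \(\Phi_I^{\AT}\) on the row \(\epsilon_i\).
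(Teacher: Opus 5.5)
Your plan follows the paper's proof. You track $B_i$ through \cref{alg:insertion} and show three things. First, the blocks ending in the columns $c_1>\cdots>c_m$ that carry an up-arrow in strip $d_i$ build $X_i$, whose RL-maxima are exactly the $c_s$. Second, the step for $d_i$ appends $r_1\cdots r_k\,d_i$, which gives $Y_i$ with RL-minima $r_1,\dots,r_k$. Third, later insertions preserve both sets. This works because their letters are bounded by the strip currently being processed, and they exceed the letter $j$ they are placed before.

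One stated fact is wrong and must be corrected. You write that $d_i$ is processed before all such $\ell_s<d_i$. A column meets diagonal strip $d_i$ only if its label exceeds $d_i$, as you note yourself a few lines later. Since strips are processed in decreasing label order, all these columns are handled \emph{before} $d_i$.

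This order is essential to your argument. Each up-arrow block is inserted immediately left of $\epsilon_i$. If $d_i$ came first, those blocks would land between $d_i$ and $\epsilon_i$. Then $B_i$ would not end in $d_i$, and it would not split as $X_iY_id_i$ with $X_i$ made of the up-arrow blocks. Your description of $B_i$ (the ``most recently inserted up-arrow block'' sitting just before $Y_i$) already presupposes the correct order. With that order, note also that after the step for $d_i$ nothing is ever inserted immediately left of $\epsilon_i$ or of $d_i$, because $j$ is never a diagonal label. Then your argument coincides with the paper's.

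Two smaller points:
\begin{itemize}
\item The inequality $r_a>j$ for the left-arrow rows of column $\ell$ does not follow from the column lying left of strip $j$; that only gives $\ell>j$. It follows from the non-pointing condition: the left-arrows of column $\ell$ must lie below its up-arrow.
\item The fact that no new RL-maxima appear in $X_i$ comes from the decreasing-order bound in your last paragraph, not from the invariant that inserted letters exceed $j$.
\end{itemize}
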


\begin{proof}
  Let \( 1=d_1<d_2<\dots<d_{r+1}\le n+1 \) be the labels of the
  diagonal strips of \( R \). Recall the construction of the word
  \( e(\pi) \) in \Cref{alg:insertion}. At each step of the process, \( e(\pi) \) is
  written in the form of \eqref{eq:10}, where each word \( B_i \) may be empty.
  At each stage, if $d_i\in B_i$, we write \( B_i = X_iY_id_i \), where
  \( (X_i,Y_i,d_i) \) is the decomposition of \( B_i \), and if $d_i\not\in B_i$ we write \( B_i=X_i \).
 
  Fix \( i\in [r+1] \), and let \(c_1>\cdots>c_m\) be the labels of
  the columns in \( R \) that contain an up-arrow in diagonal strip
  \( d_i \). By the construction in \Cref{alg:insertion}, we have
  \( B_i=X_i=\emptyset \) before we process strip \( c_1 \). At the
  stage of strip \( c_1 \), \( B_i=X_i \) becomes a nonempty word,
  where \( c_1 \) is the last element and also the largest element in
  \( X_i \), i.e., \( c_1 \) is the only RL-maximum of \( X_i \).
  During the stages between strips \(c_1\) and \( c_2 \), \( B_i=X_i \) may be modified: with each modification,
  \( c_1 \) remains at the end of \( X_i \), and only
  integers smaller than \( c_1 \) may be inserted into \( X_i \), thus maintaining $c_1$ as the only RL-maximum of $X_1$. At the stage of strip
  \( c_2 \), we add a word whose last element is \( c_2 \) and whose
  remaining elements are less than \( c_2 \) to the end of
  \( B_i=X_i \). Then \( c_1 \) and \( c_2 \) are the only RL-maxima
  of \( X_i \). Continuing in this way, after strip \( c_m \), we
  obtain that \( B_i=X_i \) and its RL-maxima are \( c_1,\dots,c_m \).
  Note that \( c_m>d_i \). Now let \(r_1<\cdots<r_\ell\) be the labels
  of the rows in \( R \) that contain a left-arrow in diagonal strip
  \( d_i \). Then, at the stage of strip \( d_i \), we insert
  \( r_1\, \cdots\, r_\ell\, d_i \) at the end of \( B_i \) so that
  \( B_i=X_iY_id_i \) and \( Y_i=r_1 \cdots r_\ell \). Note that
  \( r_1,\dots,r_\ell \) are the RL-minima of \( Y_i \). After this
  stage, the word \( B_i=X_iY_id_i \) may be modified further, but with each modification, the
  RL-maxima of \( X_i \) remain \( c_1,\dots,c_m \) and the
  RL-minima of \( Y_i \) remain \( r_1,\dots,r_\ell \). This
  completes the proof.
\end{proof}

We are now ready to prove \eqref{eq:7}, which is equivalent to \Cref{thm:MV_gen}.

\begin{proof}[Proof of \eqref{eq:7}]
  Let \( R\in \RAT^+(n+1,r+1) \) and
  \( \pi = \Phi_I(R)\in \A(n+1,r+1) \). Then we can write the
  \( \epsilon \)-word \( e(\pi) \) of \( \pi \) as
  \[
    e=e(\pi) = B_1\, \epsilon_1\, B_2\, \epsilon_2\, \cdots\, B_{r+1}\, \epsilon_{r+1}\, B_{r+2}.
  \]
  For \( i\in [r+1] \), let \((X_i,Y_i,d_i)\) be the decomposition of
  \(B_i\), so that
  \begin{equation}\label{eq:9}
    e= X_1\,Y_1\,d_1\, \epsilon_1\, X_2\,Y_2\,d_2\, \epsilon_2\, \cdots\, X_{r+1}\,Y_{r+1}\,d_{r+1}\, \epsilon_{r+1}\, B_{r+2}.
  \end{equation}
  By \Cref{thm:CN}, we have
  \[
    \frow(R) = \RLmin(e) -r-1 = \RLmin(B_{r+2}).
  \]
  Hence, by \Cref{lem:diag-strip}, it suffices to show that
\begin{equation}\label{eq:EA_gf}
  \sum_{e \in E} \alpha^{\RLmin(B_{r+2})} \prod_{i=1}^{r+1}  z_i^{\RLmax(X_i)+\RLmin(Y_i)}= 
\binom{n}{r} (\alpha+z_1+ \dots + z_{r+1})_{n-r},
\end{equation}
where \( E=\{e(\pi):\pi\in\A(n+1,r+1)\} \)
with the notation in \eqref{eq:9} used for \( e\in E \).

For a permutation \( u=u_1 \cdots u_k \) on a set
\( \{b_1 < \cdots < b_k\} \), let \( u'=u'_1 \cdots u'_k \), where
\( u'_i = b_{k+1-j} \) if \( u_i=b_j \). Then
\( \RLmin(u) = \RLmax(u') \).
Define 
\begin{equation}\label{eq:8}
  \tilde{e}= U_1\,d_1\, \epsilon_1\, U_2\,d_2\, \epsilon_2\, \cdots\, U_{r+1}\,d_{r+1}\, \epsilon_{r+1}\, U_{r+2},
\end{equation}
where \( U_i=X_iY'_i \) for \( i\in [r+1] \) and
\( U_{r+2}=B'_{r+2} \).
Note that 
\[
  \RLmax(X_i)+\RLmin(Y_i) = \RLmax(X_i)+\RLmax(Y'_i) = \RLmax(U_i),
\]
and \( \RLmin(B_{r+2}) = \RLmax(U_{r+2}) \). Since the map
\( e\mapsto \tilde{e} \) is an involution on \( E \) with these
properties, \eqref{eq:EA_gf} is equivalent to
\begin{equation}\label{eq:EA_gf2}
  \sum_{\tilde{e} \in E} \alpha^{\RLmax(U_{r+2})} \prod_{i=1}^{r+1}  z_i^{\RLmax(U_i)}= 
\binom{n}{r} (\alpha+z_1+ \dots + z_{r+1})_{n-r},
\end{equation}
where we use the notation in \eqref{eq:8} for \( \tilde{e}\in E \).

Now we prove \eqref{eq:EA_gf2} by giving an algorithm to construct the
\( \epsilon \)-words \( \tilde{e}\in E \) in the form \eqref{eq:8}.
First, choose the integers \( 1=d_1<d_2<\dots<d_{r+1}\le n+1 \), which
can be done in \( \binom{n}{r} \) ways. We initialize \( \tilde{e} \)
as in \eqref{eq:8}, where each \( U_i \) is the empty word. Let
\( k_1>\dots>k_{n-r} \) be the integers in
\( [n+1]\setminus\{d_1,\dots,d_{r+1}\} \). For each
\( i=1,2,\dots,n-r \), we insert \( k_i \) into one of
\( U_1,\dots,U_{r+2} \), which can be done in \( r+i+1 \) ways. For a
fixed \( j\in [r+2] \), \( \RLmax(U_j) \) is increased by \( 1 \) if
and only if \( k_i \) is added at the end of \( U_j \). This shows
that the insertion of \( k_i \) contributes the factor
\( \alpha+z_1 + \cdots + z_{r+1} + i-1 \) to the left-hand side of
\eqref{eq:EA_gf2}. Therefore, we obtain \eqref{eq:EA_gf2}, which
completes the proof.
\end{proof}

\subsection{A zigzag map from RAT to assembl\'ees}
\label{sec:phi_Z}

In this subsection, we give a zigzag map \( \Phi_Z \) on RAT, which
generalizes the zigzag map \( \Phi_Z^{\AT} \) on alternative tableaux
introduced in \cref{def:zigzag AT}. We then show that $\Phi_Z$ agrees
with the insertion map $\Phi_I$ up to a simple transformation,
paralleling \cref{eq:CK}.

Since it is most natural to define a zigzag map so that its images are
permutations, we will represent assembl\'ees as permutations. To this
end, we make the following definition.
\begin{defn}\label{def:A-epsilon}
Define \( \A^\epsilon(n+1,r+1) \) to be the set of
permutations
\( \pi \in S_{\{\epsilon_1,\ldots,\epsilon_{r+1},1,\ldots,n+1\}} \)
satisfying the following conditions:
\begin{enumerate}
\item Every cycle of \( \pi \) contains at most one \( \epsilon_i \).
\item If \( d_i = \pi^{-1}(\epsilon_i) \), then
  \( 1=d_1 < \cdots < d_{r+1} \).
\end{enumerate}
\end{defn}
Recalling \cref{def:phi}, as $\epsilon_1<\cdots<\epsilon_{r+1}$ are necessarily RL-minima of $e(\pi)$ for $\pi\in\A(n+1,r+1)$, we obtain that
\[
  \phi\circ e:\A(n+1,r+1)\to \A^\epsilon(n+1,r+1)
\]
is a bijection whose inverse is \(e^{-1}\circ\phi^{-1}\). See \cref{ex:phi of e}.

\begin{example}\label{ex:phi of e}
  Let
  \( \pi= [11\,\,8\,\,1\,\,4\,\,3\,\,7]\,[9\,\,10\,\,5\,\,2\,\,6]\,
  [12\,\,13]\in \A(13,3) \). Then the \( \epsilon \)-word of \( \pi \)
  is
  \[
    e(\pi) = 11\, 8\, 1\, \epsilon_1\, 9\,10\,5\, 2\,6\,\epsilon_2\,
    12\,13\,\epsilon_3\, 4\, 3\, 7.
  \]
  The RL-minima of \( e(\pi) \) are
  \( \epsilon_1,\epsilon_2,\epsilon_3, 3, 7 \). Hence, we have
 \[
   \phi(e(\pi))=(11, 8, 1, \epsilon_1)
   (9,10,5, 2,6,\epsilon_2)
   (12,13,\epsilon_3) (4, 3) (7) \in \A^\epsilon(13,3).
  \] 
\end{example}

For a tile \( \tau \) with its edges $\{N(\tau),E(\tau),S(\tau),W(\tau)\}$ defined as in \cref{fig:NESW}, we will identify the N-side \( N(\tau) \), S-side \( S(\tau) \), E-side \( E(\tau) \), and W-side \( W(\tau) \) of \( \tau \) with their midpoints, respectively.

\begin{defn}[Zigzag paths]
  \label{def:zigzag}
  Let \( T\in \RAT^+ \). Let \( U \) be a set of cells in \( T \). A
  cell of \( T \) is called a \emph{turning cell} (with respect to
  \( U \)) if it is contained in \( U \), and a \emph{straight cell}
  (with respect to \( U \)) otherwise. A \emph{zigzag path} (with
  respect to \( U \)) is a sequence \( (u_0,u_1,\dots,u_k) \) of
  points satisfying the following conditions:
  \begin{itemize}
  \item The first point \( u_0 \) is the midpoint of the north or west
    side of a tile on the northwest border of \( T \), and the last
    point \( u_k \) is the midpoint of the south or east side of a
    tile on the southeast border of \( T \).
  \item For each \( i\in [k] \), the consecutive points \( u_{i-1} \)
    and \( u_{i} \) lie on two sides of a tile \( \tau \). If
    \( \tau \) is a straight cell, then either
    \( (u_{i-1},u_i) = (N(\tau),S(\tau)) \) or
    \( (u_{i-1},u_i) = (W(\tau),E(\tau)) \). If \( \tau \) is a
    turning cell, then either \( (u_{i-1},u_i) = (N(\tau),E(\tau)) \)
    or \( (u_{i-1},u_i) = (W(\tau),S(\tau)) \).
  \end{itemize}
  In particular, if \( U \) is the set of cells of \( T \) containing
  an arrow, we call a zigzag path with respect to \( U \) an
  \emph{arrow-zigzag path}. If \( U \) is the set of cells of \( T \)
  that contain an up-arrow or that are free cells, we call a zigzag
  path with respect to \( U \) an \emph{up-free-zigzag path}.
\end{defn}

We can visualize a zigzag path as a path moving southeast that enters from the
northwest border and exits from the southeast border, and changes direction at each turning cell. See \Cref{fig:zigzag2} for an example of an arrow-zigzag path. In this section, we only consider arrow-zigzag paths. Up-free-zigzag paths will be considered in
\Cref{sec:zigzag-RPT}.

\begin{figure}
  \centering
\begin{tikzpicture}[scale=1.5]
    \short{6}{7} \short{5}{7} \short{4}{7} \short{3}{7} \short{2}{7} \cell{6}{6} \cell{5}{6} \tall{4}{5} \cell{3}{5} \cell{2}{5} \cell{1}{5} \tall{0}{4} \cell{6}{5} \cell{5}{5} \tall{4}{4} \cell{3}{4} \cell{2}{4} \cell{1}{4} \tall{0}{3} \short{3}{6} \short{2}{6} \short{1}{6} \cell{3}{3} \cell{2}{3} \cell{1}{3} \tall{0}{2} \cell{2}{2} \cell{1}{2} \tall{0}{1} \tall{0}{0} \node at (0.600000000000000, 0.300000000000000) {\tiny 13};
\node at (1.20000000000000, 1.50000000000000) {\tiny 12};
\node at (1.50000000000000, 1.80000000000000) {\tiny 11};
\node at (2.50000000000000, 1.80000000000000) {\tiny 10};
\node at (3.20000000000000, 2.50000000000000) {\tiny 9};
\node at (3.50000000000000, 2.80000000000000) {\tiny 8};
\node at (4.20000000000000, 3.50000000000000) {\tiny 7};
\node at (4.60000000000000, 4.30000000000000) {\tiny 6};
\node at (5.50000000000000, 4.80000000000000) {\tiny 5};
\node at (6.50000000000000, 4.80000000000000) {\tiny 4};
\node at (7.20000000000000, 5.50000000000000) {\tiny 3};
\node at (7.20000000000000, 6.50000000000000) {\tiny 2};
\node at (7.60000000000000, 7.30000000000000) {\tiny 1};
 \slantUpArr{2}{7} \slantUpArr{4}{7} \slantUpArr{2}{6} \upArr{5}{6} \upArr{6}{5} \slantLeftArr{4}{5} \leftArr{2}{2} \slantLeftArr{0}{0}
 \draw[->] (9,4) -- (10,4);
 \node at (9.5,4.5) {flat};
 \phantom{\node at (0.500000000000000, -0.200000000000000) {\tiny 13};}
  \draw[very thick,red,dashed] (1.5,6.5) to (2.5,6.5) to[out=0, in=45] (2.5,6) to (2.5,3) to[out=270, in=180] (3,2.5);
  \draw[very thick,purple] (4.5,8) to (3.5,7) to[out=225, in=180] (3.5,6.5) to (4.5,6.5) to[out=270, in=225] (5,6.5)
  to[out=0,in=90] (5.5,6) to (5.5,5);
  \draw[very thick,blue,dotted] (0,4.5) to (1,5.5) to (4,5.5) to[out=45,in=90] (4.5,5.5) to (4.5,4.5);
  \node at (-.3,.5) {\tiny 12};
  \node at (-.3,1.5) {\tiny 9};
  \node at (-.3,2.5) {\tiny 7};
  \node at (-.3,3.5) {\tiny 3};
  \node at (-.3,4.5) {\tiny 2};
  \node at (.2,5.7) {\tiny \( 13 \)};
  \node at (1.2,6.7) {\tiny \( 6 \)};
  \node at (2.2,7.7) {\tiny \( 1 \)};
  \node at (3.5,8.3) {\tiny 11};
  \node at (4.5,8.3) {\tiny 10};
  \node at (5.5,8.3) {\tiny 8};
  \node at (6.5,8.3) {\tiny 5};
  \node at (7.5,8.3) {\tiny 4};
\end{tikzpicture}
\qquad 
\begin{tikzpicture}[scale=1.5]
\cell{7}{7} \cells{6}{7} \cells{5}{7} \cell{4}{7} \cells{3}{7} \cells{2}{7} \cells{1}{7} \cell{0}{7} \cell{7}{6} \cell{6}{6} \cell{5}{6} \cell{4}{6} \cells{3}{6} \cells{2}{6} \cells{1}{6} \cell{0}{6} \cell{7}{5} \cell{6}{5} \cell{5}{5} \cell{4}{5} \cell{3}{5} \cell{2}{5} \cell{1}{5} \cell{0}{5} \cell{6}{4} \cell{5}{4} \cellt{4}{4} \cell{3}{4} \cell{2}{4} \cell{1}{4} \cellt{0}{4} \cell{6}{3} \cell{5}{3} \cellt{4}{3} \cell{3}{3} \cell{2}{3} \cell{1}{3} \cellt{0}{3} \cell{3}{2} \cell{2}{2} \cell{1}{2} \cellt{0}{2} \cell{2}{1} \cell{1}{1} \cellt{0}{1} \cellt{0}{0} \node at (0.500000000000000, -0.200000000000000) {\tiny 13};
\node at (1.20000000000000, 0.500000000000000) {\tiny 12};
\node at (1.50000000000000, 0.800000000000000) {\tiny 11};
\node at (2.50000000000000, 0.800000000000000) {\tiny 10};
\node at (3.20000000000000, 1.50000000000000) {\tiny 9};
\node at (3.50000000000000, 1.80000000000000) {\tiny 8};
\node at (4.20000000000000, 2.50000000000000) {\tiny 7};
\node at (4.50000000000000, 2.80000000000000) {\tiny 6};
\node at (5.50000000000000, 2.80000000000000) {\tiny 5};
\node at (6.50000000000000, 2.80000000000000) {\tiny 4};
\node at (7.20000000000000, 3.50000000000000) {\tiny 3};
\node at (7.20000000000000, 4.50000000000000) {\tiny 2};
\node at (7.50000000000000, 4.80000000000000) {\tiny 1};
\node at (8.40000000000000, 5.50000000000000) {\tiny \( \epsilon_3 \)};
\node at (8.40000000000000, 6.50000000000000) {\tiny \( \epsilon_2 \)};
\node at (8.40000000000000, 7.50000000000000) {\tiny \( \epsilon_1 \)};
 \upArr{7}{7} \upArr{3}{7} \upArr{1}{7} \upArr{4}{6} \upArr{2}{6} \upArr{0}{5} \upArr{5}{4} \upArr{6}{3} \leftArr{4}{4} \leftArr{2}{1} \leftArr{0}{0}
 \draw[very thick,purple] (2.5,8) to (2.5,7) to[out=270, in=180] (3,6.5) to (4,6.5) to[out=0, in=90] (4.5,6)
 to (4.5,5) to[out=270,in=180] (5,4.5) to[out=0,in=90] (5.5,4) to (5.5,3);
  \draw[very thick,red,dashed] (0,6.5) to (2,6.5) to[out=0, in=90] (2.5,6) to (2.5,2) to[out=270, in=180] (3,1.5);
  \draw[very thick,blue,dotted] (0,4.5) to (4,4.5) to[out=0,in=90] (4.5,4) to (4.5,3);
  \node at (-.3,.5) {\tiny 12};
  \node at (-.3,1.5) {\tiny 9};
  \node at (-.3,2.5) {\tiny 7};
  \node at (-.3,3.5) {\tiny 3};
  \node at (-.3,4.5) {\tiny 2};
  \node at (-.4,5.5) {\tiny \( \epsilon_3 \)};
  \node at (-.4,6.5) {\tiny \( \epsilon_2 \)};
  \node at (-.4,7.5) {\tiny \( \epsilon_1 \)};
  \node at (.5,8.3) {\tiny 13};
  \node at (1.5,8.3) {\tiny 11};
  \node at (2.5,8.3) {\tiny 10};
  \node at (3.5,8.3) {\tiny 8};
  \node at (4.5,8.3) {\tiny 6};
  \node at (5.5,8.3) {\tiny 5};
  \node at (6.5,8.3) {\tiny 4};
  \node at (7.5,8.3) {\tiny 1};
\end{tikzpicture}
\caption{The left diagram show the arrow-zigzag path from \( 10 \) to
  \( 5 \), the arrow-zigzag path from \( 2 \) to \( 6 \), and the arrow-zigzag
  path from \( 6 \) to \( 9 \). The right diagram show the arrow-zigzag path
  from \( 10 \) to \( 5 \), the arrow-zigzag path from \( 2 \) to \( 6 \),
  and the arrow-zigzag path from \( \epsilon_2 \) to \( 9 \).}
  \label{fig:zigzag}
\end{figure}
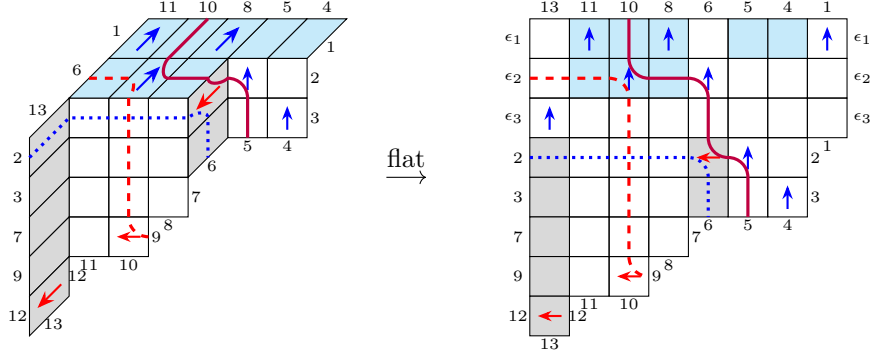

\begin{defn}[Arrow-zigzag map]
The \emph{arrow-zigzag map}
\[
  \Phi_Z: \RAT^+(n+1,r+1)\to\A^\epsilon(n+1,r+1)
\]
is defined as follows. Let \( R\in \RAT^+(n+1,r+1) \). Suppose that
\( 1=d_1 < \cdots < d_{r+1} \) are the labels of diagonal strips.
First, we set \( \sigma \) to be the permutation on \( [n+1] \) such
that for each \( i\in [n+1] \), we have \( \sigma(i) = j \), where the
arrow-zigzag path starting from strip \( i \) exits at strip \( j \).
Then write \( \sigma \) in cycle notation and insert \( \epsilon_i \)
after \( d_i \) for each \( i\in [r+1] \). We define \( \Phi_Z(R) \)
to be the resulting permutation \( \sigma\in S_{\{\epsilon_1\ldots,\epsilon_{r+1},1,\ldots,n+1\}} \).
\end{defn}

Note that \cref{thm:CN} combined with \cref{lem:Z-flat} below establishes that $\Phi_Z(R)\in\A^\epsilon(n+1,r+1)$: each cycle of $\Phi_Z(R)$ corresponds to a free row or a diagonal strip in $R$, satisfying the conditions of \cref{def:A-epsilon}. See \cref{exa:3}.

\begin{example}\label{exa:3}
  Let \( R \) be the RAT in \Cref{fig:zigzag}. The labels of the
  diagonal strips are \( (d_1,d_2,d_3) = (1,6,13) \). Using
  arrow-zigzag paths, we first set
\[
\sigma=  \begin{pmatrix}
 1  & 2 & 3 & 4 & 5 & 6 & 7 & 8 & 9  & 10 & 11 & 12 & 13\\
 11 & 6 & 4 & 3 & 2 & 9 & 7 & 1 & 10 & 5  & 8  & 13 & 12\\
\end{pmatrix}.
\]
In cycle notation,
\[
\sigma= (11,8,1)(6,9,10,5,2)(4,3)(7)(13,12).
\]
Inserting \( \epsilon_i \) after each \( d_i \), we obtain
\[
\Phi_Z(R) = (11,8,1,\epsilon_1)(9,10,5,2,6,\epsilon_2)(12,13,\epsilon_3)(4,3)(7) \in \A^\epsilon(13,3).
\]
Then the corresponding assembl\'{e}e is
\[
e^{-1}(\phi^{-1}(\Phi_Z(R))) = [11\,\,8\,\,1\,\,4\,\,3\,\,7]\,[9\,\,10\,\,5\,\,2\,\,6]\,[12\,\,13]\in \A(13,3),
\]
which coincides with the output $\Phi_I(R)$ in \cref{exa:2}.
\end{example}

The arrow-zigzag map \( \Phi_Z \) is naturally considered as the zigzag map on alternative tableaux via the flattening map, as shown in \Cref{fig:zigzag}, giving the following lemma.

\begin{lemma}\label{lem:Z-flat}
  Let \( R\in \RAT^+(n+1,r+1) \). Then
  \[
    \Phi_Z(R) = \Phi^{\AT}_Z(\flat(R)).
  \]
\end{lemma}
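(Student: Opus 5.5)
We compare, strip by strip, the arrow-zigzag paths in \(R\) with the zigzag paths in the alternative tableau \(T=\flat(R)\in\AT^+(n+1,r+1)\). First we fix the labelling of \(T\). Its northwest border carries the row labels \(\epsilon_1,\dots,\epsilon_{r+1}\) together with the labels of the horizontal strips of \(R\). It also carries the column labels, namely the labels of the vertical strips of \(R\) and the labels \(d_1,\dots,d_{r+1}\) of the former diagonal strips, which have become columns of \(T\). The southeast border is labelled \(\epsilon_1,\dots,\epsilon_{r+1},1,\dots,n+1\). It then suffices to prove the following two facts.
\begin{enumerate}
\item For \(i\in[n+1]\), the zigzag path in \(T\) entering at \(i\) exits at the same label \(j\) as the arrow-zigzag path in \(R\) entering at strip \(i\), except when \(j=d_t\). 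In that case the path in \(T\) exits at \(\epsilon_t\).
\item The zigzag path in \(T\) from \(\epsilon_t\) exits at the label \(\sigma(d_t)\), where \(\sigma\) is the permutation read from \(R\).
\end{enumerate}
Together these say that \(\Phi^{\AT}_Z(T)\) is \(\sigma\) in cycle notation with \(\epsilon_t\) inserted after \(d_t\), which is exactly \(\Phi_Z(R)\).

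The key local claim is that arrow-zigzag paths are preserved cell by cell under the correspondence of cells \((i,j)\mapsto(i,j)\), or \((d_t,j)\mapsto(\epsilon_t,j)\). We use two observations.
\begin{itemize}
\item Straight cells of \(R\) become empty cells of \(T\), and turning cells keep their arrows.
\item The cells in column \(d_t\) of \(T\) lying in the rows \(\epsilon_s\), and the cells that come from tall rhombi, are arranged exactly as the squares and tall rhombi of the diagonal strip \(d_t\) sit along the horizontal strips.
\end{itemize}
Since the tiling of \(R\) is maximal, a path moving along a row of \(R\) meets the diagonal strips in the same order as the corresponding columns appear in \(T\). Likewise, a path along a vertical strip meets the short rhombi (the diagonal strips) before the squares, matching the top \(r+1\) rows of \(T\) lying above the original rows. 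The arrows of diagonal strip \(d_t\) have been moved to row \(\epsilon_t\). Hence a path in \(R\) that travels along diagonal strip \(d_t\) corresponds in \(T\) to a path travelling along row \(\epsilon_t\), or along column \(d_t\) below the \(\epsilon\)-rows.

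Fact (1) now follows by following a path in \(T\). A path that would end at the diagonal edge \(d_t\) of \(R\) ends in \(T\) travelling along row \(\epsilon_t\) or column \(d_t\). It reaches the cell \((\epsilon_t,d_t)\), which holds the added up-arrow and is the rightmost up-arrow in that row. There it turns and exits at \(\epsilon_t\). For fact (2), the path from \(\epsilon_t\) on the northwest border goes east along row \(\epsilon_t\) until the first arrow it meets. Every arrow in row \(\epsilon_t\) other than the added one comes from diagonal strip \(d_t\). So the path continues exactly as the arrow-zigzag path in \(R\) from the northwest end of diagonal strip \(d_t\) does, and therefore exits at \(\sigma(d_t)\).

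The main obstacle is the bookkeeping in the local claim. We must check that the order in which a path crosses rows and columns in \(T\) matches the order in which it meets tiles in \(R\), in particular at tall rhombi, where a horizontal path crosses a diagonal strip. This is where the maximal-tiling remark is used. We would verify it by induction on \(\inv(w)\), following the recursive construction of \(\Gamma_w\). Each added tile (tall, square, or short) corresponds to a single cell added to \(T\) in the appropriate position, and the zigzag exits are updated identically on both sides.
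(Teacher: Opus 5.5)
Tracing paths through \(\flat(R)\) is the right idea; the paper itself only points to \Cref{fig:zigzag}. However, your Fact~(1) is false, and the argument you give for it breaks the turning rule.

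A path moving east along row \(\epsilon_t\) enters \((\epsilon_t,d_t)\) through its W-side. At the added up-arrow it therefore turns \emph{south}; it cannot exit east at \(\epsilon_t\). It then crosses the empty cells \((\epsilon_s,d_t)\), \(s>t\), and continues down the cells \((i,d_t)\) coming from the tall rhombi, exactly as the path in \(R\) continues through the tall part of strip \(d_t\). In particular, a path of \(R\) ending at the diagonal edge \(d_t\) ends in \(T\) at the southeast edge \(d_t\) (the bottom of column \(d_t\)), not at \(\epsilon_t\). The paper's \Cref{fig:zigzag} shows this: the path from \(2\) exits at \(6=d_2\) both in \(R\) and in \(\flat(R)\).

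To exit at \(\epsilon_t\), a path must run east along row \(\epsilon_t\) to the right of \((\epsilon_t,d_t)\). That forces it to have turned at \((\epsilon_t,d_t)\) coming from the north. Since the cells \((\epsilon_s,d_t)\), \(s<t\), are empty, this must be the path entering \(T\) at the top of column \(d_t\); after turning it crosses the empty cells \((\epsilon_t,c)\), \(c<d_t\).

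So the exception concerns the starting label \(i=d_t\), not the exit label. The correct values are \(\Phi^{\AT}_Z(T)(d_t)=\epsilon_t\), \(\Phi^{\AT}_Z(T)(\epsilon_t)=\sigma(d_t)\) (your Fact~(2), which is fine), and \(\Phi^{\AT}_Z(T)(i)=\sigma(i)\) for every other \(i\in[n+1]\). This is exactly \(\sigma\) with \(\epsilon_t\) inserted after \(d_t\). As written, your Facts~(1) and~(2) send both \(d_t\) and \(\epsilon_t\) to \(\sigma(d_t)\) whenever \(\sigma(d_t)\) is not a diagonal label. They are therefore inconsistent with \(\Phi^{\AT}_Z(T)\) being a permutation, and your concluding sentence does not follow from them.

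Your local claim also has to handle the cells of \(T\) with no counterpart in \(R\). These are the cells \((\epsilon_s,c)\) with \(c\) the label of a vertical strip and \(c<d_s\), and the cells \((\epsilon_s,d_u)\) with \(u\neq s\). Your description of column \(d_t\) treats its \(\epsilon\)-cells as images of tiles, but apart from \((\epsilon_t,d_t)\) they are padding. All padding cells are empty, since they would have to come from cells \((d_s,c)\) or \((d_s,d_u)\), which do not exist in \(R\). Hence they are straight cells, and both the corrected endpoint analysis and the row-to-column transition at \((\epsilon_t,d_t)\) depend on this.

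For the same reason, your proposed induction on \(\inv(w)\), in which each tile of \(R\) corresponds to one new cell of \(T\), does not match the construction. Adding a short rhombus does not change the shape of \(T\). A direct tracing argument that accounts for the padding cells is enough.
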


Viewing both the insertion map \(\Phi_I\) and the arrow–zigzag map \(\Phi_Z\) as maps on alternative tableaux after flattening, we conclude that the two maps are equivalent up to the transformation \(\phi\); see \Cref{exa:3}. Here, we apply the map \(e\) to \(\Phi_I(R)\) in order to represent it as a permutation in \( S_{\{\epsilon_1,\ldots,\epsilon_{r+1},1,\ldots,n+1\}} \).

\begin{prop}\label{prop:I is Z}
  For \(R\in\RAT^+(n+1,r+1)\), we have 
  \[
    \Phi_Z(R) = \phi(e(\Phi_I(R))).
  \]
\end{prop}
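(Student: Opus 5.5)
The plan is to reduce everything to the alternative-tableau case through the flattening map, and then invoke \cref{eq:CK}. Set \(T=\flat(R)\). By \cref{lem:6}, \(T\in\AT^+(n+1,r+1)\subseteq\AT^+(n+r+2)\). This is an extended alternative tableau whose southeast border is labeled by the totally ordered alphabet \((\epsilon_1,\dots,\epsilon_{r+1},1,\dots,n+1)\). I will regard \(T\) as an alternative tableau of size \(n+r+2\) via the order-preserving relabeling of this alphabet by \([n+r+2]\). This relabeling commutes with \(\Phi^{\AT}_I\), \(\Phi^{\AT}_Z\) and \(\phi\), because all three depend only on the relative order of labels: the insertion positions, the zigzag paths, and the RL-minima used in the Foata map.

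The chain of equalities is then
\[
  \Phi_Z(R)=\Phi^{\AT}_Z(\flat(R))=\phi\bigl(\Phi^{\AT}_I(\flat(R))\bigr)=\phi\bigl(e(\Phi_I(R))\bigr),
\]
with the three steps justified as follows.

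\begin{itemize}
\item The first equality is \cref{lem:Z-flat}.
\item The second is \cref{eq:CK} applied to \(T\), transported along the relabeling.
\item The third is \cref{lem:I-flat}, which identifies \(\Phi^{\AT}_I(\flat(R))\) with the \(\epsilon\)-word \(e(\Phi_I(R))\), viewed as a permutation written in one-line notation on the same alphabet.
\end{itemize}

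The main obstacle is making sure the objects on both sides are literally the same permutations of \(\{\epsilon_1,\dots,\epsilon_{r+1},1,\dots,n+1\}\), and not merely equal up to some identification. Two points need checking.

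First, \cref{eq:CK} requires the zigzag permutation of \(T\) to be read in the same convention as the one-line word produced by \(\Phi^{\AT}_I\). Here ``\(i\) maps to \(j\)'' means the path enters at row or column \(i\) on the northwest border and exits at edge \(j\) on the southeast border. I will check that \(\Phi_Z\) of \cref{def:zigzag} uses this same convention on the flattened tableau.

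Second, I must justify the step in the definition of \(\Phi_Z\) where one ``inserts \(\epsilon_i\) after \(d_i\)''. Under flattening, the diagonal strip \(d_t\) is split into row \(\epsilon_t\) and column \(d_t\), and an up-arrow is placed in cell \((\epsilon_t,d_t)\). Concretely, I will trace paths through this cell:
\begin{itemize}
\item the path from \(d_t\) in \(T\) turns at this cell and exits at \(\epsilon_t\);
\item the path entering along row \(\epsilon_t\) follows the route that, in \(R\), the path from \(d_t\) takes after its first diagonal segment.
\end{itemize}
Hence \(\Phi^{\AT}_Z(T)\) is exactly the cycle structure of \(\sigma\) with \(\epsilon_t\) inserted after \(d_t\). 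This is essentially the content of \cref{lem:Z-flat}, so it can be cited rather than reproved.

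Finally, I will note that the output lies in \(\A^\epsilon(n+1,r+1)\). Each \(\epsilon_t\) is an RL-minimum of \(e(\Phi_I(R))\), so it closes its own cycle under \(\phi\); this gives one \(\epsilon\) per cycle, with \(\phi(e)^{-1}(\epsilon_t)=d_t\) increasing in \(t\). This verifies the conditions of \cref{def:A-epsilon} and completes the argument.
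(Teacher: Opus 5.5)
Your proposal is correct and is exactly the paper's proof: the chain \(\Phi_Z(R)=\Phi^{\AT}_Z(\flat(R))=\phi(\Phi^{\AT}_I(\flat(R)))=\phi(e(\Phi_I(R)))\), justified by \cref{lem:Z-flat}, \cref{eq:CK}, and \cref{lem:I-flat}. Your additional remarks are consistent with the paper and not needed beyond the cited lemmas: the order-preserving relabeling of the alphabet, the path trace through the cell \((\epsilon_t,d_t)\), and the membership of the output in \(\A^\epsilon(n+1,r+1)\).
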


\begin{proof}
  By \Cref{lem:Z-flat}, \cref{eq:CK}, and \Cref{lem:I-flat},
  \[
    \Phi_Z(R) = \Phi^{\AT}_Z(\flat(R)) = \phi(\Phi^{\AT}_I(\flat(R))) = \phi(e(\Phi_I(R))).
  \]
\end{proof}

\subsection{The fusion-exchange map from assembl\'ees to RAT}\label{sec:FE}

We now describe a variant of the fusion-exchange (FE) map
\[
\ferat:\A(n+1,r+1)\to\RAT^+(n+1,r+1),
\]
which was introduced in \cite{Mandelshtam2018}. Our definition is the
transposed version of the original. Hence, in our convention,
assemblée blocks are ordered by their heads rather than their ends.
Furthermore, we add a diagonal edge to the northeast corner of the
rhombic alternative tableau, working in \(\RAT^+(n+1,r+1)\) rather
than \(\RAT(n,r)\). Apart from this modification, the fusion-exchange
rules are unchanged. See the proof of \cref{lem:labels} for more details on the correspondence between the two algorithms.

\begin{defn}
  Let \(a=a_1\cdots a_n\in S_n\). An entry \(a_i\in[n-1]\) is an
  \emph{inverse descent} if \(a^{-1}(a_i+1)<i\), or equivalently, if
  \(a_i+1\) is to the left of \( a_i \) in \(a\). 
  We write \(\iDes(a)\) for the set of inverse descents of \(a\).
  By convention, \(n\not\in\iDes(a)\).
\end{defn}

Recall the definition of the map \( f \) in \Cref{def:f}. 
Let \(\pi\in\A(n+1,r+1)\) be an assembl\'ee with canonical permutation \(f(\pi)=a_1\ldots a_{n+1}\in S_{n+1}\). We construct the word \(w=w_1\cdots w_{n+1}\in\{2,1,0\}^{n+1}\) by 
\begin{equation}\label{eq:ides}
w_i=\begin{cases}
1, & a_i\in\heads(\pi),\\
0, & a_i\in\iDes(f(\pi))\text{ and }a_i\not\in\heads(\pi),\\
2, & a_i\notin\iDes(f(\pi))\text{ and }a_i\notin\heads(\pi),
\end{cases}
\qquad i\in[n+1].
\end{equation}
In \cref{def:FE} below, we will construct a filling \(R^+=\ferat(\pi)\) on the rhombic diagram
\(\Gamma_w\) with its southeast border labeled by \( f(\pi)\) by iteratively
passing the edge labels from the southeast border of \(\Gamma_w\) to
the northwest border along the strips perpendicular to the edges at
which the labels originate. Certain interactions of labels will result
in \emph{fusing} or \emph{exchanging} labels along their trajectories,
thus giving the name of fusion-exchange. The filling will be
determined by the interactions of the labels within each cell of
\(\Gamma_w\).

A \emph{label set} is a (possibly empty) set \(I\subseteq [n]\) of
consecutive integers. If \(I,J\subseteq [n]\) are two disjoint
nonempty labels, then we define the relation \(I\prec J\) if
\(I\cup J\) is a label set and \( 1+\max I = \min J \). By convention,
\(\emptyset\not \prec I\) for any label set \(I\).

\begin{defn}[Fusion-exchange algorithm]\label{def:FE}
Let \(\pi\in\A(n+1,r+1)\). Then \(\ferat(\pi)\) is constructed as follows.
\begin{itemize} 
\item[(1)] For the canonical permutation \(f(\pi)=a_1\cdots a_{n+1}\),
  let \(w\in\{2,1,0\}^{n+1}\) be the word obtained according to
  \eqref{eq:ides}, and let \(D\) be the rhombic diagram \( \Gamma_w \)
  whose \(i\)th southeast border edge from northeast to southwest is
  labeled by the label set \(\{a_i\}\).
\item[(2)] We iteratively pass the label sets along trajectories
  determined by the strips of \(D\) to label all the edges contained
  in \(D\), as follows. Let \(\tau\) be a cell such that
  \(\{E(\tau),S(\tau)\}=\{I,J\}\) and that \( N(\tau) \) and
  \( W(\tau) \) have no labels. If \(I\prec J\) and \( I \) is not the
  label of a diagonal edge, the label set \(I\cup J\) is passed to the
  edge of \(\tau\) opposite the one containing \(J\) and \(\emptyset\)
  is passed to the edge of \(\tau\) opposite to the one containing
  \(I\), and an arrow pointing towards the label set \(\emptyset\) is
  placed inside \(\tau\). Concretely, this yields either
  \((S(\tau),E(\tau),N(\tau),W(\tau))=(I, J, \emptyset,I\cup J)\) with
  an up-arrow, or
  \((S(\tau),E(\tau),N(\tau),W(\tau))=(J, I, I\cup J, \emptyset)\)
  with a left-arrow, as shown in \cref{fig:fusion}. Otherwise, i.e.,
  if \(I\not \prec J\) or \( I \) is the label of a diagonal edge,
  then the labels simply pass through each other, and are transferred
  to the opposite edges of \(\tau\) along their trajectories.
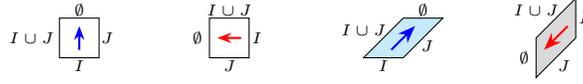
\begin{figure}
  \centering
\begin{tikzpicture}[scale=1.5]
   \cell{0}{0} \upArr{0}{0}
  \node at (0.5, -0.2) {\tiny \(I\)};
  \node at (0.5, 1.2) {\tiny \(\emptyset\)};
  \node at (1.2, 0.5) {\tiny \(J\)};
  \node at (-.7, 0.5) {\tiny \(I\cup J\)};
\end{tikzpicture}
\qquad 
\begin{tikzpicture}[scale=1.5]
  \cell{0}{0} \leftArr{0}{0}
  \node at (0.5, -0.2) {\tiny \(J\)};
  \node at (0.5, 1.2) {\tiny \(I\cup J\)};
  \node at (1.2, 0.5) {\tiny \(I\)};
  \node at (-.3, 0.5) {\tiny \(\emptyset\)};
\end{tikzpicture}
\qquad 
\begin{tikzpicture}[scale=1.5]
  \short{0}{0} \slantUpArr{0}{0}
  \node at (1.5, 1.2) {\tiny \(\emptyset\)};
  \node at (0.5, -0.2) {\tiny \(I\)};
  \node at (1.6, 0.3) {\tiny \(J\)};
  \node at (0, 0.7) {\tiny \(I\cup J\)};
\end{tikzpicture}
\qquad 
\begin{tikzpicture}[scale=1.5]
  \tall{0}{0} \slantLeftArr{0}{0}
  \node at (0, 1.7) {\tiny \(I\cup J\)};
  \node at (0.7, 0.3) {\tiny \(J\)};
  \node at (1.2, 1.5) {\tiny \(I\)};
  \node at (-.3, 0.5) {\tiny \(\emptyset\)};
\end{tikzpicture}
\caption{All possible configurations in which fusion occurs are shown.
  Here, each cell \(\tau\) satisfies \(\{E(\tau),S(\tau)\}=\{I,J\}\)
  for label sets \(I\prec J\) such that \( I \) is not the label of a
  diagonal edge.}
  \label{fig:fusion}
\end{figure}

\item[(3)] The procedure is completed once all edges in \(D\) are
  labeled, i.e., the labeled edges reach the northwest border. Define
  \(\ferat(\pi)\) to be the RAT obtained from this procedure.
\end{itemize}
\end{defn}

\begin{lemma}\label{lem:labels}
  Let \(\pi\in\A(n+1,r+1)\) and \( R= \ferat(\pi)\). Then the following hold:
  \begin{enumerate}
  \item The label of every diagonal step in the northeast border is nonempty.
  \item The label of a vertical step in the northeast border is nonempty
    if and only if the corresponding row is free.
  \item The label of every horizontal step in the northeast border is \( \emptyset \).
  \item If \( I_1,\dots,I_\ell \) are the nonempty labels of the
  diagonal and vertical edges in the northwest border from north to
  south, then \( I_1 \prec \cdots \prec I_\ell \).
  \end{enumerate}
\end{lemma}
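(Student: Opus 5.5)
The plan is to prove all four statements at once by induction along the fusion-exchange procedure, tracking an invariant on the current \emph{frontier}. The frontier is the lattice path from the northeast corner to the southwest corner that separates the cells already processed from the cells not yet processed. Initially it is the southeast border, carrying the singletons \(\{a_1\},\dots,\{a_{n+1}\}\). At the end it is the northwest border.

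The invariant I will maintain is:
\begin{enumerate}
\item the nonempty label sets on the frontier are intervals that partition \([n+1]\);
\item every diagonal edge of the frontier carries a nonempty label.
\end{enumerate}
When a fusion occurs in a cell \(\tau\) with \(I\prec J\), two adjacent intervals are merged into the interval \(I\cup J\), and \(\emptyset\) replaces one of them, so part (1) of the invariant is preserved. For part (2), I read off \cref{fig:fusion} that \(\emptyset\) is always sent to the edge opposite \(I\). Since \(I\) is never allowed to be a diagonal label, \(\emptyset\) can only travel along a vertical or horizontal strip. In the short and tall rhombi, \(\emptyset\) lands on the N-side or W-side respectively, and in both cases that side belongs to the non-diagonal strip. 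Hence a diagonal strip never carries \(\emptyset\), which proves (1) of the lemma.

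For (2) and (3), the key observation is that a strip's label becomes \(\emptyset\) exactly when that strip receives the arrow pointing towards \(\emptyset\). An up-arrow empties the N-side, that is, the column. A left-arrow empties the W-side, that is, the row. Moreover, once a label is \(\emptyset\) it stays \(\emptyset\), since \(\emptyset\not\prec I\) for every \(I\). So (2) amounts to the statement that a row ends with a nonempty label iff it has no left-arrow. This is immediate once I also check the alternative-tableau condition: after a strip is emptied, no further fusion, and hence no arrow, occurs along it, so no arrow points to another. Statement (3) says that every column acquires an up-arrow, consistent with \(\ferat(\pi)\in\RAT^+(n+1,r+1)\).

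I expect (3) to be the main obstacle. It has to use the definition \eqref{eq:ides}: a column label \(a_i\) is an inverse descent and not a head, so \(a_i+1\) originates on a southeast edge northeast of \(a_i\). I would argue by induction that the interval containing \(a_i+1\) at the moment the column strip of \(a_i\) crosses its strip has the form \(J\) with \(I\prec J\), where \(I\ni a_i\) is the current label of the column. By the rule in \cref{def:FE}, \(I\) must be the label that gets emptied, and in an up-arrow configuration \(I\) sits on the S-side of a column, so the column's own label is the one emptied. It would then remain to rule out an interference: that \(a_i\)'s interval is first merged sideways into a row or a diagonal strip. For this I would use the ordering of heads and blocks in the canonical order. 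Rather than redoing everything, my first attempt will be to translate \(\ferat\) by transposition into the original algorithm of \cite{Mandelshtam2018} (blocks ordered by ends, with the extra northeast diagonal edge deleted) and invoke the corresponding label lemmas there. I will fall back on the direct argument only where the added diagonal edge changes something.

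Finally, (4) follows from part (1) of the invariant at the final frontier. The nonempty labels on the northwest border are intervals partitioning \([n+1]\), and by (3) none of them lies on a horizontal edge. It remains to see that they appear in increasing order from north to south. I would prove this as a further invariant: along the frontier, the nonempty labels on non-horizontal edges are increasing from north to south. The invariant holds initially by the placement of heads in canonical order, and a fusion, which merges two adjacent \(\prec\)-consecutive intervals, preserves it. Increasing disjoint intervals whose union is \([n+1]\) are then automatically \(\prec\)-consecutive, giving \(I_1\prec\cdots\prec I_\ell\).
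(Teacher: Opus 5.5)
The gap is in (3) and (4). Your arguments for (1) and (2) are fine and match what the paper calls immediate: $\emptyset$ is only ever sent opposite the non-diagonal label $I$, an up-arrow empties its column and a left-arrow empties its row, and $\emptyset$ never fuses again.

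\textbf{The invariant for (4) is false.} You claim that along the frontier the nonempty labels on non-horizontal edges increase from north to south. This already fails on the southeast border. Take the one-block assembl\'ee $\pi=[3\,1\,2]$. Here $2$ is an inverse descent and $1$ is not, so $w=1\,2\,0$, and the diagonal edge $\{3\}$ lies north of the vertical edge $\{1\}$. Likewise, in \cref{fig:1} the non-horizontal labels read $2,7,4,3,9$. Canonical order only makes the heads increase; vertical labels are interleaved arbitrarily.

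The preservation step also fails, even if the initial order held:
\begin{itemize}
\item a left-arrow in a square moves a row label off the non-horizontal sequence and into a column;
\item a pass-through in a tall rhombus swaps a row label with a diagonal label.
\end{itemize}
So the final north-to-south order is produced by the process, not inherited. In the example, row $\{1\}$ receives a left-arrow, the column carries $[1,2]$, and the diagonal then absorbs it to give $[1,3]$.

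\textbf{For (3), there is no corresponding lemma to import.} Delete the top diagonal strip to get $R_1$, whose transpose is $\ferat^{MV}(\pi)$. In $R_1$, columns without up-arrows exist and keep nonempty labels. So (3) comes entirely from the added strip, which is exactly the part you leave to a fallback.

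The missing ingredient, which the paper extracts from \cite[Lemmas~3.4,~3.5]{Mandelshtam2018}, is the chain
$J_1\prec\cdots\prec J_t\prec\{d\}\prec I_2\prec\cdots\prec I_\ell$.
Here $J_1,\dots,J_t$ are the nonempty labels on the top horizontal edges of $R_1$, read west to east, and $\{d\}$ is the label of the top diagonal strip, with $d=a_1$ the smallest head.

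Given the chain, the top strip does the work. Moving west along it, $\{d\}$ meets $J_t,J_{t-1},\dots,J_1$ in turn. Each time the column label is the non-diagonal $I$ in $I\prec J$, so every remaining column receives an up-arrow and is emptied. This also gives $I_1=J_1\cup\cdots\cup J_t\cup\{d\}\prec I_2$, so (3) and (4) follow together.

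Your local sketch for (3) is not established: it says the column of $a_i$ meets the interval containing $a_i+1$ at the right moment. It also cannot replace this global chain, which is what actually drives both statements.
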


\begin{proof}
  By the construction of \( \ferat \), the statements (1) and (2) are
  immediate. To prove (3) and (4), we compare our map \(\ferat\) with
  the fusion-exchange map in \cite{Mandelshtam2018}, which we refer to
  as \(\ferat^{MV}\). Let \( R_1 \) be the RAT obtained from \( R \)
  by removing the first diagonal strip. Let \( R'_1 \) be the
  ``transpose'' of \( R_1 \), that is, the RAT obtained by reflecting
  \(R_1\) across the line \(y=-x\), so that horizontal and vertical
  edges, as well as left- and up-arrows, are
  interchanged.\footnote{The tiling of the transpose of \( R_1 \) is no
    longer a maximal tiling that we consider in this paper; see
    \Cref{rem:1}. However, in \cite{Mandelshtam2018}, the
    fusion-exchange map is defined for arbitrary tilings.} Then, by
  the definitions of the maps \( \ferat \) and \(\ferat^{MV}\), we
  have \( R'_1 = \ferat^{MV}(\pi) \).

  Note that \( I_2,\dots,I_\ell \) are the nonempty labels of the
  diagonal and vertical edges in the northwest border from north to
  south in \( R_1 \). Let \( J_1,\dots,J_t \) be the nonempty
  labels of the horizontal edges in the northwest border from west to
  east in \( R_1 \). Let \( \{d\} \) be the head of the first block in
  \( \pi \). Then, by applying \cite[Lemmas~3.4,~3.5]{Mandelshtam2018}
  to \( R'_1 = \ferat^{MV}(\pi) \), we obtain
  \begin{equation}\label{eq:16}
    J_1 \prec \cdots \prec J_t \prec \{d\} \prec I_2 \prec \cdots \prec I_\ell.
  \end{equation}

  Now observe that \( R=\ferat(\pi) \) is constructed from \( R_1 \)
  by applying the fusion-exchange algorithm to the cells in the first
  diagonal strip. By \eqref{eq:16}, the label set \( \{d\} \) is
  merged with \( J_t, J_{t-1},\dots,J_1 \) in this order, which gives
  \( I_1=J_1 \cup \cdots \cup J_t \cup \{d\} \prec I_2 \). Moreover,
  this process also creates an up-arrow in each vertical strip without
  an up-arrow. Hence, we obtain the statements (3) and (4).
\end{proof}

By \cref{lem:labels}, every vertical strip in $\ferat(\pi)$ contains
an up-arrow, and thus $\ferat(\pi)\in\RAT^+(n+1,r+1)$.

We now compare the zigzag map
\( \Phi_Z:\RAT^+(n+1,r+1)\to \A^\epsilon(n+1,r+1) \) and the inverse of the fusion-exchange map
\( \ferat^{-1}: \RAT^+(n+1,r+1)\to \A(n+1,r+1) \) to show they are equivalent up to some simple transformations. 

\begin{defn}\label{def:iota}
  For \(\pi\in\A(n+1,r+1)\), define the word \(\iota(\pi)\) as
  follows. Let \(\sigma=f(\pi)\in S_{n+1}\), let
  \(h_1<\cdots<h_{r+1}\) be the block heads of \(\pi\), and let
  \(\sigma^{-1}=u_1\cdots u_{n+1}\). Define \(\iota(\pi)\)
  to be the word obtained from \( u_1 \cdots u_{n+1} \)
  by inserting \(\epsilon_i\) immediately after \(u_{h_i}\) for each
  \(i\in[r+1]\).
\end{defn}

Note that, by construction, we have
\( 1=u_{h_1}<u_{h_2}<\cdots<u_{h_{r+1}}\) and each \( \epsilon_i \) is
placed immediately after \( u_i \). Therefore, there is an
assembl\'{e}e whose \( \epsilon \)-word is \( \iota(\pi) \), and we
can define the map
\[e^{-1}\circ\iota:\A(n+1,r+1)\to\A(n+1,r+1).\] Observe that
\(e^{-1}\circ\iota\) inverts the descent structure in the resulting
assembl\'ee: the positions of the inverse descents of \(f(\pi)\)
become descents in \(f(e^{-1}(\iota(\pi)))\), and the heads of
\(\pi\), encoding the block delineations of \(e^{-1}(\iota(\pi))\),
become special descents. Moreover, the following proposition shows that
\(e^{-1}\circ\iota\) coincides with the classical inverse on
permutations under the natural identification
\(\A(n+1,1)\cong S_{n+1}\) via the map \(f\).

\begin{prop}
For \(\pi\in\A(n+1,1)\), we have
\[
f(e^{-1}(\iota(\pi))) = f(\pi)^{-1}.
\]
\end{prop}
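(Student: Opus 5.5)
The plan is to unwind the definitions of \(\iota\), \(e^{-1}\), and \(f\) in the special case \(r=0\), where there is a single block and a single marker \(\epsilon_1\).

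Let \(\pi\in\A(n+1,1)\) and put \(\sigma=f(\pi)\). Since \(\pi\) has one block, that block is exactly the word \(\sigma=\sigma_1\cdots\sigma_{n+1}\), and its head is \(h_1=\sigma_1\). Write \(\sigma^{-1}=u_1\cdots u_{n+1}\). By \cref{def:iota}, \(\iota(\pi)\) is the word \(u_1\cdots u_{n+1}\) with \(\epsilon_1\) inserted immediately after \(u_{h_1}\). Since \(h_1=\sigma_1\), we have \(u_{h_1}=\sigma^{-1}(\sigma_1)=1\). So \(\iota(\pi)=C_1\,1\,\epsilon_1\,C_2\), where \(u_1\cdots u_{n+1}=C_1\,1\,C_2\).

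Next we apply \(e^{-1}\). By \cref{def:e-word}, with \(r+1=1\), the assembl\'ee is recovered by concatenating the subword before \(\epsilon_1\) with the subword after \(\epsilon_{r+1}=\epsilon_1\). This gives the single block \(C_1\,1\,C_2=u_1\cdots u_{n+1}\), consistent with the convention \(B_1=C_1\,1\,C_2\) and \(d_1=1\). Hence \(e^{-1}(\iota(\pi))\) is the one-block assembl\'ee whose block is the word \(\sigma^{-1}\).

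Finally, \(f\) applied to a one-block assembl\'ee just returns that block read as a permutation. Therefore \(f(e^{-1}(\iota(\pi)))=u_1\cdots u_{n+1}=\sigma^{-1}=f(\pi)^{-1}\).

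The only step needing care is checking that the \(\epsilon\)-word conventions are consistent when \(r=0\). Specifically, \(\epsilon_1\) must sit right after the letter \(1\), and the first and last markers coincide, so reconstruction simply deletes \(\epsilon_1\). Both facts follow from the computation \(u_{h_1}=1\) above; otherwise the argument is a direct unwinding of definitions with no real obstacle.
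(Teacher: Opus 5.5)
Your proof is correct and follows essentially the same route as the paper: unwind \(\iota\) to see that \(\epsilon_1\) sits immediately after the entry \(u_{h_1}=u_{\sigma_1}=1\) of \(\sigma^{-1}\), so \(e^{-1}\) simply deletes \(\epsilon_1\) and \(f\) returns \(\sigma^{-1}\). The paper leaves implicit the check \(u_{h_1}=\sigma^{-1}(\sigma_1)=1\), which is what makes \(\iota(\pi)\) a valid \(\epsilon\)-word; you state it explicitly.
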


\begin{proof}
Write \(\sigma=f(\pi)\). The \(\epsilon\)-word \(\iota(\pi)\) is given by \(\sigma^{-1}\) with \(\epsilon_1\) inserted after the entry \(\sigma_1\), and thus \(e^{-1}(\iota(\pi))\in\A(n+1,1)\) is the assembl\'ee obtained by deleting the \(\epsilon_1\). Thus \(f(e^{-1}(\iota(\pi)))=\sigma^{-1}\) as claimed.
\end{proof}

\begin{example}
  Let \(\pi=[2\,8\,5]\,[3\,9\,7\,1]\,[4]\,[6]\in\A(9,4)\) and  
  \(\sigma=f(\pi)\). Then \( \sigma=2\,8\,5\,3\,9\,7\,1\,4\,6\in S_9 \) and 
  \(\sigma^{-1}=7\,1\,4\,8\,3\,9\,6\,2\,5\). As
  \(\heads(\pi)=\{2,3,4,6\}\), we place
  \(\epsilon_1,\epsilon_2,\epsilon_3,\epsilon_4\) after
  \(\sigma^{-1}_2,\sigma^{-1}_3,\sigma^{-1}_4,\sigma^{-1}_{6}\),
  respectively, in \( \sigma^{-1} \) to obtain
\[
\iota(\pi)=7\,1\,\epsilon_1\,4\,\epsilon_2\,8\,\epsilon_3\,3\,9\,\epsilon_4\,6\,2\,5\,,
\]
which is the \(\epsilon\)-word of \(e^{-1}(\iota(\pi))=[7\,1\,6\,2\,5]\,[4]\,[8]\,[3\,9]\in\A(9,4)\).
\end{example}

\begin{prop}\label{pro:FE=zigzag2}
Let \(R\in\RAT^+(n+1,r+1)\) and let \(\pi=\ferat^{-1}(R)\in\A(n+1,r+1)\). Then
\[
\Phi_Z(R)=\phi(\iota(\pi)).
\]
In particular, if \(\theta\in S_{n+1}\) is the permutation obtained by removing the \(\epsilon_i\)'s from \(\phi(\iota(\pi))\), then \(f(\pi)=\theta^{-1}\).
\end{prop}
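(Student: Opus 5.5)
By \Cref{prop:I is Z}, we have \(\Phi_Z(R)=\phi(e(\Phi_I(R)))\), and \(\phi\) is injective on words whose RL-minima contain the \(\epsilon_i\). So the first claim reduces to the identity
\[
e(\Phi_I(R))=\iota(\ferat^{-1}(R)),
\]
or equivalently \(\Phi_I\circ\ferat=e^{-1}\circ\iota\) on \(\A(n+1,r+1)\). The second claim follows once the first is known. Deleting the \(\epsilon_i\) from \(\iota(\pi)\) yields \(\sigma^{-1}\) with \(\sigma=f(\pi)\). Each \(\epsilon_i\) is an RL-minimum of \(\iota(\pi)\) that sits at the end of a cycle of \(\phi(\iota(\pi))\). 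Removing the \(\epsilon_i\) from these cycles therefore gives the cycle decomposition of \(\phi(\sigma^{-1})\) with the \(\epsilon\)'s simply dropped. Comparing with the zigzag map on the underlying permutation, I would then read off \(\theta=\sigma^{-1}\), or more cleanly \(\theta=f(\pi)^{-1}\) as stated, by checking directly that the zigzag permutation \(\sigma\) in the definition of \(\Phi_Z\) equals \(f(\pi)^{-1}\) as a map on \([n+1]\).

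So the main step is to prove \(e(\Phi_I(\ferat(\pi)))=\iota(\pi)\). I would do this by reducing to alternative tableaux through flattening. By \Cref{lem:I-flat}, \(e(\Phi_I(R))=\Phi_I^{\AT}(\flat(R))\). I would then show that running fusion-exchange on \(R\) is the same as running the classical fusion-exchange of \cite{Viennota} on \(\flat(R)\), with the southeast labels \(\epsilon_1,\dots,\epsilon_{r+1}\) on the new vertical edges. The shape match is checked first: the border letters of \(R=\ferat(\pi)\) come from \eqref{eq:ides}, and in \(\iota(\pi)\) the heads become special descents and the inverse descents become regular descents, which agrees with \Cref{lem:shape}. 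Next, the cell-by-cell fusion rules are checked to be compatible with flattening. A diagonal label never fuses as \(I\), so a diagonal strip behaves like a free row whose fusions are relocated to the \(\epsilon_t\)-row of \(\flat(R)\). The added up-arrows in cells \((\epsilon_i,d_i)\) are exactly the fusions of \(\{d_i\}\) that \Cref{lem:labels} describes along the top strip. Once this is in place, the identity reduces to the known classical statement for \(r=0\): Viennot's FE bijection composed with the insertion map \(\Phi_I^{\AT}\) sends a permutation to its inverse. This is the content of the proposition for \(\A(n+1,1)\), and I would cite it from \cite{CorteelKim,Viennota} or verify it directly. The \(\epsilon\)-positions are then forced to agree, because both words place \(\epsilon_i\) immediately after \(d_i\).

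I expect the main obstacle to be the compatibility of FE with flattening. In particular, the labels on the northwest border of \(R\) (\Cref{lem:labels}(4)) must match the labels on the northwest border of \(\flat(R)\), so that the insertion order of strips is respected. If that direct route proves messy, my fallback is induction on the strips: process the non-horizontal strips \(\ell_1>\dots>\ell_m\) as in \Cref{alg:insertion}, and show that after each strip the partial \(\epsilon\)-word equals the restriction of \(\iota(\pi)\) to the labels processed so far. The inductive step uses the FE label sets, which are intervals with the \(\prec\) chain structure, to identify the element \(j\) to the left of which the block \(r_1\cdots r_k\ell_i\) is inserted.
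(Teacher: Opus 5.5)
Your first reduction is sound. Since $\phi$ is a bijection, \cref{prop:I is Z} turns the claim into $e(\Phi_I(R))=\iota(\pi)$, and your shape check (heads become special descents and inverse descents become regular descents, as in \cref{lem:shape}) is correct.

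\textbf{The main gap.} Everything after the reduction rests on the $r=0$ identity ``FE followed by $\Phi_I^{\AT}$ gives the inverse permutation'', which you plan to cite. The paper does not use this as an input. It appears only afterwards, as the last corollary of this section, deduced from the very proposition you are proving, and only for the paper's transposed FE applied to the reversed word rather than Viennot's map as such. So the citation is not available, and ``verify it directly'' is the whole proof.

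Your fallback does not fill this. Insertion never moves letters, so ``after each strip the partial word is the restriction of $\iota(\pi)$'' is equivalent to the final claim. The step that locates $j$ from the FE intervals is exactly what is missing.

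The missing idea works for all $r$ at once and makes flattening unnecessary. Relabel the southeast border of $R$ by $\sigma=f(\pi)$ and follow arrow-zigzag paths through the FE label sets.
\begin{itemize}
\item A fusion keeps the maximum of a label set on its own trajectory.
\item At each arrow cell, the path carrying $I\cup J$ turns onto the edge carrying the lower part $I$. Hence a path entering an edge labelled $[i,j]$ exits at $\{i\}$.
\item Consequently, the path from the northwest end of strip $\ell$ ends at $\ell+1$, unless that edge is a diagonal or free-row edge with interval $[i,\ell]$, in which case it ends at $i$.
\item By \cref{lem:labels}(4), the relabelled zigzag permutation is therefore $(1,\dots,c_1,\epsilon_1)(c_1+1,\dots,c_2,\epsilon_2)\cdots$.
\item Comparing with $\Phi_Z(R)=\phi(e(\Phi_I(R)))=(u_1,\dots,u_{b_1},\epsilon_1)\cdots$ gives $\sigma(u_k)=k$, i.e.\ $e(\Phi_I(R))=\iota(\pi)$.
\end{itemize}

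\textbf{The flattening step.} As described, it is not well posed. FE labels are values, and fusion needs $\max I+1=\min J$, so putting ``$\epsilon_1,\dots,\epsilon_{r+1}$'' on the new edges defines no fusion rule. You would have to give row $\epsilon_t$ a fresh integer value between $h_t$ and $h_t+1$ (where $h_t$ is the $t$th head) and renumber the other values. You would then have to exclude spurious fusions. In $\flat(R)$ the row $\epsilon_t$ is an ordinary row, so square-cell FE lets it fuse as the smaller set with a column label starting at the renumbered $h_t+1$. That would put a left-arrow in a top row, and ruling it out needs \cref{lem:labels}(3).

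Two descriptive claims are also wrong:
\begin{itemize}
\item The up-arrows at $(\epsilon_t,d_t)$ are not the fusions of the head in the proof of \cref{lem:labels}. Those are the up-arrows of the top diagonal strip, which flatten into row $\epsilon_1$ in columns $j>1$.
\item The left-arrows of diagonal strip $d_t$ flatten into column $d_t$, not into row $\epsilon_t$.
\end{itemize}

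\textbf{The ``in particular'' clause.} Your argument asserts two false things: that deleting the $\epsilon_i$ from the cycles of $\phi(\iota(\pi))$ gives $\phi(f(\pi)^{-1})$, and that the arrow-zigzag permutation equals $f(\pi)^{-1}$ as a map. In \cref{exa:4} the cycles become $(3,1)(6)(5,8,4,2)(9,7)$, which sends $2\mapsto 5$. But $f(\pi)^{-1}=316584297$ sends $2\mapsto 1$, and $\phi(316584297)=(3,1)(6,5,8,4,2)(9,7)$. The clause is correct only if $\theta$ is read as the word obtained by erasing parentheses and $\epsilon$'s, i.e.\ $\iota(\pi)$ with the $\epsilon_i$ deleted. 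In that reading it is immediate from the definition of $\iota$.
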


\begin{proof}
  First consider the zigzag map \(\Phi_Z\) on \(R\) with the standard
  labeling of the southeast border by \(1,2,\dots,n+1\), and suppose that
   \(\Phi_Z(R) \in S_{\{1,\ldots,n+1,\epsilon_1,\ldots,\epsilon_{r+1}\}} \) is given by
\begin{multline}\label{eq:14}
\Phi_Z(R) = (u_1, \dots, u_{b_1},\epsilon_1)(u_{b_1+1},\dots, u_{b_2},\epsilon_2)
\cdots(u_{b_r+1},\dots, u_{b_{r+1}},\epsilon_{r+1})\\
(u_{b_{r+1}+1},\dots, u_{b_{r+2}})\cdots (u_{b_{r+k}+1},\dots, u_{b_{r+k+1}}),
\end{multline}
where \( b_{r+k+1}=n+1 \) and
\(\epsilon_1<\dots<\epsilon_{r+1}<u_{b_{r+2}}<\cdots<u_{b_{r+k+1}} \)
are the RL-minima of the word
\[
 v := u_1\cdots u_{b_1}\epsilon_1 u_{b_1+1}\cdots u_{b_2}\epsilon_2 \cdots
  u_{b_r+1}\cdots u_{b_{r+1}}\epsilon_{r+1} u_{b_{r+1}+1}\cdots
  u_{b_{r+2}} \cdots u_{b_{r+k}+1}\cdots u_{b_{r+k+1}}.
\]
In particular, \(\Phi_Z(R)\) determines where the zigzag path starting
at each label \(u_i\) terminates: if
\(i\not\in \{b_1,\ldots,b_{r+k+1}\}\), then the zigzag path from
\(u_i\) terminates at \(u_{i+1}\); whereas if \(i=b_j\) for some
\(j\in[r+k+1]\), then the path terminates at \(u_{b_{j-1}+1}\), where
we set \( b_0=0 \).

Since \( \Phi_Z(R) = \phi(v) \), by \Cref{prop:I is Z},
we have \( v = e(\Phi_I(R)) \).
Then, by \Cref{lem:shape}, the labels
of the free rows of \( R \) are \( b_{r+2}<\dots<b_{r+k+1} \).

Let \(\sigma=f(\pi)\in S_{n+1}\) be the canonical permutation of
\(\pi\), and let \(\Phi_Z(R;\sigma)\) denote the zigzag map obtained
by labeling the southeast border of \(R\) by
\(\sigma_1,\dots,\sigma_{n+1}\). In other words, \(\Phi_Z(R;\sigma)\)
is obtained from \(\Phi_Z(R)\) by replacing each entry \(i\) with
\(\sigma(i)\).

We will consider two labelings of the same underlying rhombic
alternative tableau. Let \(R\) denote the tableau whose southeast
border is labeled by \(\sigma_1,\dots,\sigma_{n+1}\). Let \(R'\)
denote the same tableau, but decorated with the labeling produced by
the FE algorithm, in which every edge (including internal edges)
carries a label set. By our definition of
\(\sigma=f(\Phi^{-1}_{FE}(R))\), the southeast border edges of \(R'\)
are labeled by singleton sets
\(\{\sigma_1\},\ldots,\{\sigma_{n+1}\}\), and these labels agree with
those of \(R\) after identifying \(\{\ell\}\) with \(\ell\) for
$\ell\in[n+1]$.

For \(\ell\in[n+1]\), we refer to the \emph{northwest edge in the strip labeled \(\ell\) in \(R'\)} as the northwest border edge lying in the same strip as the southeast border edge of \(R'\) labeled \(\{\ell\}\).
This lets us identify zigzag paths in \(R\) that determine $\Phi_Z(R;\sigma)$ with paths traced through \(R'\). Specifically, a zigzag path in \(\Phi_Z(R;\sigma)\) originating at \(\ell\) and terminating at \(j\) corresponds exactly to a path in \(R'\)
that starts at the northwest edge in the strip labeled \(\ell\) and terminates at the southeast border edge labeled \(\{j\}\).

The northwest border edges of \(R'\) carrying nonempty label sets correspond precisely to diagonal strips and free rows. These label sets are intervals, and by 
\cref{lem:labels} they are totally ordered from north to south by the relation \(\prec\).

We now analyze the zigzag paths on $R'$. See \cref{exa:4} for explicit examples of the following observations. 

The turning cells of \(R'\) are the cells containing arrows, and those are precisely the cells where fusion occurs. Thus we can trace the trajectories of the zigzag paths through the movement of the labels through these cells. First, observe that fusion-exchange interactions preserve the maximal element of a label set along its trajectory. Thus, whenever some edge in \(R'\) carries the label set \(I=[i,j]\), the corresponding southeast border edge carries the label set \(\{j\}\).

On the other hand, at each turning cell \(\tau\) of \(R'\), the minimal element carried by the union of the label sets on the edges \(N(\tau)\) and \(W(\tau)\) necessarily turns. Consequently, a zigzag path passing through an edge entering a turning cell in \(R'\) that carries the label set \(I=[i,j]\), terminates at the southeast edge labeled \(\{i\}\). 

We now determine the endpoints of a zigzag path originating at a northwest edge in the strip labeled $\ell$. If a southeast edge labeled \(\ell\) is not diagonal and not a free row, then it lies in either a column containing an up-arrow or a row containing a left-arrow. Let \(\tau\) be the cell containing this arrow. Then we have that the FE label sets on \((W(\tau),N(\tau),E(\tau),S(\tau))\) in \(R'\) are \((\emptyset,[i,j],[i,\ell],[\ell+1,j])\) if \(\tau\) has a left-arrow and \(([i,j],\emptyset,[\ell+1,j],[i,\ell])\) if \(\tau\) has an up-arrow, for some \(i\leq \ell<j\).  In this case, the label set on the turning edge of that cell is \([\ell+1,j]\), and thus the zigzag path originating at the northwest edge in strip \(\ell\) terminates at the southeast edge with the reduced label set
\[
I'=\{k\in I:k>\ell\}=[\ell+1,j],
\]
so the path terminates at \(\min I'=\ell+1\). Thus in $\Phi_Z(R;\sigma)$, the zigzag path originating at \(\ell\) ends at \(\ell+1\). Similarly, the zigzag path originating at a diagonal or free row edge \(j\) ends at \(\min I=i\), where \(I=[i,j]\) is the interval labeling that edge in \(R'\).

Thus each diagonal strip or each free row of \(R'\) labeled by an
interval \(I=[i,j]\) contributes exactly one cycle
\((i,i+1,\ldots,j,\epsilon_k)\) or \((i,i+1,\ldots,j)\), respectively,
to \(\Phi_Z(R;\sigma)\in\A^\epsilon(n+1,r+1)\). Thus, by
\Cref{lem:labels}, we have
\begin{multline}\label{eq:15}
\Phi_Z(R;\sigma) = (1,\dots,c_1,\epsilon_1)(c_1+1,\dots,c_2,\epsilon_2)
\cdots(c_r+1,\dots,c_{r+1}\epsilon_{r+1})\\
(c_{r+1}+1,\dots, c_{r+2})\cdots (c_{r+k}+1,\dots, c_{r+k+1}),
\end{multline}
and the \( i \)th free row has a label set with largest element
\( c_{r+1+i} \). Since \( \Phi_Z(R;\sigma) \) is obtained from
\( \Phi_Z(R) \) by replacing every integer \( i \) with
\( \sigma(i) \), comparing \eqref{eq:14} and \eqref{eq:15} gives
\( \sigma(u_i)= i \) for all \(i\in[n+1]\), or equivalently,
\(\sigma^{-1}=u_1\cdots u_{n+1}\). Moreover, the entries
\(\epsilon_j\) in the word \(\phi^{-1}(\Phi_Z(R))\) follow the entries
\(\sigma^{-1}(h_j)\), where \(h_j\) is the \(j\)th head in \(\pi\), so
indeed we have \(\iota(\pi)=\phi^{-1}(\Phi_Z(R))\). This completes the proof.
\end{proof}

Finally, we obtain the following from \cref{prop:I is Z}.

\begin{cor}\label{cor:FE=I}
For \(\pi \in \A(n+1,r+1)\), we have
\[
\Phi_Z^{-1}(\pi)=\ferat(\iota^{-1}(e(\pi))).
\]
\end{cor}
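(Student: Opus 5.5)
We read the left-hand side as the tableau $R=\Phi_I^{-1}(\pi)$, i.e.\ $\Phi_Z^{-1}$ applied to $\phi(e(\pi))\in\A^\epsilon(n+1,r+1)$; by \cref{prop:I is Z} these are the same thing. The plan is to compare two expressions for $\Phi_Z(R)$ and cancel the injective map $\phi$.

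First, fix $\pi\in\A(n+1,r+1)$ and set $R=\Phi_I^{-1}(\pi)\in\RAT^+(n+1,r+1)$. By \cref{prop:I is Z},
\[
\Phi_Z(R)=\phi(e(\Phi_I(R)))=\phi(e(\pi)).
\]
Next, since $\ferat$ is a bijection $\A(n+1,r+1)\to\RAT^+(n+1,r+1)$ by \cite{Mandelshtam2018} together with \cref{lem:labels}, we may set $\pi'=\ferat^{-1}(R)$. Then \cref{pro:FE=zigzag2} gives $\Phi_Z(R)=\phi(\iota(\pi'))$. Hence
\[
\phi(e(\pi))=\phi(\iota(\pi')).
\]

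Both $e(\pi)$ and $\iota(\pi')$ lie in the set $E$ of $\epsilon$-words. On $E$, $\phi$ is injective, because $\phi\circ e$ is a bijection onto $\A^\epsilon(n+1,r+1)$ with inverse $e^{-1}\circ\phi^{-1}$. Therefore $e(\pi)=\iota(\pi')$.

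It remains to show that $\iota:\A(n+1,r+1)\to E$ is a bijection, so that $\pi'=\iota^{-1}(e(\pi))$. Then $R=\ferat(\pi')=\ferat(\iota^{-1}(e(\pi)))$, which is the claim. For invertibility, take $v\in E$ and proceed as follows.
\begin{itemize}
\item Delete the $\epsilon$'s from $v$ to get a word $u=u_1\cdots u_{n+1}$, and set $\sigma=u^{-1}$.
\item Let $h_i$ be the position in $u$ of the letter immediately preceding $\epsilon_i$, and declare $\{h_1,\dots,h_{r+1}\}$ to be the heads.
\item Cut $\sigma$ into blocks before each $h_i$; this yields an assembl\'ee $\pi'$ with $f(\pi')=\sigma$ and $\iota(\pi')=v$.
\end{itemize}
Injectivity of $\iota$ is immediate, since $\pi'$ is recovered from $f(\pi')$ and its heads.

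The one point requiring care, and the step I expect to be the main obstacle, is checking that this reconstruction really gives a canonical order. There are two things to verify.
\begin{itemize}
\item The letters $u_{h_1}<\cdots<u_{h_{r+1}}$ must satisfy $u_{h_1}=1$. This holds because the letter before $\epsilon_1$ in an $\epsilon$-word is $d_1=1$, and the $d_i$ increase. Consequently $\sigma(1)=h_1$, so the first entry of $\sigma$ is a head.
\item The heads must appear in $\sigma$ in increasing order. This holds because $\sigma^{-1}(h_i)=u_{h_i}$ increases with $i$.
\end{itemize}
These are exactly the defining conditions of $E$ from \cref{def:e-word} and \cref{def:A-epsilon}, so $\iota$ maps onto $E$ and is invertible there.
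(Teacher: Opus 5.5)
Your proposal is correct and is essentially the paper's argument: the paper derives the corollary by combining \cref{prop:I is Z} with \cref{pro:FE=zigzag2}, cancelling the injective map $\phi$ to get $e(\pi)=\iota(\ferat^{-1}(R))$, and then inverting $\iota$. The paper only notes the invertibility of $\iota$ in passing, and you verify it explicitly. Your reading of the left-hand side as $\Phi_I^{-1}(\pi)=\Phi_Z^{-1}(\phi(e(\pi)))$ is also the intended one.
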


By identifying $S_{n+1}$ with $\A(n+1,1)$, we obtain an analogous statement for the corresponding maps on alternative tableaux.
\begin{cor}
Define $\Phi^{AT}_{FE}:S_{n+1}\to\AT(n)$ by setting $\Phi_{FE}(\sigma)$ to be the alternative tableau obtained by removing the diagonal strip from $\Phi_{FE}(\pi)$, where $\pi\in\A(n+1,1)$ is the assembl\'ee consisting of the single block $\sigma$. Then
\[
\ferat^{AT}(\sigma)=(\Phi_I^{\AT})^{-1}(\sigma^{-1})=(\Phi_Z^{AT})^{-1}(\phi(\sigma^{-1})).
\]
\end{cor}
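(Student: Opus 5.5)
The plan is to specialize the whole chain of identities already established for RAT to the case \(r=0\), and then strip away the single auxiliary diagonal strip.

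\emph{Setup.} Fix \(\sigma\in S_{n+1}\) and let \(\pi\in\A(n+1,1)\) be the one-block assembl\'ee with \(f(\pi)=\sigma\). There is exactly one head, namely \(\sigma_1\), so \(h_1=1\).

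\emph{Step 1: compute \(e^{-1}(\iota(\pi))\).} By \cref{def:iota}, \(\iota(\pi)\) is the word \(\sigma^{-1}\) with \(\epsilon_1\) inserted right after \(u_1=\sigma^{-1}_1\). The value of \(u_1\) plays no role here: the proof of the proposition preceding \cref{pro:FE=zigzag2} shows that \(e^{-1}(\iota(\pi))\) is the one-block assembl\'ee with canonical permutation \(\sigma^{-1}\). Call it \(\pi'\).

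\emph{Step 2: apply \cref{cor:FE=I} to \(\pi'\).} Since \(\iota\circ e^{-1}\) is the map \(\pi\mapsto\iota(\pi)\) viewed as an \(\epsilon\)-word, we have \(\iota^{-1}(e(\pi'))=\pi\). Equivalently, \cref{pro:FE=zigzag2} gives \(\Phi_Z(\ferat(\pi))=\phi(\iota(\pi))=\phi(e(\pi'))\). Using \cref{prop:I is Z} and the bijectivity of \(\phi\circ e\), this yields
\[
\Phi_I(\ferat(\pi))=\pi'.
\]

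\emph{Step 3: translate to alternative tableaux.} For \(r=0\) the extended RAT \(R=\ferat(\pi)\in\RAT^+(n+1,1)\) consists of a first diagonal strip plus an alternative tableau \(T\in\AT(n)\); by definition \(T=\ferat^{AT}(\sigma)\). We have \(\flat(R)\in\AT^+(n+1,1)\), and by \cref{lem:I-flat},
\[
e(\Phi_I(R))=\Phi_I^{\AT}(\flat(R)).
\]
The \(\epsilon\)-word of the one-block assembl\'ee \(\pi'\) is \(\sigma^{-1}\) with \(\epsilon_1\) inserted after the entry \(1\). So we must check that the bijections
\[
\AT(n)\to\RAT^+(n+1,1)\xrightarrow{\flat}\AT^+(n+1,1)
\]
are compatible with \(T\mapsto T^+\in\AT^+(n+1)\) up to the relabeling \(\epsilon_1\leftrightarrow 1\).

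Concretely, \(\flat(R)\) is \(T\) with a new top row labeled \(\epsilon_1\), carrying an up-arrow in every column of \(T\) without one and an up-arrow in column \(1\). Here column \(1\) is the first southeast edge, which becomes horizontal. Deleting the entry \(1\) and renaming \(\epsilon_1\) as \(1\) should match the insertion for \(T^+\). Strip \(1\) is processed last in \cref{alg:insertion}, and it inserts \(1\) just before \(\epsilon_1\). Removing that last insertion and identifying \(\epsilon_1\) with \(1\) gives exactly \(\Phi_I^{\AT}(T^+)=\sigma^{-1}\), under the paper's identification of \(\AT(n)\) with \(\AT^+(n+1)\). This proves the first equality.

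\emph{Step 4: the second equality.} This follows directly from \cref{eq:CK}.

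The main obstacle is the label bookkeeping in Step 3. Matching the paper's identification \(\AT(n)\cong\AT^+(n+1)\), with labels \(1,\dots,n+1\), against the flattened RAT, with labels \(\epsilon_1,1,\dots,n+1\), requires a careful shift argument. I would settle it by checking that in both insertions the top row, \(\epsilon_1\) or \(1\), is the first RL-minimum and that all other steps coincide verbatim.
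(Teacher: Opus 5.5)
Your argument is correct and is essentially the paper's own; the paper only asserts that the corollary follows by specializing to $r=0$, and you supply the details: you combine \cref{pro:FE=zigzag2}, \cref{prop:I is Z} and the proposition $f(e^{-1}(\iota(\pi)))=f(\pi)^{-1}$ to identify $\Phi_I(\ferat(\pi))$ with the one-block assembl\'ee $\sigma^{-1}$, transfer this to $\Phi_I^{\AT}(T^+)=\sigma^{-1}$ via the $\epsilon_1\leftrightarrow 1$ bookkeeping, and finish with \cref{eq:CK}. One small slip in Step~1: in \cref{def:iota} the head is $h_1=\sigma_1$ (a value, not a position), so $\epsilon_1$ is inserted after $u_{\sigma_1}=\sigma^{-1}(\sigma_1)=1$ rather than after $u_1$; this is exactly what makes $\iota(\pi)=e(\pi')$ in Step~2 consistent with your correct description of $e(\pi')$ in Step~3.
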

Note that due to the proof of \cref{lem:labels}, our definition of
$\Phi_{FE}^{AT}(\sigma)$ for \( \sigma=\sigma_1 \cdots \sigma_n \) is equivalent to the transpose of the
classical fusion-exchange map in \cite{Viennota} applied to the reversed word \( \sigma_n \cdots \sigma_1 \).

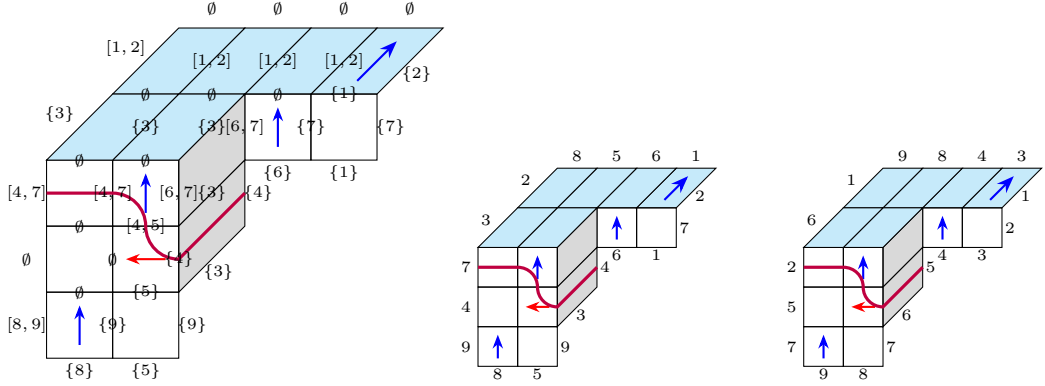
\begin{figure}
    \centering
\begin{tikzpicture}[scale=2.5]
  \short{4}{4} \short{3}{4} \short{2}{4} \short{1}{4} \cell{4}{3} \cell{3}{3} \tall{2}{2} \cell{1}{2} \cell{0}{2} \tall{2}{1} \cell{1}{1} \cell{0}{1} \short{1}{3} \short{0}{3} \cell{1}{0} \cell{0}{0}
\node at (0.500000000000000, -0.200000000000000) {\tiny \( \{8\} \)};
\node at (1.50000000000000, -0.200000000000000) {\tiny \( \{5\} \)};
\node at (2.20000000000000, 0.500000000000000) {\tiny \( \{9\} \)};
\node at (2.60000000000000, 1.30000000000000) {\tiny \( \{3\} \)};
\node at (3.20000000000000, 2.50000000000000) {\tiny \( \{4\} \)};
\node at (3.50000000000000, 2.80000000000000) {\tiny \( \{6\} \)};
\node at (4.50000000000000, 2.80000000000000) {\tiny \( \{1\} \)};
\node at (5.20000000000000, 3.50000000000000) {\tiny \( \{7\} \)};
\node at (5.60000000000000, 4.30000000000000) {\tiny \( \{2\} \)};
 \upArr{0}{0} \upArr{1}{2} \upArr{3}{3} \slantUpArr{4}{4} \leftArr{1}{1}
  \draw[very thick,purple] (0,2.5) to (1,2.5) to[out=0, in=90] (1.5,2) to[out=270, in=180] (2,1.5) to (3,2.5);
   \node at (-.3,.5) {\tiny \( [8,9] \)};
  \node at (-.3,1.5) {\tiny \( \emptyset \)};
  \node at (-.3,2.5) {\tiny \( [4,7] \)};
  \node at (.2,3.7) {\tiny \( \{3\} \)};
  \node at (1.2,4.7) {\tiny \( [1,2] \)};
  \node at (2.5,5.3) {\tiny \( \emptyset \)};
  \node at (3.5,5.3) {\tiny \( \emptyset \)};
  \node at (4.5,5.3) {\tiny \( \emptyset \)};
  \node at (5.5,5.3) {\tiny \( \emptyset \)};
  \node at (1.5,4) {\tiny \( \emptyset \)};
  \node at (.5,3) {\tiny \( \emptyset \)};
  \node at (.5,2) {\tiny \( \emptyset \)};
  \node at (.5,1) {\tiny \( \emptyset \)};
  \node at (1.5,1) {\tiny \( \{5\} \)};
  \node at (1,.5) {\tiny \( \{9\} \)};
  \node at (1,1.5) {\tiny \( \emptyset \)};
  \node at (1,2.5) {\tiny \( [4,7] \)};
  \node at (1.5,3) {\tiny \( \emptyset \)};
  \node at (2.5,4) {\tiny \( \emptyset \)};
  \node at (1.5,2) {\tiny \( [4,5] \)};
  \node at (2,1.5) {\tiny \( \{4\} \)};
  \node at (2.5,2.5) {\tiny \( \{3\} \)};
  \node at (2.5,3.5) {\tiny \( \{3\} \)};
  \node at (1.5,3.5) {\tiny \( \{3\} \)};
  \node at (2,2.5) {\tiny \( [6,7] \)};
  \node at (3,3.5) {\tiny \( [6,7] \)};
  \node at (4,3.5) {\tiny \( \{7\} \)};
  \node at (4.5,4) {\tiny \( \{1\} \)};
  \node at (3.5,4) {\tiny \( \emptyset \)};
  \node at (4.5,4.5) {\tiny \( [1,2] \)};
  \node at (3.5,4.5) {\tiny \( [1,2] \)};
  \node at (2.5,4.5) {\tiny \( [1,2] \)};
\end{tikzpicture}
\begin{tikzpicture}[scale=1.5]
  \short{4}{4} \short{3}{4} \short{2}{4} \short{1}{4} \cell{4}{3} \cell{3}{3} \tall{2}{2} \cell{1}{2} \cell{0}{2} \tall{2}{1} \cell{1}{1} \cell{0}{1} \short{1}{3} \short{0}{3} \cell{1}{0} \cell{0}{0}
\node at (0.500000000000000, -0.200000000000000) {\tiny 8};
\node at (1.50000000000000, -0.200000000000000) {\tiny 5};
\node at (2.20000000000000, 0.500000000000000) {\tiny 9};
\node at (2.60000000000000, 1.30000000000000) {\tiny 3};
\node at (3.20000000000000, 2.50000000000000) {\tiny 4};
\node at (3.50000000000000, 2.80000000000000) {\tiny 6};
\node at (4.50000000000000, 2.80000000000000) {\tiny 1};
\node at (5.20000000000000, 3.50000000000000) {\tiny 7};
\node at (5.60000000000000, 4.30000000000000) {\tiny 2};
 \upArr{0}{0} \upArr{1}{2} \upArr{3}{3} \slantUpArr{4}{4} \leftArr{1}{1}
  \draw[very thick,purple] (0,2.5) to (1,2.5) to[out=0, in=90] (1.5,2) to[out=270, in=180] (2,1.5) to (3,2.5);
   \node at (-.3,.5) {\tiny 9};
  \node at (-.3,1.5) {\tiny 4};
  \node at (-.3,2.5) {\tiny 7};
  \node at (.2,3.7) {\tiny 3};
  \node at (1.2,4.7) {\tiny 2};
  \node at (2.5,5.3) {\tiny 8};
  \node at (3.5,5.3) {\tiny 5};
  \node at (4.5,5.3) {\tiny 6};
  \node at (5.5,5.3) {\tiny 1};
\end{tikzpicture}
\qquad
\begin{tikzpicture}[scale=1.5]
  \short{4}{4} \short{3}{4} \short{2}{4} \short{1}{4} \cell{4}{3} \cell{3}{3} \tall{2}{2} \cell{1}{2} \cell{0}{2} \tall{2}{1} \cell{1}{1} \cell{0}{1} \short{1}{3} \short{0}{3} \cell{1}{0} \cell{0}{0}
\node at (0.500000000000000, -0.200000000000000) {\tiny 9};
\node at (1.50000000000000, -0.200000000000000) {\tiny 8};
\node at (2.20000000000000, 0.500000000000000) {\tiny 7};
\node at (2.60000000000000, 1.30000000000000) {\tiny 6};
\node at (3.20000000000000, 2.50000000000000) {\tiny 5};
\node at (3.50000000000000, 2.80000000000000) {\tiny 4};
\node at (4.50000000000000, 2.80000000000000) {\tiny 3};
\node at (5.20000000000000, 3.50000000000000) {\tiny 2};
\node at (5.60000000000000, 4.30000000000000) {\tiny 1};
 \upArr{0}{0} \upArr{1}{2} \upArr{3}{3} \slantUpArr{4}{4} \leftArr{1}{1}
  \draw[very thick,purple] (0,2.5) to (1,2.5) to[out=0, in=90] (1.5,2) to[out=270, in=180] (2,1.5) to (3,2.5);
   \node at (-.3,.5) {\tiny 7};
  \node at (-.3,1.5) {\tiny 5};
  \node at (-.3,2.5) {\tiny 2};
  \node at (.2,3.7) {\tiny 6};
  \node at (1.2,4.7) {\tiny 1};
  \node at (2.5,5.3) {\tiny 9};
  \node at (3.5,5.3) {\tiny 8};
  \node at (4.5,5.3) {\tiny 4};
  \node at (5.5,5.3) {\tiny 3};
\end{tikzpicture}
    \caption{The same arrow-zigzag path for a RAT $R\in\RAT^+(9,2)$ with edges labeled by the FE procedure, labeled by $\{1,\ldots,9\}$, and labeled by $\sigma$.}
    \label{fig:1}
  \end{figure}

\begin{example}\label{exa:4}
For the RAT \(R\) this example, we compare the outputs of all three
maps \(\ferat(R)\), \(\Phi_Z(R)\), and \(\Phi_I(R)\); see \Cref{fig:1}. We have
\begin{align*}
\ferat(R)&=[2\,7\,1\,6\,4][3\,9\,8],\\
\phi(e(\Phi_Z(R)))&=(3,1,\epsilon_1)(6,\epsilon_2)(5,8,4,2)(9,7),\\
\Phi_I(R)=\Phi_Z(R)&=[3\,1\,5\,8\,4\,2\,9\,7][6].
\end{align*}
Then for \(\pi=\ferat(R)=[2\,7\,1\,6\,4][3\,9\,8]\), we have
\[
  f(\pi)^{-1}=3\,1\,6\,5\,8\,4\,2\,9\,7, \qquad
  \iota(\pi)=3\,1\,\epsilon_1\,6\,\epsilon_2\,5\,8\,4\,2\,9\,7=e(\Phi_Z(R)),
\]
and
\[
  e^{-1}(\iota(\pi))=[3\,1\,5\,8\,4\,2\,9\,7][6]=\Phi_I(R),
\]
as claimed in \cref{pro:FE=zigzag2} and \cref{cor:FE=I}.
\end{example}

\section{Another zigzag map from RAT to assembl\'ees}
\label{sec:zigzag-RPT}

In this section, we give a different zigzag map from RAT to assembl\'ees,
which is a generalization of the zigzag map due to Steingr\'imsson and
Williams \cite{Steingrimsson2007} on permutation tableaux. As a
consequence, we obtain an interpretation of the parameter \( q \) of
the partition function \( \Y_{n,r}(\alpha,\beta,q) \) in terms of
assembl\'{e}es, which was proposed as an open problem in
\cite[Section~5]{Mandelshtam2018}.
More generally, we will consider
 \begin{equation}\label{eq:11}
   \Y_{n,r}(\alpha,\beta,q,y) :=\sum_{T\in \RAT(n,r)}
   \alpha^{\frow(T)} \beta^{\fcol(T)} q^{\fcell(T)}y^{\row(T)},
\end{equation}
which is a generalization of \( \Y_{n,r}(\alpha,\beta,q) \) in
\eqref{eq:Y} with an extra parameter \( y \).

Before stating the results, we introduce some definitions. Throughout
this section, \( X = \{a_1<\dots<a_n\} \) denotes a nonempty set of positive
integers and \( \overline{X} = \{-a_1,\dots,-a_n\} \). We
write \( \overline{i} = -i \).

\subsection{Signed permutations and their arc diagrams}

\newcommand{\Neg}{\text{neg}}

We first introduce signed permutations, which will be used to
interpret assembl\'{e}es.

\begin{defn}
  A \emph{signed permutation} on \( X \) is a bijection
  \( \tau:X\cup \overline{X} \to X\cup \overline{X} \) such that
  \( \tau(-i) = -\tau(i) \) for all \( i\in X \). Denote the set of signed permutations by $S^\pm_X$, and when $X=[n]$ we simply write $S^\pm_n$.
\end{defn}

For \(X=\{a_1<\cdots<a_n\}\) and \(\tau\in S_X^\pm\), define $\neg(\tau)=\{i\in X:\tau(i)\in\overline{X}\}$. The word \(|\tau(a_1)|\cdots|\tau(a_n)|\) is a permutation \(\sigma\in S_X\), and \(\tau\) is uniquely determined by \(\sigma\) together with \(\neg(\tau)\).
Accordingly, we write \(\tau\) in one-line notation as \(\tau_1\cdots\tau_n\), where \(\tau_i=\tau(a_i)\). 

\begin{defn}\label{def:13}
  Let \( \tau\in S^\pm_X \). For distinct integers
  \( i,j\in\neg(\tau) \), define \( i<_\tau j \) if
\[
  \min(i,|\tau(i)|) < \min(j,|\tau(j)|)
\qquad \mbox{or} \qquad
  \min(i,|\tau(i)|) = i = |\tau(j)|= \min(j,|\tau(j)|).
\]
\end{defn}

For any two distinct integers \( i,j\in\neg(\tau) \), we have either
\( i<_\tau j \) or \( j<_\tau i \). Thus $<_\tau$ defines a total
order on $\neg(\tau)$. For example, if
\( \tau= \overline{4}\, 5\, 3\,\overline{6}\, \overline{2}\, 1 \),
then \( 1<_\tau 5 <_\tau 4 \).

For integers \( a\le b \) (resp.~\( a<b \)), an \emph{upper
  arc} (resp.~a \emph{lower arc}) \( (a,b) \) is a curve
connecting the points labeled \( a \) and \( b \) on the \( x \)-axis
that lies above (resp.~below) the \( x \)-axis. The upper arc \((a,a)\) is also called a \emph{loop} at \( a \).

\begin{defn}\label{def:14}
  Let \( X = \{a_1<\dots<a_n\} \) be a set of positive integers. The
  \emph{arc diagram} of a signed permutation
  \( \tau \in S^\pm_X \) is defined as follows. Let
  \( \neg(\tau)=\{d_1 <_\tau \cdots <_\tau d_{r}\} \). Arrange \( n+r \) vertices labeled
  \( -r, -r+1,\dots,-1 , a_1,a_2,\dots,a_n \) on the \( x \)-axis in
  this order from left to right. For each \( i\in X \), do the following.
\begin{itemize}
\item If \( \tau(i)\ge i \), draw an upper arc
  \( (i,\tau(i)) \).
\item If \( 0<\tau(i)< i \), draw a lower arc
  \( (\tau(i),i) \).
\item If \( i=d_j\in\neg(\tau)\), draw a lower arc \( (-j,i) \) and an
  upper arc \( (-j,|\tau(i)|) \).
\end{itemize}
\end{defn}

We draw the arc diagram so that every pair of arcs crosses at most
once. See \Cref{fig:arc_diagram} for an example.

\begin{figure}
  \centering
\begin{tikzpicture}[scale=1.5]
\vertex{1}{1} \vertex{2}{2} \vertex{3}{3} \vertex{4}{4} \vertex{5}{5} \vertex{6}{6} \vertex{0}{-1} \vertex{-1}{-2} \vertex{-2}{-3} \upperEdge{2}{5} \upperLoop{3}{3} \lowerEdge{6}{1} \lowerEdge{1}{0} \upperEdge{0}{4} \lowerEdge{4}{-2} \upperEdge{-2}{6} \lowerEdge{5}{-1} \upperEdge{-1}{2} 
\node at (2,0) {$\square$};
\node at (.8,.7) {$\square$};
\node at (3.2,.7) {$\square$};
\node at (2.65,-1.15) {$\square$};
\node at (3.45,-1.3) {$\square$};
\node at (1.5,-1.5) {$\square$};
\end{tikzpicture}
\caption{The arc diagram of \( \tau= \bar4\, 5\, 3\,\bar6\, \bar2\, 1\in S^\pm_6 \). The six crossings are indicated with $\square$s.}
  \label{fig:arc_diagram}
\end{figure}
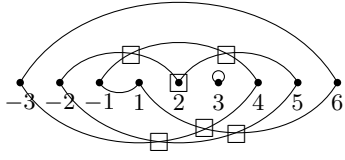

\begin{defn}
    Let \( \tau\in S^\pm_X \) be a signed permutation. An \emph{upper
    crossing} of \( \tau \) is defined as a pair \( ((a,b),(c,d))\) of
  upper arcs in the arc diagram of \( \tau \) such that
  \( a<c\le b<d \). A \emph{lower crossing} of \( \tau \) is defined
  as a pair \( ((a,b),(c,d))\) of lower arcs in the arc diagram of
  \( \tau \) such that \( a<c< b<d \). A \emph{crossing} of \( \tau \)
  is either an upper or a lower crossing of \( \tau \). We denote by
  \( \cro(\tau) \) the number of crossings of \( \tau \).
\end{defn}

For example, the signed permutation \(\tau\) shown in
\cref{fig:arc_diagram} has three upper crossings
\( ((-2,2),(-1,4)) \), \( ((-2,2),(2,5)) \), and \( ((-1,4),(2,5)) \),
three lower crossings \( ((-3,4),(-2,5)) \), \( ((-3,4),(1,6)) \), and
\( ((-2,5),(1,6)) \), hence \(\cro(\tau)=6\).

If a signed permutation is a permutation, i.e., if it has no negative
entries, then the definition of a crossing coincides with the classical one for
permutations in~\cite{Corteel2007a}.

\subsection{Assembl\'{e}es as signed permutations}

An assembl\'{e}e \( \pi\in \A(n,r) \) can be viewed as the signed
permutation \( \tau\in S^\pm_n\) obtained from the canonical
permutation \( f(\pi) \) by replacing each block head
\( a\in\heads(\pi) \) with \( -a \). Such a signed permutation can be
characterized as follows.

\begin{defn}\label{def:5}
  Define $\AS(n,r)$ to be the set of 
  signed permutations \( \tau\in S^\pm_n \) satisfying the following properties: 
\begin{itemize}
\item[(1)] \(|\neg(\tau)|=r \), 
\item[(2)] \( 1\in\neg(\tau) \), and 
\item[(3)] if \(\neg(\tau)=\{d_1 < \cdots <d_r\}\), then
\( |\tau(d_1)| < \cdots < |\tau(d_r)| \). 
\end{itemize}
\end{defn}

Any signed permutation with the properties above corresponds to an
assembl\'{e}e in \( \A(n,r) \). For example, the signed permutation
\(\tau= \bar5\,4\,9\,\bar6\,1\,\bar7\,3\,8\,2\in \AS(9,3) \)
corresponds to the assembl\'ee
\( \pi = [5\,\,4\,\,9]\,\, [6\,\,1]\,\, [7\,\,3\,\,8\,\,2] \in \A(9,3)
\). It is useful to extend this identification between \( \A(n,r) \)
and \( \AS(n,r) \) to assembl\'{e}es on an arbitrary finite set of
positive integers.

\begin{defn}\label{def:7}
  Let \( X = \{a_1<\dots<a_n\} \) be a set of positive integers. An
  \emph{assembl\'{e}e} on \( X \) is a collection
  \( \{\sigma^{(1)},\dots,\sigma^{(r)}\} \) of \( r \) permutations
  \( \sigma^{(i)}\in S_{B_i} \) such that \( \{B_1,\dots,B_r\} \) is a
  set partition of \( X \). Let \( \A(X) \) denote the set of
  assembl\'{e}es on \( X \). Define \(\AS(X)\) to be the set of signed
  permutations \( \tau\in S^\pm_X \) such that
  \( |\tau(d_1)| < \cdots < |\tau(d_r)| \) and \(d_1=a_1\), where
  \( \neg(\tau)=\{d_1 < \cdots <d_r\} \).
\end{defn}

Note that the above bijection between \( \A(n,r) \) and \( \AS(n,r) \)
naturally extends to a bijection between \(\A(X)\) and \(\AS(X)\).

Observe that for a signed permutation $\tau\in\AS(X)$ with $\neg(\tau)=\{d_1<\cdots<d_r\}$, since \( |\tau(d_i)|<|\tau(d_j)| \) for all \( 1\le i<j\le r \), we have
\( d_1 <_\tau \cdots <_\tau d_r \). Consequently, for each \( i\in [r] \), there
are a lower arc \( (-i,d_i) \) and an upper arc \( (-i,|\tau(d_i)|) \), and no crossings occur among these lower arcs or among these upper arcs.
We may therefore represent the pair of arcs \( (-i,d_i) \) and \( (-i,|\tau(d_i)|) \) by a single \emph{spiral arc} from \( d_i \) to \( |\tau(d_i)| \), omitting the intermediate vertex \( -i \), such that the resulting spiral arcs do not cross, as shown in \Cref{fig:signed_perm}.

\begin{defn}
  A \emph{spiral arc diagram} of a signed permutation
  \(\tau\in\AS(X)\) is an arc diagram consisting of upper arcs
  \(\{(i,\tau(i)):i\not\in\neg(\tau), i\le \tau(i)\}\), lower arcs
  \(\{(\tau(i),i):i\not\in\neg(\tau), \tau(i)<i\}\), and spiral arcs
  \(\{(i,|\tau(i)|):i\in\neg(\tau)\}\) for \(i\in X\), such that the
  spiral arcs are pairwise noncrossing.
\end{defn}

Note that for a given signed permutation \(\tau\in\AS(X)\), the spiral arc diagram constructed above is uniquely determined: the spiral arcs \((d_i,|\tau(d_i)|)\) are nested in the order $d_1<\cdots< d_r$, where \(\neg(\tau)=\{d_1,\ldots, d_r\}\). Equivalently, a signed permutation $\tau\in S^\pm_X$ is in the set $\AS(X)$ if and only if it admits a spiral arc diagram, and has a spiral arc incident to the leftmost vertex.

\begin{figure}
  \centering
  \begin{tikzpicture}[scale=1.5]
  \def\n{-2};
  \def\k{-1};
    \vertex{0}{-1}\vertex{\k}{-2}\vertex{\n}{-3}\vertex{1}{1} \vertex{1.8}{2} \vertex{3}{3} \vertex{4.2}{4} \vertex{5}{5} \vertex{6}{6} \vertex{7}{7} \vertex{8}{8} \vertex{9}{9} \upperEdge{0}{5}\lowerEdge{1}{0} \upperEdge{1.8}{4.2} \upperEdge{3}{9} \lowerEdge{4.2}{\k} \upperEdge{\k}{6} \lowerEdge{5}{1} \lowerEdge{6}{\n} \upperEdge{\n}{7} \lowerEdge{7}{3} \upperLoop{8}{8} \lowerEdge{9}{1.8} 
  \end{tikzpicture}\hspace{0.5in}
  \begin{tikzpicture}[scale=1.5]
    \vertex{1}{1} \vertex{2}{2} \vertex{3}{3} \vertex{4}{4} \vertex{5}{5} \vertex{6}{6} \vertex{7}{7} \vertex{8}{8} \vertex{9}{9} \oneSpiralEdge{1}{0.500000000000000}{5} \upperEdge{2}{4} \upperEdge{3}{9} \spiralEdge{4}{0.000000000000000}{6} \lowerEdge{5}{1} \spiralEdge{6}{-0.500000000000000}{7} \lowerEdge{7}{3} \upperLoop{8}{8} \lowerEdge{9}{2} 
  \end{tikzpicture}
  \caption{We show the standard arc diagram (left) and the spiral arc diagram (right) of the signed permutation
    \( \bar5\,4\,9\,\bar6\,1\,\bar7\,3\,8\,2\in\AS(9,3) \). The number of upper and lower arc crossings is 4 and 7, respectively, for a total of 11 crossings.}
  \label{fig:signed_perm}
\end{figure}
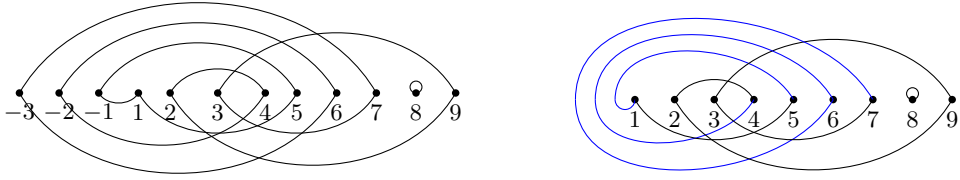

\begin{defn}
  Let \( X = \{a_1<\dots<a_n\} \) be a set of positive integers and
  let $\tau\in \AS(X)$. The \emph{shape} \( \sh(\tau) \) of \( \tau \)
  is defined to be the word \( w_1\cdots w_n \), where 
  \[
    w_i =
    \begin{cases}
     2 & \mbox{if \( \tau(a_i)\ge a_i \)},\\
     1 & \mbox{if \( \tau(a_i)<0 \)},\\
     0 & \mbox{if \( 0<\tau(a_i)<a_i \).}
    \end{cases}
  \]
  A \emph{shape-inversion} of \( \tau \) is a pair \( (a_i,a_{j}) \)
  such that \( 1\le i<j\le n \) and \( w_i>w_j \). A \emph{weak
    excedance} of \( \tau \) is an integer \( a_i\in X \) such that
  \( \tau(a_i) \ge a_i \). We denote the number of shape-inversions
  and weak excedances in \( \tau \) by \( \sinv(\tau) \) and
  \( \wex(\tau) \), respectively.
\end{defn}

\begin{defn}\label{def:1}
  Let \( X = \{a_1<\dots<a_n\} \) be a set of positive integers and
  let $\tau\in \AS(X)$ with \( \neg(\tau)=\{d_1<\cdots<d_r\} \). For \( a_i \in X \), we say
  that
  \begin{itemize}
  \item \( \tau(a_i) \) is a \emph{special LR-maximum}
  if \( \tau(a_i) > |\tau(k)| \) for all \( k\in \{a_1,\dots,a_{i-1},d_1,\dots,d_r\} \), 
  \item \( \tau(a_i) \) is a \emph{special RL-minimum}
  if \( 0<\tau(a_i) < |\tau(k)| \) for all \( k\in \{a_{i+1},\dots,a_{n},d_1,\dots,d_r\} \).
  \end{itemize}
  We denote the number of special LR-maxima and special RL-minima of \( \tau \) by \( \LRmax^*(\tau) \) and  \( \RLmin^*(\tau) \).
\end{defn}

In particular, if $a_i\in\neg(\tau)$, then $\tau(a_i)$ cannot be a special LR-maximum or a special RL-minimum. As we identify an assembl\'ee \(\pi\in\A(n,r)\) with its corresponding signed permutation \(\tau\in \AS(n,r)\), we may equivalently write $\stat(\pi)=\stat(\tau)$ for $\stat\in\{\wex,\sinv,\sh,\LRmax^*,\RLmin^*\}$.

\begin{example}\label{ex:stats}
The assembl\'ee \( \pi = [5\,\,4\,\,9]\,\, [6\,\,1]\,\, [7\,\,3\,\,8\,\,2] \in \A(9,3) \) is represented by the signed permutation \(\tau= \bar5\,4\,9\,\bar6\,1\,\bar7\,3\,8\,2\in \AS(9,3) \). Its arc diagram is shown in \cref{fig:signed_perm}. We have that \( 9 \) is the only  special LR-maximum, and the RL-minima are \( 2 \) and \( 1 \). Thus
 $\neg(\tau)=\{5,6,7\}$, $\sh(\tau)=122101020$, $\sinv(\tau)=19$, $\wex(\tau)=3$, $\RLmin^*(\tau)=2$, and $\RLmax^*(\tau)=1$.
\end{example}

\subsection{A zigzag map \( \zeta \) that preserves the $q$ statistic}\label{sec:zeta}

In this subsection, we define another zigzag map \( \zeta \) on \( \RAT^+ \).
First we introduce some notation.

\begin{defn}
  Let \( X = \{a_1<\dots<a_n\} \) be a set of positive integers. We
  denote by \( \RAT(X) \) and $\RAT^+(X)$ the sets of RAT and extended
  RAT whose southeast border is labeled by \( a_1,\dots,a_n \). Let
  \( \A \) denote the set of all assembl\'{e}es (on any finite set
  \( X \)), and let \( \RAT^+ \) denote the set of all extended RAT
  (on any finite set \( X \)).

  For \( R\in \RAT^+(X) \), let \( \topup(R) \) denote the number of
  up-arrows in diagonal strip \( a_1 \), and let \( \tile(R) \) denote
  the number of tiles in \( R \).
\end{defn}

Note that under the bijection \( R\mapsto R^+ \), we have
\( \fcol(R) = \topup(R^+) \). Hence,
we can rewrite \eqref{eq:11} as
\begin{equation}\label{eq:12}
   \Y_{n,r}(\alpha,\beta,q,y)
  =  \sum_{R\in \RAT^+(n+1,r+1)}  \alpha^{\frow(R)} \beta^{\topup(R)} q^{\fcell(R)}y^{\row(R)}.
\end{equation}

The following theorem is the main result in this section.

\begin{thm}\label{thm:zigzag-RPT}
  Let \( X  \) be a finite set of positive integers.
  There is a bijection \( \zrpt: \RAT^+(X)\to \A(X) \) such that
  if \( \zrpt(R) = \pi \), then
  \begin{align}
    \diag(R) &= \neg(\pi),\label{eq:diag}\\
    \tile(R) &= \sinv(\pi),\label{eq:sinv}\\
    \row(R) &= \wex(\pi),\label{eq:wex}\\
    \topup(R) &= \RLmin^*(\pi),\label{eq:RLmin}\\
    \frow(R) &= \LRmax^*(\pi),\label{eq:LRmax}\\
    \fcell(R) &= \cro(\pi).\label{eq:crossing}
  \end{align}
  In particular, when $X=[n+1]$, we obtain the bijection  \( \zrpt: \RAT^+(n+1,r+1)\to \A(n+1,r+1) \) with the properties above.
\end{thm}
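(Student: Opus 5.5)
The bijection \(\zrpt\) will be the up-free-zigzag map. On \(R\in\RAT^+(X)\) we take the up-free-zigzag paths of \Cref{def:zigzag}, whose turning cells are the cells holding an up-arrow together with the free cells. Each path starts at the northwest border edge of the strip labeled \(i\) and ends at some southeast border edge \(j\). We then define a signed permutation \(\tau\) as follows.
\begin{itemize}
\item If strip \(i\) is horizontal, set \(\tau(i)=j\).
\item If strip \(i\) is vertical, set \(\tau(i)=j\).
\item If strip \(i\) is diagonal, set \(\tau(i)=-j\).
\end{itemize}
The paths must be oriented so that a row's path exits weakly below it (giving \(\tau(i)\ge i\)) and a column's path exits to its right (giving \(\tau(i)<i\)). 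This is the Steingr\'imsson--Williams construction on permutation tableaux, extended to rhombic tiles. Up-free-zigzag paths give a bijection between northwest and southeast border edges, because every tile has exactly two entry and two exit sides and the local rule is a bijection on each tile. So \(\tau\) is a signed permutation.

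We first read off \eqref{eq:diag}, \eqref{eq:wex} and \eqref{eq:sinv} from the shape. Diagonal strips are exactly the negative letters, which gives \eqref{eq:diag}. Rows are the weak excedances, which gives \eqref{eq:wex}; this needs a small lemma that the path from a row's northwest edge ends weakly southwest of its origin, and the path from a column ends northeast of it. Then \(\sh(\tau)\) equals the shape word \(w\) of \(R\). The number of tiles in \(\Gamma_w\) is \(\inv(w)\), since the recursive tiling adds one tile per inversion, and \(\inv(w)=\sinv(\tau)\), which gives \eqref{eq:sinv}.

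Next we check that \(\tau\in\AS(X)\). The first edge \(a_1\) is diagonal, so \(a_1\in\neg(\tau)\). The images of the diagonal strips must increase. For this we argue that up-free paths starting from diagonal strips never cross one another. The maximal tiling puts all short rhombi of a vertical strip north of its squares, and a tall rhombus is never an up-arrow cell (up-arrows lie in columns), so these paths stay nested. This nesting is exactly the noncrossing condition on spiral arcs.

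For bijectivity, we reconstruct \(R\) from \(\tau\) by induction on \(|X|\). Remove the last southeast edge \(a_n\). If it is a column, delete that column and reroute the one path that turned in it. If it is a row, delete the row. If it is diagonal, delete that diagonal strip. We then show that the data of \(\tau\) determines uniquely where the arrows and free cells of the removed strip lie; equivalently, that deleting \(a_n\) from the arc diagram corresponds to deleting the strip. The alternative-tableau case is \cite{Steingrimsson2007}, and for \(r=0\) the flattening map \Cref{lem:6} can be used as a consistency check. An equivalent route is to show the map is injective and then compare cardinalities: by \eqref{Z_nr} at \(\alpha=\beta=1\), \(|\RAT^+(n+1,r+1)|=|\RAT(n,r)|=\binom{n}{r}(r+2)_{n-r}\). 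This must be checked to equal \(|\A(n+1,r+1)|\), which is true since \((r+2)_{n-r}=(n+1)!/(r+1)!\).

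Finally, the statistics \eqref{eq:RLmin}, \eqref{eq:LRmax} and \eqref{eq:crossing}. A free row has no left-arrow. Its path runs straight west-to-east until its first turn, so it reaches a new maximum relative to earlier edges and diagonal images; this makes \(\tau(i)\) a special LR-maximum, giving \eqref{eq:LRmax}. Similarly, an up-arrow in the top diagonal strip \(a_1\) forces the column's path to turn at the very top, which yields a special RL-minimum, giving \eqref{eq:RLmin}. The hardest step is \eqref{eq:crossing}, \(\fcell(R)=\cro(\pi)\). The plan is a local bijection: at each free cell the two paths through it both turn, and we show that this pair of paths determines a crossing pair of arcs. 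Arcs of the same type (upper–upper or lower–lower) correspond to paths meeting at a free cell. Spiral arcs account for the contributions of short and tall rhombi, with the lower half of a spiral arc crossing column arcs and the upper half crossing row arcs. Nonfree empty cells (pointed to by a left-arrow) and up-arrow cells should correspond to nestings rather than crossings. I would check this first on packed RAT using \Cref{theo:SplitRAT}, then argue tile by tile along the recursive tiling. The delicate part is handling tall rhombi with left-arrows pointing into diagonal strips, and the convention \(a<c\le b<d\) for upper crossings, including loops.
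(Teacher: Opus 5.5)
Your definition of $\zrpt$, your treatment of \eqref{eq:diag}, \eqref{eq:wex}, \eqref{eq:sinv}, and your cardinality count are fine. However, the two substantive claims, injectivity and \eqref{eq:crossing}, are not proved, and the mechanisms you propose for them are false.

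Take $X=\{1,2,3,4\}$ with shape $1200$. Diagonal strip $1$ consists of the short rhombi $(1,3),(1,4)$, and row $2$ consists of the squares $(2,3),(2,4)$. Put up-arrows in both short rhombi and leave both squares empty, so both are free. The paths are as follows: from column $3$ the path turns at $(1,3)$ and exits at $1$; from column $4$ it turns at $(1,4),(1,3),(2,3)$ and exits at $2$; from diagonal strip $1$ it turns at $(1,4),(2,4),(2,3)$ and exits at $3$; from row $2$ it turns at $(2,4)$ and exits at $4$. So $\zrpt(R)=\bar3\,4\,1\,2$, whose two crossings are $((-1,3),(2,4))$ and $((1,3),(2,4))$.

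At the free cell $(2,3)$, the two paths turning there are those of column $4$ (lower arc $(2,4)$) and diagonal strip $1$ (arcs $(-1,1),(-1,3)$), and these do not cross. The lower crossing instead comes from columns $3$ and $4$, whose paths meet only at the up-arrow cell $(1,3)$. Thus ``free cell $\leftrightarrow$ crossing of the two paths turning there, up-arrow cell $\leftrightarrow$ nesting'' already fails for $r=0$.

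The same example breaks your recursion. Three paths turn inside the last column $4$, not one. Deleting that column gives a RAT with image $\bar2\,3\,1$, whereas deleting vertex $4$ from the arc diagram of $\bar3\,4\,1\,2$ (short-circuiting $2\mapsto4\mapsto2$) gives $\bar3\,2\,1$. So deleting the last strip does not correspond to deleting $a_n$, and you give no other rule by which $\tau$ determines the removed strip. The counting argument cannot substitute for this.

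The missing idea is to reduce at the topmost corner $(i,j)$, the cell whose E- and S-sides are consecutive southeast border edges, where every change is local:
\begin{itemize}
\item If the corner holds a left-arrow, row $i$ has no turning cell, so $\tau(i)=i$, and one deletes row $i$.
\item If it holds an up-arrow, it is the only turning cell of column $j$, so $\tau(j)=i$. Deleting column $j$ only redirects the path entering from the west so that it exits at $i$.
\item If the corner is free, deleting that single cell swaps $\tau(i)$ and $\tau(j)$.
\end{itemize}
These moves are mirrored by invertible operators on signed permutations at the leftmost shape-descent, and they commute with $\zrpt$. Induction on the number of tiles then gives injectivity. Checking the arcs at the adjacent vertices $i,j$ shows that $\cro$ drops by exactly one in the free case and is unchanged otherwise, which gives \eqref{eq:crossing}.

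Your remaining steps also need actual arguments.
\begin{itemize}
\item Two paths from diagonal strips cannot share a straight cell. Such a cell holds or is pointed to by an arrow, so the path traversing it along that arrow's strip is that row's or column's own path.
\item For \eqref{eq:RLmin} and \eqref{eq:LRmax} you only assert one direction. Equality of counts also needs the converse: every strip other than a free column (resp.\ free row) fails to produce a special RL-minimum (resp.\ LR-maximum).
\end{itemize}
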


By \Cref{thm:zigzag-RPT} and \eqref{eq:12}, we have another
description of \( \Y_{n,r}(\alpha,\beta,q,y) \):
\begin{equation}\label{eq:13}
   \Y_{n,r}(\alpha,\beta,q,y)
  = \sum_{\pi\in \A(n+1,r+1)} \alpha^{\LRmax^*(\pi)}
  \beta^{\RLmin^*(\pi)} q^{\cro(\pi)}y^{\wex(\pi)}.
\end{equation}
If \( r=1 \) in \eqref{eq:13}, then we obtain the results of
\cite{Steingrimsson2007}; see also \cite[(5)]{Josuat-Verges2011}.

Recall up-free-zigzag paths in \Cref{def:zigzag}. For the rest of this
section, for brevity, up-free-zigzag paths will be called simply
zigzag paths. Accordingly, from now on, a \emph{turning cell} is either a free cell
or a cell containing an up-arrow, and a \emph{straight cell} is a cell
that is not a turning cell. 

\begin{defn}
The \emph{initial substrip} of a zigzag path \( p \) is the maximal set \( \{c_1,\dots,c_m\} \) of straight cells such that \( c_i \) is the \( i \)th cell that \( p \) passes
through for all \( i\in [m] \).
\end{defn}
Observe that the initial substrip of a zigzag path $p$ which starts in strip $k$ is necessarily contained within strip $k$. Moreover, if a cell $c$ contains an arrow pointing in the direction of strip $i$, then the initial substrip of the zigzag path that starts from strip $i$ must contain every cell the arrow in cell $c$ points to, and if the arrow is a left-arrow, the initial substrip also contains $c$ itself.

\begin{example}\label{ex:substrips}
In \cref{fig:zigzag2}, the initial substrip of the zigzag path from strip 2 to strip 4 is $\{(2,9), (2,7), (2,6)\}$, corresponding to the set of cells pointed to by the arrow in the cell $(2,6)$, including $(2,6)$ itself. For another example, the initial substrip of the zigzag path from strip 7 to strip 3 is $\{(1,7)\}$, corresponding to the cell pointed to by the arrow in the cell $(4,7)$. The initial substrips of the zigzag paths from strip 9 to strip 2 and from strip 1 to strip 5 are empty. 
\end{example}

We say that two zigzag paths \emph{intersect} if there exists a straight cell
\( c \) such that both paths pass through it. 
For example, in \Cref{fig:zigzag2}, the path from strip 1 to strip 5 intersects the path from strip 2 to strip 4 at the cell $(2,6)$, but does not intersect the path from strip 9 to strip 2.

\begin{figure}
  \centering
\begin{tikzpicture}[scale=1.5]
  \short{4}{5} \short{3}{5} \short{2}{5} \tall{4}{3} \cell{3}{3} \tall{2}{2} \cell{1}{2} \cell{0}{2} \tall{4}{2} \cell{3}{2} \tall{2}{1} \cell{1}{1} \cell{0}{1} \short{3}{4} \short{2}{4} \short{1}{4} \short{1}{3} \short{0}{3} \cell{0}{0} 
  \node at (0.500000000000000, -0.200000000000000) {\tiny 9};
\node at (1.20000000000000, 0.500000000000000) {\tiny 8};
\node at (1.50000000000000, 0.800000000000000) {\tiny 7};
\node at (2.60000000000000, 1.30000000000000) {\tiny 6};
\node at (3.50000000000000, 1.80000000000000) {\tiny 5};
\node at (4.60000000000000, 2.30000000000000) {\tiny 4};
\node at (5.20000000000000, 3.50000000000000) {\tiny 3};
\node at (5.20000000000000, 4.50000000000000) {\tiny 2};
\node at (5.60000000000000, 5.30000000000000) {\tiny 1};
\node at (3.5, 6.3) {\tiny 9};
\node at (-.3, 0.5) {\tiny 8};
\node at (4.5, 6.3) {\tiny 7};
\node at (.3, 3.7) {\tiny 6};
\node at (5.5, 6.3) {\tiny 5};
\node at (1.3, 4.7) {\tiny 4};
\node at (-.3, 1.5) {\tiny 3};
\node at (-.3, 2.5) {\tiny 2};
\node at (2.3, 5.7) {\tiny 1};
 \slantUpArr{4}{5} \slantUpArr{2}{4} \slantUpArr{2}{5} \slantLeftArr{2}{2} \leftArr{0}{0}
 \cellDot01 \cellDot11 \tallDot21 \cellDot32 \tallDot42
 \cellDot33 \tallDot43 \shortDot34 \shortDot03 \shortDot13 \shortDot14
 \draw[very thick,red] (2.5,5.5) to[out=0, in=45] (2.5,5) to[out=225, in=180] (2.5,4.5) to[out=0, in=45] (2.5,4)
  to[out=225, in=180] (2.5,3.5) -- (2.5,2.5) to[out=270, in=225] (3,2.5) to[out=0,in=90] (3.5,2);
  \draw[very thick,blue,dotted] (0,2.5) to (2,2.5) to (3,3.5) to[out=0,in=90] (3.5,3)
  to[out=270,in=180] (4,2.5) to[out=45,in=90] (4.5,2.5);
  \draw[very thick,purple,dashed] (3.5,6) to[out=270,in=180] (3.5,5.5) to (4.5,5.5)
  to[out=0,in=45] (4.5,5) to[out=225,in=0] (4.5,4.5) to[out=90,in=225] (5,4.5);
\end{tikzpicture}
\caption{An extended RAT whose free cells are marked with dots. The
  red zigzag path enters from strip \( 1 \) and exits from strip
  \( 5 \). The blue (dotted) zigzag path enters from strip \( 2 \) and
  exits from strip \( 4 \). The purple (dashed) zigzag path enters
  from strip \( 9 \) and exits from strip \( 2 \).}
  \label{fig:zigzag2}
\end{figure}
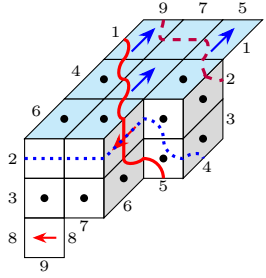

We now define the map $\zeta$ and show that it has the properties in
\Cref{thm:zigzag-RPT}. Although \(\zeta\) is naturally defined as a
map from \(\RAT^+(X) \) to \( \AS(X) \), by the identification between
the signed permutations in \( \AS(X) \) and the assembl\'{e}es in
\( \A(X) \), we also view \(\zeta\) as a map from \(\RAT^+(X)\) to
\( \A(X) \).

\begin{defn}\label{def:2}
  Let \( R\in \RAT^+(X) \). We define \( \zrpt(R) \) to be the signed permutation $\tau\in \AS(X)$ such that for all \( i\in X \),
  we have \( \tau(i) = \epsilon j \), where the zigzag path starting
  from strip \( i \) exits from strip \( j \), and \( \epsilon=-1 \) if
  strip \( i \) is a diagonal strip, and \( \epsilon=1 \) otherwise.
\end{defn}

For example, if \( R \) is the RAT in \Cref{fig:zigzag2}, then
\( \zrpt(R) \) is the signed permutation in \Cref{fig:signed_perm}.
The following lemma shows that \(\zeta\) indeed defines a map from
\(\RAT^+(X)\) to \( \AS(X) \).

\begin{lemma}\label{lem:3}
  For \( R\in \RAT^+(X) \), we have  \( \zrpt(R) \in \AS(X) \).
\end{lemma}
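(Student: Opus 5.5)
The plan is to check separately the two defining requirements of \( \AS(X) \): that \( \zrpt(R) \) is a signed permutation, and that its negative set and values have the required order. The first requirement is local. In every tile, the zigzag rule of \Cref{def:zigzag} is a bijection from the entering sides \( \{N(\tau),W(\tau)\} \) to the exiting sides \( \{S(\tau),E(\tau)\} \), whether the tile is straight or turning. So every zigzag path can be traced backwards uniquely, and two distinct paths never use the same edge. Each strip has exactly one northwest border edge and one southeast border edge. Hence the paths starting from the northwest edges of the strips with labels \( i\in X \) end at pairwise distinct southeast border edges, and \( i\mapsto j \) is a permutation of \( X \). Inserting the signs then gives a bijection \( \tau \) of \( X\cup\overline{X} \) with \( \tau(-i)=-\tau(i) \). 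By construction \( \neg(\tau) \) is exactly the set of labels of diagonal strips, so \( \neg(\tau)=\diag(R) \). Since \( R\in\RAT^+(X) \), the shape begins with a \( 1 \), so \( a_1 \) labels a diagonal strip and \( d_1=a_1 \).

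It remains to prove the monotonicity \( |\tau(d_1)|<\cdots<|\tau(d_r)| \), where \( d_1<\cdots<d_r \) are the labels of the diagonal strips. I will use the geometry of diagonal strips. In the maximal tiling, a diagonal strip consists of short rhombi (where it meets columns) followed by tall rhombi (where it meets rows). Distinct diagonal strips never cross, so the strip \( d_{k+1} \) separates the diagram into a part north/east of it and a part south/west of it. The southeast border edges north-east of \( d_{k+1} \) are exactly those with labels \( <d_{k+1} \) among the non-\( d_{k+1} \) edges (and similarly on the northwest border). It suffices to compare consecutive strips \( d_k \) and \( d_{k+1} \). The path \( p_k \) from \( d_k \) starts on the northeast side of strip \( d_{k+1} \), and the path \( p_{k+1} \) starts at the northwest end of strip \( d_{k+1} \) itself. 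I will show that the exit point of \( p_k \) is strictly northeast of that of \( p_{k+1} \) along the southeast border, which is exactly \( |\tau(d_k)|<|\tau(d_{k+1})| \).

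To compare the two paths, I note that two zigzag paths can change their relative (northeast/southwest) order only at a straight cell where one enters through \( N \) and the other through \( W \). At a turning cell they bounce off each other and keep their order, and otherwise they occupy different tiles. So I must show that \( p_k \) and \( p_{k+1} \) never swap order, and that \( p_{k+1} \) cannot overtake \( p_k \) to the northeast. Here I will use the arrow conditions. A straight cell is either a cell with a left-arrow or a non-free cell pointed to by some arrow. Moreover, by the remark after the definition of initial substrips, the cells pointed to by an arrow lie in the initial substrip of the path starting from the strip the arrow points along. I will combine this with the fact that every column contains an up-arrow (extended RAT) and that diagonal strips receive no arrows pointing into their border edges. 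The goal is to show that when \( p_{k+1} \) leaves strip \( d_{k+1} \) it does so on the southwest side, while \( p_k \) reaches a crossing cell of strip \( d_{k+1} \) only from the north/east in a configuration where it turns rather than crosses.

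I expect this non-crossing step to be the main obstacle, since it requires a careful case analysis of short versus tall rhombi of strip \( d_{k+1} \) and of the arrows that can make them straight. My first attempt will be an induction on the number of tiles, or equivalently on the \( \simeq \)-decomposition of \Cref{theo:SplitRAT}, removing the last strip processed by the recursive tiling and tracking how the two paths change. If that fails, I will fall back on the flattening map of \Cref{lem:6}: in \( \flat(R) \) the diagonal strips become the top \( r+1 \) rows, whose rightmost up-arrows sit in increasing columns \( d_1<\cdots<d_{r+1} \), and I will try to read the ordering of the exits from that increasing-column condition.
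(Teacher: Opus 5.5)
\textbf{There is a genuine gap: the key non-crossing step is never proved.} Your first paragraph is fine: the zigzag rule is a bijection on the sides of each tile, so the paths are edge-disjoint and reversible and induce a permutation of \(X\); \(\neg(\tau)\) is the set of diagonal labels; and \(\min X\) is diagonal. Your reduction of the monotonicity to showing that the paths from two diagonal strips never cross at a straight cell is exactly the reduction the paper makes.

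However, that non-crossing statement is the whole content of the lemma, and you leave it open, flagging it as the main obstacle and offering two tentative fallbacks. Worse, the intermediate goal you set yourself is false. Take \(X=[4]\), shape \(1\,2\,1\,0\), and a single up-arrow in the square \((2,4)\). The path from diagonal strip \(3\) goes straight through the short rhombus \((3,4)\), which is pointed to by that up-arrow. It then turns at the free tall rhombus \((2,3)\) and exits at edge \(2\). So it leaves strip \(3\) on its \emph{northeast} side. Positions relative to strip \(d_{k+1}\) are not controlled in general; only the relative order of the two paths is.

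\textbf{How to close it.} The missing argument is short and uses only ingredients you already cite.
\begin{enumerate}
\item Suppose the paths \(p,p'\) from diagonal strips \(d<d'\) pass through a common straight cell \(c\). Since \(c\) is straight, it contains a left-arrow or is pointed to by an arrow. That arrow lies in, and points along, a row or a column \(s\) of \(c\); arrows never point along a diagonal strip.
\item By the remark after the definition of initial substrips, \(c\) lies in the initial substrip of the path \(q\) starting from strip \(s\). So \(q\) traverses \(c\) along \(s\).
\item At a straight cell, two distinct paths through \(c\) traverse it along its two different strips. Since paths are edge-disjoint, \(q\in\{p,p'\}\). This is impossible, because \(q\) starts from a row or column while \(p,p'\) start from diagonal strips.
\end{enumerate}
Hence \(p\) stays northeast of \(p'\), and \(|\tau(d)|<|\tau(d')|\).

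The paper phrases this as a two-case check at the first common cell. A tall rhombus can only be straight because of a left-arrow in its row, which forces the path entering through the vertical side \(W(c)\) to be a row path. A short rhombus can only be straight because of an up-arrow below it in its column, which forces the path entering through \(N(c)\) to be a column path.

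Neither of your fallbacks is needed. The flattening route in particular would not transfer directly: \Cref{lem:Z-flat} concerns arrow-zigzag paths, and \(\flat(R)\) contains new cells (in rows \(\epsilon_t\) and columns \(j<d_t\)), some of them free, which alter the up-free-zigzag paths.
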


\begin{proof}
  Let \( R\in \RAT^+(X) \) and \( \zrpt(R) = \tau \). Since
  \( R\in \RAT^+(X) \), we have \( \min(X)\in \neg(\tau) \). Thus, it
  suffices to show that if \( d \) and \( d' \) are labels
  of diagonal strips in \( R \) such that \( d<d' \), then
  \( |\tau(d)| < |\tau(d')| \).

  Let \( p \) and \( p' \) be the zigzag paths starting from diagonal
  strips \( d \) and \( d' \), respectively. In particular, the
  initial substrips of $p$ and $p'$ are contained in the diagonal
  strips $d$ and $d'$, respectively. It suffices to show that \( p \)
  and \( p' \) do not intersect, as this would imply that \( p \)
  always stays to the northeast of \( p' \), and hence
  \(|\tau(d)|<|\tau(d')|\) since the labels on \(R\) increase from
  northeast to southwest. To this end, suppose by way of contradiction
  that \( p \) and \( p' \) intersect. Let \(c\) be the first cell
  where they intersect, which is necessarily a short or tall rhombus.
  There are two cases.

  \begin{description}
  \item[Case 1] \( c \) is a tall rhombus, so \(N(c)\) is diagonal and
    \(W(c)\) is vertical. Since \( c \) is a straight cell, either it contains a left-arrow, or there must be a left-arrow pointing to \(c\). Then the cells that are pointed to by this left-arrow are contained in the initial substrip of \(p'\). However, this implies that the initial substrip of \( p' \) is in a horizontal strip, a contradiction.
  \item[Case 2] \( c \) is a short rhombus, so \(N(c)\) is horizontal
    and \(W(c)\) is diagonal. Since \( c \) is a straight cell, there must be an up-arrow pointing to \( c \). Then the cells that are pointed to by this up-arrow are contained in the initial substrip of \(p\). However, this implies that the initial substrip of \(p\) is in a vertical strip, a contradiction.
  \end{description}
  Since both cases imply a contradiction, \( p \) and \( p' \) do not
  intersect. Hence we obtain \(|\tau(d)|<|\tau(d')|\), completing the
  proof.
\end{proof}

In what follows, we prove several properties of the map \( \zrpt \).
The following lemma verifies \eqref{eq:diag} and \eqref{eq:wex}, and
is an immediate consequence of the construction of $\zrpt$.

\begin{lemma}\label{lem:4}
  Let \( R\in \RAT^+(X) \), \( \zrpt(R) = \tau \), and \( i\in X \).
  Then the following hold:
\begin{itemize}
\item \( i \) is the label of a diagonal strip in \( R \) if and only if \( i\in\neg(\tau) \);
\item \( i \) is the label of a row strip in \( R \) if and only if \( i \) is a weak excedance of \( \tau \);
\item \( i \) is the label of a column strip in \( R \) if and only if \( 0<\tau(i)<i \).
\end{itemize}
\end{lemma}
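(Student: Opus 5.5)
The plan is to follow a single up-free-zigzag path from its first step, and to read off its exit direction from the type of strip $i$ it starts in. The first bullet is essentially the definition of $\zrpt$ in \cref{def:2}. The sign of $\tau(i)$ is $-1$ exactly when strip $i$ is diagonal, so $i\in\neg(\tau)$ if and only if $i$ labels a diagonal strip. For the remaining two bullets, $i$ labels a row or a column, so $\tau(i)=j>0$ is the label of the exit strip. It then suffices to prove two things: if strip $i$ is a row then $j\ge i$, and if strip $i$ is a column then $j<i$. Since every strip is exactly one of row, column, or diagonal, the three cases partition $X$, and both equivalences follow.

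The key observation concerns how a zigzag path moves relative to the labels. A path entering at the northwest end of strip $i$ travels within strip $i$ until its first turning cell. In a row it moves east, and if it never turns it exits through the E-side of the row, at border edge $i$, giving $\tau(i)=i$. In a column it moves south, and if it never turns it would exit at edge $i$ as well. This is impossible for columns, however. Every column of an extended RAT contains an up-arrow, and that up-arrow's cell is a turning cell, so a column path must turn at least once.

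After leaving the initial strip, I will use the monotonicity of zigzag paths. Each step goes south or east (or along the corresponding diagonal directions). Border labels increase from northeast to southwest along the southeast border. A path beginning in row $i$ starts on the west side, at the height of row $i$, and only descends. Its exit edge therefore lies weakly southwest of the row's own border edge in the border ordering, which should give $j\ge i$. For a column $i$, the path first turns east at some turning cell $(i,h)$. From there it exits through an edge lying strictly northeast of edge $i$, since the column's border edge is the south end of strip $i$ and the path has moved strictly east of that column. This gives $j<i$.

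The main obstacle is making the ``northeast/southwest of'' comparison rigorous on the rhombic tiling, where diagonal steps complicate the geometry. I would phrase it as a planar separation argument. Strip $i$ splits the diagram into a northeast part and a southwest part, and the border edges on the northeast side carry exactly the labels smaller than $i$. A row path starting west of this strip that moves only S/E/SE cannot cross strip $i$ from southwest to northeast, so it ends at a label $\ge i$. A column path, once it turns east out of strip $i$, lies in the northeast part and cannot cross back, so it ends at a label $<i$. If needed, the flattening picture in \cref{lem:6} could serve as a sanity check on straight shapes, where this is the classical fact for permutation tableaux from \cite{Steingrimsson2007}.
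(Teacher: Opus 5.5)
Your argument is correct in substance. The paper gives no proof beyond calling the lemma ``an immediate consequence of the construction of $\zrpt$'', and your write-up is the natural way to supply one. The first bullet is just the sign convention in \cref{def:2}. Reducing the other two bullets to ``row $\Rightarrow \tau(i)\ge i$'' and ``column $\Rightarrow \tau(i)<i$'', plus the trichotomy of strip types, is exactly right. So is your use of the up-arrow in every column of an extended RAT to force a column path to turn at least once.

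One step needs repair: zigzag paths on a rhombic diagram are not geometrically monotone.
\begin{itemize}
\item A straight $W\to E$ passage through a tall rhombus rises by one unit, because rows climb where they cross diagonal strips. The blue path from strip $2$ in \cref{fig:zigzag2} does exactly this.
\item A straight $N\to S$ passage through a short rhombus moves down and one unit to the left.
\end{itemize}
So ``starts at the height of row $i$ and only descends'' is false as stated. For the same reason, the separation argument in your last paragraph should not rest on ``moves only S/E/SE''.

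Justify it instead by the side rule of \cref{def:zigzag}: a path enters a tile only through its N- or W-side and leaves only through its S- or E-side.
\begin{itemize}
\item The sides of a row tile not shared with the row are its N-side, which faces the part of the diagram containing the border edges with labels $<i$, and its S-side, which faces the part with labels $>i$. A row path therefore either runs along row $i$ to edge $i$, or leaves through an S-side. No path can cross a row from its S-side to its N-side, so the row path ends at a label $\ge i$.
\item The sides of a column tile not shared with the column are its E-side, which faces the labels $<i$, and its W-side. A column path must leave column $i$ through an E-side. It can never re-enter the column or cross it from east to west, since that would require entering through an E-side. Hence it ends at a label $<i$.
\end{itemize}
With this replacement your proof is complete.
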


\begin{lemma}[Proof of \eqref{eq:sinv}]\label{lem:5}
  Let \( R\in \RAT^+(X) \), \( \zrpt(R) = \tau \), and \( i,j\in X \),
  where \( i<j \). Then $(i,j)$ is a cell in $R$
  if and only if \( (i,j) \) is a shape-inversion of \( \tau \).
\end{lemma}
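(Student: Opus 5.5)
This lemma reduces to a purely geometric fact about rhombic diagrams, combined with \cref{lem:4}.

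By \cref{lem:4}, the letter of $\sh(\tau)$ at position $i$ records the type of strip $i$ in $R$. It is $2$ for a row (horizontal strip, i.e.\ a vertical border step), $1$ for a diagonal strip, and $0$ for a column. Hence $\sh(\tau)$ coincides with the shape word $w$ of $R$ under the labeling of the southeast border by $X$. It therefore suffices to show the following claim about the rhombic diagram $\Gamma_w$ alone: for border positions $i<j$, strips $i$ and $j$ intersect in a tile if and only if $w_i>w_j$.

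I would prove this claim by induction on $\inv(w)$, following the recursive construction of the maximal tiling.

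\emph{Base case.} If $\inv(w)=0$, the diagram has no tiles, and there are no pairs with $w_i>w_j$.

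\emph{Inductive step.} Let $i_0$ be the smallest index with $w_{i_0}>w_{i_0+1}$, and let $w'$ be $w$ with these two letters swapped. By construction, $\Gamma_w$ is $\Gamma_{w'}$ with one extra tile glued along the two border edges at positions $i_0,i_0+1$ of $w'$. In $\Gamma_w$, this new tile lies at the intersection of the strips labeled $i_0$ and $i_0+1$. For every other pair of strips, their intersections are inherited from $\Gamma_{w'}$, with the two labels $i_0,i_0+1$ transposed: the strip ending at position $i_0$ of $w'$ continues to position $i_0+1$ of $w$, and vice versa. Meanwhile, the inversion set of $w$ is obtained from that of $w'$ by transposing $i_0,i_0+1$ and adding the pair $(i_0,i_0+1)$. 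So the two sets match by induction.

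The one point needing care is that two strips cross at most once, so the new tile really contributes the new pair and no duplicate. This holds because $(i_0,i_0+1)$ is not an inversion of $w'$, so by induction those two strips did not meet in $\Gamma_{w'}$. It is also worth checking that the tile type is consistent with the strip types: tall, square or short according to $(2,1)$, $(2,0)$ or $(1,0)$. This confirms that the new tile does belong to both strips.

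With the claim in hand, $(i,j)$ is a cell of $R$ if and only if $\sh(\tau)_i>\sh(\tau)_j$, which is exactly the definition of a shape-inversion. Summing over all pairs then gives \eqref{eq:sinv}. I expect the only mildly delicate step to be the bookkeeping of how strip labels are transported through the local swap; the rest is routine.
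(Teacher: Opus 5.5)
Your proof is correct and follows the paper's route: use \cref{lem:4} to identify $\sh(\tau)$ with the shape word of $R$, then note that strips $i<j$ meet exactly in the cases (row, diagonal), (row, column), (diagonal, column), i.e.\ exactly when $w_i>w_j$. The paper simply asserts this geometric fact. Your induction on $\inv(w)$ along the recursive construction of the tiling supplies a proof of it, including the check that two strips share at most one tile.
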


\begin{proof}
  There is a cell in strips \( i \) and \( j \) if and only if one of
  the following conditions holds:
  \begin{itemize}
  \item strip \( i \) is a row strip and strip \( j \) is a diagonal strip (in this case, \( (i,j) \) is a tall rhombus);
  \item strip \( i \) is a row strip and strip \( j \) is a column strip (in this case, \( (i,j) \) is a square);
  \item strip \( i \) is a diagonal strip and strip \( j \) is a
    column strip (in this case, \( (i,j) \) is a short rhombus).
  \end{itemize}
  By \Cref{lem:4}, this is equivalent to the condition that
  \( (i,j) \) is a shape-inversion of \( \tau \).
\end{proof}

\begin{lemma}\label{lem:z+intersect}
  Let \( R\in \RAT^+(X) \). Suppose that \( p_1 \) and \( p_2 \) are
  zigzag paths in \( R \) that intersect at a cell \( c \).
  Then, \( c \) is a straight cell contained in at least one
  of the initial substrips of \( p_1 \) and \( p_2 \).
\end{lemma}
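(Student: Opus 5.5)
Two zigzag paths can only share a cell if they enter it from different sides. The plan is to analyze this local situation at \(c\), then follow one of the paths backwards. Turning cells are sent to turning cells: a path entering at \(N(c)\) leaves at \(E(c)\), and one entering at \(W(c)\) leaves at \(S(c)\). Straight cells pass straight through: \(N\to S\) and \(W\to E\). In either case the local map is a bijection from incoming sides to outgoing sides, so two distinct paths cannot use the same entry side of a cell. Hence if \(p_1\) and \(p_2\) both pass through \(c\), one enters at \(N(c)\) and the other at \(W(c)\).

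First step: \(c\) is straight. The paper defines intersection as passing through a common straight cell, so this holds by definition. To see that the definition loses nothing, note that at a turning cell the two paths use disjoint pairs of sides, \(N\to E\) and \(W\to S\). In the straight case the two paths genuinely cross, one going \(N\to S\) along the vertical/diagonal strip through \(c\) and the other going \(W\to E\) along the horizontal/diagonal strip.

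Second step: trace backwards. Since \(c\) is straight, it holds a left-arrow, is pointed to by a left-arrow, or is pointed to by an up-arrow. A straight cell with an up-arrow is impossible, because cells with up-arrows are turning.
\begin{itemize}
\item Suppose \(c\) is governed by a left-arrow in cell \(c'\), lying in the row through \(c\) at \(c\) or to the east of it. Every cell from \(c'\) southwest along the row to \(c\) is then straight: cells strictly southwest of \(c'\) are pointed to, and \(c'\) itself has a left-arrow. Consider the path \(p\) that traverses \(c\) along the row, from \(E\) to \(W\) in backwards direction, that is, entering at \(W(c)\). Read backwards, \(p\) moves northeast along the row through straight cells. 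Beyond \(c'\), any cell east of \(c'\) in that row is either straight or turning. I claim the stretch from \(c\) back to the northwest border edge of the row consists of straight cells up to the border. This needs an argument: a cell northeast of \(c'\) in the same row could contain an up-arrow, and that up-arrow would then point to cells further south in its column. The cleaner choice is to use the alternative-tableau rule. Any arrow northeast of \(c'\) in the row would be pointed to by the left-arrow at \(c'\)? No: a left-arrow points southwest, so it does not constrain cells to its east. So I would instead argue with the arrow further along.
\end{itemize}

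I expect this second step to be the main obstacle: showing that, going backwards from \(c\) along the traversed strip, the path hits only straight cells all the way to the northwest border. That would make \(c\) lie in the initial substrip, which by the earlier observation is contained in the starting strip. My planned approach is to pick, among the two paths, the one whose backward traversal from \(c\) runs toward the nearest arrow governing \(c\), and to choose that arrow to be the northeast-most relevant one. Concretely, if \(c\) is pointed to by a left-arrow, take the easternmost left-arrow in that row; it is unique, since two left-arrows in a row would point at each other. The same uniqueness holds for up-arrows in a column. It then remains to check that cells northeast of this arrow in the row are not turning. For a free cell this holds because the paper's tableaux put arrows so that the strip's \emph{head} region between the arrow and the border is covered. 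Here I would use the extended-tableau condition (every column has an up-arrow) together with the no-pointing rule to rule out free cells and up-arrows there, splitting into the cases where the strip through \(c\) is a row, a column, or a diagonal strip, in the style of the proof of \cref{lem:3}.

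Final step: combine the two pieces. Whichever of \(p_1,p_2\) traverses \(c\) along the strip governed by that arrow has only straight cells before \(c\), so \(c\) lies in its initial substrip.
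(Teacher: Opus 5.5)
There is a genuine gap, and it comes from running the path in the wrong direction. A zigzag path that traverses $c$ along a row enters at $W(c)$. So the part of it before $c$ consists of the cells of that row lying west (weakly southwest) of $c$, back to the row's edge on the northwest border. Likewise, a path traversing $c$ along a column enters at $N(c)$, and its earlier part lies north of $c$.

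You instead follow the row backwards from $c$ toward and past the governing left-arrow $c'$. You then set yourself the task of proving that the cells east of $c'$ are not turning. Those cells are never visited by the path before $c$, and the claim is false in general. In \cref{fig:zigzag2}, row $2$ has its left-arrow in the tall rhombus $(2,6)$, yet the cells $(2,5)$ and $(2,4)$ east of it are free, hence turning. So the step you single out as the main obstacle cannot be carried out. Choosing a ``northeast-most'' arrow or invoking the extended condition does not repair it.

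The missing observation is that the cells preceding $c$ on the path lie on the side the arrow points to.
\begin{itemize}
\item If $c$ contains or is pointed to by a left-arrow at $c'$ in row $i$, then every cell of row $i$ west of $c$ is southwest of $c'$ in that row. It is therefore pointed to by $c'$, so it carries no arrow and is not free, i.e.\ it is straight. The path crossing $c$ along row $i$ thus entered at the border edge of row $i$ and met only straight cells up to and including $c$. Hence $c$ lies in its initial substrip.
\item If $c$ is pointed to by an up-arrow below it in column $j$, the cells of column $j$ north of $c$ are pointed to by the same arrow. The path crossing $c$ along column $j$ then has $c$ in its initial substrip for the same reason.
\end{itemize}
Arrows never point along a diagonal strip, so these two cases exhaust the possibilities for a straight cell. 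This is exactly the paper's short argument.
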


\begin{proof}
 As $p_1$ and $p_2$ intersect at the cell $c$, it must be a straight cell. Suppose that \( c=(i,j) \) with $i<j$, and that $p_1$ and $p_2$ passing through $c$ in strip $i$ and strip $j$, respectively. Since \( c \) is not a free cell or contains an up-arrow, either there is an arrow pointing to \( c \) in strip \( i \) or strip \( j \), or \( c \) itself contains a left-arrow. If there is an arrow in strip \( i \) (resp.~\( j \)) pointing to \( c \), then this strip must contain the initial substrip of \( p_1 \) (resp.~\( p_2 \)). If \( c \) contains a left-arrow, then strip $i$ is horizontal, and contains the initial substrip of $p_1$, which in turn contains \( c \).
\end{proof}

\begin{lemma}[Proof of \eqref{eq:RLmin}]\label{lem:RLmin}
  Let \( R\in \RAT^+(X) \), \( \zrpt(R) = \tau \), and let \( i\in X \).
  Then \( i \) is the label of a free column in \( R \) if and only if
  \( \tau(i) \) is a special RL-minimum.
\end{lemma}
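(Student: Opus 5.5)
The plan is to read the lemma through the geometry of zigzag paths, in the same way \Cref{lem:3} is proved. Since every column of an extended RAT contains an up-arrow, a free column of \(R\) means, as in \eqref{eq:12}, a column \(i\) whose up-arrow lies in the top diagonal strip \(a_1=\min X\). By \Cref{lem:4}, strip \(i\) is a column exactly when \(0<\tau(i)<i\). So it suffices to show, for a column \(i\), that its up-arrow sits in strip \(a_1\) if and only if the path \(p\) from strip \(i\) exits at a label smaller than the exit label of every path starting from a strip \(k>i\) or from a diagonal strip.

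I will use two facts throughout.
\begin{itemize}
\item Labels on the southeast border increase from northeast to southwest. Hence "\(|\tau(k)|>\tau(i)\)" is equivalent to "the path from \(k\) exits strictly southwest of \(p\)."
\item By \Cref{lem:z+intersect}, two paths can only meet at a straight cell lying in the initial substrip of one of them. Away from such cells, the relative position (northeast or southwest) of two paths is preserved.
\end{itemize}

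\emph{Forward direction.} Suppose the up-arrow of column \(i\) is in the topmost cell \((i,a_1)\).
\begin{enumerate}
\item That cell is a turning cell and no cell lies above it, so the initial substrip of \(p\) is empty. The path \(p\) turns immediately and then runs east along diagonal strip \(a_1\) before continuing southeast.
\item Let \(q\) be a path from a strip \(k>i\) or from a diagonal strip \(d\). Its starting edge on the northwest border lies southwest of the starting edge of \(p\); for diagonal strips this uses that \(p\) has already entered strip \(a_1\).
\item The paths \(p\) and \(q\) can only meet in an initial substrip, and the one of \(p\) is empty. So a meeting cell \(c\) would lie in the initial substrip of \(q\), which means some arrow in strip \(k\) (or \(d\)) points to \(c\) on the trajectory of \(p\).
\item I will rule this out as in Cases 1 and 2 of \Cref{lem:3}. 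A left-arrow pointing to \(c\) would force \(p\) to travel along a row through \(c\). An up-arrow pointing to \(c\) would force \(p\) to pass straight through a column. In either case this contradicts how \(p\) moves through the diagonal strip \(a_1\) and how it turns afterwards.
\end{enumerate}
Hence \(q\) stays southwest of \(p\), so \(|\tau(k)|>\tau(i)\) and \(\tau(i)\) is a special RL-minimum.

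\emph{Converse, by contrapositive.} Suppose the up-arrow of column \(i\) lies in a strip \(h\ne a_1\), at cell \(c_0=(i,h)\). Then the arrow points to all cells of column \(i\) above \(c_0\), including \((i,a_1)\). These cells form the initial substrip of \(p\), so \(p\) runs straight down column \(i\) to \(c_0\) and turns there. I will then exhibit a witness \(q\), from a strip \(k>i\) or a diagonal strip, that crosses \(p\) inside this initial substrip and ends northeast of it.

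The natural first candidate is the path entering the cell just above \(c_0\) from the west. This is the path through the strip containing that cell, which is either a diagonal strip or a row whose entry I expect to trace back to a label larger than \(i\). I would check that, after crossing column \(i\) straight through the pointed-to cell, \(q\) remains northeast of \(p\). This holds because \(p\) turns east at \(c_0\) strictly below the crossing point, and afterwards the two paths cannot meet again except in an initial substrip. That would give \(|\tau(k)|<\tau(i)\), so \(\tau(i)\) is not a special RL-minimum.

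\emph{Main obstacle.} The hard step is this converse: choosing the witness path correctly and checking that its starting label is indeed \(>i\) or diagonal. If the path through the cell just above \(c_0\) does not work, I would instead take the path from diagonal strip \(a_1\) itself. I would then compare it with \(p\) along column \(i\), running a case analysis on the tile type of \((i,a_1)\) (always a short rhombus) and of \(c_0\) (a square or a short rhombus).
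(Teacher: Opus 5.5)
Your forward direction has the same skeleton as the paper's: the initial substrip of \(p\) is empty, \Cref{lem:z+intersect} applies, and you compare endpoints. The converse, however, has a genuine gap, because neither witness you propose works in general.

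Take \(X=[5]\) and the extended RAT of shape \(1\,2\,2\,0\,0\): diagonal strip 1, rows 2 and 3, columns 4 and 5, with \((1,4),(1,5)\) short rhombi and the other four cells squares. Put up-arrows in \((3,4)\) and \((1,5)\) and a left-arrow in \((2,5)\). The only free cell is \((3,5)\), and tracing the paths gives \(\zrpt(R)=\bar4\,2\,5\,3\,1\).

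Column 4 is not free, its up-arrow is at \(c_0=(3,4)\), and \(\tau(4)=3\).
\begin{itemize}
\item Your first candidate fails. The cell just above \(c_0\) is \((2,4)\). The path entering it from the west is the path from row 2 itself, since it goes straight through the left-arrow cell \((2,5)\). Its label is a row smaller than \(i\). A row cell above \(c_0\) can be fed directly from its own western end, so your hope that the entry ``traces back to a label larger than \(i\)'' fails.
\item Your fallback fails too. The path from strip 1 turns at \((1,5)\), runs down column 5, turns at \((3,5)\) and \((3,4)\), and exits at 4. So \(|\tau(1)|=4>3\).
\end{itemize}
The only witness is strip 5, with \(\tau(5)=1\).

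The missing idea, which is the one the paper's argument uses, is to cross \(p\) at the \emph{top} cell \((a_1,i)\) of column \(i\), not just above \(c_0\). Every column has an up-arrow, and strip \(a_1\) consists of the top cells of the columns. So each cell of strip \(a_1\) either holds an up-arrow or lies above one; that strip has no free cells and no left-arrows. Trace backwards the path entering \((a_1,i)\) from the west. It starts either at the top of the nearest column \(t>i\) whose up-arrow lies in strip \(a_1\), or at the western end of strip \(a_1\). Both labels are admissible. This path crosses \(p\) at \((a_1,i)\), which lies in \(p\)'s initial substrip. The two paths cannot meet again: a second meeting would have to lie in column \(i\), which the witness has left, or in strip \(a_1\), which \(p\) has left. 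So the witness exits strictly northeast of \(p\), and \(\tau(i)\) is not a special RL-minimum.

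Two smaller points.
\begin{itemize}
\item Your reduction to columns omits rows. These need a pigeonhole argument: \(\tau(i)\ge i\), while the \(|\{k\in X:k>i\}|+1\) values \(|\tau(k)|\) for \(k\in\{a_1\}\cup\{k>i\}\) cannot all exceed \(i\).
\item In the forward direction, Steps 3--4 are only asserted. For a diagonal \(q\), the arrow making \(c\) straight cannot point along \(q\)'s strip. It therefore points along \(p\)'s strip and also covers the cell where \(p\) turned into that strip, making that cell straight; that is the actual contradiction. Columns \(k>i\) must be dismissed separately, since they lie entirely west of \(p\). Rows \(k>i\) are harmless, since \(\tau(k)\ge k>i>\tau(i)\).
\end{itemize}
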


\begin{proof}
  For the ``only if'' part of the statement, suppose that \( i \) is
  the label of a free column in \( R \). Then the short tile
  \( (1,i) \) has an up-arrow. Thus, the zigzag path \( p \) starts
  from the column strip \( i \). Now let \( p' \) be a zigzag path
  starting from a strip \( j \) with \( j>i \) that is either a column strip or a diagonal
  strip. To show that \( \tau(i) \) is a special RL-minimum, it
  suffices to show that \( p \) and \( p' \) do not intersect.

  Suppose, for a contradiction, that \( p \) and \( p' \) intersect at
  a cell \( c \). By \Cref{lem:z+intersect}, \( c \) is contained in
  the initial substrip of \( p \) or \( p' \). Since the initial
  substrip of \( p \) is empty, \( c \) is contained in the initial
  substrip of \( p' \). Note that if \( j \) is the label of a column
  strip with \( j>i \), then the whole column strip \( j \) is
  strictly to the left of \( p \). Hence, \( p' \) must start from a
  diagonal strip whose initial substrip contains \( c \). There are
  two cases.

  \begin{description}
  \item[Case 1] \( c \) is a short rhombus. Then \( p \) enters \( c \)
    at the horizontal edge $N(c)$ and \( p' \) enters \( c \) at the diagonal edge $W(c)$.
    Since \( c \) is a straight cell, there must be an up-arrow pointing to
    it. Then, \( c \) must also be contained in the initial substrip
    of \( p \), a contradiction.

  \item[Case 2] \( c \) is a tall rhombus. Then \( p \) enters \( c \)
    at the diagonal edge $N(c)$ and \( p' \) enters \( c \) at the vertical edge $W(c)$.
    However, this is impossible because every initial substrip
    starting from a diagonal strip always enters any tile at a diagonal edge.
  \end{description}

  In either case, we reach a contradiction. Hence \( p \) and \( p' \)
  do not intersect. This shows the ``only if'' part of the statement.

  For the ``if'' part of the statement, suppose that \( i \) is not
  the label of a free column in \( R \). If \( i \) is the label of a
  diagonal strip, then  \(i\in\neg(\tau)\), so \( i \) cannot be a special RL-minimum of \( \tau \). If \( i \) is the label of a row strip, then
  again \( i \) is not a special RL-minimum of \( \tau \). To see this,
  observe that in this case, we have \( i\le \tau(i)<|\tau(j)| \) for
  all \( j\in \{1,i+1,i+2,\dots,n\} \) since \( 1 \) is always the
  label of a diagonal strip. However, this implies
  \( \{|\tau(1)|,|\tau(i+1)|,|\tau(i+2)|,\dots,|\tau(n)|\}\subseteq
  \{i+1,\dots,n\} \), which is impossible. It remains to check the
  case that \( i \) is the label of a column containing an up-arrow. In this case,
  \( (1,i) \) does not contain an up-arrow. Let \( t \) be the largest
  integer with \( t<i \) such that \( (1,t) \) contains an up-arrow.
  Then \( t \) is the label of a column strip, and let \( p' \) be the
  zigzag path starting from strip \( t \). (If there is no such
  \( t \), then let \( p' \) be the diagonal strip containing
  \( (1,i) \).) Then \( p' \) changes direction at \( (1,t) \) and at
  \( (1,i) \). By \Cref{lem:z+intersect}, \( p \) and \( p' \) do not
  intersect, which implies that \( \tau(t)>\tau(i) \). Therefore,
  \( i \) is not a special RL-minimum of \( \tau \). This completes the proof.
\end{proof}

\begin{lemma}[Proof of \eqref{eq:LRmax}]\label{lem:LRmax}
  Let \( R\in \RAT^+(X) \), \( \zrpt(R) = \tau \), and \( i\in X \).
  Then \( i \) is the label of a free row in \( R \) if and only if
  \( \tau(i) \) is a special LR-maximum.
\end{lemma}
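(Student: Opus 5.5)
The plan mirrors the proof of \cref{lem:RLmin}, with rows and columns interchanged and the diagonal strips playing the same auxiliary role.

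\emph{Only if.} Suppose $i$ labels a free row. Then row $i$ contains no left-arrow. Each cell of row $i$ is a square or a tall rhombus, so no up-arrow can point into it. Consequently every cell of row $i$ is either free or contains an up-arrow, i.e.\ a turning cell. The zigzag path $p$ from strip $i$ therefore turns at the first cell it meets and has empty initial substrip.

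To show $\tau(i)$ is a special LR-maximum, we must show $\tau(i)>|\tau(k)|$ for every row strip $k<i$ and every diagonal strip $k=d_t$. Column strips $k<i$ need no separate treatment, since then $\tau(k)<k<i\le\tau(i)$ holds automatically.

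For such a $k$, let $p'$ be the path from strip $k$. It suffices to show that $p$ and $p'$ do not intersect; then $p'$ stays northeast of $p$, and its exit label is smaller. Suppose they intersect at a cell $c$. By \Cref{lem:z+intersect}, $c$ lies in the initial substrip of $p$ or of $p'$. It cannot lie in that of $p$, which is empty, so $c$ lies in the initial substrip of $p'$, which is contained in strip $k$.

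If $k$ is a row with $k<i$, then row $k$ lies entirely northeast of where $p$ begins. Since $p$ only moves southeast from the northwest border of row $i$, it never enters row $k$, so no intersection occurs. It remains to treat $k=d_t$ a diagonal strip, which splits by the type of $c$:
\begin{itemize}
\item[(a)] If $c$ is a short rhombus, then $p$ enters $c$ through its horizontal N-side, so $p$ is travelling in a column at that moment. Because $c$ is straight, an up-arrow points to $c$. Then $c$ lies in that column's initial substrip, not in the diagonal strip's, giving a contradiction analogous to Case 1 of \cref{lem:RLmin}.
\item[(b)] If $c$ is a tall rhombus, then $p'$, travelling in its diagonal initial substrip, must enter $c$ through the diagonal N-side, while $p$ enters through the vertical W-side, travelling in a row. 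Since $c$ is straight, either $c$ contains a left-arrow or a left-arrow points to it. In both cases $c$ belongs to the initial substrip of the path along that row, not to a diagonal one, a contradiction. This is Case 1 of \cref{lem:3}.
\end{itemize}

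\emph{If.} Suppose $i$ does not label a free row.
\begin{itemize}
\item If $i$ is diagonal, then $\tau(i)<0$, so $\tau(i)$ is not a special LR-maximum.
\item If $i$ is a column, then $\tau(i)<i$. The values $|\tau(k)|$ for $k$ in $\{a_1,\dots,a_{i-1}\}\cup\{d_1,\dots,d_r\}$ form a set larger than $\{1,\dots,i-1\}$ can accommodate (counting, as in \cref{lem:RLmin}), so some $|\tau(k)|$ is at least $\tau(i)$.
\item If $i$ is a row containing a left-arrow, let $c=(i,j)$ be the leftmost (most southwest) left-arrow in the row, so that $p$'s initial substrip runs through $c$ and everything it points to. Take the first turning cell of row $i$ weakly northeast of $c$, or the row's NW entry. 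Choose the strip $k<i$ (a row or a diagonal) whose path $p'$ turns into row $i$ right before the initial substrip, i.e.\ the path that uses the row-$i$ segment between that turning cell and $c$. Then $p'$ follows row $i$ to the west of $p$'s start, and after $p$'s initial substrip the two paths never cross again. This forces $|\tau(k)|>\tau(i)$.
\end{itemize}

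The main obstacle is the row-with-left-arrow case: choosing the competing path $p'$ from a row or diagonal strip $k<i$ precisely. The natural candidate is the path that enters strip $i$ at the turning cell immediately northeast of the left-arrow, which travels along row $i$ and then continues beyond $p$. One must also verify, via \Cref{lem:z+intersect}, that $p$ and $p'$ cannot swap relative order afterwards.
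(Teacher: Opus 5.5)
You have two genuine gaps: the ``only if'' argument rests on a false claim, and the key case of the ``if'' direction is not actually proved.

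\textbf{The ``only if'' direction.} Your argument assumes that no up-arrow can point into a cell of row $i$. This is false. The squares of row $i$ also lie in columns, and an up-arrow further south in such a column points to them. For instance, take the extended RAT of shape $1\,2\,2\,0$ on $\{1,2,3,4\}$ whose only arrow is an up-arrow in $(3,4)$. Row $2$ is free, yet its unique cell $(2,4)$ is straight, so the path from row $2$ has initial substrip $\{(2,4)\}$.

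In general the initial substrip $p_0$ of $p$ consists of squares $(i,c_1),\dots,(i,c_k)$, each pointed to by an up-arrow south of row $i$. You never exclude an intersection inside $p_0$, and this is exactly the case the paper's proof treats. The missing step is this: a path crossing $(i,c_\ell)$ must travel down column $c_\ell$. But every cell of column $c_\ell$ north of its up-arrow is straight, in particular $(k,c_\ell)$ for a row $k<i$ or a diagonal $k$. So the only path running vertically through $(i,c_\ell)$ is the one starting at column $c_\ell$, not $p'$.

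Your cases (a) and (b), for $c$ in the initial substrip of $p'$, can be kept, but the contradiction is misstated, since a cell can lie in two initial substrips at once. The real point is that $p$ would have to be the path starting at that column in (a). In (b), $p$ would have to start at a row containing a left-arrow, i.e.\ row $i$, which is free.

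\textbf{The ``if'' direction.} As you acknowledge, the row-with-left-arrow case is not proved, and the construction you sketch does not work. The cells of row $i$ from its west end through the left-arrow are traversed horizontally only by $p$ itself, and nothing of row $i$ lies west of $p$'s start. Any competitor must cross $p_0$ vertically in some strip $c$. The real danger is that such a path originates at a column, which says nothing about special LR-maxima.

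The idea you need is the paper's choice of crossing cell:
\begin{itemize}
\item Let $(i,c)$ be the westernmost cell of row $i$ such that $c$ is a diagonal, or a column with an up-arrow north of $(i,c)$. Such a cell exists because the left-arrow cell qualifies: it is either a tall rhombus, or its column's up-arrow lies north of it, since that up-arrow cannot point to the left-arrow.
\item Every column $x$ west of $c$ then has its up-arrow strictly south of row $i$, so all its cells north of row $i$ are straight.
\item Consequently the path $p'$ crossing $(i,c)$ along strip $c$ is the path starting at the strip $j$ of the last turning cell $(j,c)$ above $(i,c)$. If $c$ is a diagonal and there is no such cell, $p'$ starts at $c$ itself. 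In either case $j$ is a row $<i$ or a diagonal.
\item Since $(i,c)\in p_0$, the two paths cross there. By \cref{lem:z+intersect} they never meet again, so $|\tau(j)|>\tau(i)$.
\end{itemize}

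A minor point in the column case: include $\tau(i)$ itself in your count. The values $|\tau(k)|$ for $k\in X$, $k\le i$, would then all lie in $X\cap[1,i-1]$, which is too small. As you wrote it, excluding $\tau(i)$, the count gives no contradiction when no diagonal exceeds $i$.
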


\begin{proof}
  For the ``only if'' part of the statement, suppose that \( i \) is
  the label of a free row in \( R \) and let \( p \) be the zigzag
  path starting from row strip \( i \). Let \( p' \) be a zigzag path
  starting from row \( j \) with \( j<i \) or from a diagonal strip.
  To show that \( \tau(i) \) is a special LR-maximum, it suffices to
  show that \( p \) and \( p' \) do not intersect.

  Let \( p_0 \) be the initial substrip of \( p \). Then every cell in
  \( p_0 \) is straight. Since row \( i \) is free, every cell in
  \( p_0 \) has an up-arrow pointing to it. Since row strip \( i \)
  has only squares and tall rhombi and there is no up-arrow pointing
  to a tall rhombus, every cell in \( p_0 \) is a square. Let
  \( (i,c_1),\dots,(i,c_k) \) be the squares in \( p_0 \). Suppose
  that strip \( j \) is a row strip with \( j<i \) or a diagonal
  strip. Let \( p' \) be the zigzag path starting from strip \( j \).
  Then, the cells \( (j,c_1),\dots,(j,c_k) \) are straight cells.
  Thus, \( p' \) cannot visit any cell in \( p_0 \). By
  \Cref{lem:z+intersect}, \( p \) and \( p' \) do not intersect. This
  shows the ``only if'' part of the statement.

  For the ``if'' part of the statement, suppose that \( i \) is not
  the label of a free row in \( R \). If \( i \) is the label of a
  diagonal strip, then \( i\in\neg(\tau) \), so $i$ is not a special LR-maximum of
  \( \tau \). If \( i \) is the label of a column strip, then again \( i \) is not a special
  RL-minimum of \( \tau \). Indeed, if \( i \) is both the label of a
  column strip and a special RL-minimum, we have
  \( i> \tau(i)>|\tau(j)| \) for all \( j\in \{1,\dots,i\} \). However,
  this implies
  \( \{|\tau(1)|,\dots,|\tau(i)|\}\subseteq \{1,\dots,i-1\} \), which is
  impossible. It remains to check the case that \( i \) is the label
  of a (non-free) row.

  Suppose that \( p \) is the zigzag path starting from row strip
  \( i \). Let \( (i,c) \) be the leftmost cell in row \( i \) such
  that either \( c \) is the label of a diagonal strip or \( c \) is
  the label of a column strip that contains an up-arrow above
  \( (i,c) \). Such a cell exists because the cell in row \( i \)
  containing a left-arrow satisfies the condition. Let \( (j,c) \) be
  the last turning cell in strip \( c \) before \( (i,c) \). Then the
  zigzag path \( p' \) starting from strip \( j \) makes the first
  turn at \( (j,c) \) and passes through \( (i,c) \). By
  \Cref{lem:z+intersect}, \( p \) and \( p' \) intersect only once at
  \( (i,c) \), which shows \( |\tau(j)|>\tau(i) \). Since strip \( j \)
  is either a row strip with \( j<i \) or a diagonal strip, this
  means that \( i \) is not a special LR-maximum. This completes the
  proof.
\end{proof}
\newcommand{\tp}{\gamma}

By \Cref{lem:4,lem:5,lem:RLmin,lem:LRmax}, for the proof of
\Cref{thm:zigzag-RPT}, it remains to show that \( \zrpt \) is a
bijection and that \( \fcell(R) = \cro(\zrpt(R)) \). To establish the
bijectivity of \( \zeta \), we construct an explicit inverse map. This inverse is defined recursively using a family of invertible local operators $\{D_{i,j}^{\tp}\}$, which act on $R\in\RAT^+$ by reducing the number of
cells of $R$. A parallel family of operators similarly acts on signed permutations $\tau\in \AS$ by reducing the number of shape inversions, thereby allowing us to prove our results by induction on the number
of cells. 

\begin{defn}\label{def:6}
  Let \( X=\{a_1 < \cdots <a_n\} \) and \( R\in \RAT^+(X) \). A
  \emph{corner} of \( R \) is a cell \( c=(a_k,a_{k+1}) \) for some
  \( 1\le k\le n-1 \). The \emph{type} of the corner \( c \) is
  \[
    \tp =
    \begin{cases}
     L & \mbox{if the cell \( c \) contains a left-arrow},\\
     U & \mbox{if the cell \( c \) contains an up-arrow},\\
     \emptyset{} & \mbox{otherwise.}
    \end{cases}
  \]
\end{defn}

Note that a corner \(c=(a_k,a_{k+1})\) has its edges \(E(c)\) and
\(S(c)\) lying on the southeast border of $R$, corresponding to the
southeast border edges labeled \(a_k\) and \(a_{k+1}\),
respectively. 

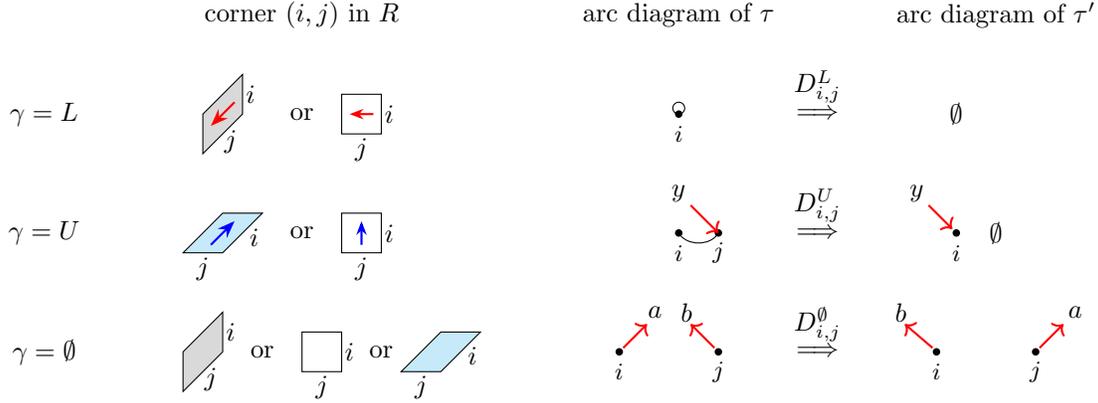
\begin{figure}
  \centering
\begin{tikzpicture}[scale=1.5]
\def\w{11}
\node at (2.5,3) {corner $(i,j)$ in $R$};
\node at (\w+1,3) {arc diagram of $\tau$};
\node at (\w+6+3,3) {arc diagram of $\tau'$};

  \node at (-4,.5) {\( \tp=L \)};
  \tall{0}{-.5}  \cell{3.5}{0}
  \node at (2.5,.5) {or};
  \node at (0.7, -0.2) {\( j \)};
  \node at (1.2, 1) {\( i \)};
    \node at (4, -0.4) {\( j \)};
  \node at (4.7, .5) {\( i \)};
  \slantLeftArr{0}{-.5}
  \leftArr{3.5}{0}
  \begin{scope} [shift={(\w,.5)}]
    \vertex{1}{i} \upperLoop{1} 
    \node at (4.5,0) {\( \Longrightarrow \)};
    \node at (4.5,.7) {\( D^L_{i,j} \)};
  \end{scope}
  \begin{scope} [shift={(\w+6,.5)}]
  \node at (2,0) {$\emptyset$};
  \end{scope}

\begin{scope}[shift={(0,-3)}]
  \node at (-4,.5) {\( \tp=U \)};
 \short{-.5}{0}
  \node at (0, -0.4) {\( j \)};
  \node at (1.3, 0.4) {\( i \)};
  \slantUpArr{-.5}{0}
    \cell{3.5}{0}
  \node at (2.5,.5) {or};
    \node at (4, -0.4) {\( j \)};
  \node at (4.7, .5) {\( i \)};
  \upArr{3.5}{0}
  \begin{scope} [shift={(\w,.5)}]
   \vertex{1}{i} \vertex{2}{j}
    \lowerEdge{2}{1}
    \draw[<-,red,thick] (2,0) to (1.3,.7); \node at (1,1) {\( y \)};
    \node at (4.5,0) {\( \Longrightarrow \)};
    \node at (4.5,.7) {\( D^U_{i,j} \)};
  \end{scope}
  \begin{scope} [shift={(\w+6,.5)}]
 \vertex{2}{i}
  \node at (3,0) {$\emptyset$};
    \draw[<-,red,thick] (1.9,0.1) to (1.3,.7); \node at (1,1) {\( y \)};
  \end{scope}
\end{scope}

\begin{scope}[shift={(0,-6)}]
  \node at (-4,.5) {\( \tp=\emptyset \)};
  \begin{scope}[shift={(-.5,0)}]
  \tall{0}{-.5}  \cell{3}{0}
  \node at (2,.5) {or};
  \node at (0.7, -0.2) {\( j \)};
  \node at (1.2, 1) {\( i \)};
    \node at (3.5, -0.4) {\( j \)};
  \node at (4.2, .5) {\( i \)};
  \end{scope}
  \begin{scope}[shift={(5.5,0)}]
    \node at (-1,.5) {or};
   \short{-.5}{0}
  \node at (0, -0.4) {\( j \)};
  \node at (1.3, 0.4) {\( i \)};
  \end{scope}
  \begin{scope} [shift={(\w,.5)}]
\vertex{-.5}{i} \vertex{2}{j} 
    \draw[->,red,thick] (1.9,0.1) to (1.3,.7); \node at (1.2,1) {\( b \)};
      \draw[->,red,thick] (-.4,0.1) to (.2,.7); \node at (.4,1) {\( a \)};
    \node at (4.5,0) {\( \Longrightarrow \)};
    \node at (4.5,.7) {\( D^\emptyset_{i,j} \)};
  \end{scope}
  \begin{scope} [shift={(\w+8,.5)}]
 \vertex{-.5}{i} \vertex{2}{j} 
    \draw[->,red,thick] (2.1,0.1) to (2.7,.7); \node at (3,1) {\( a \)};
    \draw[->,red,thick] (-.6,0.1) to (-1.3,.7); \node at (-1.4,1) {\( b \)};
  \end{scope}
\end{scope}
\end{tikzpicture}
\caption{We give an illustration of the operator \( D^\tp_{i,j} \) acting on the arc diagram of $\tau$  to obtain $\tau'=D^\gamma_{i,j}(\tau)$ for each type  \( \tp \) for the corresponding corner $(i,j)$ in the RAT $R$. Let $a=\tau(i)$, $b=\tau(j)$, and $y=\tau^{-1}(j)$; arrows may represent upper arcs, lower arcs, or spiral arcs. All other arcs are left unchanged.}
  \label{fig:D tau}
\end{figure}

\begin{defn}\label{def:4}
  Suppose that \( R\in \RAT^+ \) has a corner \( (i,j) \) of type
  \( \tp \). We define \( D^\tp_{i,j}(R) \) to be the extended RAT
  \( R' \) obtained from \( R \) by doing the following:
  \begin{enumerate}
  \item If \( \gamma=L \), delete row strip $i$ containing it.
  \item If \( \gamma=U \), delete column strip $j$ containing it.
  \item If \( \gamma=\emptyset \), delete the corner $(i,j)$.
  \end{enumerate}
  For a fixed triple $(i,j,\tp)$, the operator $D_{i,j}^\tp$ is invertible; we denote its inverse $I_{i,j}^\tp$, which is obtained by reversing the procedure. 
\end{defn}

Note that if \(R\in\RAT^+(X)\) with \(X=\{a_1<\cdots<a_n\}\), then any
corner has the form \((i,j)=(a_k,a_{k+1})\) for some \(1\le k<n\). 
If \((i,j)\) has type \(\tp\), then \(D^\tp_{i,j}(R)\) lies in
\(\RAT^+(X\setminus\{a_k\})\), \(\RAT^+(X\setminus\{a_{k+1}\})\), or
\(\RAT^+(X)\) in the cases \( \gamma=L \), \( \gamma=U \), or \( \gamma=\emptyset \), respectively.

The notion of a corner in a RAT has a direct analogue for the signed permutation \(\tau\) associated to an assembl\'ee. Following the definition of \(\sh(\tau)\), corners correspond to adjacent shape-inversions, which we call \emph{shape-descents}.

\begin{defn}\label{def:3}
  Let \( X=\{a_1 < \cdots <a_n\} \), and let $\tau\in \AS(X)$ with
  \( \sh(\tau) = (w_1,\dots,w_n) \). A \emph{shape-descent} of
  \( \tau \) is a pair \( (a_k,a_{k+1}) \) for some \( 1\le k\le n-1 \)
  with \( w_k>w_{k+1} \). The \emph{type} \( \tp \) of the
  shape-descent \( (a_k,a_{k+1}) \) is defined by
  \[
    \tp =
    \begin{cases}
     L & \mbox{if \( \tau(a_{k}) = a_{k} \)},\\
     U & \mbox{if \( \tau(a_{k+1}) = a_k \)},\\
     \emptyset{} & \mbox{otherwise.}
    \end{cases}
  \]
\end{defn}

We now define an operator on signed permutations that mirrors the action of \(D^\tp_{i,j}\) on RAT. To emphasize the parallel definition, we use the same notation for this operator.

\begin{defn}\label{def:9}
  Suppose that \( \tau\in\AS(X)\) is a signed permutation that has a shape-descent \( (i,j) \) of type $\tp$. 
  Suppose \( \tau(i)=a \) and \( \tau(j)=b \):
  \[
  \tau =
  \begin{pmatrix}
\cdots & i & j & \cdots \\
\cdots & a & b & \cdots 
\end{pmatrix}.
\]
We define \( D^\tp_{i,j}(\tau) \) to be the signed permutation
  \( \tau' \) obtained from \( \tau \) by doing the following:
  \begin{enumerate}
  \item If \( \tp=L \), we must have \(a=i\). In this case, delete the bi-letter \( \biletter{i}{i} \).
  \item If \( \tp=U \), we must have \( b=i \), and \( \tau \) has a
    bi-letter \( \biletter{y}{\epsilon j} \), where
    \( \epsilon\in \{+1,-1\} \). In this case, delete the bi-letter
    \( \biletter{j}{i} \) and replace \( \biletter{y}{\epsilon j} \)
    by \( \biletter{y}{\epsilon i} \).
  \item If \( \tp=\emptyset \), then replace \( \biletter{i}{a} \) and
    \( \biletter{j}{b} \) by \( \biletter{i}{b} \) and
    \( \biletter{j}{a} \), respectively.
  \end{enumerate}
  See \cref{fig:D tau} for an illustration of the arc diagrams of $\tau$ and $\tau'$ under this operator. Denote by \( I^\tp_{i,j}(\tau') = \tau \), the inverse process
  of the operator \( D^\tp_{i,j} \).
  
\end{defn}

\begin{example}
  The following RAT \( R\in\RAT^+(X) \) and signed permutation \( \tau\in\AS(X) \) for $X=\{2,4,7,8\}$ satisfy \( \tau=\zrpt(R) \):
\begin{center}
  \begin{tikzpicture}[scale=1.5]
  \short{1}{1} \short{0}{1} \cell{1}{0} \cell{0}{0} \node at (0.500000000000000, -0.200000000000000) {\tiny 8};
\node at (1.50000000000000, -0.200000000000000) {\tiny 7};
\node at (2.20000000000000, 0.500000000000000) {\tiny 4};
\node at (2.60000000000000, 1.30000000000000) {\tiny 2};
 \slantUpArr{0}{1} \upArr{1}{0}
 \node[left] at (-1,1) {\( R= \)};
 \node[left] at (13,1) {\(\tau = \begin{pmatrix} 2 & 4 & 7 & 8 \\ -7 & 8 & 4 & 2 \end{pmatrix}=\)};
 \begin{scope}[shift={(13,1)}]
\vertex{1}{2} \vertex{2}{4} \vertex{3}{7} \vertex{4}{8} \oneSpiralEdge{1}{0.500000000000000}{3} \upperEdge{2}{4} \lowerEdge{3}{2} \lowerEdge{4}{1} 
 \end{scope}
\end{tikzpicture}
\end{center}  
Applying \( D^{U}_{4,7} \) to \( R \) and \( \tau \) yields the following results: 
\begin{center}
  \begin{tikzpicture}[scale=1.5]
\short{0}{1} \cell{0}{0} \node at (0.500000000000000, -0.200000000000000) {\tiny 8};
\node at (1.20000000000000, 0.500000000000000) {\tiny 4};
\node at (1.60000000000000, 1.30000000000000) {\tiny 2};
 \slantUpArr{0}{1}
 \node[left] at (-1,1) {\( D^{U}_{4,7} (R)= \)};
 \node[left] at (12,1) {\(D^{U}_{4,7} (\tau) = \begin{pmatrix} 2 & 4 & 8 \\ -4 & 8 & 2 \end{pmatrix}=\)};
 \begin{scope}[shift={(12,1)}]
\vertex{1}{2} \vertex{2}{4} \vertex{4}{8} \oneSpiralEdge{1}{0.500000000000000}{2} \upperEdge{2}{4} \lowerEdge{4}{1} 
 \end{scope}
\end{tikzpicture}
\end{center}  
\end{example}

\begin{lemma}\label{lem:2}
  Let \( R\in \RAT^+ \) and \( \tau=\zrpt(R) \). Suppose that the first
  corner of \( R \) is the cell \( (i,j) \) of type \( \tp \). Let
  \( R'=D_{i,j}^\tp(R) \) and \( \tau'=\zrpt(R') \). Then
  \( \tau'=D_{i,j}^\tp(\tau) \), i.e., the following diagram commutes:
\[
\begin{tikzcd}[row sep=large, column sep = huge]
&R \arrow[r, "D^{\tp}_{i,j}"] \arrow[d,"\zrpt"] & R' \arrow[d,"\zrpt"]\\
&\tau \arrow[r,"D^{\tp}_{i,j}"] & \tau'
\end{tikzcd}
\]
Moreover, the first shape-descent of \( \tau \) is the pair \( (i,j) \)
of type \( \tp \), and
\begin{align}
\label{eq:2}  \fcell(R) &= \fcell(R') + \epsilon, \\
\label{eq:3}  \cro(\tau) &= \cro(\tau') + \epsilon,
\end{align}
where \( \epsilon=1 \) if the cell \( (i,j) \) is a free cell and
\( \epsilon=0 \) otherwise.
\end{lemma}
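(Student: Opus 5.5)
The plan is to first identify the first corner with the first shape-descent, then compare zigzag paths in \(R\) and \(R'\) case by case on \(\tp\), and finally compare the free-cell and crossing counts.

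\emph{Corners versus shape-descents.} By \Cref{lem:4}, \(\sh(\tau)\) equals the shape word of \(R\) (digit \(2\) for rows, \(1\) for diagonals, \(0\) for columns). Corners \((a_k,a_{k+1})\) are exactly the positions with \(w_k>w_{k+1}\), so the first corner is the first shape-descent. For its type, I work at the corner \(c=(i,j)\). Its E- and S-sides lie on the southeast border, so the zigzag path from strip \(i\) or strip \(j\) ends there.

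\emph{Type of the corner.} If \(c\) carries a left-arrow, strip \(i\) is a row. The whole row is then straight and pointed to by that arrow, so the path from \(i\) runs straight along its row and exits at \(i\). Hence \(\tau(i)=i\), which is type \(L\). If \(c\) carries an up-arrow, then every other cell of strip \(j\) is pointed to and straight, while \(c\) itself turns. So the path from \(j\) enters \(c\) from the N-side and turns out through the E-side, exiting at \(i\). Hence \(\tau(j)=i\), type \(U\). In the empty case I must rule out \(\tau(i)=i\) and \(\tau(j)=i\). A path ending at the E-side of \(c\) must enter \(c\) from the W-side if \(c\) is straight, or from the N-side if \(c\) is turning. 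Tracing back with the alternative-tableau rules (no arrow in \(c\)), neither can be the path from \(i\) or from \(j\), since those would need an arrow in \(c\) or a fully pointed-to strip.

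\emph{Commutation with \(D^\tp_{i,j}\).} This is a local comparison of paths.
\begin{itemize}
\item For \(\tp=L\), deleting the row \(i\) changes nothing for the other paths, because every cell of row \(i\) is straight and paths only cross it. So \(\tau'\) is \(\tau\) with \(\biletter{i}{i}\) removed.
\item For \(\tp=U\), the column \(j\) consists of straight cells apart from \(c\). The path that previously ended at \(j\), say from \(y\), passes straight down column \(j\) to the S-side of \(c\); in \(R'\) it instead exits at edge \(i\). This gives the replacement \(\biletter{y}{\epsilon j}\to\biletter{y}{\epsilon i}\), and the sign is unchanged because the starting strip is unchanged.
\item For \(\tp=\emptyset\), the two paths entering \(c\) (from the N- and W-sides) exit through \(S(c)\) and \(E(c)\). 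Removing \(c\) redirects these endpoints: if \(c\) was straight they now swap, and if \(c\) was turning they again swap. Either way, the values at the two paths through \(c\) are exchanged, which is the swap \(\biletter{i}{a},\biletter{j}{b}\to\biletter{i}{b},\biletter{j}{a}\) in \(\tau\).
\end{itemize}
I also need to check that \(R'\) is still an extended RAT and has the maximal tiling. This holds because the corner is the first inversion, so removing it corresponds to the first recursive step of the tiling construction. The same step also verifies that \(\tau'\in\AS(X')\).

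\emph{Statistics.} For \eqref{eq:2}, deleting a row, column, or corner only removes cells. In cases \(L\) and \(U\), every removed cell is non-free: each is either pointed to or carries the arrow. The pointing status of the remaining cells is unchanged, since the only arrow removed points exclusively into the deleted strip. In case \(\emptyset\), only \(c\) is removed, so \(\epsilon\) equals the indicator that \(c\) is free.

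\emph{Main obstacle: \eqref{eq:3}.} In case \(L\), the loop at \(i\) touches the adjacent vertex \(j>i\). I expect to have to argue that no upper arc \((a,b)\) with \(a<i\le b\) creates a crossing with the loop, using the minimality of the first corner: all positions before \(i\) are rows or diagonals with values \(\ge i\). Case \(U\) is handled similarly, by contracting the lower arc \((i,j)\). In case \(\emptyset\), swapping the two arc endpoints at the adjacent vertices \(i,j\) changes the crossing number by exactly \(0\) or \(1\): it is \(1\) precisely when the two arcs cross in \(\tau\) near \(i,j\). That crossing should correspond to \(c\) being a turning free cell rather than a straight cell. For an empty cell, straightness means being pointed to, and I would translate this into the relative order of the arc endpoints \(a\) and \(b\). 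I expect this translation, together with the spiral-arc conventions of the upper-crossing definition (\(a<c\le b<d\)), to require a careful case split by whether \(c\) is a square, a tall rhombus, or a short rhombus.
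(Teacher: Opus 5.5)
\textbf{There is a genuine gap in the commutation step for \(\gamma=\emptyset\), which is the substantive part of the lemma.}

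First, an arrowless corner is automatically free. The cell \(c\) is the eastmost cell of strip \(i\) and the southmost cell of strip \(j\), so no arrow can point to it. Hence \(c\) is always a turning cell, your ``straight'' branch never occurs, and \(\epsilon=1\) exactly in case \(\emptyset\).

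Now look at what deleting \(c\) does. The border edges in the positions of \(i\) and \(j\) become \(N(c)\) and \(W(c)\), labeled \(i\) and \(j\). The path entering \(N(c)\) exited at \(E(c)\), label \(i\), in \(R\), and it exits at \(N(c)\), again label \(i\), in \(R'\). The path through \(W(c)\) exits at label \(j\) in both. So, contrary to your claim, no exit label changes.

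What changes is the labeling of the strips. The strip through \(N(c)\) (old strip \(j\)) becomes strip \(i\) of \(R'\), and old strip \(i\) becomes strip \(j\). Relabeling these two starting points, with the sign travelling with the strip, is precisely \(\tau'(i)=\tau(j)\) and \(\tau'(j)=\tau(i)\).

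Swapping the exits of the two paths through \(c\), as you propose, would instead give \((i\,j)\circ\tau\). Moreover, those two paths are in general not the paths from strips \(i\) and \(j\). For example, take shape \(120\) with an up-arrow in the short rhombus \((1,3)\) and the square corner \((2,3)\) empty. Then \(\tau=\bar2\,3\,1\), and the paths through \(c\) start at strips \(1\) and \(2\); the path from strip \(3\) turns at \((1,3)\) and never meets \(c\). Swapping their exits gives \(\bar3\,2\,1\), whereas \(\zrpt(R')=\bar2\,1\,3=D^{\emptyset}_{2,3}(\tau)\).

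Several supporting claims also need repair.

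\begin{itemize}
\item \emph{Type check for \(\gamma=\emptyset\).} It is false that the path from \(j\) cannot end at \(E(c)\). With shape \(121\) and the tall-rhombus corner empty, the path from diagonal strip \(3\) turns at \(c\) and exits at \(2\), so \(\tau(3)=\bar2\). What you actually need is \(\tau(j)\neq +i\). This is automatic by sign when \(j\) is diagonal. When \(j\) is a column, use that an extended RAT has an up-arrow in column \(j\), necessarily north of \(c\). The path from \(j\) turns east there and can never re-enter column \(j\), since paths cross columns only from west to east. Likewise paths cross rows only from north to south, which gives \(\tau(i)\neq i\) for a row \(i\).
\item \emph{Case \(U\).} The path ending at \(S(c)\) arrives through \(W(c)\) along strip \(i\) and turns at \(c\). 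It does not come straight down column \(j\); that is the path from \(j\), which exits at \(E(c)\).
\item \emph{Crossing identity in case \(\emptyset\).} Since \(c\) is always free, you must show the crossing number always drops by exactly one. The lost crossing is generally not between the two swapped arcs. When \(\tau(i)>i\) and \(\tau(j)<j\), those arcs lie on opposite sides of the axis, and the destroyed crossing is one of the following:
\begin{itemize}
\item if \(\tau^{-1}(i)<i\), the touching upper crossing of the arc ending at \(i\) with \((i,\tau(i))\);
\item otherwise, the lower crossing of the lower arc ending at \(j\) with \((i,\tau^{-1}(i))\).
\end{itemize}
Only in the short-rhombus case is it the pair of swapped lower arcs.
\item \emph{Crossings in cases \(L\) and \(U\).} These are immediate: a loop never takes part in a crossing, and a lower arc joining adjacent vertices has none.
\end{itemize}
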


\begin{proof}
 First, as \(\sh(\tau)\) matches the boundary of \(R\), \((i,j)\) is a corner of \(R\) if and only if \((i,j)\) is a shape-descent of \(\tau\). We compare the type \(\tp'\) of \((i,j)\) in \(\tau\) to \(\tp\): 
 \begin{itemize}
 \item if \(\tp=L\), then strip \(i\) in \(R\) has no turning cells, hence \(\tau(i)=i\) and \( \gamma'=L \);
 \item if \(\tp=U\), then \((i,j)\) is the only turning cell in strip \(j\) of \(R\), hence \(\tau(j)=i\) and \( \gamma'=U \);
 \item if \( \tp=\emptyset \), then \(\tau(i)\ne i\) and \(\tau(j)\ne i\), hence \( \tp'=\emptyset \).
 \end{itemize}
 Therefore, we always have \(\tp=\tp'\), and \(D_{i,j}^\tp(\tau)\) is well-defined. 
 
The operator \(D^\tp_{i,j}\) acts on \(R\) by deleting either a row strip ending with a left-arrow, a column strip ending with an up-arrow, or a free corner, and hence affects only the zigzag paths interacting with the corner \((i,j)\). Tracing these paths shows that the resulting signed permutation \(\zeta(R')\) is obtained from \(\tau\) through exactly the same case (1), (2), or (3) of the procedure in \cref{def:9}. Thus the diagram commutes.

We now prove \eqref{eq:2}. In cases (1) and (2) of \cref{def:4}, a strip with no free cells is removed from \(R\) to form \(R'\), so \(\fcell(R)=\fcell(R')\). On the other hand, case (3) occurs if and only if \((i,j)\) is a free cell, in which case deleting it removes exactly one free cell: \(\fcell(R')=\fcell(R)-1\).

Finally, we show \eqref{eq:3}. Since \(i\) and \(j\) are adjacent vertices in the arc diagram, only the crossings arising from two arcs incident to one or both of them may be affected by \(D_{i,j}^\gamma\). In case (1) of \cref{def:9}, removing the loop at \(i\) leaves all other arcs and their crossings unchanged. Case (2) corresponds to merging the vertices \(i\) and \(j\) in the arc diagram of \(\tau\) to obtain \(\tau'\), which again does not change the number of crossings, since the lower arc from \(j\) to \(i\) has no crossings in \(\tau\). Thus \(\cro(\tau)=\cro(\tau')\) in both cases. 

It remains to analyze case (3) of \cref{def:9} with \(\tp=\emptyset\). In this case, we have one of the following possibilities for \(\tau(i)\) and \( \tau(j) \), corresponding to whether the corner \((i,j)\) is a square, tall rhombus, or short rhombus, respectively:
\[
(\tau(i)>i,0<\tau(j)<j)\qquad\text{or}\qquad (\tau(i)>i,\tau(j)<0)\qquad\text{or}\qquad(\tau(i)<0,0<\tau(j)<j). 
\] 
Note that the first two cases can be combined into the conditions
\( \tau(i)>i \) and \( \tau(j)<j \). We write \(\tilde{\tau}(j)\) to
mean \(\tau(j)\) if \(j\not\in\neg(\tau)\), and \(-d\) if
\(j\in\neg(\tau)\), where \((-d,0)\) is the intermediate vertex of the
spiral arc from \(j\) to \(|\tau(j)|\). Similarly, we write
\(\tilde{\tau}^{-1}(i)\) to mean \(\tau^{-1}(i)\) if
\(|\tau^{-1}(i)|\not\in\neg(\tau)\), and \(-d'\) if
\(|\tau^{-1}(i)|\in\neg(\tau)\), where \((-d',0)\) is the intermediate
vertex of the spiral arc from \( |\tau^{-1}(i)| \) to \( i \). In particular, if
\( \tau(j)<j \), there is a lower arc \( (\tilde{\tau}(j),j) \). Then
we have the following cases:
\begin{itemize}
\item Suppose \(\tau(i)>i\) and \( \tau(j)<j \). If
  \(\tau^{-1}(i)<i\), then \(((\tilde{\tau}^{-1}(i),i),(i,\tau(i)))\)
  is an upper crossing in \(\tau\). This becomes the pair of arcs
  \(((\tilde{\tau}^{-1}(i),i),(j,\tau(i)))\) in \(\tau'\), which is no
  longer a crossing:
\begin{center}
  \begin{tikzpicture}[scale=1.5]
  \begin{scope} [shift={(6,.5)}]
    \vertex{0}{\tilde{\tau}^{-1}(i)} \vertex{2}{i} \vertex{3}{j} \vertex{4}{\tau(i)}
    \upperEdge{0}{2} \upperEdge{2}{4}
    \node at (6.5,0) {\( \rightarrow \)};
  \end{scope}
  \begin{scope} [shift={(15,.5)}]
    \vertex{0}{\tilde{\tau}^{-1}(i)} \vertex{2}{i} \vertex{3}{j} \vertex{4}{\tau(i)}
    \upperEdge{0}{2} \upperEdge{3}{4}
  \end{scope}
\end{tikzpicture}
\end{center}
  If \(\tau^{-1}(i)>i\), then
  \(((\tilde{\tau}(j),j),(i,\tau^{-1}(i)))\) is a lower crossing in
  \(\tau\). This becomes the pair of arcs
  \(((\tilde{\tau}(j),i),(i,\tau^{-1}(i)))\) in \(\tau'\), which is no
  longer a crossing:
  \begin{center}
  \begin{tikzpicture}[scale=1.5]
  \begin{scope} [shift={(6,.5)}]
    \Uvertex{0}{\tilde{\tau}(j)} \Uvertex{2}{i} \Uvertex{3}{j} \Uvertex{5}{\tau^{-1}(i)}
    \lowerEdge{3}{0} \lowerEdge{5}{2}
    \node at (7.5,0) {\( \rightarrow \)};
  \end{scope}
  \begin{scope} [shift={(16,.5)}]
    \Uvertex{0}{\tilde{\tau}(j)} \Uvertex{2}{i} \Uvertex{3}{j} \Uvertex{5}{\tau^{-1}(i)}
    \lowerEdge{2}{0} \lowerEdge{5}{2}
  \end{scope}
\end{tikzpicture}
\end{center}

\item Suppose \( \tau(i)<0 \) and \( 0<\tau(j)<j \). Then
  \(((\tilde{\tau}(i),i),(\tau(j),j))\) is a lower crossing in
  \(\tau\). This becomes the pair of arcs
  \(((\tilde{\tau}(i),j),(\tau(j),i)) \) in \(\tau'\), which is no
  longer a crossing:
  \begin{center}
  \begin{tikzpicture}[scale=1.5]
  \begin{scope} [shift={(6,.5)}]
    \Uvertex{0}{\tilde{\tau}(i)} \Uvertex{2}{\tau(j)} \Uvertex{4}{i} \Uvertex{5}{j}
    \lowerEdge{4}{0} \lowerEdge{5}{2}
    \node at (7.5,0) {\( \rightarrow \)};
  \end{scope}
  \begin{scope} [shift={(16,.5)}]
    \Uvertex{0}{\tilde{\tau}(i)} \Uvertex{2}{\tau(j)} \Uvertex{4}{i} \Uvertex{5}{j}
    \lowerEdge{4}{2} \lowerEdge{5}{0}
  \end{scope}
\end{tikzpicture}
\end{center}
\end{itemize}
It is straightforward to verify that any other crossings arising from two arcs incident to \(i\) and \(j\) remain unchanged. Thus we obtain \(\cro(\tau)=\cro(\tau')+1\) in this case as well, completing the proof of \eqref{eq:3}.
\end{proof}

\begin{prop}\label{pro:1}
Let \(X=\{a_1<\cdots<a_n\}\) be a set of positive integers. 
  The map \( \zrpt:\RAT^+(X)\to \AS(X) \) is a bijection. Moreover,  we have
  \[
    \fcell(R)=\cro(\tau).
  \]
\end{prop}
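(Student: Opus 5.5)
We will prove \Cref{pro:1} by induction on the number of tiles, \(\tile(R)\), simultaneously for all finite sets \(X\). By \Cref{lem:5}, \(\tile(R)=\sinv(\zrpt(R))\), so the induction runs in parallel on \(\sinv\) for signed permutations in \(\AS(X)\). The base case is \(\tile(R)=0\). Then \(R\) has shape \(w\) with \(\inv(w)=0\). Since \(R\in\RAT^+(X)\), \(w\) begins with \(1\) and has no \(0\) (a column would produce a cell with the first diagonal strip). So \(w=1\cdots12\cdots2\), and every zigzag path runs straight from strip \(i\) back to strip \(i\). On the other side, the signed permutations \(\tau\in\AS(X)\) with \(\sinv(\tau)=0\) have \(\sh(\tau)\) weakly increasing, starting with \(1\) and containing no \(0\). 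Every row-type \(a_i\) must satisfy \(\tau(a_i)\ge a_i\); the largest values force \(\tau(a_i)=a_i\) from the right, and then the negative entries must satisfy \(|\tau(d_k)|=d_k\) by the monotonicity condition in \Cref{def:5}. For each set of diagonal labels there is therefore exactly one such \(\tau\), namely the identity with signs on those labels, and it equals \(\zrpt(R)\). Both sides have no free cells and no crossings.

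For the inductive step, let \(\tile(R)>0\), so \(R\) has a corner; take the first one, \((i,j)\), of type \(\tp\). By \Cref{lem:2}, \(\zrpt(D^\tp_{i,j}(R))=D^\tp_{i,j}(\zrpt(R))\), the first shape-descent of \(\tau=\zrpt(R)\) is \((i,j)\) of the same type, and both \(\fcell\) and \(\cro\) drop by the same \(\epsilon\). The identity \(\fcell(R)=\cro(\tau)\) then follows from the induction hypothesis applied to \(R'=D^\tp_{i,j}(R)\), since \(\tile(R')<\tile(R)\).

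For bijectivity, we define \(\zrpt^{-1}\) recursively. Given \(\tau\in\AS(X)\) with \(\sinv(\tau)>0\), take its first shape-descent \((i,j)\) of type \(\tp\), set \(\tau'=D^\tp_{i,j}(\tau)\), and put \(\zrpt^{-1}(\tau)=I^\tp_{i,j}(\zrpt^{-1}(\tau'))\). For this to make sense we must check two things, and this is the main obstacle:
\begin{itemize}
\item \(\tau'\in\AS(X')\);
\item \(I^\tp_{i,j}\) is applicable to \(R'=\zrpt^{-1}(\tau')\): the shape of \(R'\) must allow reinserting the corner, row strip or column strip so that the result lies in \(\RAT^+\), has \((i,j)\) as its \emph{first} corner, and has the correct type. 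Concretely:
\begin{itemize}
\item for \(\tp=L\), the reinserted row must carry a left-arrow at the corner, and no up-arrow may point into it;
\item for \(\tp=U\), the column must get an up-arrow at the corner, and no left-arrow may point into it;
\item for \(\tp=\emptyset\), the restored cell must be free.
\end{itemize}
\end{itemize}
We will handle these checks by reading the conditions off \(\tau'\) through the zigzag interpretation of the induction hypothesis. For example, for \(\tp=\emptyset\), the cell cannot be pointed to by an arrow in strips \(i\) or \(j\), because that would force \(\tau(i)=i\) or \(\tau(j)=i\). Minimality of the shape-descent gives the ``first corner'' condition, since the shapes agree.

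Injectivity follows because \(D^\tp_{i,j}\) is invertible and \((i,j,\tp)\) is determined by \(\tau\) via \Cref{lem:2}. Surjectivity follows from the recursive construction, which by induction produces a preimage of every \(\tau\). Together these give that \(\zrpt\) is a bijection.
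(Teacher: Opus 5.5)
Your base case, the derivation of $\fcell(R)=\cro(\tau)$ from \Cref{lem:2}, and your injectivity argument are sound and essentially match the paper. For injectivity, the first corner and its type are read off from $\tau$, and you then induct and use that $D^\gamma_{i,j}$ is invertible.

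The gap is surjectivity. You correctly list what the recursive inverse needs: that $\tau'=D^\gamma_{i,j}(\tau)$ lies in $\AS(X')$, and that $I^\gamma_{i,j}(R')$ is an extended RAT whose first corner is $(i,j)$ of type $\gamma$. The second condition is exactly what is required to apply \Cref{lem:2} and conclude $\zrpt(I^\gamma_{i,j}(R'))=\tau$. You then call this the main obstacle and do not carry it out. The one justification you give is also misdirected. Being \emph{pointed to} by an arrow does not force $\tau(i)=i$ or $\tau(j)=i$; only an arrow \emph{in} the corner does, and that is the forward direction $R\mapsto\tau$. Reading conditions off $\tau'$ through zigzag paths is not the relevant tool. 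As written, ``surjectivity follows from the recursive construction'' is an assertion, not a proof.

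The missing checks are local and easy, so your route can be completed.
\begin{itemize}
\item A corner is the first cell of both its strips: the easternmost cell of its row strip and the southernmost cell of its column strip. No arrow acts along a diagonal strip. So no arrow other than one placed in the corner can point to it.
\item For $\gamma=L$ (resp.~$U$), insert row $i$ (resp.~column $j$) with the arrow in the corner and all other new cells empty. This always gives an element of $\RAT^+$: the new empty cells may harmlessly be pointed to, and every column still has an up-arrow.
\item For $\gamma=\emptyset$, an empty corner cell is automatically free.
\item Since the shapes agree and $(i,j)$ is the first descent of $\sh(\tau)$, the maximal tiling is compatible and $(i,j)$ is the first corner.
\item For $\tau'\in\AS(X')$, use that $i$ and $j$ are adjacent in $X$, so the order of the values $|\tau(d)|$ is preserved. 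Also, a shape-descent $(a_1,a_2)$ with letters $10$ forces $\tau(a_2)=a_1$, i.e.\ type $U$. Hence $a_1$ stays negative in the $\emptyset$ case.
\end{itemize}

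Alternatively, do what the paper does and avoid constructing the inverse. By the insertion bijection $\Phi_I$ of \Cref{sec:bijections}, $|\RAT^+(X)|=\sum_r|\RAT^+(n,r)|=\sum_r|\A(n,r)|=|\AS(X)|$, so the injectivity you already have gives bijectivity.
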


\begin{proof}
  Since
  \[
    |\RAT^+(X)| = \sum_{r\ge1} |\RAT^+(n,r)|= \sum_{r\ge1} |\A(n,r)|
    = \sum_{r\ge1} |\AS(n,r)| = |\AS(X)|,
  \]
  it suffices to show that \( \zrpt \) is injective. Let
  \( R\in \RAT^+(X) \) and set \( \tau=\zrpt(R) \).
  
  We define a sequence \((R^{(0)},R^{(1)},\ldots)\) of RAT by
  sequentially applying the operators \(D_{i,j}^\tp\) to reduce the
  number of tiles in \(R\). Define \(R^{(0)}=R\), and for \(k\ge 0\),
  if \(R^{(k)}\) has at least one tile, let \((i_k,j_k)\) be the
  topmost corner of \(R^{(k)}\) with type \(\tp_k\), and set
\[
R^{(k+1)}=D^{\tp_k}_{i_k,j_k}(R^{(k)}).
\]
Let \(m\) be the first index such that \(R^{(m)}\) has no tiles.

Define similarly a sequence \((\tau^{(0)},\tau^{(1)},\dots)\)
of signed permutations by \(\tau^{(0)}=\tau\), and for
\(k\ge 0\), if \(\tau^{(k)}\) has a shape-descent, let \((i_k,j_k)\) be its
leftmost shape-descent, and set
\[
\tau^{(k+1)}:=D^{\tp_k}_{i_k,j_k}(\tau^{(k)}),
\]
where \(\tp_k\) is the type of \(\tau^{(k)}\).

By \cref{lem:2}, for each \(k<m\),  assuming \(\zeta(R^{(k)})=\tau^{(k)}\), the topmost corner of \(R^{(k)}\)
corresponds to the leftmost shape-descent of \(\tau^{(k)}\) with the same labels
\((i_k,j_k)\) and type \(\tp_k\), and hence
\[
\zeta(R^{(k+1)})=\zeta(D^{\tp_k}_{i_k,j_k}(R^{(k)}))
= D^{\tp_k}_{i_k,j_k}(\zeta(R^{(k)}))=D^{\tp_k}_{i_k,j_k}(\tau^{(k)})
=\tau^{(k+1)}.
\]
Thus by induction on \(k\), we have \(\tau^{(k)}=\zeta(R^{(k)})\) for all
\(0\le k\le m\), and in particular, \(\tau^{(m)}=\zeta(R^{(m)})\).

Suppose \(R^{(m)}\in\RAT^+(X')\) for \(X'=\{b_1<\cdots<b_\ell\}\).
Since \( R^{(m)} \) has no tiles, it is the unique extended
RAT whose border consists of \(r\) diagonal edges followed by
\(\ell-r\) vertical edges (from northeast to southwest). By definition
of \(\zeta\), we have
\[
\tau^{(m)}=\zeta(R^{(m)})=\bar b_1\,\cdots\,\bar b_r\,b_{r+1}\,\cdots\,b_\ell.
\]
In particular, \(\tau^{(m)}\) has no shape-descents and \(\cro(\tau^{(m)})=0\).

Since each \(D^{\tp_k}_{i_k,j_k}\) is invertible, we recover \(R\) from \(R^{(m)}\) by applying the inverses \(I^{\tp_{m-1}}_{i_{m-1},j_{m-1}}, \ldots,I^{\tp_0}_{i_0,j_0}\). Therefore \(\tau\) determines \(R\), and \(\zeta\) is injective, and hence a bijection.

Finally, iterating \cref{lem:2} for \(k=1,2,\ldots,m\)
yields 
\[
\cro(\tau) =\cro(\tau^{(m)})+\fcell(R)-\fcell(R^{(m)}).
\]
Because \(R^{(m)}\) has no tiles, we have \(\fcell(R^{(m)})=0\), and from the above, \(\cro(\tau^{(m)})=0\). Hence \(\cro(\tau)=\fcell(R)\), as claimed.
\end{proof}

Combining \cref{lem:4,lem:5,lem:RLmin,lem:LRmax}, and \cref{pro:1}, we complete the proof of \Cref{thm:zigzag-RPT}.

\section{Marked Laguerre histories}
\label{sec:laguerre-histories}

In \cite{Corteel2020a}, Corteel and Nunge gave a bijection between
marked Laguerre histories and signed permutations, generalizing the
Fran\c{c}on--Viennot bijection \cite{Francon1979}. In this section, we
propose a new bijection between these objects by generalizing the
Foata--Zeilberger bijection \cite{Foata1990a} as presented in
\cite{Corteel2007a}. We also show that a signed permutation admits a natural decomposition into a pair consisting of an assembl\'{e}e and a
permutation. As a consequence, we show that the generating function
for the marked Laguerre histories has the factor
\( [r]_q! := [1]_q[2]_q \cdots [r]_q \), where
\( [i]_q = (1-q^i)/(1-q) \). This answers a question of
\cite{Corteel2020a}.

\subsection{Definitions}

A \emph{Motzkin path} of length \(n\) is a lattice path from \((0,0)\)
to \((n,0)\), consisting of \emph{up steps} \( (1,1) \),
\emph{horizontal steps} \( (1,0) \), and \emph{down steps}
\( (1,-1) \), that never goes below the \( x \)-axis. We denote by
\(\nearrow\), \(\longrightarrow\), and \(\searrow\) an up step, a
horizontal step, and a down step, respectively. A \emph{2-Motzkin
  path} is a Motzkin path such that every horizontal step is either a
\emph{solid} horizontal step \(\longrightarrow\) or a \emph{dashed}
horizontal step \(\dashrightarrow\).

A Laguerre history is a labeled 2-Motzkin path introduced by Viennot
in~\cite{ViennotLN} (referred to as a \emph{restricted Laguerre history} therein); see also~\cite{Medicis1994}.

\begin{defn}
  A \emph{Laguerre history} of length \(n\) is a labeled
  2-Motzkin path of length \(n\) such that
  \begin{itemize}
  \item each solid horizontal step \(\longrightarrow\) starting at height \(h\) has
    a label \( i\in [0,h]\),
  \item each dashed horizontal step \(\dashrightarrow\) starting at
    height \(h\) has a label \( i \in [0,h-1]\),
  \item each up step \(\nearrow\) starting at height \(h\) has a label
    \( i\in [0,h] \),
  \item each down step \(\searrow\) starting at height \(h\) has a label
    \( i\in [0,h-1] \).
  \end{itemize}
\end{defn}

Note that a Laguerre history cannot have a dashed horizontal step or a
down step starting at height \( h=0 \) due to the condition on its
label \( i\in [0,h-1] \). Hence, the first step of a Laguerre history
is always an up step or a solid horizontal step.

Now we define marked Laguerre histories introduced by Corteel and
Nunge \cite{Corteel2020a}. 

\begin{defn}
  A {\em marked Laguerre history} is a Laguerre history in which every step may be \emph{marked}, and such that the first step is always marked. The \emph{weight} of a marked Laguerre history \( H \) is defined to
  be \( q^{l(H)+m(H)} \), where \( l(H) \) is the sum of the labels and $m(H)$ is the sum of the starting heights of the marked steps. We denote by \( \MLH(n,r) \) the set of marked Laguerre histories of length \( n \) with \( r \) marks.
\end{defn}

See \Cref{exChemin} for an example of a marked Laguerre history.

\begin{figure}
  \centering
\begin{tikzpicture}[scale=1.5]
  \Grid{12}{3}
\HS{0}{0} \HLabel{0}{0}{0} \US{1}{0} \ULabel{1}{0}{0} \HS{2}{1} \HLabel{2}{1}{0}  \US{3}{1} \ULabel{3}{1}{1} \HS{4}{2} \HLabel{4}{2}{2} \DS{5}{2} \DLabel{5}{2}{0}  \DS{6}{1} \DLabel{6}{1}{0} \US{7}{0} \ULabel{7}{0}{0} \US{8}{1} \ULabel{8}{1}{1} \DS{9}{2} \DLabel{9}{2}{1} \dHS{10}{1} \HLabel{10}{1}{0} \DS{11}{1} \DLabel{11}{1}{0}
\HMark{0}{0}{x} \DMark{6}{1}{x}\UMark{3}{1}{x}\DMark{5}{2}{x}\UMark{7}{0}{x}\DMark{9}{2}{x}\HMark{10}{1}{x}
\end{tikzpicture}\hspace{0.3in}
\begin{tikzpicture}[scale=1.5]
\Grid{12}{3}
\HS{0}{0} \HLabel{0}{0}{0} \US{1}{0}  \HS{2}{1} \HLabel{2}{1}{0}  \US{3}{1} \HS{4}{2} \HLabel{4}{2}{2} \DS{5}{2} \DLabel{5}{2}{01}  \DS{6}{1} \DLabel{6}{1}{00} \US{7}{0}  \US{8}{1}  \DS{9}{2} \DLabel{9}{2}{11} \dHS{10}{1} \HLabel{10}{1}{0} \DS{11}{1} \DLabel{11}{1}{00}
\HMark{0}{0}{a} \UMark{1}{0}{d}\UMark{3}{1}{ad}\UMark{7}{0}{a}\UMark{8}{1}{d}\HMark{10}{1}{a}
\end{tikzpicture}
\caption{We show an example of a marked Laguerre history
  \( H\in \MLH(12,7) \) (left) and its corresponding modified marked Laguerre history $H'\in\MLH^*(12,7)$ (right). The label of a step is written above the
  step. The mark of type \( x \) on a step is indicated as \( x \)
  below it. In $H'$, for simplicity, the label \( (a,b) \) of a down step is
  written as \( ab \). The sum of the labels is \( 5 \), and the sum of
  the starting heights of the steps with each mark is
  \( 0+1+2+1+0+2+1=7 \) in $H$ and $0+0+(1+1)+0+1+1=4$ in $H'$, and there are three marks of type
  \( d \). Hence, the weight of \( H \) and $H'$ is \( q^{5+7} = q^{12} \).}
  \label{exChemin}
\end{figure}
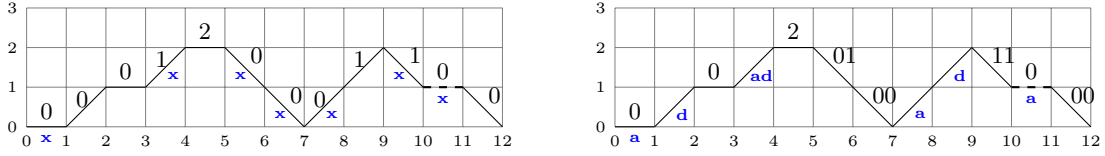

\subsection{A bijection from signed permutations to marked Laguerre histories}

Let \( \SP(n,r) \) denote the set of signed permutations
\( \tau\in S^\pm_n \) such that \( |\neg(\tau)|=r \) and
\( 1\in\neg(\tau) \). In this subsection, we define a bijection
\( \phi: \SP(n,r)\to \MLH(n,r) \).

We represent elements of $\SP(n,r)$ with ordinary arc diagrams. It will be convenient to work with a generalized class of arc diagrams corresponding to partially completed configurations. We first introduce some terminology.

An \emph{open upper arc} at \( a \) is a segment connecting
\( (a,0) \) and \( (a+\epsilon,\epsilon) \) for some \( \epsilon>0 \).
Similarly, an \emph{open lower arc} at \( a \) is a segment
connecting \( (a,0) \) and \( (a+\epsilon,-\epsilon) \).

Let \( \tau\in \SP(n,r) \). The \emph{partial arc diagram
  \( D_i(\tau) \)} of \( \tau \) up to \( i \) is the diagram obtained
from the full arc diagram \( D(\tau) \) of \( \tau \) by removing all upper
and lower arcs \( (a,b) \) with \( a>i \), and replacing each upper
(resp.~lower) arc \( (a,b) \) with \( a\le i<b \) by an open
upper (resp.~lower) arc at \( a \).

For $b\in[n]$, let \( \cro^+_b(\tau) \) denote the number of pairs $((a,c),(b,d))$ of upper arcs satisfying $a<c\leq b<d$, and let \( \cro^-_b(\tau) \) denote the
number of pairs \( ((a,b),(c,d)) \) of lower arcs satisfying \( a<c<b<d \), corresponding to configurations (U) and (L) below, respectively:
 \begin{center}
      \begin{tikzpicture}[scale=1.5]
      \node at (-1,1) {(U)};
        \draw[dotted] (-1,0) -- (7,0);
        \vertex{0}{a} \vertex{2}{c} \Rvertex{4}{b} \vertex{6}{d} 
        \upperEdge{2}{6} 
        \upperEdge{0}{4}
        \node at (7.5,0) {or};
        \begin{scope}[xshift=3.3cm]
         \draw[dotted] (-1,0) -- (5,0);
        \vertex{0}{a} \Rvertex{2}{b=c} \vertex{4}{d} 
        \upperEdge{2}{4} 
        \upperEdge{0}{2}
        \end{scope}
        \begin{scope}[xshift=6cm]
        \node at (-1,1) {(L)};
        \draw[dotted] (-1,0) -- (7,0);
        \vertex{0}{a} \vertex{2}{c} \Rvertex{4}{b} \vertex{6}{d} 
        \lowerEdge{6}{2} 
        \lowerEdge{4}{0}
        \end{scope}
      \end{tikzpicture}
    \end{center} 
Writing \( \cro_b(\tau) = \cro^+_b(\tau) + \cro^-_b(\tau) \), we have
\( \cro(\tau) = \sum_{b=1}^{n} \cro_b(\tau) \).

\begin{example}\label{ex:crossings} For the signed permutation 
\( \tau \) in \Cref{fig:arc_diagram} with $\cro(\tau)=6$, we have:
\begin{align*}
(\cro_i^+(\tau):1\leq i\leq 6)&=(0,2,0,1,0,0),\\ 
(\cro_i^-(\tau):1\leq i\leq 6)&=(0,0,0,2,1,0),\\
 (\cro_i(\tau):1\leq i\leq 6) &= (0,2,0,3,1,0).
\end{align*}
\end{example}

To facilitate our bijection, we define a \emph{modified marked Laguerre history}, which is an auxiliary object in simple bijection with ordinary marked Laguerre histories $\MLH(n,r)$.

\begin{defn}
  A \emph{modified marked Laguerre history} of length \(n\) is a labeled
  2-Motzkin path of length \(n\) such that
  \begin{itemize}
  \item each solid horizontal step \(\longrightarrow\) starting at height \(h\) has
    a label \( i\in [0,h]\),
  \item each dashed horizontal step \(\dashrightarrow\) starting at
    height \(h\) has a label \( i \in [0,h-1]\),
  \item each down step \(\searrow\) starting at height \(h\) has a pair of labels
    \( (i,j)\in [0,h-1]^2 \).
  \end{itemize}
  The steps may have marks of types $a$ and $d$, subject to the following
  conditions:
  \begin{itemize}
  \item Every up step may have a mark of type~$a$, a mark of type~$d$, or both.
  \item Every (solid or dashed) horizontal step may have a mark of type~$a$.
  \item The first step must have a mark of type \( a \).
    (If it is an up step, it may also have a mark of type \( d \).)
  \end{itemize}
 The \emph{weight} of a modified marked Laguerre history \( H' \) is defined to
  be \( q^{\ell(H')+m(H')+d(H')} \), where \( \ell(H') \) is the sum of the labels, $m(H')$ is the sum of the starting heights of the steps corresponding to each mark (if an up step has marks of both type $a$ and $d$, it gets counted twice), and $d(H')$ is the number of marks of type $d$. Denote the set of modified marked Laguerre histories with $n$ steps and a total of $r$ marks by $\MLH^*(n,r)$.
\end{defn}

There is a straightforward weight-preserving bijection between $\MLH^*(n,r)$ and $\MLH(n,r)$, which is a variant of the standard bijection on Laguerre histories. In a 2-Motzkin path, each up step starting from height $h-1$ is paired with the leftmost down step to its right starting at height $h$. Under this bijection, the label $(i,j)\in[0,h-1]^2$ on a down step of height $h$ in $H'\in\MLH^*(n,r)$ is split into  a label $i$ on the same down step and a label $j$ on its corresponding up step in $H\in\MLH(n,r)$, and marks of type $d$ on up steps in $H'$ are transferred to their corresponding down steps in $H$. Consequently, the bijection is weight-preserving:
\[
q^{\ell(H)+m(H)} = q^{\ell(H')+m(H')+d(H')}.
\]

Now we describe the bijection \( \phi: \SP(n,r)\to \MLH^*(n,r) \).

\begin{algorithm}\label{alg:SP-MLH}
  Let \( \tau\in \SP(n,r) \). We will construct the corresponding
  modified marked Laguerre history \( \phi(\tau)\in\MLH^*(n,r) \). For \( i=1,2,\dots,n \), the
  \( i \)th step of \( H=\phi(\tau) \) and its labels and marks are determined as
  follows. Here, \( h \) is the number of open upper arcs in
  \( D_{i-1}(\tau) \). In each case, we display the arcs, open upper arcs, and
  open lower arcs connected to \( i \) in
  \( D_i(\tau) \), and the corresponding \( i \)th step in \( H \).
  Some open arcs are drawn longer to indicate future crossings (in the partial arc diagram $D_j(\tau)$ for some $j>i$).
 \begin{description}
 \item[Case 1] If \( |\tau^{-1}(i)|,|\tau(i)|<i \), the \( i \)th step
   of \( H \) is an up step.
   \begin{description}
   \item[Case 1-1] If \( \tau(i), \tau^{-1}(i)>0 \), the \( i \)th step has no marks.
     \begin{center}
      \begin{tikzpicture}[scale=1.5]
        \draw[dotted] (2,0) -- (7,0);
        \vertex{6}{i} 
        \lowerHalfEdge{6} \upperHalfEdge{6}
        \shortUpperHalfEdge{3.5} \shortUpperHalfEdge{4} \shortUpperHalfEdge{4.5}
        \shortLowerHalfEdge{3.3} \shortLowerHalfEdge{3.8} \shortLowerHalfEdge{4.3}
        \upperBrace{4}{.7}{\( h \)}
        \lowerBrace{3.8}{-.7}{\( h \)}
        \node at (9,0) {\( \Leftrightarrow \)};
        \node at (12,0) {\( h \)};
        \draw[dotted] (12.5,0) -- (14,0);
        \US{14}{0} 
      \end{tikzpicture}
    \end{center} 
       \item[Case 1-2] If \( \tau^{-1}(i)<0<\tau(i) \), the \( i \)th step has a mark of type $a$.
     \begin{center}
      \begin{tikzpicture}[scale=1.5]
        \draw[dotted] (1,0) -- (7,0);
        \vertex{2}{m} \vertex{6}{i} 
        \lowerEdge{6}{2} \lowerHalfEdge{6} \upperHalfEdge{2}
        \shortUpperHalfEdge{3.5} \shortUpperHalfEdge{4} \shortUpperHalfEdge{4.5}
        \longLowerHalfEdge{3.3} \longLowerHalfEdge{3.8} \longLowerHalfEdge{4.3}
        \upperBrace{4}{.7}{\( h \)}
        \lowerBrace{4.8}{-1.7}{\( h \)}
        \node at (9,0) {\( \Leftrightarrow \)};
        \node at (12,0) {\( h \)};
        \draw[dotted] (12.5,0) -- (14,0);
        \US{14}{0} \UMark{14}{0}{a}
      \end{tikzpicture}
    \end{center} 
   \item[Case 1-3] If \( \tau(i)<0<\tau^{-1}(i) \), the \( i \)th step has a mark of type $d$.
     \begin{center}
      \begin{tikzpicture}[scale=1.5]
        \draw[dotted] (1,0) -- (7,0);
        \vertex{2}{m} \vertex{6}{i} 
        \upperEdge{2}{6} \upperHalfEdge{6} \lowerHalfEdge{2}
        \longUpperHalfEdge{3.5} \longUpperHalfEdge{4} \longUpperHalfEdge{4.5}
        \shortLowerHalfEdge{3.3} \shortLowerHalfEdge{3.8} \shortLowerHalfEdge{4.3}
        \upperBrace{5}{1.7}{\( h \)}
        \lowerBrace{3.8}{-.7}{\( h \)}
        \node at (9,0) {\( \Leftrightarrow \)};
        \node at (12,0) {\( h \)};
        \draw[dotted] (12.5,0) -- (14,0);
        \US{14}{0} \UMark{14}{0}{d}
      \end{tikzpicture}
    \end{center} 
   \item[Case 1-4] If \( \tau(i), \tau^{-1}(i)<0 \), the \( i \)th
     step has a mark of type $a$ and a mark of type $d$.
     \begin{center}
      \begin{tikzpicture}[scale=1.5]
        \draw[dotted] (0,0) -- (7,0);
        \vertex{1}{\ell} \vertex{2}{m} \vertex{6}{i} 
        \upperEdge{1}{6} \lowerEdge{6}{2}
        \longRedUpperHalfEdge{2} \lowerHalfEdge{1}
        \longUpperHalfEdge{3.5} \longUpperHalfEdge{4} \longUpperHalfEdge{4.5}
        \longLowerHalfEdge{3.3} \longLowerHalfEdge{3.8} \longLowerHalfEdge{4.3}
        \upperBrace{5}{1.7}{\( h \)}
        \lowerBrace{4.8}{-1.7}{\( h \)}
        \node at (9,0) {\( \Leftrightarrow \)};
        \node at (12,0) {\( h \)};
        \draw[dotted] (12.5,0) -- (14,0);
        \US{14}{0} \UMark{14}{0}{ad}
      \end{tikzpicture}
    \end{center} 
   \end{description}
 \item[Case 2] If \( |\tau(i)| = i \), the \( i \)th step of
     \( H \) is a solid horizontal step.
   \begin{description}
   \item[Case 2-1] If \( \tau(i)>0 \), the \( i \)th step has a
     label \( 0 \) and no marks.
     \begin{center}
      \begin{tikzpicture}[scale=1.5]
        \draw[dotted] (2,0) -- (7,0);
        \vertex{6}{i} \upperLoop{6}
        \shortUpperHalfEdge{3.5} \shortUpperHalfEdge{4} \shortUpperHalfEdge{4.5}
        \shortLowerHalfEdge{3.3} \shortLowerHalfEdge{3.8} \shortLowerHalfEdge{4.3}
        \upperBrace{4}{.7}{\( h \)}
        \lowerBrace{3.8}{-.7}{\( h \)}
        \node at (9,0) {\( \Leftrightarrow \)};
        \node at (12,0) {\( h \)};
        \draw[dotted] (12.5,0) -- (14,0);
        \HS{14}{0} \HLabel{14}{0}{0}
      \end{tikzpicture}
    \end{center} 
   \item[Case 2-2] If \( \tau(i)<0 \), the \( i \)th step has a
     label \( h \) and a mark of type $a$.
     \begin{center}
      \begin{tikzpicture}[scale=1.5]
        \draw[dotted] (1,0) -- (7,0);
       \vertex{2}{m} \vertex{6}{i} 
        \upperEdge{2}{6} \lowerEdge{6}{2}
        \longUpperHalfEdge{3.5} \longUpperHalfEdge{4} \longUpperHalfEdge{4.5}
        \longLowerHalfEdge{3.3} \longLowerHalfEdge{3.8} \longLowerHalfEdge{4.3}
        \upperBrace{5}{1.7}{\( h \)}
        \lowerBrace{4.8}{-1.7}{\( h \)}
        \node at (9,0) {\( \Leftrightarrow \)};
        \node at (12,0) {\( h \)};
        \draw[dotted] (12.5,0) -- (14,0);
        \HS{14}{0} \HLabel{14}{0}{\( h \)} \HMark{14}{0}{a}
      \end{tikzpicture}
    \end{center} 
   \end{description}

 \item[Case 3] If \( |\tau^{-1}(i)|<i<|\tau(i)| \), the
   \( i \)th step of \( H \) is a solid horizontal step.
   \begin{description}
   \item[Case 3-1] If \( \tau(i)>0 \), the \( i \)th step of \( H \)
     has a label \( k=\cro^+_i(\tau) \) and no marks.
     \begin{center}
      \begin{tikzpicture}[scale=1.5]
        \draw[dotted] (-1,0) -- (7,0);
        \vertex{2}{a} \vertex{6}{i} 
        \upperEdge{2}{6} 
        \upperHalfEdge{6} 
        \shortUpperHalfEdge{-.5} \shortUpperHalfEdge{0} \shortUpperHalfEdge{.5}
        \upperBrace{0}{.7}{\( h-k \)}
        \upperBrace{5}{1.7}{\( k-1 \)}
        \longUpperHalfEdge{3.5} \longUpperHalfEdge{4} \longUpperHalfEdge{4.5}
        \shortLowerHalfEdge{3.3} \shortLowerHalfEdge{3.8} \shortLowerHalfEdge{4.3}
        \lowerBrace{3.8}{-.7}{\( h \)}
        \node at (9,0) {\( \Leftrightarrow \)};
        \node at (12,0) {\( h \)};
        \draw[dotted] (12.5,0) -- (14,0);
        \HS{14}{0} \HLabel{14}{0}{\( k \)}
        \node at (18,0) {\( (k\in [h]) \)};
      \end{tikzpicture}
    \end{center} 
   \item[Case 3-2] If \( \tau(i)<0 \), the \( i \)th step of \( H \)
     has a label \( k=\cro^+_i(\tau) \) and a mark of type $a$.
     \begin{center}
      \begin{tikzpicture}[scale=1.5]
        \draw[dotted] (-3,0) -- (7,0);
        \vertex{-2}{m}
        \vertex{2}{a} \vertex{6}{i} 
        \upperEdge{2}{6} \lowerEdge{6}{-2} 
        \upperHalfEdge{-2} 
        \shortUpperHalfEdge{-.5} \shortUpperHalfEdge{0} \shortUpperHalfEdge{.5}
        \upperBrace{0}{.7}{\( h-k-1 \)}
        \upperBrace{5}{1.7}{\( k \)}
        \longUpperHalfEdge{3.5} \longUpperHalfEdge{4} \longUpperHalfEdge{4.5}
        \longLowerHalfEdge{3.3} \longLowerHalfEdge{3.8} \longLowerHalfEdge{4.3}
        \lowerBrace{4.8}{-1.7}{\( h \)}
        \node at (9,0) {\( \Leftrightarrow \)};
        \node at (12,0) {\( h \)};
        \draw[dotted] (12.5,0) -- (14,0);
        \HS{14}{0} \HLabel{14}{0}{\( k \)} \HMark{14}{0}{a}
        \node at (18.5,0) {\( (k\in [0,h-1]) \)};
      \end{tikzpicture}
    \end{center} 
   \end{description}

 \item[Case 4] If \( |\tau(i)|<i<|\tau^{-1}(i)| \), the
   \( i \)th step of \( H \) is a dashed horizontal step.
   \begin{description}
   \item[Case 4-1] If \( \tau^{-1}(i)>0 \), the \( i \)th step of
     \( H \) has a label \( k=\cro^-_i(\tau) \) and no marks.
     \begin{center}
      \begin{tikzpicture}[scale=1.5]
        \draw[dotted] (-2,0) -- (7,0);
        \vertex{2}{b} \vertex{6}{i} 
        \lowerEdge{6}{2} 
        \lowerHalfEdge{6} 
        \shortLowerHalfEdge{-.5} \shortLowerHalfEdge{0} \shortLowerHalfEdge{.5}
        \lowerBrace{0}{-.7}{\( h-k-1 \)}
        \lowerBrace{5}{-1.7}{\( k \)}
        \longLowerHalfEdge{3.5} \longLowerHalfEdge{4} \longLowerHalfEdge{4.5}
        \shortUpperHalfEdge{3.3} \shortUpperHalfEdge{3.8} \shortUpperHalfEdge{4.3}
        \upperBrace{3.8}{.7}{\( h \)}
        \node at (9,0) {\( \Leftrightarrow \)};
        \node at (12,0) {\( h \)};
        \draw[dotted] (12.5,0) -- (14,0);
        \dHS{14}{0} \HLabel{14}{0}{\( k \)}
        \node at (18.5,0) {\( (k\in [0,h-1]) \)};
      \end{tikzpicture}
    \end{center} 
   \item[Case 4-2] If \( \tau^{-1}(i)<0 \), the \( i \)th step of
     \( H \) has a label \( k=\cro^-_i(\tau) \) and a mark of type $a$.
          \begin{center}
      \begin{tikzpicture}[scale=1.5]
        \draw[dotted] (-3,0) -- (7,0);
        \vertex{-2}{m}
        \vertex{2}{b} \vertex{6}{i} 
        \lowerEdge{6}{2} \upperEdge{-2}{6} 
        \lowerHalfEdge{-2} 
        \shortLowerHalfEdge{-.5} \shortLowerHalfEdge{0} \shortLowerHalfEdge{.5}
        \lowerBrace{0}{-.7}{\( h-k-1 \)}
        \lowerBrace{5}{-1.7}{\( k \)}
        \longLowerHalfEdge{3.5} \longLowerHalfEdge{4} \longLowerHalfEdge{4.5}
        \longUpperHalfEdge{3.3} \longUpperHalfEdge{3.8} \longUpperHalfEdge{4.3}
        \upperBrace{4.8}{1.7}{\( h \)}
        \node at (9,0) {\( \Leftrightarrow \)};
        \node at (12,0) {\( h \)};
        \draw[dotted] (12.5,0) -- (14,0);
        \dHS{14}{0} \HLabel{14}{0}{\( k \)} \HMark{14}{0}{a}
        \node at (18.5,0) {\( (k\in [0,h-1]) \)};
      \end{tikzpicture}
    \end{center} 
   \end{description}

 \item[Case 5] If \( |\tau(i)|,|\tau^{-1}(i)|>i \), the \( i \)th step
   of \( H \) is a down step with a pair \( (k,k') \) of labels, where
   \( k=\cro^+_i(\tau) \) and \( k'=\cro^-_i(\tau) \). In the diagram
   below, the relative positions of the vertices \( a \) and \( b \)
   are irrelevant.
   \begin{center}
      \begin{tikzpicture}[scale=1.5]
        \draw[dotted] (-4,0) -- (7,0);
        \vertex{-1}{a}
        \vertex{2}{b} \vertex{6}{i} 
        \lowerEdge{6}{2} \upperEdge{-1}{6} 
        \shortUpperHalfEdge{-3.5} \shortUpperHalfEdge{-3} \shortUpperHalfEdge{-2.5}
        \upperBrace{-3}{.7}{\( h-k-1 \)}
        \shortLowerHalfEdge{-.5} \shortLowerHalfEdge{0} \shortLowerHalfEdge{.5}
        \lowerBrace{0}{-.7}{\( h-k'-1 \)}
        \lowerBrace{5}{-1.7}{\( k' \)}
        \longLowerHalfEdge{3.5} \longLowerHalfEdge{4} \longLowerHalfEdge{4.5}
        \longUpperHalfEdge{3.3} \longUpperHalfEdge{3.8} \longUpperHalfEdge{4.3}
        \upperBrace{4.8}{1.7}{\( k \)}
        \node at (9,0) {\( \Leftrightarrow \)};
        \node at (12,0) {\( h \)};
        \draw[dotted] (12.5,0) -- (14,0);
        \DS{14}{0} \node [right] at (14.5,0) {\( kk' \)};
        \node at (19,0) {\( (k,k'\in [0,h-1]) \)};
      \end{tikzpicture}
    \end{center}
 \end{description}
\end{algorithm}

See \Cref{fig:map_SP_MLH} for an example of the map \( \phi \).

\begin{figure}
  \begin{tikzpicture}[scale=1.2]
\vertex{1}{1} \vertex{2}{2} \vertex{3}{3} \vertex{4}{4} \vertex{5}{5} \vertex{6}{6} \vertex{7}{7} \vertex{8}{8} \vertex{9}{9} \vertex{0}{-1} \vertex{-1}{-2} \vertex{-2}{-3} \vertex{-3}{-4} \vertex{-4}{-5} \lowerHalfEdge{1} \lowerEdge{1}{0} \upperHalfEdge{0} 
\begin{scope}[shift={(11,-3/2)}]
\Grid{9}{3}\US{0}{0} \UMark{0}{0}{a} 
\end{scope}
\end{tikzpicture}

\begin{tikzpicture}[scale=1.2]
\vertex{1}{1} \vertex{2}{2} \vertex{3}{3} \vertex{4}{4} \vertex{5}{5} \vertex{6}{6} \vertex{7}{7} \vertex{8}{8} \vertex{9}{9} \vertex{0}{-1} \vertex{-1}{-2} \vertex{-2}{-3} \vertex{-3}{-4} \vertex{-4}{-5} \lowerHalfEdge{1} \lowerEdge{1}{0} \upperHalfEdge{0} \lowerEdge{2}{-1} \upperHalfEdge{-1} \upperEdge{-2}{2} \lowerHalfEdge{-2} 
\begin{scope}[shift={(11,-3/2)}]
\Grid{9}{3}\US{0}{0} \UMark{0}{0}{a} \US{1}{1} \UMark{1}{1}{ad} 
\end{scope}
\end{tikzpicture}

\begin{tikzpicture}[scale=1.2]
\vertex{1}{1} \vertex{2}{2} \vertex{3}{3} \vertex{4}{4} \vertex{5}{5} \vertex{6}{6} \vertex{7}{7} \vertex{8}{8} \vertex{9}{9} \vertex{0}{-1} \vertex{-1}{-2} \vertex{-2}{-3} \vertex{-3}{-4} \vertex{-4}{-5} \lowerHalfEdge{1} \lowerHalfEdge{3} \lowerEdge{1}{0} \upperHalfEdge{0} \lowerEdge{2}{-1} \upperHalfEdge{-1} \lowerEdge{3}{-2} \upperEdge{-2}{2} 
\begin{scope}[shift={(11,-3/2)}]
\Grid{9}{3}\US{0}{0} \UMark{0}{0}{a} \US{1}{1} \UMark{1}{1}{ad} \dHS{2}{2} \HLabel{2}{2}{1} \HMark{2}{2}{a} 
\end{scope}
\end{tikzpicture}

\begin{tikzpicture}[scale=1.2]
\vertex{1}{1} \vertex{2}{2} \vertex{3}{3} \vertex{4}{4} \vertex{5}{5} \vertex{6}{6} \vertex{7}{7} \vertex{8}{8} \vertex{9}{9} \vertex{0}{-1} \vertex{-1}{-2} \vertex{-2}{-3} \vertex{-3}{-4} \vertex{-4}{-5} \upperLoop{4} \lowerHalfEdge{1} \lowerHalfEdge{3} \lowerEdge{1}{0} \upperHalfEdge{0} \lowerEdge{2}{-1} \upperHalfEdge{-1} \lowerEdge{3}{-2} \upperEdge{-2}{2} 
\begin{scope}[shift={(11,-3/2)}]
\Grid{9}{3}\US{0}{0} \UMark{0}{0}{a} \US{1}{1} \UMark{1}{1}{ad} \dHS{2}{2} \HLabel{2}{2}{1} \HMark{2}{2}{a} \HS{3}{2} \HLabel{3}{2}{0} 
\end{scope}
\end{tikzpicture}

\begin{tikzpicture}[scale=1.2]
\vertex{1}{1} \vertex{2}{2} \vertex{3}{3} \vertex{4}{4} \vertex{5}{5} \vertex{6}{6} \vertex{7}{7} \vertex{8}{8} \vertex{9}{9} \vertex{0}{-1} \vertex{-1}{-2} \vertex{-2}{-3} \vertex{-3}{-4} \vertex{-4}{-5} \upperLoop{4}{4} \upperHalfEdge{5} \lowerHalfEdge{1} \lowerHalfEdge{3} \lowerEdge{1}{0} \upperHalfEdge{0} \lowerEdge{2}{-1} \upperHalfEdge{-1} \lowerEdge{3}{-2} \upperEdge{-2}{2} \upperEdge{-3}{5} \lowerHalfEdge{-3} 
\begin{scope}[shift={(11,-3/2)}]
\Grid{9}{3}\US{0}{0} \UMark{0}{0}{a} \US{1}{1} \UMark{1}{1}{ad} \dHS{2}{2} \HLabel{2}{2}{1} \HMark{2}{2}{a} \HS{3}{2} \HLabel{3}{2}{0} \US{4}{2} \UMark{4}{2}{d} 
\end{scope}
\end{tikzpicture}

\begin{tikzpicture}[scale=1.2]
\vertex{1}{1} \vertex{2}{2} \vertex{3}{3} \vertex{4}{4} \vertex{5}{5} \vertex{6}{6} \vertex{7}{7} \vertex{8}{8} \vertex{9}{9} \vertex{0}{-1} \vertex{-1}{-2} \vertex{-2}{-3} \vertex{-3}{-4} \vertex{-4}{-5} \upperLoop{4}{4} \upperHalfEdge{5} \lowerHalfEdge{1} \lowerHalfEdge{3} \lowerEdge{1}{0} \upperHalfEdge{0} \lowerEdge{2}{-1} \upperHalfEdge{-1} \lowerEdge{3}{-2} \upperEdge{-2}{2} \lowerEdge{6}{-4} \upperEdge{-4}{6} \upperEdge{-3}{5} \lowerHalfEdge{-3} 
\begin{scope}[shift={(11,-3/2)}]
\Grid{9}{3}\US{0}{0} \UMark{0}{0}{a} \US{1}{1} \UMark{1}{1}{ad} \dHS{2}{2} \HLabel{2}{2}{1} \HMark{2}{2}{a} \HS{3}{2} \HLabel{3}{2}{0} \US{4}{2} \UMark{4}{2}{d} \HS{5}{3} \HLabel{5}{3}{3} \HMark{5}{3}{a} 
\end{scope}
\end{tikzpicture}

\begin{tikzpicture}[scale=1.2]
\vertex{1}{1} \vertex{2}{2} \vertex{3}{3} \vertex{4}{4} \vertex{5}{5} \vertex{6}{6} \vertex{7}{7} \vertex{8}{8} \vertex{9}{9} \vertex{0}{-1} \vertex{-1}{-2} \vertex{-2}{-3} \vertex{-3}{-4} \vertex{-4}{-5} \upperLoop{4}{4} \upperEdge{5}{7} \lowerHalfEdge{1} \lowerHalfEdge{3} \lowerEdge{1}{0} \upperHalfEdge{0} \lowerEdge{2}{-1} \upperHalfEdge{-1} \lowerEdge{3}{-2} \upperEdge{-2}{2} \lowerEdge{6}{-4} \upperEdge{-4}{6} \lowerEdge{7}{-3} \upperEdge{-3}{5} 
\begin{scope}[shift={(11,-3/2)}]
\Grid{9}{3}\US{0}{0} \UMark{0}{0}{a} \US{1}{1} \UMark{1}{1}{ad} \dHS{2}{2} \HLabel{2}{2}{1} \HMark{2}{2}{a} \HS{3}{2} \HLabel{3}{2}{0} \US{4}{2} \UMark{4}{2}{d} \HS{5}{3} \HLabel{5}{3}{3} \HMark{5}{3}{a} \DS{6}{3} \DLabel{6}{3}{02} 
\end{scope}
\end{tikzpicture}

\begin{tikzpicture}[scale=1.2]
\vertex{1}{1} \vertex{2}{2} \vertex{3}{3} \vertex{4}{4} \vertex{5}{5} \vertex{6}{6} \vertex{7}{7} \vertex{8}{8} \vertex{9}{9} \vertex{0}{-1} \vertex{-1}{-2} \vertex{-2}{-3} \vertex{-3}{-4} \vertex{-4}{-5} \upperLoop{4}{4} \upperEdge{5}{7} \lowerEdge{8}{1} \lowerHalfEdge{3} \lowerEdge{1}{0} \upperHalfEdge{0} \lowerEdge{2}{-1} \upperEdge{-1}{8} \lowerEdge{3}{-2} \upperEdge{-2}{2} \lowerEdge{6}{-4} \upperEdge{-4}{6} \lowerEdge{7}{-3} \upperEdge{-3}{5} 
\begin{scope}[shift={(11,-3/2)}]
\Grid{9}{3}\US{0}{0} \UMark{0}{0}{a} \US{1}{1} \UMark{1}{1}{ad} \dHS{2}{2} \HLabel{2}{2}{1} \HMark{2}{2}{a} \HS{3}{2} \HLabel{3}{2}{0} \US{4}{2} \UMark{4}{2}{d} \HS{5}{3} \HLabel{5}{3}{3} \HMark{5}{3}{a} \DS{6}{3} \DLabel{6}{3}{02} \DS{7}{2} \DLabel{7}{2}{11} 
\end{scope}
\end{tikzpicture}

\begin{tikzpicture}[scale=1.2]
\vertex{1}{1} \vertex{2}{2} \vertex{3}{3} \vertex{4}{4} \vertex{5}{5} \vertex{6}{6} \vertex{7}{7} \vertex{8}{8} \vertex{9}{9} \vertex{0}{-1} \vertex{-1}{-2} \vertex{-2}{-3} \vertex{-3}{-4} \vertex{-4}{-5} \upperLoop{4}{4} \upperEdge{5}{7} \lowerEdge{8}{1} \lowerEdge{9}{3} \lowerEdge{1}{0} \upperEdge{0}{9} \lowerEdge{2}{-1} \upperEdge{-1}{8} \lowerEdge{3}{-2} \upperEdge{-2}{2} \lowerEdge{6}{-4} \upperEdge{-4}{6} \lowerEdge{7}{-3} \upperEdge{-3}{5} 
\begin{scope}[shift={(11,-3/2)}]
\Grid{9}{3}\US{0}{0} \UMark{0}{0}{a} \US{1}{1} \UMark{1}{1}{ad} \dHS{2}{2} \HLabel{2}{2}{1} \HMark{2}{2}{a} \HS{3}{2} \HLabel{3}{2}{0} \US{4}{2} \UMark{4}{2}{d} \HS{5}{3} \HLabel{5}{3}{3} \HMark{5}{3}{a} \DS{6}{3} \DLabel{6}{3}{02} \DS{7}{2} \DLabel{7}{2}{11} \DS{8}{1} \DLabel{8}{1}{00} 
\end{scope}
\end{tikzpicture}
\caption{A step-by-step description of the map \( \phi(\tau) \) for
  \( \tau = \bar9\,\bar8\,\bar2\,4\,7\,\bar6\,\bar5\,1\,3 \), where
  \( \tau^{-1} = 8\, \bar3\, 9\, 4\, \bar7\, \bar6\, 5\, \bar2\, \bar1 \).}
\label{fig:map_SP_MLH}
\end{figure}
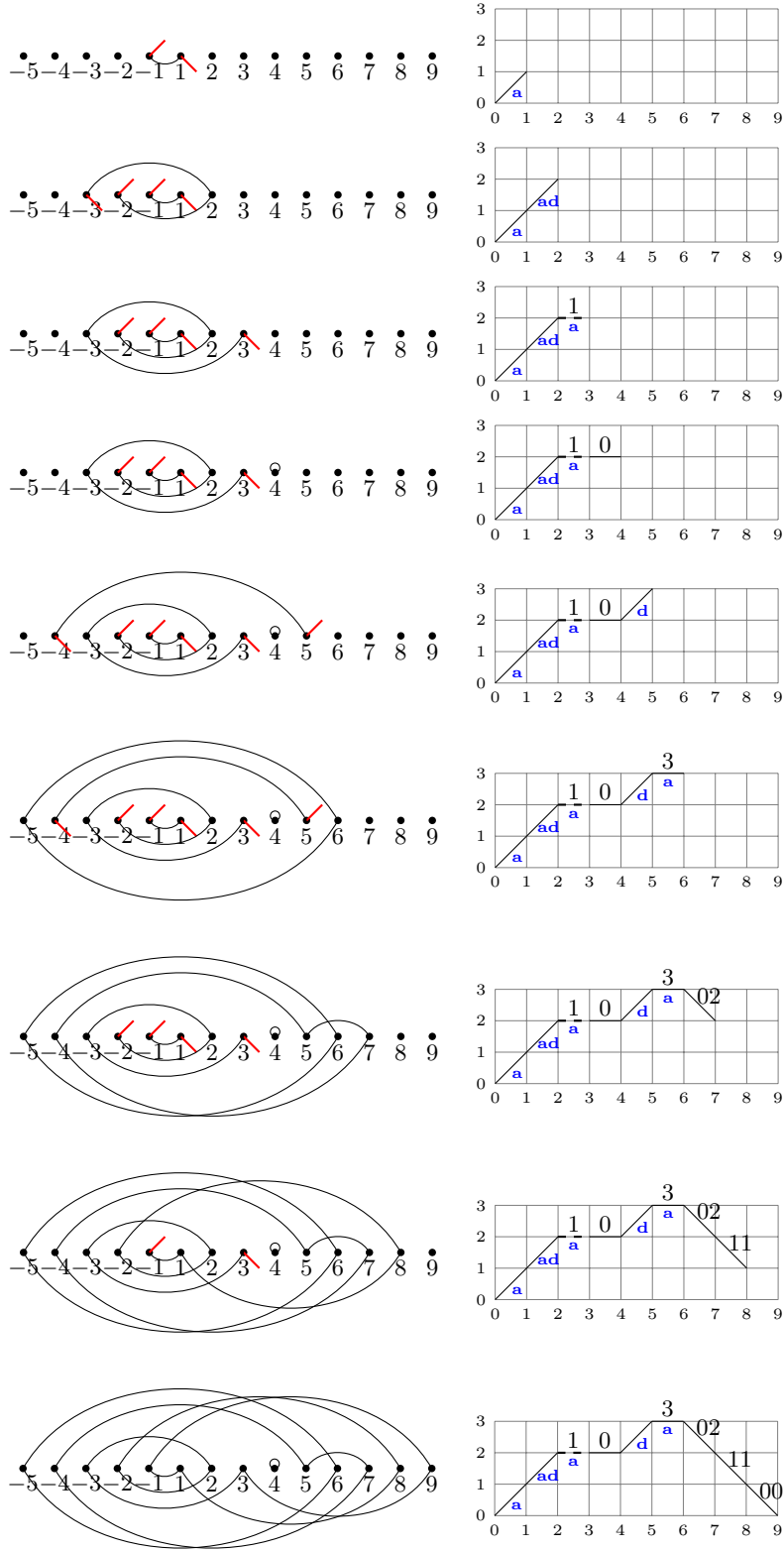

\begin{thm}\label{thm:SP-MLH}
  The map \( \phi: \SP(n,r)\to \MLH^*(n,r) \) is a bijection. Moreover,
  if \( \phi(\tau) = H \), then \( \wt^*(H) = q^{\cro(\tau)} \).
\end{thm}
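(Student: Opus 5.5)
The plan is to prove the two claims separately: weight preservation by a step-by-step accounting, and bijectivity by writing down an explicit inverse that rebuilds the arc diagram from left to right. Throughout I will keep track of the invariant that, after processing vertex \(i\), the height of the path equals the number \(h\) of open upper arcs in \(D_i(\tau)\), and also equals the number of open lower arcs.

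\textbf{Well-definedness and the height invariant.} The two counts agree because every vertex (including the negative ones) has exactly one upper and one lower arc-end. In the arc diagram of \(\tau\in\SP(n,r)\), the negative vertices \(-1,\dots,-r\) sit at the left and each carries an upper arc and a lower arc. The key observation is that each open arc from a negative vertex starts exactly when its partner arc closes at a positive vertex \(i\) with \(\tau(i)<0\) or \(\tau^{-1}(i)<0\). So I will treat the pair of arcs through \(-j\) as a single path through \(i\). The case analysis then shows that Case 1 adds one open upper and one open lower arc, Case 5 closes one of each, and Cases 2--4 leave both counts unchanged. This gives a valid 2-Motzkin path, and it lets me check each label range: for example, in Case 3-1 the label \(k=\cro^+_i\) counts open upper arcs started left of the arc ending at \(i\), so \(k\in[1,h]\) there. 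The marks record exactly the positions where \(\tau(i)<0\) (type \(a\)) or \(\tau^{-1}(i)<0\) (type \(d\)), so there are \(r\) of them in total. Since \(1\in\neg(\tau)\), the first step carries an \(a\)-mark.

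\textbf{Weight.} I would show, case by case, that the weight contributed by step \(i\), namely its label plus the starting height of each mark plus one for each \(d\)-mark, equals \(\cro_i(\tau)\). This is where the spiral-style crossings through negative vertices enter. An \(a\)-mark at height \(h\) counts the \(h\) open lower arcs crossed by the lower arc from the new negative vertex. A \(d\)-mark counts \(h\) upper crossings plus one extra. The extra one comes from the companion lower arc starting at the same negative vertex, or equivalently from the convention \(a<c\le b<d\) for upper crossings with \(c=b\). The main obstacle is getting these boundary conventions right. These are the upper crossings with a shared endpoint (\(c=b\)), the loops, and the ordering of negative vertices given by \(<_\tau\), which places new negative vertices in a way that crosses all currently open arcs of the relevant type. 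I would handle this by proving the per-step identity in each of the eleven subcases using the displayed pictures. In each subcase I would identify which open arcs an arc closed at \(i\) crosses: it crosses exactly those started to the right of its left endpoint, and that number is recorded by the label, or is \(h\) or \(h-1\) when the arc comes from a negative vertex. Summing over \(i\) then gives \(\cro(\tau)\), since \(\cro(\tau)=\sum_b\cro_b(\tau)\).

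\textbf{Bijectivity.} Read \(H\in\MLH^*(n,r)\) from left to right, maintaining a partial arc diagram whose open upper and open lower arcs are each linearly ordered by starting point. At step \(i\), the step type and marks determine which case applies. The label \(k\) (or pair \((k,k')\)) determines which open arc to close at \(i\): it is the \((k+1)\)-st most recently opened upper arc, or lower arc, respectively. The marks determine whether a new negative vertex is created, and its position is forced by the order \(<_\tau\). Each label range matches exactly the number of available choices, so this reconstruction is well defined and inverts \(\phi\). The one remaining check is that the negative vertices produced satisfy the ordering of Definition~\ref{def:14}, which I would verify directly from the definition of \(<_\tau\).
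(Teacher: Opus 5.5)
Your strategy matches the paper's: invert $\phi$ by rebuilding the partial arc diagrams from left to right, with each mark consuming the rightmost available negative vertex, and prove the weight identity step by step as $\cro_i(\tau)=k+u+v$, where $u=h$ for an $a$-mark and $v=h+1$ for a $d$-mark. However, several concrete claims you rely on are false, and two of them break the argument as written.

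The first is the mark rule. A mark is placed only when a negative vertex is \emph{activated}, i.e.\ at step $\min(d,|\tau(d)|)$ for $d\in\neg(\tau)$. At the later endpoint of that vertex's pair of arcs nothing is marked. Your rule (an $a$ wherever $\tau(i)<0$, a $d$ wherever $\tau^{-1}(i)<0$) fails in several ways:
\begin{itemize}
\item It yields $2r$ marks, since each condition holds at $r$ positions.
\item It puts marks on down steps and $d$-marks on horizontal steps, both forbidden in $\MLH^*(n,r)$.
\item In Case~2-2 ($\tau(i)=-i$, hence $\tau^{-1}(i)=-i$) it gives two marks where there is one.
\item The type is not read off from which of $\tau(i),\tau^{-1}(i)$ is negative: in Case~4-2 the activation is through the upper arc $(m,i)$, yet the mark is $a$ and pays for the $h$ \emph{upper} crossings of that arc.
\end{itemize}
The correct count is that each of the $r$ negative vertices is activated exactly once, and each activation contributes exactly one mark. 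Likewise, the extra $1$ of a $d$-mark cannot come from the companion lower arc, since a lower arc contributes no upper crossing. In Case~1-3 it is the upper arc born at $i$ (the $c=b$ convention). In Case~1-4 it is the open upper arc at the other new vertex $m$, which lies between $\ell$ and $i$. So an arc from a newly activated vertex has $h$ or $h+1$ crossings, not $h$ or $h-1$. Your justification that open upper and open lower counts agree is also wrong as stated: a Case~3-1 vertex has two upper ends and no lower one. The equality does, however, follow from your step-by-step count.

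The second problem is the inverse rule. $\cro^\pm_i$ counts open arcs whose \emph{starting point} lies strictly right of the left endpoint of the arc closing at $i$, which is not the order of opening. A vertex activated later sits further \emph{left}, so the most recently opened arc can be the leftmost one.

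For example, take $\tau=\bar3\,4\,\bar5\,1\,2$. The upper arc from the vertex activated at step $3$ opens after $(2,4)$ but lies to its left. The down step at $4$ has $\cro^+_4=0$ and must close $(2,4)$, whereas ``$(k+1)$-st most recently opened'' closes the negative vertex's arc.

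You must instead choose the arc with exactly $k$ open arcs starting strictly to its right in position order, counting the arc born at $i$ in Case~3-1. This lazy bookkeeping computes $\cro(\tau)$ correctly because arcs from not-yet-activated negative vertices nest over everything currently open. Verifying that, together with the Case~1-4 tie-break (the vertex $m$ of the lower arc $(m,i)$ is right of $\ell$), is exactly the check against $<_\tau$ you deferred. It is the substance of the proof, not a side remark.

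Finally, your instinct to work from the pictures is right. The printed inequalities for Cases~1 and~5, and the sign conditions in Cases~1-2 and~1-3, are transposed relative to the pictures.
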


\begin{proof}
  We will find the inverse \( \psi \) of \( \phi \). Let
  \( H\in \MLH(n,r) \). To find \( \psi(H)=\tau \), we construct the
  partial arc diagram \( D_i(\tau) \) for \( i=0,1,2,\dots,n \). In the
  process of the construction, an \emph{available vertex} is a vertex
  \( a<0 \) such that there are no arcs or open arcs attached to it.

  First, \( D_0(\tau) \) is the diagram consisting only of the vertices
  \( -r,\dots,-1 ,1,\dots,n \). For \( i\in [n] \), suppose that
  \( D_{i-1}(\tau) \) is given, and let \( h \) be the starting height
  of the \( i \)th step in \( H \). Then \( D_i(\tau) \) is constructed
  as follows.

\begin{description}
\item[Case 1] The \( i \)th step of \( H \) is an up step.
   \begin{description}
   \item[Case 1-1] If the \( i \)th step has no marks, draw an open
     upper arc at \( i \) and an open lower arc at \( i \).
   \item[Case 1-2] If the \( i \)th step has only a mark of type
     \( a \), draw a lower arc \( (m,i) \) and an open upper arc at
     \( m \), where \( m \) is the rightmost available vertex.
   \item[Case 1-3] If the \( i \)th step has only a mark of type
     \( d \), draw an upper arc \( (m,i) \) and an open lower arc at
     \( m \), where \( m \) is the rightmost available vertex.
   \item[Case 1-4] If the \( i \)th step has both a mark of type
     \( a \) and a mark of type $d$, draw a lower arc \( (m,i) \),
     an open upper arc at \( m \), an upper arc \( (\ell,i) \), and an
     open lower arc at \( \ell \), where \( m \) is the rightmost
     available vertex and \( \ell \) is the second rightmost available
     vertex.
   \end{description}
 \item[Case 2] The \( i \)th step of \( H \) is a solid horizontal
   step with one of the following properties.
   \begin{description}
   \item[Case 2-1] The \( i \)th step has a label \( 0 \) and no marks.
     Then draw a loop at \( i \).
   \item[Case 2-2] The \( i \)th step has a label \( h \) and a mark of type $a$.
     Then draw a lower arc \( (m,i) \) and an upper arc \( (m,i) \),
    where \( m \) is the rightmost available vertex.
   \end{description}

 \item[Case 3] The \( i \)th step of \( H \) is a solid horizontal
   step with one of the following properties.
   \begin{description}
   \item[Case 3-1] The \( i \)th step has a label \( k\in [h] \) and
     no marks. Then draw an upper arc \( (a,i) \) and an open upper
     arc at \( i \), where the vertex \( a \) is chosen (among those
     with an open upper arc in \( D_{i-1}(\tau) \)) so that
     \( \cro^+_i(\tau)=k \).
   \item[Case 3-2] The \( i \)th step of \( H \) has a label
     \( k \in [0,h-1] \) and a mark of type $a$. Then draw a
     lower arc \( (m,i) \), an open upper arc at \( m \), and an upper
     arc \( (\ell,i) \), where \( m \) is the rightmost available vertex
     and the vertex \( \ell \) is chosen so that \( \cro^+_i(\tau)=k \).
   \end{description}

 \item[Case 4] The \( i \)th step of \( H \) is a dashed horizontal
   step.
   \begin{description}
   \item[Case 4-1] The \( i \)th step has a label \( k\in[0,h-1] \)
     and no marks. Then draw a lower arc \( (b,i) \) and an open lower
     arc at \( i \), where the vertex \( b \) is chosen (among those
     with an open lower arc in \( D_{i-1}(\tau) \)) so that
     \( \cro^-_i(\tau)=k \).
   \item[Case 4-2] The \( i \)th step of \( H \) has a label
     \( k\in [0,h-1] \) and a mark of type $a$. Then draw an upper
     arc \( (m,i) \), an open lower arc at \( m \), and a lower arc
     \( (b,i) \), where \( m \) is the rightmost available vertex and
     the vertex \( b \) is chosen so that \( \cro^-_i(\tau)=k \).
   \end{description}

 \item[Case 5] The \( i \)th step of \( H \) is a down step with a
   pair \( (k,k') \) of labels. Draw an upper arc \( (a,i) \) and a
   lower arc \( (b,i) \), where the vertices \( a \) and \( b \)
   chosen so that \( \cro^+_i(\tau)=k \) and \( \cro^-_i(\tau)=k' \).
 \end{description}

 By the construction, it is straightforward to check that \( \psi \)
 is the inverse of the map \( \phi \) in \Cref{alg:SP-MLH}. For the
 second statement of the theorem, observe that for each
 \( i\in [n] \), if \( h \) is the starting height of the \( i \)th
 step of \( H \), then \( \cro_i(\tau) \) is equal to \( k+u+v \),
 where \( k \) is the sum of the labels of the \( i \)th step of
 \( H \) (if any), and
 \begin{align*}
   u &= 
   \begin{cases}
    h & \mbox{if the \( i \)th step has a mark of type $a$},\\
    0 & \mbox{otherwise,}
   \end{cases} \\
    v &= 
   \begin{cases}
    h+1 & \mbox{if the \( i \)th step has a mark of type $d$},\\
    0 & \mbox{otherwise.}
   \end{cases}
 \end{align*}
 Hence, 
 \[
   q^{\cro(\tau)} = q^{\sum_{i=1}^{n} \cro_i(\tau)} = \wt(H).
 \]
\end{proof}

Identifying $H'=\phi(\tau)\in\MLH^*(n,r)$ with its corresponding ordinary marked Laguerre history $H\in\MLH(n,r)$, we obtain the desired bijection from $\SP(n,r)$ to $\MLH(n,r)$.

\begin{cor}
There exists a bijection from $\SP(n,r)$ to $\MLH(n,r)$ such that $\cro(\tau)=\wt(H)$, where $H$ is the marked Laguerre history corresponding to $\tau\in\SP(n,r)$.
\end{cor}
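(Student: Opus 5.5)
The corollary follows by composing the bijection \(\phi:\SP(n,r)\to\MLH^*(n,r)\) of \cref{thm:SP-MLH} with the bijection \(\MLH^*(n,r)\to\MLH(n,r)\) described just before \cref{alg:SP-MLH}. Given \(\tau\in\SP(n,r)\), let \(H'=\phi(\tau)\) and let \(H\) be the marked Laguerre history obtained from \(H'\) as follows. Pair each up step with its matching down step. Split each down-step label \((i,j)\) into the label \(i\) on the down step and the label \(j\) on the paired up step. Move each mark of type \(d\) from an up step to its paired down step, and forget the distinction between mark types.

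First I would check that \(H\in\MLH(n,r)\). An up step starting at height \(h-1\) is paired with a down step starting at height \(h\). The transferred label \(j\in[0,h-1]\) is therefore a legal up-step label at starting height \(h-1\), and the remaining \(i\in[0,h-1]\) is a legal down-step label. Since each step of \(H'\) carries at most one type-\(d\) mark and at most one type-\(a\) mark, every step of \(H\) ends up with at most one mark. The total number of marks is still \(r\), and the first step stays marked because its type-\(a\) mark is never moved. Invertibility is clear: given \(H\), re-merge labels along the same up/down pairing, and declare a mark on a down step to be a type-\(d\) mark on its partner up step.

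Next I would check the weight identity \(q^{\ell(H)+m(H)}=q^{\ell(H')+m(H')+d(H')}\). Labels are only redistributed, so \(\ell(H)=\ell(H')\). A type-\(d\) mark moves from an up step starting at height \(h-1\) to a down step starting at height \(h\). Each such move adds exactly \(1\) to the height sum, and these increments total \(d(H')\). Type-\(a\) marks do not move.

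Combining this with \(\wt^*(H')=q^{\cro(\tau)}\) from \cref{thm:SP-MLH} gives \(\wt(H)=q^{\cro(\tau)}\), where \(\cro(\tau)=\wt(H)\) in the statement is read as an equality of exponents. The only delicate point is the marks. I must confirm that the modified history's rule that a down step carries no mark is compatible with the original rule that any step may be marked. This holds because down steps of \(H\) receive marks only via the transfer, and each such transfer is reversible.
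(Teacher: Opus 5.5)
Your proposal is correct and takes the same approach as the paper: compose \(\phi:\SP(n,r)\to\MLH^*(n,r)\) from Theorem~\ref{thm:SP-MLH} with the weight-preserving bijection \(\MLH^*(n,r)\to\MLH(n,r)\) described before Algorithm~\ref{alg:SP-MLH}. Your checks of the label ranges, the mark count, the first-step mark, and the identity \(m(H)=m(H')+d(H')\) spell out details the paper leaves implicit in its one-sentence derivation.
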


\subsection{A connection with assembl\'{e}es}

The set $\AS(n,r)$ of signed permutations arising from assembl\'{e}es
is contained in $\SP(n,r)$. Moreover, for $\nu\in\SP(n,r)$, the
entries in $\neg(\nu)$ can appear in any order, so that
$|\SP(n,r)|=r!|\AS(n,r)|$. Thus it is natural to decompose $\SP(n,r)$
into an element of $\AS(n,r)$ together with a permutation of its
negative entries. In this subsection, we define a bijection
\[\rho:\SP(n,r)\to \AS(n,r)\times S_r
\] 
with the property that if \( \rho(\nu)=(\sigma,\tau)\) for $\nu\in\SP(n,r) $, then
\( \cro(\nu) = \cro(\tau) + \inv(\sigma) \).

For \( \nu\in \SP(n,r) \), let \(a_1 < \cdots < a_r \) be the elements of $\neg(\nu)$, and let \( b_1 < \cdots < b_r \) be the elements of $[n]$ such that \( \nu^{-1}(b_i)\in\neg(\nu) \). There is a unique permutation \( \sigma\in S_r \) such that
\( \nu(a_j) = - b_{\sigma(j)} \) for all \( j\in [r] \). Define 
\( \tau\in \AS(n,r) \) by
\[
  \tau(i) =
  \begin{cases}
   -b_j & \mbox{if \( i=a_j \) for some \( j\in [r] \)},\\
   \nu(i) & \mbox{otherwise}.
  \end{cases}
\]
Note that $\tau$ is well-defined since by construction, $\tau(a_1)<\cdots<\tau(a_r)$, where $1=a_1<\cdots<a_r$ are the elements in $\neg(\tau)$. We then define \( \rho(\nu) = (\tau,\sigma) \in \AS(n,r)\times S_r \). 
\begin{thm}\label{thm:SP-A*S}
  The map \( \rho:\SP(n,r) \to \AS(n,r)\times S_r \) is a bijection
  such that if \( \rho(\nu) = (\tau,\sigma) \), then
  \( \cro(\nu) = \cro(\tau) + \inv(\sigma) \).
\end{thm}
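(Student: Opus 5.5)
The plan is to prove bijectivity directly from an explicit inverse. For crossings, we split the crossings of the arc diagram into three classes: those among ordinary arcs, those between an ordinary arc and an arc incident to a negative vertex, and those among the arcs incident to negative vertices.

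\emph{Bijectivity.} Given \( (\tau,\sigma)\in\AS(n,r)\times S_r \), let \( a_1<\cdots<a_r \) be the elements of \( \neg(\tau) \) and let \( b_j=|\tau(a_j)| \), so that \( b_1<\cdots<b_r \) by \Cref{def:5}. Define \( \nu \) by \( \nu(a_j)=-b_{\sigma(j)} \) and \( \nu(i)=\tau(i) \) otherwise. Then \( \nu\in\SP(n,r) \), since \( 1=a_1\in\neg(\nu) \), and this construction is clearly inverse to \( \rho \). Both sets are finite and the two constructions compose to the identity in either order, so \( \rho \) is a bijection.

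\emph{Crossings.} We use the ordinary arc diagram of \Cref{def:14}. In both \( \nu \) and \( \tau \), the ordinary (non-spiral) arcs are the same: they are the arcs \( (i,\nu(i)) \) or \( (\nu(i),i) \) for \( i\notin\neg(\nu) \). Each \( d\in\neg(\nu) \) contributes a lower arc \( (-j,d) \) and an upper arc \( (-j,|\nu(d)|) \), where \( j \) is the rank of \( d \) in \( <_\nu \).

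\emph{Step 1: crossings among ordinary arcs.} These are literally identical in \( \nu \) and \( \tau \).

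\emph{Step 2: mixed crossings.} Consider an ordinary upper arc \( (x,y) \) with \( x,y>0 \) and an upper arc \( (-j,e) \). Since \( -j<x \), they cross if and only if \( x\le e<y \). Similarly, an ordinary lower arc \( (x,y) \) crosses a lower arc \( (-j,d) \) if and only if \( x<d<y \). Neither condition depends on \( j \). Hence the number of mixed crossings depends only on the set of lower endpoints \( \{a_1,\dots,a_r\} \) and the set of upper endpoints \( \{b_1,\dots,b_r\} \). These two sets are the same for \( \nu \) and \( \tau \).

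\emph{Step 3: crossings among arcs at negative vertices.} Take \( d,d'\in\neg(\nu) \) with \( d<_\nu d' \), so that \( d \) sits at vertex \( -j \) and \( d' \) at \( -k \) with \( j<k \). Write \( e=|\nu(d)| \) and \( e'=|\nu(d')| \).
\begin{itemize}
\item The two lower arcs cross exactly when \( -k<-j<d'<d \), i.e.\ when \( d'<d \).
\item The two upper arcs cross exactly when \( e'<e \); equality is impossible because the values are distinct.
\end{itemize}
Both crossings cannot occur at once. Indeed, \( d'<d \) and \( e'<e \) would give \( \min(d',e')<\min(d,e) \), contradicting \( d<_\nu d' \). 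So the pair contributes exactly one crossing when the relative order of \( (d,d') \) disagrees with that of \( (e,e') \), and zero crossings otherwise.

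It follows that the number of crossings among negative-vertex arcs equals the number of pairs \( j<l \) for which the order of \( (a_j,a_l) \) disagrees with the order of \( (b_{\sigma(j)},b_{\sigma(l)}) \). Since the \( a \)'s and \( b \)'s are increasing, this count is \( \inv(\sigma) \). In \( \tau \) the orders always agree, so the count is \( 0 \). Summing Steps 1--3 gives \( \cro(\nu)=\cro(\tau)+\inv(\sigma) \).

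\emph{Main obstacle.} The delicate point is the boundary cases, which I would check case by case.
\begin{itemize}
\item Loops and coincidences such as \( e=d' \), i.e.\ \( \nu(d)=-d' \). Here the tie-breaking clause of \Cref{def:13} and the weak inequality \( c\le b \) in the definition of upper crossings must be verified to give the stated count. For example, when \( \min(d,e)=\min(d',e') \), which forces \( d=e' \) or \( e=d' \), one must check that the tie-break orders the two vertices so that exactly the disagreement crossings occur.
\item Spiral pairs with \( d=e \) (loops through a negative vertex) in Step 2. Here one must check that the weak inequality \( x\le e \) is applied consistently with the strict inequality used for lower arcs.
\end{itemize}
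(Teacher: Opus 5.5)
Your proof is correct and follows the paper's argument. Steps 1--3 are exactly the paper's split of crossings into the classes $U_{++}+L_{++}$, $U_{-+}+L_{-+}$ and $U_{--}+L_{--}$ by the signs of the left endpoints. As in the paper, the mixed crossings depend only on the endpoint sets $\{a_j\}$ and $\{b_j\}$, and the same pairwise analysis, using $<_\nu$ only to exclude a double crossing, shows that the last class counts $\inv(\sigma)$.

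The boundary cases you list as the main obstacle need no further checking, because your crossing conditions were derived directly from the definitions. In Step 3 the weak inequality $c\le b$ never matters, since a negative vertex cannot coincide with a positive endpoint. A tie $\min(d,e)=\min(d',e')$ forces $d=e'$ or $e=d'$, and either way the pair disagrees, so both orderings give exactly one crossing.
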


\begin{proof}
  Let \( (\tau,\sigma)\in \AS(n,r)\times S_r \) and suppose that
  \( a_1 < \cdots < a_r \) and \( b_1 < \cdots < b_r \) are the
  integers such that \( \tau(a_i)\in\neg(\tau) \) and \( \tau^{-1}(b_i)\in\neg(\tau) \). Let $\sigma\in S_r$ be the unique permutation satisfying $\tau(a_{\sigma(j)})=-b_j$.
  Then define \( \nu\in \SP(n,r) \) by
\[
  \nu(i) =
  \begin{cases}
   -b_{\sigma(j)} & \mbox{if \( i=a_j \) for some \( j\in [r] \)},\\
   \tau(i) & \mbox{otherwise}.
  \end{cases}
\]
By the construction, the map \( (\tau,\sigma)\mapsto \nu \) is the
inverse of \( \rho \). Hence, \( \rho \) is a bijection.

It remains to prove that \( \cro(\nu) = \cro(\tau) + \inv(\sigma) \).
We say that a crossing \( ((a,b),(c,d)) \) of two upper arcs is of
\emph{type \( U_{++} \)} (resp.~\( U_{-+} \) and \( U_{--} \))
if \( a,c>0 \) (resp.~\( a<0<c \) and \( a,c<0 \)). Similarly,
a crossing \( ((a,b),(c,d)) \) of two lower arcs is of \emph{type
  \( L_{++} \)} (resp.~\( L_{-+} \) and \( L_{--} \)) if
\( a,c>0 \) (resp.~\( a<0<c \) and \( a,c<0 \)). We denote by
\( U_{++}(\nu) \) the number of crossings of type \( U_{++} \) in
\( \nu \). We define \( U_{-+}(\nu) \), \( U_{--}(\nu) \),
\( L_{++}(\nu) \), \( L_{-+}(\nu) \), and \( L_{--}(\nu) \) similarly.
Then
\begin{align*}
  \cro(\nu) &= U_{++}(\nu) + U_{-+}(\nu) + U_{--}(\nu) +L_{++}(\nu) + L_{-+}(\nu) + L_{--}(\nu),\\
  \cro(\tau) &= U_{++}(\tau) + U_{-+}(\tau) + U_{--}(\tau) +L_{++}(\tau) + L_{-+}(\tau) + L_{--}(\tau).
\end{align*}
Since \( U_{--}(\tau) = L_{--}(\tau) = 0 \), it suffices to show the following identities:
\begin{align}
  \label{eq:U++}
  U_{++}(\nu) + L_{++}(\nu) &= U_{++}(\tau) + L_{++}(\tau),\\
  \label{eq:U-+}
  U_{-+}(\nu) + L_{-+}(\nu) &= U_{-+}(\tau) +  L_{-+}(\tau),\\
  \label{eq:U--}
  U_{--}(\nu) + L_{--}(\nu) &= \inv(\sigma).
\end{align}

The first identity \eqref{eq:U++} follows from the fact that \( \nu \)
and \( \tau \) have the same upper and lower arcs \( (a,b) \) with
\( a>0 \).

For the second identity \eqref{eq:U-+}, observe that if
\( ((a,b),(c,d)) \) is a crossing of type \( U_{-+} \) in \( \nu \),
then \( b=b_j \) for some \( j \), and \( a \) is uniquely determined
by \( b_j \). Thus, \( U_{-+}(\nu) \) is the number of triples
\( (b_j,c,d) \), where \( 0<c\le b_j<d \) and \( (c,d) \) is an upper
arc in \( \nu \). Since the same is true for \( \tau \), we have
\( U_{-+}(\nu) = U_{-+}(\tau) \), and similarly,
\( L_{-+}(\nu) = L_{-+}(\tau) \). This shows \eqref{eq:U-+}.

Now we prove \eqref{eq:U--}. Let \( u_1,\dots,u_r \) be the vertices
such that \( (u_i,a_i) \) is a lower arc in \( \nu \) for each
\( i\in [r] \). Observe that \( (u_i,b_{\sigma(i)}) \) is an upper arc
in \( \nu \) since \( \nu(a_i) = -b_{\sigma(i)} \).
Thus,
\[
    L_{--}(\nu) = \sum_{1\le i<j\le r} \chi(u_i<u_j<a_i<a_j),
\]
where \( \chi(P) \) is \( 1 \) if \( P \) is true and \( 0 \) otherwise.
Since there are two cases \( b_{\sigma(i)}<b_{\sigma(j)} \)
and \( b_{\sigma(j)}<b_{\sigma(i)} \), we have
\[
  U_{--}(\nu) = \sum_{1\le i<j\le r} \chi(u_i<u_j<b_{\sigma(i)}<b_{\sigma(j)})
  + \sum_{1\le i<j\le r} \chi(u_j<u_i<b_{\sigma(j)}<b_{\sigma(i)}).
\]
Since \( \inv(\sigma) = \sum_{1\le i<j\le r} \chi(\sigma(j)<\sigma(i))  \),
it suffices to show that for all \( 1\le i<j\le r \),
\begin{multline}\label{eq:4}
  \chi(\sigma(j)<\sigma(i)) \\
  = \chi(u_i<u_j<a_i<a_j) + \chi(u_i<u_j<b_{\sigma(i)}<b_{\sigma(j)}) + \chi(u_j<u_i<b_{\sigma(j)}<b_{\sigma(i)}).
\end{multline}
If \( \sigma(i)<\sigma(j) \), then \( b_{\sigma(i)}<b_{\sigma(j)} \), and by
the definition of the order \( <_\nu \), we have \( a_i<_\nu a_j \).
Therefore, \( u_j<u_i \), and both sides of \eqref{eq:4} are zero. If
\( \sigma(j)<\sigma(i) \), then \( b_{\sigma(j)}<b_{\sigma(i)} \). In this
case, \eqref{eq:4} is equivalent to
\( 1 = \chi(u_i<u_j) + \chi(u_j<u_i) \), which is implied by $u_i\neq u_j$. This
completes the proof.
\end{proof}

The following answers a question in \cite[p.~15]{Corteel2020a}.

\begin{cor}\label{cor:2}
  We have
  \[
    \sum_{H\in \MLH(n,r)}  \wt(H) = \sum_{\nu\in \SP(n,r)}  q^{\cro(\nu)} = [r]_q! \sum_{\tau\in \AS(n,r)} q^{\cro(\tau)}.
  \]
\end{cor}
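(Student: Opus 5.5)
The plan is to chain together the two bijections just established. For the first equality, I would use \Cref{thm:SP-MLH} together with the weight-preserving bijection between $\MLH^*(n,r)$ and $\MLH(n,r)$ described before \Cref{alg:SP-MLH}. These give a bijection $\SP(n,r)\to\MLH(n,r)$, $\nu\mapsto H$, under which $q^{\cro(\nu)}=\wt(H)$. This is exactly the preceding corollary. Summing over $\nu\in\SP(n,r)$ then yields $\sum_{H\in\MLH(n,r)}\wt(H)=\sum_{\nu\in\SP(n,r)}q^{\cro(\nu)}$.

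For the second equality, I would apply \Cref{thm:SP-A*S}. The map $\rho$ is a bijection $\SP(n,r)\to\AS(n,r)\times S_r$, and if $\rho(\nu)=(\tau,\sigma)$ then $\cro(\nu)=\cro(\tau)+\inv(\sigma)$. Reindexing the sum through $\rho$, the generating function factors as
\[
\sum_{\nu\in\SP(n,r)}q^{\cro(\nu)}=\Bigl(\sum_{\tau\in\AS(n,r)}q^{\cro(\tau)}\Bigr)\Bigl(\sum_{\sigma\in S_r}q^{\inv(\sigma)}\Bigr).
\]
The last factor is the classical inversion generating function of $S_r$, namely $[r]_q!$. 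I would either cite this fact or prove it by the usual insertion argument: inserting $k$ into a permutation of $[k-1]$ creates $0,1,\dots,k-1$ new inversions according to its position, which contributes a factor $[k]_q$.

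No step should be a genuine obstacle. The one point I would check is that the weight convention is consistent across the maps. \Cref{thm:SP-MLH} is stated with the modified weight $\wt^*$ on $\MLH^*(n,r)$, and the transfer identity $q^{\ell(H)+m(H)}=q^{\ell(H')+m(H')+d(H')}$ converts it to the ordinary weight $\wt$ on $\MLH(n,r)$. The bijection also preserves the number $r$ of marks: a type-$d$ mark is moved from an up step to its matched down step, so the mark count is unchanged and $\MLH^*(n,r)$ does correspond to $\MLH(n,r)$.
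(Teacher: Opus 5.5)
Your proposal is correct and follows the paper's own argument: the first equality comes from \Cref{thm:SP-MLH} composed with the weight-preserving correspondence between $\MLH^*(n,r)$ and $\MLH(n,r)$, and the second from reindexing the sum through the bijection $\rho$ of \Cref{thm:SP-A*S} together with $\sum_{\sigma\in S_r}q^{\inv(\sigma)}=[r]_q!$. You also check that moving the type-$d$ marks preserves both the weight and the number of marks; this is a detail the paper leaves implicit, and it holds.
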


\begin{proof}
  This follows from \Cref{thm:SP-MLH,thm:SP-A*S} and the well-known
  fact that \( \sum_{\sigma\in S_r} q^{\inv(\sigma)} = [r]_q! \).
\end{proof}

We note that Corteel and Nunge \cite[Theorem~22]{Corteel2020a} show
that
\[
    \sum_{H\in \MLH(n,r)}  \wt(H) = \sum_{\nu\in \SP(n,r)}  q^{\mathrm{tw}(\nu)},
\]
where \( \mathrm{tw}(\nu) \) is
the number of 31-2 and 31-\( \overline{2} \) patterns in the signed permutation $\nu$. Furthermore, Iraci, Nadeau, and Vanden Wyngaerd
\cite{Iraci2024} studied 2-31 patterns of signed permutations, which they called segmented permutations.
It would be interesting to make an explicit connection between these signed permutation patterns and statistics on RAT.

\bibliographystyle{abbrv}

\end{document}